\numberwithin{equation}{section}
\newcommand{\mr}{\mathrm}
\newcommand{\UH}{\mathsf{H}}   %% upright H
\newcommand{\UT}{\mathsf{T}}   %% upright T
\renewcommand{\Re}{\mr{Re}}   %% upright T
\renewcommand{\Im}{\mr{Im}}   %% upright T
\newcommand{\Mydef}{\overset{  \scriptscriptstyle \Delta  }{=}}
\declaretheorem{lemma}
\declaretheorem{proposition}
\declaretheorem{corollary}
\declaretheorem{definition}
\declaretheorem{remark}
\declaretheorem{fact}
\declaretheorem[numbered=unless unique]{theorem}
\newcommand{\E}{\mathbb{E}}
\newcommand{\hatE}{\hat{\E}}
\renewcommand{\P}{\mathbb{P}}
\newcommand{\Tr}{\mathsf{Tr}}
\newcommand*\diff{\mathop{}\!\mathrm{d}}
\newcommand{\gauss}[2]{\mathcal{N}\left( #1,#2 \right)}
\newcommand{\cgauss}[2]{\mathcal{CN}\left( #1,#2 \right)}
\newcommand{\explain}[2]{\overset{\text{\tiny{#1}}}{#2}}
\newcommand{\diag}[1]{\text{Diag}\left(#1\right)}
\newcommand{\gpdf}[1][\sigma]{\psi_{#1}}
\newcommand{\MI}[2]{\mathbf{I} \left( {{#1}; {#2}}  \right)}
\newcommand{\Ent}[1]{\mathbf{H} \left( {#1} \right)}
\newcommand{\CEnt}[2]{\mathbf{H} \left( {#1} \left| \right. {#2} \right)}
\newcommand{\ip}[2]{\langle {#1}, {#2} \rangle}
\newcommand{\weakthr}{\delta_{\tiny{\mathsf{weak}}}}
\newcommand{\critthr}{\delta_{c}}
\newcommand{\Nr}{\mathscr{U}}
\newcommand{\Dr}{\mathscr{L}}
\newcommand{\CalF}{\mathcal{F}}
\newcommand{\CalFhat}{\hat{\mathcal{F}}}
\newcommand{\Texp}[2]{\mathsf{TExp} \left(#1,#2\right)}
\newcommand{\ZTexp}[2]{Z_{{\footnotesize\mathsf{TExp}}} \left(#1,#2\right)}
\newcommand{\VTexp}[2]{\sigma^2_{{\footnotesize\mathsf{TExp}}} \left(#1,#2\right)}
\newcommand{\MTexp}[3]{\mu_{{\footnotesize\mathsf{TExp}}}^{(#3)} \left(#1,#2\right)}
\newcommand{\Exp}[1]{\mathsf{Exp} \left( #1 \right)}
\newcommand{\dGamma}[2]{\mathsf{Gamma} \left( #1, #2 \right)}
\newcommand{\Beta}[2]{\mathsf{Beta} \left( #1,#2 \right) }
\newcommand{\Wis}[2]{\mathsf{Wis}\left( #1,#2 \right)}
\newcommand{\TWis}[3]{\mathsf{TWis} \left(#1,#2,#3\right)}
\newcommand{\ZTWis}[3]{Z_{{\footnotesize\mathsf{TWis}}} \left(#1,#2,#3\right)}
\newcommand{\VTWis}[3]{\bm \Sigma_{{\footnotesize\mathsf{TWis}}} \left(#1,#2,#3\right)}
\newcommand{\MTWis}[4]{\mu_{{\footnotesize\mathsf{TExp}}}^{(#4)} \left(#1,#2,#3\right)}
\renewcommand{\vec}[1]{\mathsf{Vec} \left( #1 \right)}
\renewcommand{\i}{{i\mkern1mu}}
\newcommand{\unif}[1]{\text{Unif} \left( #1 \right)}
\newcommand{\GUE}[1]{\mathsf{GUE}(#1)}
\begin{document}
%\linenumbers	
	\title{Information Theoretic Limits \\ for Phase Retrieval  with \\
		Subsampled Haar Sensing Matrices}
	\author{Rishabh Dudeja, Junjie Ma, Arian Maleki \\ Department of Statistics, Columbia University}
	
	\maketitle
	\begin{abstract} We study information theoretic limits of recovering an unknown $n$ dimensional, complex signal vector $\bm x_\star$ with unit norm from $m$ magnitude-only measurements of the form $y_i = |(\bm A \bm x_\star)_i|^2, \; i = 1,2 \dots , m$, where $\bm A$ is the sensing matrix. This is known as the Phase Retrieval problem and models practical imaging systems where measuring the phase of the observations is difficult. Since in a number of applications, the sensing matrix has orthogonal columns, we model the sensing matrix as a subsampled Haar matrix formed by picking $n$ columns of a uniformly random $m \times m$ unitary matrix.  We study this problem in the high dimensional asymptotic regime, where $m,n \rightarrow \infty$, while $m/n \rightarrow \delta$ with $\delta$ being a fixed number, and show that if $m < (2-o_n(1))\cdot n$, then \emph{any estimator} is asymptotically orthogonal to the true signal vector $\bm x_\star$. This lower bound is sharp since when $m > (2+o_n(1)) \cdot n $, estimators that achieve a non trivial asymptotic correlation with the signal vector are known from previous works \citep{dudeja2019rigorous, ma2019spectral}.
	\end{abstract}
% 	\begin{IEEEkeywords}
% 		Phase Retrieval, Random Orthogonal Matrices, Subsampled Haar Matrices, Coded Diffraction Pattern, Large Deviation Theory,  Random Matrix Theory, Phase Transition, Weak Recovery Threshold.
% 	\end{IEEEkeywords}
	
	\newpage
	\tableofcontents
	\newpage
	\section{Introduction}
	\subsection{Motivation}
	Phase retrieval refers to the problem of recovering an unknown signal $\bm x_\star \in \mathbb C^n$ from $m$ phaseless measurements of the form:
	\begin{align}
	y_i & =  |(\bm A \bm x_\star)_i|^2,
	\end{align} 
	where the matrix $\bm A \in \mathbb C^{m \times n}$ denotes the sensing matrix.
	
	The Phase Retrieval problem is a model for practical imaging systems where it is infeasible to obtain the phase of the measurements. This is true for imaging systems that arise in a wide range of fields, such as electron microscopy, crystallography, astronomy, and optical imaging \citep{shechtman2015phase}. Due to physical considerations in these imaging systems, the sensing matrix $\bm A$ is often a variant of the Discrete Fourier Transform (DFT) matrix \citep{millane1990phase}. Designing recovery algorithms for the phase retrieval problem with such structured sensing matrices is a challenging problem and no algorithm with rigorous recovery guarantees is known.  In light of this disconnect between theory and practice, \citet{candes2015phase} have proposed using a class of random sensing matrices called coded diffraction patterns (CDPs). They leverage the fact that in some applications, it is possible to randomize the sensing mechanism by introducing random masks. CDP sensing matrices can be expressed as follows:
	\begin{align*}
	\bm A_{CDP} & = \begin{bmatrix} \bm F_n \bm P_1 \\ \bm F_n \bm P_2 \\ \vdots \\ \bm F_n \bm P_L \end{bmatrix}.
	\end{align*}
	In the above display, $\bm F_n$ is the $n \times n$ DFT matrix. The matrices $\bm P_l \;, l = 1,2 \dots L$ are diagonal matrices representing random masks: 
	\begin{align*}
	\bm P_l = \diag{e^{\i \theta_{1,l}}, e^{\i \theta_{2,l}}  \dots e^{\i \theta_{n,l}}}.
	\end{align*}
	where the random phases $\theta_{j,l}, \; j = 1, 2 \dots n, \; l = 1,2 \dots L$ are sampled from some distribution. The authors analyzed an SDP based recovery algorithm and showed that it performs a successful recovery with $m = O(n \log(n))$ measurements. This was the first estimator for the phase retrieval problem with rigorous guarantees in a physically realistic setup. 
	
	The result of \citet{candes2015phase} sheds light on the order of measurements required to solve the phase retrieval problem. However, it is desirable to complement these results with a sharp asymptotic analysis of the problem where $n,m \rightarrow \infty$ and $m = n \delta$. Such sharp results allow us to compare different algorithms (which may require the same order of phaseless measurements) based on the value of $\delta$ at which they succeed. Furthermore, asymptotic results tell us the information theoretically minimum value of $\delta$ required and whether existing algorithms achieve it. 
	
	For gaussian sensing matrices this framework has been successfully used for designing optimal spectral estimators \citep{yuelu_spectral,yuelu_optimal,mondelli2019fundamental}, to compute the precise Bayes risk in Bayesian phase retrieval \citep{barbier2019optimal}, to analyze the performance of message passing algorithms for phase retrieval \citep{ma2019optimization}, and to analyze the performance of convex relaxations \citep{dhifallah2018phase,abbasi2019universality}. The results for Gaussian sensing matrices are universal in the sense that the same results hold for any sensing matrix with i.i.d. entries \citep{barbier2019optimal}. 
	
	Despite the recent progress in the asypptotic analysis of the phase retrieval problem with Gaussian matrices, the study of CDP sensing matrices have resisted the asymptotic analysis. Simulation results have also shown that the results derived for Gaussian sensing matrices are not consistent with the empirical results obtained for CDP sensing matrices. For example, in our previous work \citep{ma2019spectral, dudeja2019rigorous} we observed that for noiseless phase retrieval, theory based on Gaussian sensing matrices predicts that optimally designed spectral methods should achieve a non-trivial correlation with the planted vector as soon as $\delta > 1$. In practice, however, for CDP matrices, spectral estimators don't achieve a non-trivial correlation unless $\delta > 2$. 
	
	In order to better model CDP sensing matrices,  our previous work \citep{ma2019spectral} proposed that since CDP sensing matrices are column orthogonal, i.e. $\bm A_{CDP}^\UH \bm A_{CDP} = \bm I_n$, it is natural to study the phase retrieval problem with subsampled Haar sensing matrices that are generated as follows: 
	\begin{eqnarray}\label{subsampled_haar_sensing}
	\bm A = \bm H_{m} \bm S_{m,n}, \; \bm H_m \sim \unif{\mathbb U_m}, \bm S_{m,n}  = \begin{bmatrix} \bm I_{n} \\ \bm 0_{m-n,n} \end{bmatrix}.
	\end{eqnarray}
	In the above display, $\unif{\mathbb U(m)}$ denotes the Haar measure on the space of $m \times m$ unitary matrices. Note that the matrix $\bm A$ is formed by simply selecting the first $n$ columns of a $m \times m$ Haar matrix. For the subsampled Haar model, we \citep{ma2019spectral,dudeja2019rigorous} derived theoretical predictions for the performance of certain message passing algorithms  and a class of spectral methods for subsampled Haar sensing matrices. We found that these predictions have excellent agreement with the empirical performance of these estimators for CDP sensing matrices. In particular, they correctly predict the failure of spectral methods for $\delta < 2$. This observation underscores the potential of the subsampled Haar model to provide theoretical guidance for design and comparision of estimators for the CDP model. A similar universality phenomena has been observed in the context of noiseless compressed sensing by \citet{donoho2009observed,monajemi2013deterministic} and \citet{oymak2014case} regarding the performance of LASSO, and by \citet{abbara2019universality} regarding the performance of AMP algorithms. 
\subsection{Our main contribution: } Given the empirical evidence that practical sensing matrices like the CDP sensing matrices behave similarly to subsampled Haar matrices \citep{ma2019spectral}, we study information theoretic lower bounds for Phase Retrieval problem (see \eqref{PR_problem}) with subsampled Haar sensing matrices generated according to \eqref{subsampled_haar_sensing} in the presence of measurement noise:
	\begin{eqnarray}
	y_i  = m |(\bm A \bm x_\star)_i|^2 + \sigma  \epsilon_i, \; i = 1,2 \dots m.
	\label{PR_problem}
	\end{eqnarray}
	We assume the measurement noise $\epsilon_i \explain{i.i.d.}{\sim} \gauss{0}{1}$.  We  assume that the signal vector is a uniformly random unit vector: $\bm x_\star \sim \unif{\mathbb S_{n-1}}$. This is intended to model situations where we don't have apriori knowledge regarding the structure of the signal (for example it is not known if it is sparse). Moreover, as we will clarify in a moment, this is the least favorable prior for this problem.  The particular choice of scaling in \eqref{PR_problem} has been made so that the rescaled noiseless measurement $\sqrt{m} \cdot (|\bm A \bm x_\star|)_i$ satisfies $m \cdot \E |\bm A \bm x_\star|_i^2 = 1$. We adopt the sharp high dimensional asymptotic framework for our analysis and study a sequence of phase retrieval problems with $m,n \rightarrow \infty $, such that the oversampling ratio $\delta \Mydef m/n$ remains fixed. Our main result is summarized in the following theorem:
	\begin{theorem}\label{main_result} For any $\delta < 2$ and for any noise level $\sigma > 0$, the Bayes risk satisfies:
		\begin{align*}
		  \lim_{\substack{m, n \rightarrow \infty \\ \frac{m}{n} = \delta}} \;  \E \left\| \bm x_\star \bm x_\star^\UH - \E \left[ \bm x_\star \bm x_\star^\UH \big| \bm y, \bm A \right] \right\|^2 & \rightarrow 1,
		\end{align*}
	where $\| \cdot \|$ denotes the Frobenius norm.
	\end{theorem}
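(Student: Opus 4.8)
Write $\hat{\bm X} \Mydef \E[\bm x_\star\bm x_\star^\UH\mid\bm y,\bm A]$. Since $\|\bm x_\star\bm x_\star^\UH\|=1$ deterministically and $\hat{\bm X}$ is the conditional mean, the orthogonality principle gives $\E\|\bm x_\star\bm x_\star^\UH-\hat{\bm X}\|^2 = 1-\E\|\hat{\bm X}\|^2$; moreover $\|\hat{\bm X}\|^2 = \E[\,|\langle\bm x_\star,\bm x\rangle|^2\mid\bm y,\bm A]$ where $\bm x$ is a posterior sample drawn independently of $\bm x_\star$ given $(\bm y,\bm A)$ (Nishimori identity). Hence the theorem is equivalent to $\E\,|\langle\bm x_\star,\bm x\rangle|^2\to 0$, and since this overlap lies in $[0,1]$ it suffices to prove that for every fixed $\eta>0$ the posterior mass of $\{\bm x : |\langle\bm x_\star,\bm x\rangle|^2\ge\eta\}$ tends to $0$ in expectation. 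I would stress that this is strictly stronger than a mutual-information statement: even for $\delta<2$ the observations reveal $\Theta(n)$ bits about $\bm x_\star$ (e.g. the noiseless magnitudes $|(\bm A\bm x_\star)_i|$ confine $\bm x_\star$ to an $\approx n(2-\delta)$-dimensional set), so $\tfrac1n I(\bm x_\star;\bm y\mid\bm A)$ stays bounded away from $0$; the content of the theorem is that \emph{within} that set the overlap with the truth still vanishes.

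\textbf{Partition functions and the moment method.}
Let $H(\bm x) \Mydef \frac1{2\sigma^2}\sum_{i=1}^m(y_i-m|(\bm A\bm x)_i|^2)^2$, $Z \Mydef \E_{\bm x\sim\unif{\mathbb S_{n-1}}}e^{-H(\bm x)}$ and $Z_\eta \Mydef \E_{\bm x\sim\unif{\mathbb S_{n-1}}}[\mathbb I(|\langle\bm x_\star,\bm x\rangle|^2\ge\eta)\,e^{-H(\bm x)}]$, so the posterior mass in question is exactly $Z_\eta/Z$. The plan is to show (i) $\tfrac1n\log Z$ concentrates, with high probability, around $\tfrac1n\log\E Z+o(1)$ --- a (conditional) second-moment statement --- and (ii) $\E Z_\eta\le\exp(n\phi_\eta+o(n))$ with $\phi_\eta$ strictly below $\lim\tfrac1n\log\E Z$ whenever $\delta<2$; then Markov's inequality forces $\E[Z_\eta/Z]\to0$. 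The moments over $\bm A$ are computable because of a single structural fact: the law of $\bm A$ in \eqref{subsampled_haar_sensing} is invariant under $\bm A\mapsto\bm U\bm A$ and $\bm A\mapsto\bm A\bm V$ for every fixed $\bm U\in\mathbb U_m$, $\bm V\in\mathbb U_n$, so conditionally on the pairwise overlaps among $\bm x_\star$ and the (one or two) replica vectors, their images under $\bm A$ are uniformly random tuples of unit vectors in $\mathbb C^m$ with the prescribed Gram matrix. The moment computations therefore reduce to spherical integrals over $\mathbb U_m$, which one evaluates to exponential order by Laplace's method after conditioning on the scalar overlap $q=|\langle\bm x_\star,\bm x\rangle|^2$.

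\textbf{The variational problem and the threshold $\delta=2$.}
The exponential rate of $\E Z_\eta$ is governed by a variational problem over an order parameter finer than the scalar overlap --- e.g. the joint empirical distribution of the coordinate pairs $\{((\bm A\bm x_\star)_i,(\bm A\bm x)_i)\}_{i\le m}$, of which both $|\langle\bm x_\star,\bm x\rangle|^2$ and $H(\bm x)$ are functionals --- and the task is to show this optimum is attained only at overlap $0$ when $\delta<2$. This is the rigorous form of a degrees-of-freedom count: $\bm x_\star\bm x_\star^\UH$ carries $2n-2$ real parameters but only $m$ real scalars are observed, so for $\delta<2$ the noiseless level set $\{\bm x : |(\bm A\bm x)_i|=|(\bm A\bm x_\star)_i|\ \forall i\}$ is a manifold of dimension $\approx n(2-\delta)>0$ on which, by the same rotational symmetry, the overlap with $\bm x_\star$ concentrates at $0$, whereas for $\delta>2$ that level set collapses to the global-phase orbit $\{e^{\I\theta}\bm x_\star\}$. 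At noise level $\sigma$ the posterior is a $\sigma$-thickening of this picture --- dominated by an $\approx\sigma$-neighborhood of the level set --- and for $\delta<2$ the contribution from configurations with overlap $\ge\eta$ is exponentially smaller than the total. Since the matrix MMSE is nondecreasing in $\sigma$, the binding case is $\sigma\to0^+$, so all estimates must be shown to hold uniformly in $\sigma$ (and in $\eta$), without relying on the regularization a fixed noise level would provide.

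\textbf{Main obstacle.}
I expect the hard part to be the (conditional) second-moment estimate: controlling $\E[Z^2]$ and $\E[Z_\eta^2]$. Here the relevant object is the joint law, under Haar $\bm A$, of $(\bm A\bm x_\star,\bm A\bm x^{(1)},\bm A\bm x^{(2)})$ for two independent replicas, i.e. a spherical integral over $\mathbb U_m$ whose sharp exponential asymptotics --- in particular, the conditions under which it factorizes to leading order --- must be extracted; and, as is typical, the unconditioned second moment is almost surely inflated by atypical realizations (of the column norms $\|\bm a_i\|^2$, of the replica cross-overlap $\langle\bm A\bm x^{(1)},\bm A\bm x^{(2)}\rangle$, or of the noise $\bm\epsilon$), so one must exhibit a high-probability event $\mathcal G$ with $\E[Z^2\mathbb I_{\mathcal G}]\le(1+o(1))(\E Z)^2$ and negligible complementary contribution, and then run Paley--Zygmund on the restricted partition function. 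Carrying this out uniformly in $\sigma>0$ and $\eta>0$, and simultaneously for $Z$ and $Z_\eta$, is the technical core; an alternative would be to replace the second moment by a direct interpolation/cavity computation of the quenched free energy, trading the truncation bookkeeping for a Guerra-type bounding scheme.
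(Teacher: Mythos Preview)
Your reduction via the Nishimori identity is correct, and your diagnosis that $\tfrac1n I(\bm x_\star;\bm y\mid\bm A)$ stays bounded away from $0$ is exactly right --- the paper does \emph{not} try to bound that mutual information. But rather than attacking the partition function directly, the paper takes a different route that sidesteps the three-vector computation you flag as the main obstacle.

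The key device is a \emph{side-information trick}: augment the observations with $\lfloor\Delta m\rfloor$ linear Gaussian measurements $z_i=\langle\bm w_i,\bm x_\star\bm x_\star^\UH\rangle+\gauss{0}{1}$ of the matrix $\bm x_\star\bm x_\star^\UH$, with $\bm w_i\sim\GUE{n}$. The paper then bounds not $I(\bm x_\star;\bm y\mid\bm A)$ but $I(\bm y,\bm z;\bm A,\bm W)$ --- the mutual information between the \emph{observations} and the \emph{sensing matrices}. This quantity \emph{can} be $o(m)$, and a short Gaussian max-entropy argument (each $z_i$ has conditional variance $1+\text{MMSE}$) shows that $I(\bm y,\bm z;\bm A,\bm W)=o(m)$ for some $\Delta>0$ forces MMSE $\to 1$. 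The $\chi^2$ bound on this MI then reduces to $\E_{\bm y}[\Nr(\bm y,q)/\Dr^2(\bm y)]$, where $\Nr$ is a \emph{two}-replica spherical integral (the two copies in $p^2(\bm y|\bm A)=\E_{\bm x,\bm x'}p(\bm y|\bm A,\bm x)p(\bm y|\bm A,\bm x')$) and $\Dr$ is a \emph{one}-vector integral (since the marginal $p(\bm y)$ does not depend on $\bm A$ by Haar invariance). The planted vector $\bm x_\star$ has been integrated out and never appears alongside two replicas.

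By contrast, your $\E[Z^2]$ is a third moment of $p(\bm y|\bm A)$ and genuinely requires the joint law of $(\bm A\bm x_\star,\bm A\bm x^{(1)},\bm A\bm x^{(2)})$ under Haar $\bm A$ --- a three-vector constrained integral with a $3\times 3$ overlap matrix as order parameter. This is doable in principle (same large-deviation/local-CLT machinery, now for a Tilted-Wishart-type distribution on $3\times 3$ matrices), but it is strictly more work than the paper's two-vector analysis, and you would still need the analogue of the paper's stochastic Laplace method to handle the fact that $\bm y$ is random. The paper's side-information device buys exactly this reduction from three vectors to two. Both routes end at the same variational problem over the scalar overlap $q$ and the same low-noise analysis isolating $\delta=2$; your monotonicity-in-$\sigma$ observation is used by the paper too (it suffices to verify the condition for \emph{some} small $\sigma$, not uniformly).
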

	We interpret the above result in two ways. First note that according to this theorem, for $\delta < 2$, the Bayes risk is the same as the risk of the estimator $\hat{\bm x} = \bm 0$. Hence it is information theoretically impossible for any estimator to have a better performance than the trivial estimator $\hat{\bm x} = \bm 0$. Second, we can make the above point more explicit as follows: Let $\hat{\bm x}(\bm A, \bm y)$ be any estimator for $\bm x_\star$ and let $r\geq 0$ be an arbitrary constant.  By the optimality of the Bayes estimator, we have, 
	\begin{align*}
	\min_{r \geq 0} \E \left\| \bm x_\star \bm x_\star^\UH - r\frac{\hat{\bm x}(\bm A, \bm y) \hat{\bm x}(\bm A, \bm y)^\UH}{\|\hat{\bm x}(\bm A, \bm y)\|^2} \right\|^2 & \geq \E \left\| \bm x_\star \bm x_\star^\UH - \E \left[ \bm x_\star \bm x_\star^\UH \big| \bm y, \bm A \right] \right\|^2.
	\end{align*}
	Taking $m,n \rightarrow \infty$ and some simple algebraic manipulations give us  the following conclusion. When $\delta < 2$, then for any estimator $\hat{\bm x}(\bm A,\bm y)$ we have,
	\begin{align*}
	 \lim_{\substack{m, n \rightarrow \infty \\ \frac{m}{n} = \delta}} \E \left[ \frac{|\bm x_\star^\UH \hat{\bm x}(\bm A,\bm y)|^2 }{\|\hat{\bm x}(\bm A,\bm y)\|^2} \right] & = 0.
	\end{align*}
	That is, when $\delta < 2$, Theorem \ref{main_result} provides \emph{an impossibility result}: any estimator is asymptotically orthogonal to the signal vector $\bm x_\star$. This result complements our previous results \citep{ma2019spectral,dudeja2019rigorous} which showed that the optimally designed spectral estimator is orthogonal to the signal vector in this regime. Moreover, these papers also provide the \emph{achievability result} and exhibit estimators which achieve a strictly positive correlation with the signal vector when $\delta > 2$ and $\sigma = 0$. Hence, the sharp threshold for achieving a non-trivial correlation with the signal vector (called the weak recovery threshold in the literature) is $\weakthr = 2$ for phase retrieval with subsampled Haar sensing matrix and vanishing measurement noise. This also shows that the uniform prior on $\bm x_\star$ as the least favorable prior in the following sense: The achievability results of these papers actually hold for an \emph{arbitrary} signal vector (not necessarily drawn from a prior distribution). Consequently, when $\delta > 2$, for \emph{any} prior on the signal vector, the Bayes risk for noiseless phase retrieval is non-trivial ($<1$).  Hence the uniform prior maximizes the $\delta$ threshold below which the Bayes risk is trivial and hence is least favorable. 
	\subsection{Related Work}
	There are a large number of results on the Phase Retrieval problem with varying assumptions on the sensing matrix, studying different classes of estimators under different analysis frameworks. In this section, we summarize a few important and representative results from each category. 
	\paragraph{Finite Sample Analyses} A large number of estimators are known to   achieve the rate optimal sample complexity of $O(n)$ or the near optimal sample complexity of $O(n \cdot  \text{poly}(\log(n)))$.  The earliest such estimator is PhaseLift SDP relaxation proposed by \citet{candes2013phaselift} and the recently proposed PhaseMax LP relaxation proposed by \citet{goldstein2018phasemax} and \citet{bahmani2017phase}. More recently approaches based on non-convex optimization have been analyzed. This includes an alternating minimization approach due to \citet{netrapalli2013phase} and a gradient descent based algorithm due to \citet{candes2015wirtinger}. Moreover, the finite sample analyses is flexible enough to extend to CDP sensing matrices. We refer the reader to \citep{candes2015phase} for the analysis of PhaseLift for CDP matrices and to \citet{candes2015wirtinger} and \citet{qu2017convolutional} for the analysis of non-convex optimization approach for CDP matrices and random circulant sensing matrices respectively.
	\paragraph{Sharp Asymptotic Analyses for Gaussian Sensing Matrices}  Finite sample analysis, though flexible, lacks the resolution to compare the performance of various estimators which achieve the optimal sample complexity of $O(n)$ measurements. Consequently, recent years have seen a number of works which provide an analysis in the high dimensional asymptotic framework where $m,n \rightarrow \infty$ and $m/n = \delta$. \citet{yuelu_spectral} analyzed a class of spectral estimators in this asymptotic framework for Gaussian sensing matrices. Their analyses was leveraged by \citet{mondelli2019fundamental} and \citet{yuelu_optimal} to design spectral estimators with optimal performance. Convex relaxation based approaches, such as PhaseLift and PhaseMax have also been analyzed in this framework for Gaussian sensing matrices \citep{dhifallah2018phase,abbasi2019universality}. 
	\paragraph{Sharp Information Theoretic Lower Bounds for Gaussian Sensing Matrices} \citet{mondelli2019fundamental} showed that the weak recovery threshold for Gaussian sensing matrices was $\weakthr = 1$. Our work builds on their proof technique. Extending their proof to subsampled Haar sensing matrices requires several new ideas to handle the underlying dependence structure in the sensing matrix. \citet{barbier2019optimal} have used interpolation methods to obtain expressions for the asymptotic Bayes risk for estimating generalized linear models which includes real valued phase retrieval with Gaussian sensing matrices as a special case. In particular, their results recover the results of \citet{mondelli2019fundamental} as a special case and also shed light on the minimum mean square error achievable above the weak recovery threshold. This work also shows that the expression of the Bayes risk for any sensing matrix with i.i.d. entries with some mild moment assumptions is the same as the Bayes risk for Gaussian sensing matrices. 
	\paragraph{Universality Results} In the high dimensional asymptotic, results proved for Gaussian sensing matrices are often universal in the sense that they hold for any sensing matrix whose rows are independent and identically distributed under some mild moment assumptions. Examples of such results include universality of the Bayes risk \citep{barbier2019optimal}, performance of AMP algorithms \citep{bayati2015universality} and performance of convex relaxations \citep{abbasi2019universality}. We emphasize that neither the subsampled Haar sensing matrix nor the CDP sensing matrix have independent rows and hence these universality results don't apply to these ensembles. In fact, we will see that the weak  recovery threshold is different for subsampled Haar sensing matrices and i.i.d. Gaussian sensing matrices. Empirically, it has been observed that CDP sensing matrices behave like subsampled Haar sensing matrices \citep{ma2019spectral}.  
	\paragraph{Sharp Asymptotic Analyses for Non-i.i.d. Sensing Matrices} Results for non-i.i.d. sensing matrices in the high dimensional asymptotic framework are very limited. \citet{thrampoulidis2015isotropically} provide an analysis of the generalized Lasso estimator for compressed sensing using uniformly random row orthogonal matrices using the Convex Gaussian Minmax Theorem (CGMT) framework. Some achievability results for Phase Retrieval with subsampled Haar sensing matrices include the analysis of a class of message passing algorithms (designed to solve certain eigenvalue problems) due to \citet{ma2019spectral} and a class of spectral estimators due to \citet{dudeja2019rigorous}. We note that the non-rigorous replica method can be used to derive conjectures for the asymptotic Bayes risk for a large class of unitarily invariant sensing matrices which includes sub-sampled Haar sensing matrices as a special case. The application of the replica method to unitarily invariant ensembles was pioneered in a sequence of papers by \citet{takeda2006analysis}, \citet{takeda2007statistical} and \citet{kabashima2008inference}. We refer the reader to \citet{reeves2017additivity} for a recent derivation of these conjectures.  To the best of our knowledge, these conjectures have not been rigorously proved except in a few special cases, none of which cover the sub-sampled Haar sensing matrix. The only rigorous result about sharp information theoretic lower bounds for non-i.i.d. sensing matrices is due to \citet{barbier2018mutual} who provide the expression for the limiting Bayes risk for a certain class of sensing matrices. The class of sensing matrices they consider are formed by a product of independent matrices each consisting of i.i.d. entries. This is significantly different from the sub-sampled Haar sensing model which we consider here. Moreover, the sensing problem they study is the real linear sensing problem and not the phase retrieval problem that we study here. Lastly we note that the non-rigorous replica method has also been used to analyze convex relaxation methods like LASSO \citep{vehkapera2016analysis,wen2016sparse} for unitarily invariant sensing matrices. 
	\paragraph{Proof Techniques} Our proof builds on the techniques of \citet{mondelli2019fundamental}: namely relating the Bayes risk to the Mutual Information and bounding the Mutual Information by the $\chi^2$ divergence. However, unlike in the case of Gaussian sensing matrices, the evaluation of $\chi^2$ divergence for our model is non trivial due to the dependence in the entries of the subsampled Haar sensing matrix. In our model, understanding the asymptotics of the $\chi^2$ divergence reduces to understanding the asymptotics of a pair of high dimensional integrals defined on $\mathbb{S}^{m-1}$ and $\mathbb{S}^{m-1} \times \mathbb{S}^{m-1}$ (see Lemma \ref{introduce_main_integrals_lemma}) which we accomplish using Large Deviation techniques. These integrals are related to low rank Harish-Chandra-Itsker-Zuber (HCIZ) integrals studied by \citet{guionnet2005fourier} and our analysis is inspired by their approach. More specifically, our analysis of these integrals is based on the classical approach of \citet{chaganty1993strong} for obtaining strong large deviation results (i.e. results characterizing the leading exponential order as well as the second order polynomial factors in large deviation quantities of interest) using change of measure and local central limit theorems. 
	
	\subsection{Notation}
	\paragraph{Notations for common sets} We use $\mathbb R, \mathbb C$ to denote real numbers and complex numbers respectively.  $\mathbb R^n$ and $\mathbb C^n$ denote the $n$ dimensional real and complex vector spaces respectively. $\mathbb S^{n-1} \subset \mathbb C^n$ is the set of complex $n$-dimensional vectors with unit norm. $\mathbb N$ denotes the set of natural numbers and $[n]$ denotes the set $\{1,2 \dots ,n\}$. The set of $m \times n$ real matrices is denoted by $\mathbb R^{m \times n}$ and the set of $m \times n$ complex matrices is denoted by $\mathbb C^{m \times n}$. The set of all $m \times m$ unitary matrices is denoted by $\mathbb U(m)$.
	\paragraph{Notations for complex analytic aspects} For a complex number $z \in \mathbb C$, $\overline{z}, \Re(z), \Im(z), |z|$ denote the complex conjugate, real part of, imaginary part of and modulus of $z$ respectively. We use $\i$ to denote $\sqrt{-1}$. 
	\paragraph{Notation for Asymptotic Analysis} We say a sequence $f(n)$ is $o(n)$ if $f(n)/n \rightarrow 0$ as $n \rightarrow \infty$. We use the generic constant $C$ to refer to a positive finite constant that does not depend on $m,n$. This constant may change from line to line and may depend on the noise level $\sigma$ and the sampling ratio $\delta$ unless stated otherwise. If this constant depends on any other parameters we will make this dependence explicit: For example,  $C(\epsilon)$ denotes a positive, finite constant depending on some parameter $\epsilon$, the noise level $\sigma$ and possibly the sampling ratio $\delta$ but independent of $m,n$.
	\paragraph{Notations for linear algebraic aspects} For vectors and matrices $\|\cdot\|$ denotes the $\ell_2$ and the Frobenius norm respectively. For complex matrices $\|\cdot\|_{op}$ denotes the operator norm. For a matrix $\bm A \in \mathbb C^{m\times n}$, $\bm A^\UH$ denotes the conjugate transpose of $\bm A$. $\Tr(\cdot)$ denotes the trace of a square matrix. For vectors $\bm a, \bm b \in \mathbb C^n$, the inner product $\ip{\bm a}{\bm b}$ is defined as $\bm a^\UH \bm b$. For matrices $\bm A, \bm B \in \mathbb C^{m \times n}$ the inner product $\ip{\bm A}{\bm B}$ is defined as $\Tr(\bm A^\UH \bm B)$. For a Hermitian matrix $\bm A$, we denote the largest and smallest eigenvalue of $\bm A$ by $\lambda_{\max}(\bm A)$ and $\lambda_{\min}(\bm A)$. For a $2 \times 2$ Hermitian matrix $\bm A$ we define the $\vec{\cdot}$ operation by:
	\begin{align*}
	\vec{\bm A} & = \begin{bmatrix} A_{11} \\ A_{22} \\ \Re(A_{12}) \\ \Im (A_{12}) \end{bmatrix}.
	\end{align*}
	Finally, we use $\bm e_1, \bm e_2 \dots , \bm e_n$ to denote the standard basis vectors in $\mathbb R^n$. 
	\paragraph{Notations for special distributions} $\gauss{\mu}{\sigma^2}$ denotes the (real) Gaussian distribution with mean $\mu$ and variance $\sigma^2$. $\gpdf[\sigma]$ denotes the probability density function of $\gauss{0}{\sigma^2}$: 
	\begin{align*}
	\gpdf[\sigma](x) & = \frac{1}{\sqrt{2\pi \sigma^2}} e^{-\frac{x^2}{2 \sigma^2}}.
	\end{align*}
	The (real) multivariate Gaussian distribution with mean $\bm \mu$ and variance $\bm \Sigma$ is denoted by $\gauss{\bm \mu}{\bm \Sigma}$. 
	We say a complex random variable $Z$ is standard complex gaussian distributed, denoted by $Z \sim \cgauss{0}{1}$ if $\Re(Z)$ and $\Im(Z)$ are i.i.d. $\gauss{0}{\frac{1}{2}}$. We say a complex $n$-dimensional random vector $\bm Z \sim \cgauss{\bm 0}{\bm I_n}$ if each entry $Z_i \explain{i.i.d.}{\sim} \cgauss{0}{1}$. A random matrix $\bm W$ is a $\GUE{n}$ random matrix if it is a Hermitian $n \times n$ random matrix whose entries are sampled as follows:
	\begin{align*}
	W_{ii}  \sim \gauss{0}{1}\; \forall \; i \in [n], \; W_{ij} \sim \cgauss{0}{1}\; \forall \; j < i, \; W_{ji} = \overline{W}_{ij}  \;  \forall \; j > i.
	\end{align*}
	$\Exp{\lambda}$ denotes the exponential distribution with parameter $\lambda$ which has the pdf:
	\begin{align*}
	f(x) & = \begin{cases} \lambda e^{-\lambda x} &: x \geq 0 \\ 0 &: x < 0 \end{cases}.
	\end{align*}
	$\dGamma{\alpha}{\beta}$ denotes the Gamma distribution with shape parameter $\alpha$ and rate parameter $\beta$ and has the pdf:
	\begin{align*}
	f(x; \alpha,\beta) & = \begin{cases} \frac{\beta^\alpha}{\Gamma(\alpha)} x^{\alpha -1} e^{-\beta x}&: x \geq 0 \\ 0  &: x < 0 \end{cases}.
	\end{align*}
	$\Beta{\alpha}{\beta}$ denotes the Beta distribution with shape parameters $\alpha,\beta \geq 0$ which has the pdf:
	\begin{align*}
	f(x; \alpha,\beta) & = \begin{cases} \frac{\Gamma(\alpha+\beta)}{\Gamma(\alpha)\Gamma(\beta)} \cdot x^{\alpha - 1} (1-x)^{\beta -1} &: x \in [0,1] \\ 0 &: x \not \in [0,1] \end{cases}. 
	\end{align*}
	Let $\bm g_1, \bm g_2 \dots , \bm g_n \explain{i.i.d.}{\sim} \cgauss{\bm 0}{\bm I_p}$. Then the matrix $\bm S = \sum_{i=1}^n \bm g_i \bm g_i^\UH$ has a complex Wishart distribution with parameters $n,p$ denoted by $\Wis{n}{p}$. The complex Wishart distribution is supported on positive definite Hermitian matrices and has the pdf:
	\begin{align*}
	f(\bm S; n,p) & = \frac{\det(\bm S)^{n-p} \cdot  e^{-\Tr(\bm S)}}{\pi^{\frac{p(p-1)}{2}}. \prod_{j=1}^p (n-j)!}.
	\end{align*}
	The distribution $\unif{\mathbb U_m}$ denotes the uniform (Haar) probability measure on $\mathbb U(m)$.
	\paragraph{Notation for other probabilistic aspects}For an event $\mathcal{E}$, $\mathbf{1}_\mathcal{E}$ denotes the indicator function of $\mathcal{E}$. We will use $p(\bm y)$ to denote the density of the measurements $\bm y$ with respect to the Lebesgue measure. Likewise $p(\bm y | \bm A)$ and $p(\bm y|\bm A, \bm x)$ denote the conditional density of the measurements $\bm y$ given the measurement matrix $\bm A$ and the conditional density of the $\bm y$ given the measurement matrix $\bm A$ and the signal vector $\bm x$ respectively. For two random variables $B_1,B_2$ we will use $B_1 \explain{d}{=} B_2$ to denote the claim that $B_1$ and $B_2$ have the same distribution. Finally we denote convergence in probability using $\explain{P}{\rightarrow}$. 
	\paragraph{Notation for Information Theoretic Aspects} For random variables $A_1, A_2, \dots A_k$, we denote the entropy of $(A_1 \dots ,A_k)$ by $\Ent{A_1,A_2 \dots , A_k}$. If $(A_1,A_2 \dots A_k)$ have a joint density $p(a_1,a_2 \dots , a_k)$ with respect to the Lebesgue measure, this is defined as:
	\begin{align*}
	\Ent{A_{1:k}} & = - \int_{\mathbb R^k} p(a_{1:k}) \ln p(a_{1:k}) \diff a_{1:k}.
	\end{align*}
	Let $B_1, B_2 \dots B_l$ be another collection of random variables. We denote the conditional entropy of $(A_1,\dots A_k)$ given $(B_1,  \dots ,B_l)$  by $\CEnt{A_1,A_2 \dots , A_k}{B_1, B_2 \dots, B_l}$. When the conditional distribution of $(A_1, A_2 \dots, A_k)$ given $(B_1, B_2 \dots, B_l)$ has a density $p(a_1,a_2 \dots , a_k | b_1, b_2 \dots ,b_l)$ (with respect to Lebesgue measure) and $(B_1, B_2 \dots , B_l)$ has a marginal density $p(b_1, b_2 \dots, b_l)$ (with respect to Lebesgue measure), then $\CEnt{A_{1:k}}{B_{1:l}}$ is given by:
	\begin{align*}
	\CEnt{A_1, \dots A_k}{B_1 \dots , B_l}& = \int_{\mathbb R^l} p(b_1 , \dots b_l) \int_{\mathbb R^k} p(a_1,  \dots a_k | b_1 \dots , b_l) \ln p(a_1, \dots a_k | b_1,  \dots b_l) \diff a_{1:k} \diff b_{1:l}.
	\end{align*}
	The mutual information between $A_1,A_2, \dots A_k$ and $B_1, B_2 \dots B_l$ is denoted $\MI{A_1, \dots  A_k}{B_1,\dots  B_l}$ and is defined by the following equivalent formulae:
	\begin{align*}
	\MI{A_1, \dots  A_k}{B_1,\dots  B_l} & \Mydef \Ent{A_1, \dots  A_k} - \CEnt{A_1, \dots  A_k}{B_1,  \dots B_l}\\ &= \Ent{B_1, \dots  B_l} - \CEnt{B_1,  \dots  B_l}{A_1,  \dots  A_k}.
	\end{align*}
	\paragraph{The random variable Y} We reserve the random variable $Y$ to denote the random variable with one of the following two special distributions: 
	\begin{enumerate}
		\item $Y$ can be sampled from the empirical distribution of the phase retrieval measurements:
		\begin{align*}
		Y & \sim \frac{1}{m} \sum_{i=1}^m \delta_{y_i}.
		\end{align*}
		For any $f: \mathbb R \mapsto \mathbb R$, we define $\hatE f(Y)$ to be the expectation of $f(Y)$ with respect to the empirical measure of the measurements:
		\begin{eqnarray}
		\label{hatE_notation}
		\hatE f(Y)  \Mydef \frac{1}{m} \sum_{i=1}^m f(y_i).
		\end{eqnarray}
		\item Alternatively the distribution of $Y$ can be given by $Y = |Z|^2 + \sigma \epsilon$ where $Z \sim \cgauss{0}{1}, \; \epsilon \sim \gauss{0}{1}$. 	For any $f: \mathbb R \mapsto \mathbb R$, we define $\E f(Y)$ denotes the expectation of $f(Y)$ with respect to this measure, that is, $\E f(Y) = \E f(|Z|^2 + \sigma \epsilon)$. This special distribution is important to us because we will see that for a large class of test functions $f$, $\hatE f(Y) \rightarrow \E f(Y)$ as $m \rightarrow \infty$.
	\end{enumerate}
\subsection{Outline}
The remainder of this paper is dedicated to proving Theorem \ref{main_result}. The proof consists of different steps which are split into various sections as follows:
\begin{description} 
	\item [] In Section \ref{MI_and_Bayes_Risk}, we relate the Bayes risk to the Mutual Information for the phase retrieval problem with a small amount of side information and show that if the mutual information is $o(m)$, then the asymptotic Bayes risk is trivial. Hence, our focus shifts to showing that when $\delta < 2$, the Mutual information is $o(m)$. We then bound the mutual information by the $\chi^2$ divergence. Understanding the $\chi^2$ divergence in the Phase retrieval model requires us to understand the asymptotics of two high dimensional integrals denoted by $\Dr$ and $\Nr$ on $\mathbb S^{m-1}$ and $\mathbb S^{m-1} \times \mathbb S^{m-1}$ respectively. 
	\item [] In Section \ref{section_integral_asymptotics}, we study the asymptotics of the integrals $\Nr,\Dr$ by change of measure techniques and local central limit theorems.
	\item [] In Section \ref{section_stochastic_laplace}, we use a stochastic version of the Laplace Principle along with the asymptotics of $\Nr,\Dr$ to understand the asymptotics of the $\chi^2$ divergence. This results in a explicit condition on the sampling ratio $\delta$ and the noise level $\sigma$ which guarantees that the mutual information is $o(m)$ and hence the Bayes risk is trivial. 
	\item [] In Section \ref{low_noise_section}, we simplify the condition on $\delta,\sigma$ obtained previously in the low noise limit $\sigma \rightarrow 0$. 
\end{description}
	%\section{Overview of the Proof}
	\section{Mutual Information and Bayes Risk}
	\label{MI_and_Bayes_Risk}
	We first relate the Bayes risk to the mutual information in the phase retrieval problem where one observes a small amount of side information about the signal vector $\bm x_\star$. The amount of side information we observe will be controlled by a parameter $\Delta >0$ which will be a constant independent of $n,m$. The side information we observe will be linear gaussian measurements of the matrix $\bm x_\star \bm x_\star^\UH$. More precisely, for $i=1,2 \dots \lfloor\Delta \cdot  m \rfloor$ we observe a measurement pair $(\bm w_i, z_i)$ drawn from the following model:
	\begin{eqnarray}
	\bm w_i \explain{i.i.d.}{\sim} \GUE{n}, \; z_i \explain{i.i.d.}{\sim} \gauss{ \ip{\bm w_i}{\bm x_\star \bm x_\star^\UH}}{1} \; \forall \; i=1,2, \dots \lfloor\Delta \cdot  m \rfloor.
	\label{side_info_model}
	\end{eqnarray} 
	We collect all the side information measurements $z_i$'s in a vector $\bm z \in \mathbb R^{\lfloor \Delta m \rfloor}$. We denote the collection of the GUE sensing matrices by $\bm W \Mydef \{\bm w_1, \bm w_2 \dots \bm w_{\lfloor\Delta m \rfloor}\}$. 
	The following proposition establishes the connection between the Bayes Risk and $\MI{\bm y, \bm z}{\bm A, \bm W}$. 
	\begin{proposition}
		\label{bayes_risk_to_MI}
		Suppose that there exists a constant $\Delta >0$ (independent of $m,n$) such that the mutual information $\MI{\bm y, \bm z}{\bm A, \bm W} = o(m)$. Then we have,
		\begin{align*}
		\lim_{\substack{m,n \rightarrow \infty \\ m = n \delta}}\E_{\bm x_\star, \bm y, \bm A} \| \bm x_\star \bm x^\UH_\star - \E[ \bm x_\star \bm x_\star^\UH | \bm y, \bm A] \|^2 & = 1.
		\end{align*}
	\end{proposition}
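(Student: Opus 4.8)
\medskip
\noindent\textbf{Proof proposal.} The plan is to show that the hypothesis forces $q := \E \left\| \E[ \bm x_\star \bm x_\star^\UH \mid \bm y, \bm A] \right\|^2 \to 0$. Since $\left\| \bm x_\star \bm x_\star^\UH \right\|^2 = \| \bm x_\star \|^4 = 1$ deterministically, the orthogonality principle for the conditional expectation gives
\[
\E \left\| \bm x_\star \bm x_\star^\UH - \E[ \bm x_\star \bm x_\star^\UH \mid \bm y, \bm A] \right\|^2 \;=\; \E \left\| \bm x_\star \bm x_\star^\UH \right\|^2 - q \;=\; 1 - q ,
\]
which is exactly the quantity in the statement, so $q \to 0$ concludes. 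Write $M := \bm x_\star \bm x_\star^\UH$ and $K := \lfloor \Delta m \rfloor$.

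The first step is a two-way chain-rule decomposition. The crucial structural input is that $(\bm y, \bm z)$ is \emph{marginally} independent of $\bm x_\star$: because $\bm A$ is Haar-distributed, $\bm A \bm x_\star$ is uniform on $\mathbb S^{m-1}$ for every fixed $\bm x_\star$, and each $\ip{\bm w_i}{M} \mid \bm x_\star$ is $\gauss{0}{\|M\|^2} = \gauss{0}{1}$ for every fixed $\bm x_\star$; hence the conditional law of $(\bm y, \bm z)$ given $\bm x_\star$ does not depend on $\bm x_\star$, so $\MI{\bm y, \bm z}{\bm x_\star} = 0$. Applying the chain rule to $\MI{\bm y, \bm z}{(\bm A, \bm W), \bm x_\star}$ two ways and using this, I obtain
\[
\MI{\bm y, \bm z}{\bm A, \bm W} = \MI{\bm y, \bm z}{\bm A, \bm W \mid \bm x_\star} - \MI{\bm y, \bm z}{\bm x_\star \mid \bm A, \bm W} .
\]
Conditionally on $\bm x_\star$ the phase-retrieval channel and the side channel are independent, so the first term equals $\MI{\bm y}{\bm A \mid \bm x_\star} + \MI{\bm z}{\bm W \mid \bm x_\star}$; here $\MI{\bm y}{\bm A \mid \bm x_\star}$ is the mutual information $\mathcal{I}_m$ of the ``full-sphere'' channel $y_i = m|u_i|^2 + \sigma \epsilon_i$, $\bm u \sim \unif{\mathbb S^{m-1}}$ (via $\bm A \to \bm A\bm x_\star \to \bm y$), and $\MI{\bm z}{\bm W \mid \bm x_\star} = \tfrac{K}{2}\ln 2$ exactly, since the $K$ pairs $(z_i, \bm w_i)$ are conditionally i.i.d.\ and each contributes $\CEnt{z_i}{\bm x_\star} - \CEnt{z_i}{\bm w_i, \bm x_\star} = \tfrac12\ln(4\pi e) - \tfrac12\ln(2\pi e)$ (because $z_i \mid \bm x_\star \sim \gauss{0}{2}$ while $z_i \mid \bm w_i, \bm x_\star \sim \gauss{\ip{\bm w_i}{M}}{1}$). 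For the subtracted term, $\MI{\bm y, \bm z}{\bm x_\star \mid \bm A, \bm W} = \MI{\bm y}{\bm x_\star \mid \bm A} + \MI{\bm z}{M \mid \bm y, \bm A, \bm W}$ (using $\bm W \perp (\bm y, \bm A, \bm x_\star)$ and that $\bm z$ depends on $\bm x_\star$ only through $M$). Finally, $\mathcal{I}_m$ equals (marginal differential entropy of $\bm y$) $-\,\tfrac{m}{2}\ln(2\pi e\sigma^2)$, while $\MI{\bm y}{\bm x_\star \mid \bm A}$ equals (differential entropy of $\bm y$ given $\bm A$) $-\,\tfrac{m}{2}\ln(2\pi e\sigma^2)$, and the marginal law of $\bm y$ coincides with its law given $\bm x_\star$, so $\mathcal{I}_m - \MI{\bm y}{\bm x_\star \mid \bm A} = \MI{\bm y}{\bm A} \ge 0$. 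Collecting terms,
\[
\MI{\bm y, \bm z}{\bm A, \bm W} \;=\; \MI{\bm y}{\bm A} + \frac{K}{2}\ln 2 - \MI{\bm z}{M \mid \bm y, \bm A, \bm W} \;\ge\; \frac{K}{2}\ln 2 - \MI{\bm z}{M \mid \bm y, \bm A, \bm W}.
\]

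The remaining step is an elementary incremental (I-MMSE--type) bound on $\MI{\bm z}{M \mid \bm y, \bm A, \bm W}$. Expanding by the chain rule over the $K$ side measurements, the $k$-th increment is $\MI{z_k}{M \mid \mathcal{P}_k, \bm w_k}$, where $\mathcal{P}_k$ collects $(\bm y, \bm A)$ together with the first $k-1$ side-measurement pairs (the future $\bm w_j$'s drop out by independence), and where $\bm w_k$ is independent of $(M, \mathcal{P}_k)$. Since $z_k = \ip{\bm w_k}{M} + \epsilon_k$ with $\epsilon_k$ standard Gaussian, the fact that a Gaussian maximizes differential entropy at a fixed variance gives $\MI{z_k}{M \mid \mathcal{P}_k, \bm w_k} \le \tfrac12\,\E[\ln(1 + V_k)]$ with $V_k$ the conditional variance of $\ip{\bm w_k}{M}$ given $(\mathcal{P}_k, \bm w_k)$; averaging over $\bm w_k$ and $\mathcal{P}_k$, using $\E_{\bm w}\ip{\bm w}{D}^2 = \|D\|^2$ for a $\GUE{n}$ matrix, and using concavity of $\ln(1+\cdot)$ yields $\MI{z_k}{M \mid \mathcal{P}_k, \bm w_k} \le \tfrac12\ln\!\big(1 + \E\| M - \E[M \mid \mathcal{P}_k] \|^2\big)$. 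The MMSE $\E\| M - \E[M \mid \mathcal{P}_k] \|^2$ is non-increasing in $k$ and equals $1 - q$ at $k = 1$, so each increment is at most $\tfrac12\ln(2 - q)$ and
\[
\MI{\bm y, \bm z}{\bm A, \bm W} \;\ge\; \frac{K}{2}\ln 2 - \frac{K}{2}\ln(2 - q) \;=\; \frac{\lfloor \Delta m \rfloor}{2}\,\ln\frac{2}{2 - q}.
\]
Since $q \in [0,1]$ and $\lfloor \Delta m \rfloor \sim \Delta m$, the hypothesis $\MI{\bm y, \bm z}{\bm A, \bm W} = o(m)$ forces $\ln\frac{2}{2-q} \to 0$, hence $q \to 0$, which is what we wanted. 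The point that needs the most care — rather than a genuine difficulty — is the bookkeeping in the two-way chain rule: verifying $\MI{\bm y, \bm z}{\bm x_\star} = 0$ (the single place where rotational invariance of the subsampled Haar ensemble is used) and the cancellation $\mathcal{I}_m - \MI{\bm y}{\bm x_\star \mid \bm A} = \MI{\bm y}{\bm A} \ge 0$, which is precisely what allows one to discard the (large, $\Theta(m)$) phase-retrieval contribution and reduce the whole estimate to the clean per-measurement bound for the Gaussian side channel.
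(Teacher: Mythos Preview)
Your proof is correct. The essential ingredients are the same as the paper's: the marginal independence of $(\bm y,\bm z)$ from $\bm x_\star$ (Lemma~\ref{side_info_distribution_lemma} in the paper), the Gaussian maximum-entropy bound on each $z_i$, the $\GUE{n}$ isometry $\E_{\bm w}\ip{\bm w}{D}^2=\|D\|^2$, and Jensen's inequality. The packaging, however, differs. The paper sandwiches $\CEnt{\bm z}{\bm y,\bm A,\bm W}$: a lower bound $\Delta m\cdot h(2)-o(m)$ is obtained by writing $\CEnt{\bm z}{\bm y,\bm A,\bm W}=\CEnt{\bm y,\bm z}{\bm A,\bm W}-\CEnt{\bm y}{\bm A}$, replacing conditional entropies by unconditional ones at cost $o(m)$ (this is where the hypothesis is spent), and using $\bm y\perp\bm z$; an upper bound $\Delta m\cdot h(1+\text{MMSE})$ is obtained by subadditivity and the max-entropy/Jensen step. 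Comparing yields $h(1+\text{MMSE})\ge h(2)-o(1)$. You instead derive the exact identity
\[
\MI{\bm y,\bm z}{\bm A,\bm W}=\MI{\bm y}{\bm A}+\tfrac{K}{2}\ln 2-\MI{\bm z}{M\mid \bm y,\bm A,\bm W}
\]
via a two-way chain rule through $\bm x_\star$, and then upper-bound $\MI{\bm z}{M\mid \bm y,\bm A,\bm W}$ by an incremental (I-MMSE style) argument that gives $\tfrac{K}{2}\ln(2-q)$. The two final inequalities are equivalent. Your route makes the cancellation of the large phase-retrieval term $\MI{\bm y}{\bm A\mid\bm x_\star}$ completely explicit and yields a clean non-asymptotic lower bound $\MI{\bm y,\bm z}{\bm A,\bm W}\ge \tfrac{\lfloor\Delta m\rfloor}{2}\ln\frac{2}{2-q}$; the paper's entropy sandwich is a little shorter because it bypasses the two-way chain-rule bookkeeping and uses subadditivity of entropy in one stroke instead of your per-increment monotonicity of the MMSE.
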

	In light of Proposition \ref{bayes_risk_to_MI}, in order to show that the Bayes risk is trivial, it is sufficient to show that an upper bound on the mutual information is $o(m)$. We will use the second moment upper bound (or the $\chi^2$-divergence uppper bound) on mutual information. This upper bound was utilized by \citet{mondelli2019fundamental} for determining the weak recovery threshold for Gaussian sensing matrices. In our setup, the result of these authors can be stated as:
	\begin{align*}
	\MI{\bm y, \bm z}{\bm A, \bm W} & \leq \E_{\bm y, \bm z} \left[  \frac{ \E_{\bm A, \bm W}p^2(\bm y, \bm z | \bm A, \bm W)}{p^2(\bm y, \bm z)} \right] - 1.
	\end{align*}
	It is also well known that the second moment upper bound is sensitive to bad but rare events that can cause the upper bound to blow up. In order to exclude these bad events we will use a conditional version of the above bound which is stated below. A similar result was used by \citet{reeves2019all} in the context of a linear regression problem. The proof of this result is given in Appendix \ref{second_moment_conditional_proof}.
	\begin{lemma}
		\label{second_moment_conditional} Let $\mathcal{E}_m$ be any sequence of  events depending only $\bm y$.
		We have, 
		\begin{align*}
		\MI{\bm y, \bm z}{\bm A, \bm W} & \leq \left(  \int_{\mathcal{E}_m} \frac{\E_{\bm A, \bm W}   p^2(\bm y, \bm z|\bm A, \bm W)}{ p(\bm y, \bm z)} \diff \bm y \diff \bm z -1 \right) + C \cdot m \cdot \sqrt{\P(\mathcal{E}_m^c)}.
		\end{align*}
		In the above display, $C \geq 0$ denotes a finite constant depending only on $\delta, \Delta, \sigma^2$.
	\end{lemma}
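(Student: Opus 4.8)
The plan is to start from the unconditional second moment bound $\MI{\bm y, \bm z}{\bm A, \bm W} \le \int \E_{\bm A, \bm W} p^2(\bm y, \bm z | \bm A, \bm W) / p(\bm y, \bm z) \, \diff\bm y \diff \bm z - 1$ and to localize the integral defining the mutual information to the good event $\mathcal{E}_m$, paying a correction of the advertised size. Using the standard identity $\MI{\bm y, \bm z}{\bm A, \bm W} = \E \left[ \ln \frac{p(\bm y, \bm z | \bm A, \bm W)}{p(\bm y, \bm z)} \right]$, the expectation being under the joint law of $(\bm y, \bm z, \bm A, \bm W)$, split this as $T_1 + T_2$, where $T_1$ carries the indicator $\mathbf{1}_{\mathcal{E}_m}$ and $T_2$ the indicator $\mathbf{1}_{\mathcal{E}_m^c}$ (both depend only on $\bm y$). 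On $\mathcal{E}_m$, applying $\ln t \le t - 1$ with $t = p(\bm y, \bm z | \bm A, \bm W)/p(\bm y, \bm z)$ and then integrating out first $(\bm y, \bm z)$ given $(\bm A, \bm W)$ and then $(\bm A, \bm W)$ yields, exactly, $T_1 \le \int_{\mathcal{E}_m} \E_{\bm A, \bm W} p^2(\bm y, \bm z | \bm A, \bm W)/p(\bm y, \bm z) \, \diff \bm y \diff \bm z - \P(\mathcal{E}_m)$, and $-\P(\mathcal{E}_m) = -1 + \P(\mathcal{E}_m^c)$. This already produces the leading term of the claimed bound, plus a term $\P(\mathcal{E}_m^c) \le m \sqrt{\P(\mathcal{E}_m^c)}$.

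It remains to bound $T_2 \le C m \sqrt{\P(\mathcal{E}_m^c)}$. Bounding the logarithm by its positive part, $\mathbf{1}_{\mathcal{E}_m^c} \ln \frac{p(\bm y, \bm z | \bm A, \bm W)}{p(\bm y, \bm z)} \le \mathbf{1}_{\mathcal{E}_m^c} \left( \ln p(\bm y, \bm z | \bm A, \bm W) \right)_+ + \mathbf{1}_{\mathcal{E}_m^c} | \ln p(\bm y, \bm z) |$. In the representation $p(\bm y, \bm z | \bm A, \bm W) = \E_{\bm x} \left[ \prod_{i} \gpdf[\sigma](y_i - m|(\bm A \bm x)_i|^2) \prod_{j} \gpdf[1](z_j - \ip{\bm w_j}{\bm x \bm x^\UH}) \right]$ (with $\bm x$ uniform on $\mathbb{S}^{n-1}$), each Gaussian factor is at most its peak value, so $\ln p(\bm y, \bm z | \bm A, \bm W) \le C m$ deterministically, and hence also $\ln p(\bm y, \bm z) = \ln \E_{\bm A, \bm W} p(\bm y, \bm z | \bm A, \bm W) \le C m$. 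Since the $(\bm y, \bm z)$-marginal of the joint law has density $p(\bm y, \bm z)$, Cauchy--Schwarz then gives $T_2 \le C m \, \P(\mathcal{E}_m^c) + \sqrt{\P(\mathcal{E}_m^c)} \cdot \sqrt{\E \left[ ( \ln p(\bm y, \bm z) )^2 \right]}$.

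Everything thus reduces to proving $\E \left[ ( \ln p(\bm y, \bm z) )^2 \right] \le C m^2$. For the lower bound on $\ln p(\bm y, \bm z)$, apply Jensen's inequality twice — once to pull $\ln$ inside $\E_{\bm A, \bm W}$ and once to pull it inside $\E_{\bm x}$ — to get $- \ln p(\bm y, \bm z) \le C m + \frac{1}{2\sigma^2} \E_{\bm A} \E_{\bm x} \sum_i ( y_i - m|(\bm A \bm x)_i|^2 )^2 + \frac{1}{2} \E_{\bm W} \E_{\bm x} \sum_j ( z_j - \ip{\bm w_j}{\bm x \bm x^\UH} )^2$. Expanding the squares leaves $\|\bm y\|^2$, $\|\bm z\|^2$ and the two self-energy averages $\E_{\bm A} \E_{\bm x} \sum_i m^2 |(\bm A \bm x)_i|^4$ and $\E_{\bm W} \E_{\bm x} \sum_j \ip{\bm w_j}{\bm x \bm x^\UH}^2$, both of which are $O(m)$. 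Here column orthonormality $\bm A^\UH \bm A = \bm I_n$ enters crucially: it forces the rows $\bm a_i$ of $\bm A$ to satisfy $\|\bm a_i\|^2 \le 1$ and $\sum_i \|\bm a_i\|^2 = n$, whence $\sum_i \E_{\bm x} |(\bm A \bm x)_i|^4 \le C n^{-2} \sum_i \|\bm a_i\|^4 \le C n^{-1}$ and $m^2 \cdot C n^{-1} = O(m)$ because $m/n = \delta$ (without orthogonality this term would only be $O(m^2)$, which would be fatal); and a direct moment computation for the GUE term gives $\E_{\bm x} \E_{\bm w \sim \GUE{n}} \ip{\bm w}{\bm x \bm x^\UH}^2 = \E_{\bm x} ( \sum_k |x_k|^4 + 2 \sum_{k<l} |x_k|^2 |x_l|^2 ) = 1$. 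Therefore $-\ln p(\bm y, \bm z) \le C ( m + \|\bm y\|^2 + \|\bm z\|^2 )$, and with $\ln p(\bm y, \bm z) \le C m$ this gives $( \ln p(\bm y, \bm z) )^2 \le C ( m^2 + \|\bm y\|^4 + \|\bm z\|^4 )$. Since $\E y_i^4$ and $\E z_j^4$ are bounded uniformly in $i,j,m$ — for $\bm y$ because $\bm A \bm x_\star$ is uniform on $\mathbb{S}^{m-1}$ so $m|(\bm A \bm x_\star)_i|^2$ has uniformly bounded moments, and for $\bm z$ because conditionally on $\bm x_\star$ the quantity $\ip{\bm w_j}{\bm x_\star \bm x_\star^\UH}$ is a mean-zero Gaussian of variance $1$ — a second-moment computation gives $\E\|\bm y\|^4, \E \|\bm z\|^4 = O(m^2)$ and hence $\E \left[ ( \ln p(\bm y, \bm z) )^2 \right] = O(m^2)$.

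Collecting the estimates, $\MI{\bm y, \bm z}{\bm A, \bm W} = T_1 + T_2 \le \left( \int_{\mathcal{E}_m} \E_{\bm A, \bm W} p^2(\bm y, \bm z | \bm A, \bm W)/p(\bm y, \bm z) \, \diff \bm y \diff \bm z - 1 \right) + \P(\mathcal{E}_m^c) + C m \, \P(\mathcal{E}_m^c) + C m \sqrt{\P(\mathcal{E}_m^c)}$, and since $\P(\mathcal{E}_m^c) \le \sqrt{\P(\mathcal{E}_m^c)} \le m \sqrt{\P(\mathcal{E}_m^c)}$ all error terms fold into a single $C m \sqrt{\P(\mathcal{E}_m^c)}$, which is the claim. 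The only genuinely non-trivial part is the third paragraph; the main obstacle there is identifying precisely where — and how strongly — the column orthonormality of $\bm A$ must be invoked so that the quartic self-interaction term stays at order $m$ rather than $m^2$, together with the uniform fourth-moment control on $\bm y$ and $\bm z$. Everything else is bookkeeping with $\ln t \le t - 1$, Jensen, and Cauchy--Schwarz.
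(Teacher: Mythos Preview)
Your proof is correct and follows essentially the same route as the paper's: both use $\ln t \le t-1$ on $\mathcal{E}_m$ to produce the second-moment term, the deterministic peak-value bound $\ln p(\bm y,\bm z\mid \bm A,\bm W)\le Cm$, Jensen's inequality to lower-bound $\ln p(\bm y,\bm z)$, Cauchy--Schwarz to isolate $\mathbf 1_{\mathcal{E}_m^c}$, and the moment estimates $\E\|\bm y\|^4,\E\|\bm z\|^4=O(m^2)$ together with $\E_{\bm A,\bm x}\sum_i m^2|(\bm A\bm x)_i|^4=O(m)$. The only difference is organizational: you split the mutual information directly as $\E[\mathbf 1_{\mathcal{E}_m}\cdot]+\E[\mathbf 1_{\mathcal{E}_m^c}\cdot]$ on the log-ratio, whereas the paper first writes $\MI{\bm y,\bm z}{\bm A,\bm W}=\Ent{\bm y,\bm z}-\CEnt{\bm y,\bm z}{\bm A,\bm W}$ and then restricts each integral separately, producing three terms $\mathsf{I}+\mathsf{II}+\mathsf{III}$; your $T_1$ is their $\mathsf{I}$ and your $T_2$ merges their $\mathsf{II}$ and $\mathsf{III}$.
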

	The following lemma simplifies the upper bound on $\MI{\bm y, \bm z}{\bm A, \bm W}$. 	For any $\bm y \in \mathbb R^m$ and any positive semidefinite $2 \times 2$ Hermitian matrix $\bm Q$, introduce the functions: 
	\begin{eqnarray}
	\Nr(\bm y, \bm Q) & \Mydef & \E \left[ \prod_{i=1}^m \gpdf(y_i - | G_{1i}|^2 ) \gpdf(y_i -  |G_{2i}|^2 ) \bigg| \bm G ^\UH \bm G = {m \bm Q} \right], \label{Nr_integral_def_eq}\\
	\Dr(\bm y) &\Mydef& \E \left[ \prod_{i=1}^m \gpdf(y_i - | G_{1i}|^2 ) \bigg| \|\bm G_1\|^2 =m \right], \label{Dr_integral_def_eq}
	\end{eqnarray}
	where $\bm G_1, \bm G_2  \explain{i.i.d.}{\sim} \cgauss{\bm 0}{\bm I_m}$ and the matrix $\bm G = [\bm G_1 \; \bm G_2]$. We emphasize that in the definitions of $\Nr(\bm y,\bm Q)$ and $\Dr(\bm y)$, the measurements $\bm y$ are fixed, and the expectation is only with respect to the Gaussian matrix $\bm G$. 
	\begin{lemma}
		\label{introduce_main_integrals_lemma}
		We have, 
		\begin{align*}
		\int_{\mathcal{E}_m} \frac{\E_{\bm A, \bm W}   p^2(\bm y, \bm z|\bm A, \bm W)}{ p(\bm y, \bm z)} \diff \bm y \diff \bm z  & = \frac{2}{n-1} \E_{\bm y} \left[ \frac{ \int_0^1 \Nr \left(\bm y, \begin{bmatrix} 1 & q \\ q & 1 \end{bmatrix}\right) \cdot \frac{q \cdot (1-q^2)^{n-2}}{(1-q^2/2)^{\lfloor\Delta m \rfloor}} \diff q}{\Dr^2(\bm y)} \cdot  \mathbf{1}_{\mathcal{E}_m}\right] \\
		& \leq \frac{2}{n-1} \E_{\bm y} \left[ \frac{ \int_0^1 \Nr \left(\bm y, \begin{bmatrix} 1 & q \\ q & 1 \end{bmatrix}\right) \cdot \frac{q \cdot (1-q^2)^{n-2}}{(1-q^2/2)^{\Delta m}} \diff q}{\Dr^2(\bm y)} \cdot  \mathbf{1}_{\mathcal{E}_m}\right]
		\end{align*}
	\end{lemma}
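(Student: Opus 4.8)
The plan is to evaluate the second-moment integral on the left by the standard ``two independent replicas'' expansion of the squared likelihood, reducing every quantity to the overlap between the two replica signals and disposing of the side information by a short Gaussian computation; throughout write $\bm Q_q$ for $\begin{bmatrix}1 & q \\ q & 1\end{bmatrix}$. First I would write $p(\bm y,\bm z\mid\bm A,\bm W)=\E_{\bm x}\,p(\bm y,\bm z\mid\bm A,\bm W,\bm x)$, the expectation being over the uniform prior on $\mathbb{S}^{n-1}$, so that $\E_{\bm A,\bm W}\,p^2(\bm y,\bm z\mid\bm A,\bm W)=\E_{\bm x,\bm x'}\E_{\bm A,\bm W}[p(\bm y,\bm z\mid\bm A,\bm W,\bm x)\,p(\bm y,\bm z\mid\bm A,\bm W,\bm x')]$ with $\bm x,\bm x'$ independent, each uniform on $\mathbb{S}^{n-1}$. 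Since $\bm y$ depends only on $(\bm A,\bm x)$, $\bm z$ only on $(\bm W,\bm x)$, and $\bm A$ is independent of $\bm W$, this inner expectation factorizes as a ``measurement part'' $\E_{\bm A}[p(\bm y\mid\bm A,\bm x)\,p(\bm y\mid\bm A,\bm x')]$ times a ``side-information part'' $\E_{\bm W}[p(\bm z\mid\bm W,\bm x)\,p(\bm z\mid\bm W,\bm x')]$; and by rotational invariance of the Haar measure and of the GUE, $p(\bm y\mid\bm x)$ and $p(\bm z\mid\bm x)$ do not depend on $\bm x$, so $p(\bm y,\bm z)=p(\bm y)\,p(\bm z)$ with $p(\bm y)=\Dr(\bm y)$ (because $\bm A\bm x$ is a uniformly random unit vector of $\mathbb{C}^m$, distributed as $\bm G_1/\sqrt m$ conditioned on $\|\bm G_1\|^2=m$).

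Next I would reduce the measurement part to a function of $q\Mydef|\ip{\bm x}{\bm x'}|$ alone. By left and right invariance of the Haar measure, the joint law of $(\bm A\bm x,\bm A\bm x')$ depends on $(\bm x,\bm x')$ only through $q$, and $(\bm A\bm x,\bm A\bm x')$ is $\sqrt m$ times a uniformly random pair of unit vectors of $\mathbb{C}^m$ with inner product of modulus $q$; for the squared-modulus coordinates $(|(\bm A\bm x)_i|^2,|(\bm A\bm x')_i|^2)_{i\le m}$ the global phase is immaterial, and these have exactly the law of $(|G_{1i}|^2,|G_{2i}|^2)_{i\le m}$ under the conditioning $\bm G^\UH\bm G=m\bm Q_q$, so the measurement part is $\Nr(\bm y,\bm Q_q)$. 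For the side-information part, for each $i$ the pair $(\ip{\bm w_i}{\bm x\bm x^\UH},\ip{\bm w_i}{\bm x'\bm x'^\UH})$ is jointly centered real Gaussian with unit variances and covariance $|\ip{\bm x}{\bm x'}|^2=q^2$ (a routine GUE second-moment computation), and each $z_i$ is marginally $\gauss{0}{2}$, so $p(\bm z)=\prod_i\gpdf[\sqrt2](z_i)$; integrating out first the Gaussian means and then $z_i$ gives $\int\E[\gpdf[1](z-\mu_i)\gpdf[1](z-\mu_i')]/\gpdf[\sqrt2](z)\,\diff z=(1-q^2/2)^{-1}$, so the side-information part equals $(1-q^2/2)^{-\lfloor\Delta m\rfloor}$. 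Finally, for two independent uniform vectors on the unit sphere of $\mathbb{C}^n$ one has $q^2\sim\Beta{1}{n-1}$, i.e.\ $q$ has density proportional to $q(1-q^2)^{n-2}$ on $[0,1]$.

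Assembling these, and using that $\mathcal{E}_m$, $\Dr$ and $\Nr$ depend only on $\bm y$ while $\gpdf[\sqrt2]$ depends only on $\bm z$, the $\bm y$- and $\bm z$-integrals separate; rewriting $\int_{\mathcal{E}_m}\Nr(\bm y,\bm Q_q)/\Dr(\bm y)\,\diff\bm y$ as $\E_{\bm y}[\Nr(\bm y,\bm Q_q)\,\mathbf{1}_{\mathcal{E}_m}/\Dr^2(\bm y)]$ (via $p(\bm y)=\Dr(\bm y)$) and applying Tonelli to pull $\E_{\bm y}$ out of the $q$-integral yields the asserted equality once the normalizing constant of the overlap density is collected. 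The inequality is then immediate, since $1-q^2/2\le1$ on $[0,1]$ and $\lfloor\Delta m\rfloor\le\Delta m$. The step I expect to be the main obstacle is making the reduction of the measurement part to $\Nr$ rigorous: one must verify that conditioning $\bm G$ on $\bm G^\UH\bm G=m\bm Q_q$ reproduces the conditional law of $(\bm A\bm x,\bm A\bm x')$ entering the integrand --- i.e.\ that the conditional law of $\bm G$ on that Gram-matrix level set is the uniform one with no spurious Jacobian, which holds because the Gaussian density and the Jacobian of $\bm G\mapsto\bm G^\UH\bm G$ are both constant on the level set --- and that the phase ambiguity in $\ip{\bm x}{\bm x'}$ relative to the real off-diagonal entry of $\bm Q_q$ is harmless because the integrand depends only on $|G_{1i}|^2$ and $|G_{2i}|^2$.
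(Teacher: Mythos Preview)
Your proposal is correct and follows essentially the same route as the paper: two-replica expansion, factorization into a measurement part and a side-information part, reduction of the measurement part to $\Nr(\bm y,\bm Q_q)$ via the Gram matrix, a short Gaussian calculation for the side information yielding $(1-q^2/2)^{-\lfloor\Delta m\rfloor}$, and the $\Beta{1}{n-1}$ law for $q^2$. The one place where the paper is more explicit is precisely the step you flag as the obstacle: rather than arguing via constancy of the Gaussian density and the Jacobian on the level set, the paper writes the chain $\sqrt m\,[\bm A\bm x\ \bm A\bm x']\overset{d}{=}\sqrt m\,\bm U\bm C=\bm U\bm C\bm Q^{-1/2}(m\bm Q)^{1/2}\overset{d}{=}\bm U(m\bm Q)^{1/2}\overset{d}{=}\bm G(\bm G^\UH\bm G)^{-1/2}(m\bm Q)^{1/2}\overset{d}{=}\bm G\mid\bm G^\UH\bm G=m\bm Q$, using that $\bm C\bm Q^{-1/2}$ is unitary and that $\bm G(\bm G^\UH\bm G)^{-1/2}$ is independent of $\bm G^\UH\bm G$; this makes the conditioning rigorous without any co-area or Jacobian argument.
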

	\begin{proof}
		We have, 
		\begin{align*}
		&\E_{\bm A, \bm W} p^2(\bm y, \bm z|\bm A, \bm W) = \E_{\bm A, \bm W, \bm x, \bm x^\prime} p(\bm y, \bm z| \bm A, \bm W, \bm x) p(\bm y, \bm z| \bm A , \bm W, \bm x^\prime) \\
		& = \E_{\bm A, \bm W, \bm x, \bm x^\prime} \left[ \prod_{i=1}^m \gpdf(y_i - m |\ip{\bm a_i}{ \bm x}|^2 ) \gpdf(y_i - m |\ip{\bm a_i}{ \bm x^\prime}|^2 ) \prod_{i=1}^{\lfloor\Delta m \rfloor} \gpdf[1](z_i - \ip{\bm w_i}{\bm x \bm x^\UH}) \gpdf[1](z_i - \ip{\bm w_i}{\bm x^\prime \bm x^{\prime\UH}}) \right]
		\end{align*}
		Define the scalar random variable: 
		\begin{align*}
		q = \bm x^\UH\bm x^\prime,
		\end{align*}
		and the associated random matrices: 
		\begin{align*}
		\bm Q = \begin{bmatrix}1 & q \\ \bar{q} & 1 \end{bmatrix}, \; \bm C = \begin{bmatrix} 1 & q \\ 0 & \sqrt{1-|q|^2} \end{bmatrix}
		\end{align*}
		Note that we have $\bm C^\UH \bm C = \bm Q$.
		It is easy to see that,  conditioned on $\bm x, \bm x^\prime$:
		\begin{align*}
		\begin{bmatrix} \ip{\bm w_i}{\bm x \bm x^{\UH}} \\ \ip{\bm w_i}{\bm x^\prime \bm x^{\prime\UH}}  \end{bmatrix} & \explain{i.i.d.}{\sim} \gauss{\bm 0}{\begin{bmatrix} 1 & |q|^2 \\ |q|^2 & 1 \end{bmatrix}}, \\
		\bm A \bm x, \bm A \bm x^\prime & \explain{d}{=}  \bm U_1, q \bm U_1 + \sqrt{ 1-|q|^2} \bm U_2.
		\end{align*}
		In the above display, $\bm U = [\bm U_1 \;  \bm U_2]$ is a uniformly random $m \times 2$ partial unitary matrix. 
		Let $\bm G$ be a $m \times 2$ matrix consisting of $\cgauss{0}{1}$ entries. Then we have,  
		\begin{align*}
		\sqrt{m} \cdot \begin{bmatrix}\bm A \bm x &  \bm A \bm x^\prime \end{bmatrix} & \explain{d}{=} \sqrt{m} \cdot \bm U \bm C \\
		& =  \bm U \bm C {\bm Q}^{-1/2} {(m \cdot \bm Q)}^{1/2} \\
		& \explain{d,(1)}{=} \bm U {(m\bm Q)}^{1/2} \\
		& \explain{d,(2)}{=} \bm G (\bm G^\UH \bm G)^{-1/2}  {(m \cdot \bm Q)}^{1/2} \\
		& \explain{d,(3)}{=}  \bm G (\bm G^\UH \bm G)^{-1/2}  {(m \cdot \bm Q)}^{1/2} | \bm G^\UH \bm G = m{\bm Q} \\
		& = \bm G | \bm G^\UH \bm G = m{\bm Q}.
		\end{align*} 
		In the step marked (1), we used the fact that $\bm C \bm Q^{-1/2}$ is unitary consequently $\bm U \bm C \bm Q^{-1/2} \explain{d}{=} \bm U$. In step (2) we used the well known fact that a uniformly random partial unitary matrix can be realized as $\bm U \explain{d}{=} \bm G (\bm G ^\UH \bm G)^{-1/2}$. In the step marked (3) we used the fact that $\bm G(\bm G^\UH \bm G)^{-1/2}$ is independent of $\bm G^\UH \bm G$, and hence conditioning on the event $\bm G^\UH \bm G$ does not change the distribution of $ \bm G (\bm G^\UH \bm G)^{-1/2}$. Hence we have shown that, conditioned on $\bm x, \bm x^\prime$,  the matrix $\sqrt{m} \cdot [ \bm A \bm x \; \bm A \bm x^\prime]$ has the same distribution as a Gaussian matrix $\bm G$ conditioned on the event $\bm G^\UH \bm G = m \bm Q$:
		\begin{align*}
			\sqrt{m} \cdot \begin{bmatrix}\bm A \bm x &  \bm A \bm x^\prime \end{bmatrix} &\explain{d}{=} \bm G | \bm G^\UH \bm G = m{\bm Q}. 
		\end{align*}
		Hence we have, 
		\begin{align*}
		&\E_{\bm A, \bm W} p^2(\bm y, \bm z|\bm A, \bm W) = \E_{\bm A, \bm W, \bm x, \bm x^\prime} p(\bm y, \bm z| \bm A, \bm W, \bm x) p(\bm y, \bm z| \bm A , \bm W, \bm x^\prime) \\
		&= \E_{q} \left[ \E \left[ \prod_{i=1}^m \gpdf(y_i - | G_{1i}|^2 ) \gpdf(y_i -  |G_{2i}|^2 ) \bigg| \bm G ^\UH \bm G = m {\bm Q} \right] \prod_{i=1}^{\lfloor\Delta m \rfloor} \E_{Z,Z^\prime} \gpdf[1](z_i - Z) \gpdf[1](z_i - |q|^2 Z - \sqrt{1-|q|^4} Z^\prime) \right],
		\end{align*}
		where $Z,Z^\prime \explain{i.i.d.}{\sim} \gauss{0}{1}$. 
		We observe that the conditional expectation:
		\begin{align*}
		\E \left[ \prod_{i=1}^m \gpdf(y_i - | G_{1i}|^2 ) \gpdf(y_i - |G_{2i}|^2 ) \bigg| \bm G ^\UH \bm G = {m\bm Q} \right],
		\end{align*}
		depends on $q$ only via $|q|$. Consequently, we redefine the matrix $\bm Q$ as: 
		\begin{align*}
		\bm Q & = \begin{bmatrix} 1 & |q| \\ |q| & 1 \end{bmatrix}.
		\end{align*}
		The following integral has been evaluated in Lemma \ref{bivariate_gauss_integral} in Appendix \ref{misc_appendix}.
		\begin{align*}
		\E_{Z,Z^\prime} \gpdf[1](z-Z) \gpdf[1](z- |q|^2 Z - \sqrt{1-|q|^4} Z^\prime) & = \frac{1}{4 \pi\sqrt{1-|q|^4/4}} \exp \left( - \frac{z^2}{2(1+|q|^2/2)} \right).
		\end{align*}
		Hence,
		\begin{align*}
		    &\E_{\bm A, \bm W} p^2(\bm y, \bm z|\bm A, \bm W)   \\&\hspace{2cm}= \E_q \left[\E \left[ \prod_{i=1}^m \gpdf(y_i - | G_{1i}|^2 ) \gpdf(y_i -  |G_{2i}|^2 ) \bigg| \bm G ^\UH \bm G = m {\bm Q} \right] \cdot \frac{\exp \left( - \frac{1}{2(1+|q|^2/2)} \cdot \sum_{i=1}^{\lfloor\Delta m \rfloor} z_i^2 \right)}{(4 \pi\sqrt{1-|q|^4/4})^{\lfloor\Delta m\rfloor}} \right].
		\end{align*}
		Next we compute $p(\bm y, \bm z)$. Since $\bm y $ and $\bm z$ are independent, $p(\bm y, \bm z) = p(\bm y) p(\bm z)$. $p(\bm y)$ can be computed by following similar steps as before:
		\begin{align*}
		    p(\bm y) & = \E \left[ \prod_{i=1}^m \gpdf(y_i -  | G_{1i}|^2 ) \bigg| \|\bm G_1\|^2 =m \right].
		\end{align*}
		It is also easy to check that $z_i \explain{i.i.d.}{\sim} \gauss{0}{2}$. Hence:
		\begin{align*}
		p(\bm y, \bm z) & =   \E \left[ \prod_{i=1}^m \gpdf(y_i -  | G_{1i}|^2 ) \bigg| \|\bm G_1\|^2 =m \right] \cdot \prod_{i=1}^{\lfloor\Delta m \rfloor} \gpdf[\sqrt{2}](z_i).
		\end{align*}
		Consequently, introducing the functions:
			\begin{align*}
		\Nr(\bm y, \bm R) & \Mydef \E \left[ \prod_{i=1}^m \gpdf(y_i - | G_{1i}|^2 ) \gpdf(y_i -  |G_{2i}|^2 ) \bigg| \bm G ^\UH \bm G = {m \bm R} \right], \\
		\Dr(\bm y) &\Mydef \E \left[ \prod_{i=1}^m \gpdf(y_i -  | G_{1i}|^2 ) \bigg| \|\bm G_1\|^2 =m  \right].
		\end{align*}
	we obtain,	
	\begin{align*}
		\E_{\bm z} \frac{\E_{\bm A,\bm W} p^2(\bm y, \bm z |\bm A, \bm W)}{p^2(\bm y, \bm z)}  &= \E_{q} \left[ \frac{\Nr \left(\bm y, \begin{bmatrix} 1 & |q| \\ |q| & 1 \end{bmatrix}\right)}{\Dr^2(\bm y)} \cdot \left( \frac{\E_{Z \sim \gauss{0}{2}} \exp\left( \frac{\frac{|q|^2}{2}}{1+\frac{|q|^2}{2}} \cdot \frac{Z^2}{2}\right)}{\sqrt{1-\frac{|q|^4}{4}}} \right)^{\lfloor\Delta m \rfloor} \right] \\ & \explain{(a)}{=}  \E_{q} \left[\frac{\Nr \left(\bm y, \begin{bmatrix} 1 & |q| \\ |q| & 1 \end{bmatrix}\right)}{\Dr^2(\bm y)} \cdot \frac{1}{(1-|q|^2/2)^{\Delta m}} \right].
		\end{align*}
		In the step marked (a), we used the MGF of $\chi^2$ distribution to compute:
		\begin{align*}
		    \E_{Z \sim \gauss{0}{2}} \exp\left( \frac{\frac{|q|^2}{2}}{1+\frac{|q|^2}{2}} \cdot \frac{Z^2}{2}\right) & = \sqrt{\frac{1+|q|^2/2}{1-|q|^2/2}}
		\end{align*}
		Hence we have,
		\begin{align*}
		\int_{\mathcal{E}_m} \frac{\E_{\bm A, \bm W}   p^2(\bm y, \bm z|\bm A, \bm W)}{ p(\bm y, \bm z)} \diff \bm y \diff \bm z  & = \E_{\bm y,|q|} \left[ \frac{\Nr \left(\bm y, \begin{bmatrix} 1 & |q| \\ |q| & 1 \end{bmatrix}\right)}{\Dr^2(\bm y)} \cdot \frac{1}{(1-|q|^2/2)^{\lfloor\Delta m\rfloor}} \cdot  \mathbf{1}_{\mathcal{E}_m}\right]
		\end{align*}
		Next we observe that,
		\begin{align*}
		|q|^2 & \sim \Beta{1}{n-1}.
		\end{align*}
		Utilizing the formula for the pdf of Beta random variables we have, 
		\begin{align*}
		\int_{\mathcal{E}_m} \frac{\E_{\bm A, \bm W}   p^2(\bm y, \bm z|\bm A, \bm W)}{ p(\bm y, \bm z)} \diff \bm y \diff \bm z  & = \frac{1}{n-1} \E_{\bm y} \left[ \frac{ \int_0^1 \Nr \left(\bm y, \begin{bmatrix} 1 & \sqrt{b} \\ \sqrt{b} & 1 \end{bmatrix}\right) \cdot \frac{(1-b)^{n-2}}{(1-b/2)^{\lfloor\Delta m \rfloor}} \diff b}{\Dr^2(\bm y)} \cdot  \mathbf{1}_{\mathcal{E}_m}\right].
		\end{align*}
		Finally making the change of variable $b = q^2$ gives us:
		\begin{align*}
		\int_{\mathcal{E}_m} \frac{\E_{\bm A, \bm W}   p^2(\bm y, \bm z|\bm A, \bm W)}{ p(\bm y, \bm z)} \diff \bm y \diff \bm z & = \frac{2}{n-1} \E_{\bm y} \left[ \frac{ \int_0^1 \Nr \left(\bm y, \begin{bmatrix} 1 & q \\ q & 1 \end{bmatrix}\right) \cdot \frac{q \cdot (1-q^2)^{n-2}}{(1-q^2/2)^{\lfloor\Delta m\rfloor}} \diff q}{\Dr^2(\bm y)} \cdot  \mathbf{1}_{\mathcal{E}_m}\right]. \\
		& \leq \frac{2}{n-1} \E_{\bm y} \left[ \frac{ \int_0^1 \Nr \left(\bm y, \begin{bmatrix} 1 & q \\ q & 1 \end{bmatrix}\right) \cdot \frac{q \cdot (1-q^2)^{n-2}}{(1-q^2/2)^{\Delta m}} \diff q}{\Dr^2(\bm y)} \cdot  \mathbf{1}_{\mathcal{E}_m}\right].
		\end{align*}
	\end{proof}
	\begin{remark} At this point, it is instructive to compare the claim of Lemma \ref{introduce_main_integrals_lemma} to its counterpart from \citep{mondelli2019fundamental}. If $\bm A$ were Gaussian, then, \citet{mondelli2019fundamental} have shown that, 
		\begin{align}
		\int \frac{\E_{\bm A, \bm W}   p^2(\bm y, \bm z|\bm A, \bm W)}{ p(\bm y, \bm z)} \diff \bm y \diff \bm z  & = \frac{2}{n-1} \E_{\bm y} \left[ \frac{ \int_0^1 \Nr_{\mathsf{Gauss}} \left(\bm y, \begin{bmatrix} 1 & q \\ q & 1 \end{bmatrix}\right) \cdot \frac{q \cdot (1-q^2)^{n-2}}{(1-q^2/2)^{\lfloor\Delta m\rfloor}} \diff q}{\Dr^2_{\mathsf{Gauss}}(\bm y)} \cdot  \right],
		\label{MM_second_moment}
		\end{align}
		where the functions $\Nr_{\mathsf{Gauss}}$ and $\Dr_{\mathsf{Gauss}}$ are defined as follows:
		\begin{align*}
		\Nr_{\mathsf{Gauss}}\left(\bm y, \begin{bmatrix} 1 & q \\ q & 1 \end{bmatrix}\right) & \Mydef \E \left[ \prod_{i=1}^m \gpdf(y_i - | G_{1i}|^2 ) \gpdf(y_i -  |q G_{1i} + \sqrt{1-q^2} G_{2i}|^2 )\right], \\
		\Dr_{\mathsf{Gauss}}(\bm y) &\Mydef \E \left[ \prod_{i=1}^m \gpdf(y_i - | G_{1i}|^2 ) \right]
		\end{align*}
		Because the conditioning is absent in the definitions of $\Nr_{\mathsf{Gauss}}$ and $\Dr_{\mathsf{Gauss}}$, one can leverage the independence in $\bm G_1, \bm G_2$ and obtain straightforwardly: 
		\begin{align*}
		\frac{\Nr_{\mathsf{Gauss}}\left(\bm y, \begin{bmatrix} 1 & q \\ q & 1 \end{bmatrix}\right)}{\Dr_{\mathsf{Gauss}}^2(\bm y)} & = \prod_{i=1}^m\frac{\E_{G_1,G_2} \left[ \gpdf(y_i - | G_{1}|^2 ) \gpdf(y_i -  |q G_{1} + \sqrt{1-q^2} G_{2}|^2 )\right]}{\E^2_G \left[ \gpdf(y_i - | G|^2 )\right]}.
		\end{align*}
		Furthermore when the sensing matrix is Gaussian, the observations $y_1,y_2 \dots y_m$ are i.i.d. Let $Y$ be a random variable with the same distribution as $y_i$. The expression in \eqref{MM_second_moment} simplifies significantly:
		\begin{align}
		\int \frac{\E_{\bm A, \bm W}   p^2(\bm y, \bm z|\bm A, \bm W)}{ p(\bm y, \bm z)} \diff \bm y \diff \bm z  & = \frac{2}{n-1}\int_0^1 F_{\mathsf{Gauss}}(q)^m \cdot  \frac{q \cdot (1-q^2)^{n-2}}{(1-q^2/2)^{\lfloor\Delta m\rfloor}} \diff q,
		\label{MM_result}
		\end{align}
		where,
		\begin{align*}
		F_{\mathsf{Gauss}}(q) \Mydef \E_Y \left[ \frac{\E_{G_1,G_2} \left[ \gpdf(Y - | G_{1}|^2 ) \gpdf(Y -  |q G_{1} + \sqrt{1-q^2} G_{2}|^2 )\right]}{\E^2_G \left[ \gpdf(Y - | G|^2 )\right]}\right] . 
		\end{align*}
		\citet{mondelli2019fundamental} analyze the integral in \ref{MM_result} by a straightforward application of the Laplace Principle. Note that this whole approach breaks down in our case because the conditioning in the definition of $\Nr,\Dr$ introduces dependence between the Gaussian random vectors $\bm G_1, \bm G_2$ and their entries. This dependence is a manifestation of the dependence present in a subsampled Haar unitary matrix. 
	\end{remark}
	
	\section{Asymptotic Analysis of $\Dr$ and $\Nr$} \label{section_integral_asymptotics}
	In order to evaluate the upper bound on the mutual information that is given in Lemma \ref{introduce_main_integrals_lemma}, one needs to understand the asymptotic behaviour of the functions $\Dr$ and $\Nr$ introduced in Lemma \ref{introduce_main_integrals_lemma}.
	\subsection{Analysis of $\Dr$}
	Recall that $\Dr(\bm y)$ was defined as:
\begin{align*}
\Dr(\bm y) &\Mydef \E \left[ \prod_{i=1}^m \gpdf(y_i - | G_{1i}|^2 ) \bigg| \|\bm G_1\|^2 =m \right].
\end{align*}
	We can rewrite $\Dr(\bm y)$ as follows:
	\begin{align*}
	\Dr(\bm y) &\Mydef \E \left[ \exp \left(\sum_{i=1}^m \ln \gpdf(y_i - | G_{1i}|^2 ) \right) \bigg| \frac{1}{m} \|\bm G_1\|^2 =1 \right].
	\end{align*}
	The above equation suggests that the asymptotics of $\Dr$ are determined by the large deviation properties of the random variables:
	\begin{align}
	\label{two_rvs_dr}
	\left( \frac{1}{m} \sum_{i=1}^m \ln \gpdf(y_i - | G_{1i}|^2 ),  \frac{1}{m} \|\bm G_1\|^2  \right).
	\end{align}
	Note that the random variables in the display above are a sum of independent random variables.  In our analysis we treat $\bm y$ as a fixed vector in  $\mathbb R^m$ and only leverage the randomness in $\bm G_1$. Consequently, the two random variables in  \eqref{two_rvs_dr} are sums of independent, but not identically distributed random variables. This makes our analysis a bit delicate.  Large deviation theory tells us that the Cramer Transform plays a crucial role in understanding the large deviations of sums of independent random variables. Hence, we define the Tilted Exponential distribution which is the Cramer Transform (or the exponential tilting) of the pair of random variables $(\ln \gpdf(y -|G|^2), |G|^2)$ where $G \sim \cgauss{0}{1}$ and $y \in \mathbb R$ is a fixed scalar below.
	\begin{definition}[The Tilted Exponential Distribution]\label{Texp_definition} The Tilted Exponential distribution with parameters $(\lambda,y)$ denoted by $\Texp{\lambda}{y}$ is the distribution on $[0,\infty)$ with the pdf: 
		\begin{align*}
		f(u) & = \frac{e^{-(1-\lambda)u}\gpdf(u-y)}{\ZTexp{\lambda}{y}},
		\end{align*}
		where, $\ZTexp{\lambda}{y}$ denotes the normalizing constant: 
		\begin{align*}
		\ZTexp{\lambda}{y} & \Mydef \int_0^\infty e^{-(1-\lambda)u}\gpdf(u-y)  \diff u = \E_{E \sim \Exp{1}} e^{\lambda E} \gpdf(E-y).
		\end{align*}
		We also denote the variance of $\Texp{\lambda}{y}$ by $\VTexp{\lambda}{y}$. 
	\end{definition}
	In Appendix \ref{texp_properties_appendix} we prove some essential properties of the Tilted Exponential distribution which will be useful in our analysis.
	
	The analysis of $\Dr(\bm y)$ uses two standard techniques from large deviation theory: performing an exponential change of measure and then applying a central limit theorem under the tilted measure.
	
	The following lemma is a change of measure result that we use in our analysis. In order to state it we first introduce some notation. Fix any $\bm y \in \mathbb R^m$ and any $ \lambda \in \mathbb R$. Let $u_1,u_2 \dots u_m$  be independent non-negative random variables with $u_i \sim \Texp{\lambda}{y_i}$. Let $F_{\lambda, \bm y}$ be the density of the random variable $\sum_{i=1}^m u_i$.
	\begin{lemma} \label{Dr_representation_formula}
		For any $\lambda \in \mathbb{R}, \bm y \in \mathbb{R}^m$  we have, 
		\begin{align*}
		\Dr(\bm y) & =  \frac{(m-1)! \cdot  e^{m(1-\lambda)} \cdot F_{\lambda, \bm y}(m)}{m^{m-1}} \cdot \prod_{i=1}^m \ZTexp{\lambda}{y_i}.
		\end{align*} 
		In the above display, $F_{\lambda, \bm y}$ is the density of the random variable $\sum_{i=1}^m u_i$ where the random variables $u_i$ are sampled independently with marginal distribution $u_i \sim \Texp{\lambda}{y_i}$.
	\end{lemma}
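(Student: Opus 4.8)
The plan is to peel back the conditional expectation defining $\Dr(\bm y)$ into an explicit $(m-1)$-dimensional integral over a hyperplane, then perform an exponential change of measure to the tilted exponential law of Definition \ref{Texp_definition} and recognize what remains as a convolution density.

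First I would record the elementary facts that if $G \sim \cgauss{0}{1}$ then $|G|^2 \sim \Exp{1}$, so the coordinates $E_i \Mydef |G_{1i}|^2$ are i.i.d.\ $\Exp{1}$ and $\|\bm G_1\|^2 = \sum_{i=1}^m E_i \sim \dGamma{m}{1}$, whose density at $m$ equals $m^{m-1}e^{-m}/(m-1)!$. To make the conditioning on $\{\sum_i E_i = m\}$ rigorous, I would change variables from $(E_1,\dots,E_m)$ to $(E_1,\dots,E_{m-1},T)$ with $T \Mydef \sum_{i=1}^m E_i$; this is a linear map of determinant one, so the conditional law of $(E_1,\dots,E_{m-1})$ given $T=m$ has density equal to the joint density of $(E_1,\dots,E_m)$ evaluated along $e_m = m - \sum_{i=1}^{m-1}e_i$, divided by $f_T(m)$. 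Writing $g(\bm e) = \prod_{i=1}^m \gpdf(y_i - e_i)$ and using $f_{\bm E}(\bm e) = \prod_{i=1}^m e^{-e_i}\mathbf{1}_{\{e_i \ge 0\}}$, this gives, with $e_m \Mydef m - \sum_{i=1}^{m-1} e_i$ throughout,
\[
\Dr(\bm y) \;=\; \frac{(m-1)!}{m^{m-1}\, e^{-m}} \int_{\mathbb{R}^{m-1}} \prod_{i=1}^{m} \Big[ \gpdf(y_i - e_i)\, e^{-e_i}\, \mathbf{1}_{\{e_i \ge 0\}} \Big]\, \diff e_1 \cdots \diff e_{m-1}.
\]
On the domain of integration $\sum_{i=1}^m e_i = m$, hence $\prod_{i=1}^m e^{-e_i} = e^{-m}$, which cancels the $e^{-m}$ in the denominator.

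Next I would introduce the tilt: by Definition \ref{Texp_definition}, for each $i$ one may rewrite $\gpdf(y_i - e_i)\mathbf{1}_{\{e_i \ge 0\}} = \ZTexp{\lambda}{y_i}\, e^{(1-\lambda)e_i}\, f_{\lambda,y_i}(e_i)$ where $f_{\lambda,y_i}$ denotes the density of $\Texp{\lambda}{y_i}$. Substituting and again using $\sum_{i=1}^m e_i = m$ to extract the factor $e^{(1-\lambda)m}\prod_{i=1}^m \ZTexp{\lambda}{y_i}$, one obtains
\[
\Dr(\bm y) \;=\; \frac{(m-1)!\, e^{(1-\lambda)m}}{m^{m-1}} \Big( \prod_{i=1}^m \ZTexp{\lambda}{y_i} \Big) \int_{\mathbb{R}^{m-1}} \prod_{i=1}^{m-1} f_{\lambda,y_i}(e_i)\cdot f_{\lambda,y_m}\!\Big( m - \sum_{i=1}^{m-1} e_i \Big)\, \diff e_1 \cdots \diff e_{m-1}.
\]
Finally, the last integral is precisely the standard convolution formula for the density of $\sum_{i=1}^m u_i$ evaluated at $m$, where the $u_i \sim \Texp{\lambda}{y_i}$ are independent; that is, it equals $F_{\lambda,\bm y}(m)$. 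This yields the claimed identity, valid for every $\lambda \in \mathbb{R}$ since the tilt merely transfers mass between the $\ZTexp{\lambda}{y_i}$ factors, the prefactor $e^{(1-\lambda)m}$, and the density $F_{\lambda,\bm y}$.

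There is no serious obstacle here; the only step requiring care is the disintegration with respect to $\{\sum_i E_i = m\}$, which the unimodular change of variables to $(E_1,\dots,E_{m-1},T)$ handles cleanly without any surface-measure bookkeeping, together with the observation that the $\Exp{1}$ weights $\prod_i e^{-e_i}$ collapse to the constant $e^{-m}$ on the relevant hyperplane so that the exponential change of measure leaves behind exactly a product of tilted-exponential densities whose convolution is $F_{\lambda,\bm y}$.
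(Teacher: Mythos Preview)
Your proof is correct and follows essentially the same approach as the paper: both arguments hinge on the exponential change of measure from $\Exp{1}$ to $\Texp{\lambda}{y_i}$ together with the explicit Gamma$(m,1)$ density at $m$. The only organizational difference is that the paper packages the computation by introducing the auxiliary scalar $T=\sum_i\ln\gpdf(y_i-u_i)$ and manipulating the two-dimensional joint density of $(U,T)$ under the two measures, whereas you work directly with the $(m-1)$-dimensional hyperplane integral and then recognize the convolution; the content is the same.
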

	\begin{proof}
		Define the random variables:
		\begin{align*}
		U  = \sum_{i=1}^m u_i, \; T  = \sum_{i=1}^m \ln \gpdf(y_i-u_i).
		\end{align*}
		Consider two possible probability distributions for $U$ and $T$: 
		%\textcolor{red}{What are R and S?}
		\begin{enumerate}
			\item $u_i$ are i.i.d. $\Exp{1}$. Let $G(u,t)$ be the joint pdf of $U$ and $T$ in this setup. 
			\item $u_i$ are sampled independently from $\Texp{\lambda}{y_i}$ defined in the statement of the lemma. Let $F_{\lambda, \bm y}(u,t)$ denote the joint pdf of $U,T$ in this setup.  
		\end{enumerate}
		We can compute $F_{\lambda, \bm y}(u,t)$ in terms of $G(u,t)$ in the following way: 
		\begin{align}
		F_{\lambda, \bm y}(u,t) & = \frac{\exp(t+\lambda u)}{\prod_{i=1}^m \ZTexp{\lambda}{\bm y_i}} \cdot G(u,t).
		\label{F2Geq}
		\end{align}
		Let $G(t|u)$ denote the conditional density of $T$ given $U = u$ and $G(u)$ denote the marginal density of $U$ under Setup 1.  Analogously define $F_{\lambda, \bm y}(t|u)$ and $F_{\lambda, \bm y}(u)$. We can then compute $\Dr(\bm y)$ as follows: 
		\begin{align*}
		\Dr(\bm y) & = \E_{\bm g \sim \cgauss{\bm 0}{\bm I_m}} \left[ \exp \left( \sum_{i=1}^m \ln \phi_{\sigma}(y_i -  |g_{i}|^2) \right) \bigg| \|\bm g\|^2 = m  \right] \\ 
		& \explain{}{=} e^{-m  \lambda} \E \left[ \exp \left( \sum_{i=1}^m \ln \phi_{\sigma}(y_i -  |g_{i}|^2) + \lambda m \right) \bigg| \|\bm g\|^2 = m \right] \\
		& \explain{(a)}{=} e^{-m  \lambda } \int e^{t+ m  \lambda} G(t|m) \diff t \\
		& = \frac{e^{-m  \lambda}}{G(m )} \int e^{t + m  \lambda } G(m,t) \diff t.
		\end{align*}
		In the step marked (a), we used the fact that if $ G \sim \cgauss{0}{1}$, then $|G|^2 \sim \Exp{1}$. Next, appealing to \eqref{F2Geq}, we obtain:
		\begin{align*}
		\Dr(\bm y) & \explain{(b)}{=} \frac{e^{-m\lambda} \prod_{i=1}^m \ZTexp{\lambda}{y_i} }{G(m)} \int F_{\lambda, \bm y}(m,t) \diff t \\
		& = \frac{F_{\lambda, \bm y}(m)e^{-m\lambda}}{G(m)} \cdot \prod_{i=1}^m \ZTexp{\lambda}{y_i} \\
		& \explain{(c)}{=} \frac{(m-1)! \cdot  e^{m(1-\lambda)} \cdot F_{\lambda, \bm y}(m)}{(m)^{m-1}} \cdot \prod_{i=1}^m \ZTexp{\lambda}{y_i}. 
		\end{align*}
		The equality marked (b) follows from \eqref{F2Geq}. 
		In the step (c), we used the fact that under Setup 1, $U$ is a sum of exponential random variables and hence $U \sim \dGamma{m}{1}$. Therefore the density of the Gamma distribution can be used to evaluate $G(m)$. This proves the claim of the lemma. 
	\end{proof}
Our next step will be to develop the asymptotics of $F_{\lambda, \bm y}$ by means of a local CLT. Note that in Lemma \ref{Dr_representation_formula}, $\lambda \in \mathbb R$ was arbitrary. We will set $\lambda = \hat{\lambda}_1(\sigma)$, where
\begin{align}
\hat{\lambda}_1( \sigma) & \Mydef \arg\max_{\lambda \in \mathbb{R}} \left( \lambda  - \hat\E_Y  
\ln \E_{E \sim \Exp{1}} e^{\lambda E} \gpdf(E-Y) \right).
\label{xi_1_problem_def_argmax}
\end{align}
We also define,
\begin{align}
\hat\Xi_1(\sigma) &\Mydef  \max_{\lambda \in \mathbb{R}} \left( \lambda  - \hatE_Y  
\ln \E_{E \sim \Exp{1}} e^{\lambda E} \gpdf(E-Y) \right).
\label{xi_1_problem_def_max}
\end{align}
The notation $\hatE$ in the above display, has been introduced in \eqref{hatE_notation}. Note that the above quantities depend on the vector $\bm y$, but we have not made the dependence explicit in the notation. The intuition for setting $\lambda$ in this way is that the first order stationarity condition applied to the concave variational problem in \eqref{xi_1_problem_def_argmax} and \eqref{xi_1_problem_def_max} give us:
\begin{align*}
\frac{1}{m} \sum_{i=1}^m \frac{\E E e^{\hat\lambda_1(\sigma) E} \gpdf(E-y_i)}{\ZTexp{\hat{\lambda}_1(\sigma)}{y_i}} = 1 \implies    \E \left[  \sum_{i=1}^m u_i \right] = m.
\end{align*}
Consequently, by the central limit theorem, we expect that, $m^{-\frac{1}{2}} \cdot ((\sum_i u_i)-m)$ is close to a Gaussian distribution with variance:
\begin{align}
\label{hatv_def}
\hat{v}(\sigma) & \Mydef \frac{1}{m} \sum_{i=1}^m \VTexp{\hat{\lambda}_1(\sigma)}{y_i} =    \hatE_Y \VTexp{\hat{\lambda}_1(\sigma)}{Y}.
\end{align}
Hence, $F_{\hat{\lambda}_1(\sigma),\bm y}$, which is the density of $\sum_{i=1}^m u_i$ can be approximated by the density of $\gauss{m}{m \hat{v}(\sigma)}$.
\begin{align*}
F_{\hat{\lambda}_1(\sigma),\bm y}(m) & \approx \gpdf[ m \cdot \hat{v}(\sigma)](0)  = \frac{1}{\sqrt{2\pi \hat{v}(\sigma)\cdot m}}.
\end{align*}
This intuition is made rigorous in the following proposition.
\begin{restatable}[A Local Central Limit Theorem]{proposition}{localcltdenom} \label{local_clt_denom} Suppose that there exists a constant $0<K< \infty$, such that, 
	\begin{align*}
	|\hat{\lambda}_1(\sigma)|  \leq K, \; \hatE_Y (|Y|+|Y|^2 + |Y|^3) \leq K, \; \frac{1}{K} \leq \hat{v}(\sigma) \leq K.
	\end{align*}
	Then, there exists a constant $C(K)$, depending only on $K$ such that we have the following asymptotic expansion for $F_{\hat{\lambda}_1(\sigma), \bm y}(m)$: 
	\begin{align*}
	\left|{F}_{\hat{\lambda}_1(\sigma),\bm y}(m) - \frac{1}{\sqrt{2\pi \hat{v}(\sigma)\cdot m}}\right|  \leq \frac{C(K)\ln(m)}{m},
	\end{align*}
where $\hat{\lambda}_1(\sigma)$ and $\hat{v}(\sigma)$ have been defined in \eqref{xi_1_problem_def_argmax} and \eqref{hatv_def}.
\end{restatable}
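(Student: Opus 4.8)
The plan is to prove Proposition~\ref{local_clt_denom} as a quantitative, non-i.i.d.\ local central limit theorem for $S_m \Mydef \sum_{i=1}^m u_i$, where the $u_i \sim \Texp{\hat\lambda_1(\sigma)}{y_i}$ are independent, by Fourier inversion of the density of $S_m$. Write $\hat\lambda = \hat\lambda_1(\sigma)$, $\mu_i = \E u_i$, $\sigma_i^2 = \VTexp{\hat\lambda}{y_i}$; the stationarity property defining $\hat\lambda$ in \eqref{xi_1_problem_def_argmax} is exactly $\sum_i\mu_i = m$, and $\frac1m\sum_i\sigma_i^2 = \hat v(\sigma) \Mydef \hat v$. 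Each $u_i$ has a bounded density with a jump at the origin, so its characteristic function $\varphi_i(t)\Mydef\E e^{\i t u_i}$ decays like $1/|t|$; hence $\prod_i\varphi_i\in L^1$ once $m\ge2$, and Fourier inversion together with $\frac{1}{\sqrt{2\pi\hat v m}} = \frac1{2\pi}\int_{\mathbb R}e^{-\hat v m t^2/2}\diff t$ gives
\begin{align*}
F_{\hat\lambda,\bm y}(m) - \frac{1}{\sqrt{2\pi\hat v m}} \;=\; \frac{1}{2\pi}\int_{\mathbb R}\Bigl[\,\prod_{i=1}^m\hat\varphi_i(t) - e^{-\hat v m t^2/2}\,\Bigr]\diff t,\qquad \hat\varphi_i(t)\Mydef e^{-\i t\mu_i}\varphi_i(t),
\end{align*}
since $\prod_i\hat\varphi_i$ is the characteristic function of $S_m-m$. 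Everything then reduces to bounding this integral.

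The inputs, which are where the hypotheses $|\hat\lambda|\le K$, $\hatE_Y(|Y|+|Y|^2+|Y|^3)\le K$, $\frac1K\le\hat v\le K$ enter together with properties of the Tilted Exponential established in Appendix~\ref{texp_properties_appendix}, are the following uniform-in-$i$ (or averaged) estimates. First, the centered third and fourth absolute moments of $u_i$ are $\le C(K)$ for every $i$, uniformly in $y_i$ (the tilt concentrates $u_i$ near $\max(0,y_i)$ with $O(1)$ spread, so large or moderately negative $y_i$ is harmless); this yields $\log\hat\varphi_i(t) = -\tfrac12\sigma_i^2 t^2 + r_i(t)$ with $|r_i(t)|\le C(K)|t|^3$ on a fixed disc $|t|\le\eta(K)$. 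Second, at least $c(K)m$ of the indices satisfy $\sigma_i^2\ge\frac1{2K}$ --- because $\sum_i\sigma_i^2\ge m/K$ while $\sigma_i^2\le C(K)$ for all $i$ --- and for such ``well-spread'' indices $|\hat\varphi_i(t)|\le e^{-c(K)t^2}$ on $|t|\le\eta(K)$ and $|\hat\varphi_i(t)|\le\rho(K)<1$ on the compact annulus $\eta(K)\le|t|\le C(K)$, while $|\hat\varphi_i(t)|\le1$ always. Third, $|\varphi_i(t)|\le C(K)(1+|y_i|)/|t|$ for all $t\ne0$ by one integration by parts (using the value of the density at the origin), so that by concavity of $\log$ and $\frac1m\sum_i|y_i|\le K$ we get $\prod_i|\varphi_i(t)|\le(C(K)/|t|)^m$, which is super-exponentially small for $|t|$ large.

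With these in hand I split $\mathbb R$ into three pieces. On the \emph{Edgeworth window} $|t|\le\Lambda\sqrt{(\ln m)/m}$ (with $\Lambda=\Lambda(K)$ to be fixed), substitute $t=s/\sqrt m$ and write $\prod_i\hat\varphi_i(s/\sqrt m)=\exp(-\tfrac12\hat v s^2+R_m(s))$, where $|R_m(s)| = \bigl|\sum_i r_i(s/\sqrt m)\bigr|\le C(K)|s|^3/\sqrt m\le1$ on the window; then $\bigl|\prod_i\hat\varphi_i(s/\sqrt m)-e^{-\hat v s^2/2}\bigr|\le C(K)\,e^{-\hat v s^2/2}\,|s|^3/\sqrt m$, and integrating this Gaussian-times-polynomial contributes at most $C(K)/m$. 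On the \emph{moderate band} $\Lambda\sqrt{(\ln m)/m}\le|t|\le\eta(K)$, both $\prod_i|\hat\varphi_i(t)|\le e^{-c(K)mt^2}$ (using the well-spread indices) and $e^{-\hat v m t^2/2}$ are $\le e^{-c(K)\Lambda^2\ln m}$ uniformly, and $\int_{|t|\ge\Lambda\sqrt{(\ln m)/m}}e^{-c(K)mt^2}\diff t\lesssim m^{-1/2}m^{-c(K)\Lambda^2}$, which is $O(1/m)$ once $\Lambda$ is chosen large; fixing the logarithmic width of this window is exactly what produces the $\ln m$ in the stated bound. On the \emph{tail} $|t|\ge\eta(K)$, the $\varphi$-part of the integrand is handled by splitting at $|t|=C(K)$ (using $|\hat\varphi_i|\le\rho(K)<1$ for well-spread indices on the annulus, and $\prod_i|\varphi_i(t)|\le(C(K)/|t|)^m$ beyond), and the Gaussian part by $\hat v\ge1/K$; both are $e^{-c(K)m}$. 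Adding the three bounds gives $\bigl|F_{\hat\lambda,\bm y}(m)-\tfrac{1}{\sqrt{2\pi\hat v m}}\bigr|\le C(K)(\ln m)/m$, the claim.

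The main obstacle, and what distinguishes this from the i.i.d.\ computation behind \citep{mondelli2019fundamental}, is establishing the uniform estimates above: here $\bm y$ is an essentially arbitrary fixed vector, constrained only through the averaged bound $\hatE_Y(|Y|+|Y|^2+|Y|^3)\le K$ (and the structural bounds on $\hat\lambda$ and $\hat v$), so individual $y_i$'s may be very large, or --- because of the additive Gaussian noise --- moderately negative, and one must show that every relevant functional of $u_i\sim\Texp{\hat\lambda}{y_i}$ (centered moments, the count of well-spread summands, the constant in the $1/|t|$ decay, the non-degeneracy bound $|\varphi_i(t)|\le\rho(K)<1$ on a compact annulus) is either uniformly bounded in $y_i\in\mathbb R$ or bounded after averaging and passing through Jensen's inequality. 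Proving that the Tilted Exponential depends on its parameter $y$ in a uniformly controlled way (the content of the Appendix~\ref{texp_properties_appendix} lemmas) and keeping the bookkeeping so that nothing survives except a dependence on $K$ is where essentially all the work lies; the Fourier-analytic skeleton above is then standard, in the spirit of the strong large-deviation arguments of \citet{chaganty1993strong}.
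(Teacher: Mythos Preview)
Your Fourier-inversion skeleton is the same as the paper's, and the three-region split (central, moderate, tail) matches. The difference is in how you handle the non-i.i.d.\ structure. You rely on \emph{uniform-in-$y_i$} estimates: uniformly bounded centered third and fourth moments of $u_i$, a uniform disc $|t|\le\eta(K)$ for the log-CF expansion, and $|\hat\varphi_i(t)|\le\rho(K)<1$ on a fixed annulus for the well-spread indices. These claims are plausible (for large positive $y_i$ the tilted law is approximately $\gauss{y_i+O(1)}{\sigma^2}$, for very negative $y_i$ it degenerates to a point mass), but they are \emph{not} what Appendix~\ref{texp_properties_appendix} provides: Lemma~\ref{lemma_texp_properties} gives only $\E|T|^k\le C_k(|y|^k+|\lambda|^k+1)$ and $|\E e^{\i tT}|\le C(1+|y|+|\lambda|)/|t|$, both growing with $|y|$. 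So you would need to prove separate lemmas establishing uniform control of centered moments and of the CF on the annulus; your proposal flags this as ``where essentially all the work lies'' but then misattributes it to the existing appendix.

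The paper avoids all per-index uniformity. For the central region it invokes Berry--Esseen (Theorem~\ref{berry_eseen_theorem}) with $\rho_3=\frac1m\sum_i\E|u_i-\E u_i|^3\le\frac{8}{m}\sum_i\E|u_i|^3\le C(K)$ via the \emph{averaged} hypothesis $\hatE_Y|Y|^3\le K$. For the moderate region it symmetrizes, writing $|\check\psi(t)|^2=\prod_i\E e^{\i t(u_i-u_i')/\sqrt m}$ and Taylor-expanding the product directly; this again needs only $\frac1m\sum_i\E|u_i-u_i'|^3\le C(K)$. For the tail it uses AM--GM, $|\check\psi(t\sqrt m)|^2\le\bigl(\frac1m\sum_i|\E e^{\i tu_i}|^2\bigr)^m$, and recognizes $\frac1m\sum_i|\E e^{\i tu_i}|^2$ as the characteristic function of the symmetrized \emph{mixture}, to which Lemma~\ref{cf_bound_requirement} applies directly; the $1/|t|$ decay then enters only through the averaged bound $\frac1m\sum_i(1+|\hat\lambda|+|y_i|)^2\le C(K)$. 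This AM--GM/mixture trick is the main device your route lacks, and it is what lets the paper work entirely with the averaged moment hypotheses rather than per-index control.
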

There is a large literature on local central limit theorems. We refer the reader to \citet{bhattacharya1986normal} for a textbook treatment of these results. We are unable to use the statements of local central limit theorems already available in the literature because we require a local central limit theorem for sums of independent but not identically distributed random variables and we further require some control on the error of normal approximation. The proof of Proposition \ref{local_clt_denom} can be found in Appendix \ref{appendix_localcltdenom}. It closely follows the classical proofs of local central limit theorems based on characteristic functions (see for example \citet[Chapter 16]{feller2008introduction}).

We conclude our analysis of $\Dr$ with the following result which is a straightforward corollary of the change of measure result given in Lemma \ref{Dr_representation_formula} and the local central limit theorem in Proposition \ref{local_clt_denom}. 
\begin{corollary}[Lower Bound on $\Dr$] 
	\label{local_clt_denom_corollary} Under the assumptions of Proposition \ref{local_clt_denom}, there exists $M(K) \in \mathbb{N}$ depending only on $K$ such that,
	\begin{align*}
	\Dr(\bm y) & \geq \frac{1}{2\sqrt{K}} \exp \left(  -m \cdot  \hat\Xi_1(\sigma) \right), \; \forall m \geq M(K),
	\end{align*}
	where the function $\hat\Xi_1(\sigma)$ has been defined in \eqref{xi_1_problem_def_max}.
\end{corollary}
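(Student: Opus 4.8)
The plan is to plug the optimal tilt $\lambda = \hat{\lambda}_1(\sigma)$ into the exact representation of Lemma \ref{Dr_representation_formula}, simplify the product of normalizing constants using the definition of $\hat\Xi_1(\sigma)$, and then replace $F_{\hat{\lambda}_1(\sigma),\bm y}(m)$ by its Gaussian approximation from Proposition \ref{local_clt_denom}. First I would observe that
\[
\prod_{i=1}^m \ZTexp{\hat{\lambda}_1(\sigma)}{y_i} = \exp\!\Bigl( m\, \hatE_Y \ln \ZTexp{\hat{\lambda}_1(\sigma)}{Y} \Bigr),
\]
and since $\ZTexp{\lambda}{y} = \E_{E\sim\Exp{1}} e^{\lambda E}\gpdf(E-y)$, the definitions \eqref{xi_1_problem_def_argmax}--\eqref{xi_1_problem_def_max} give $\hatE_Y \ln \ZTexp{\hat{\lambda}_1(\sigma)}{Y} = \hat{\lambda}_1(\sigma) - \hat\Xi_1(\sigma)$. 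Substituting this and $\lambda = \hat{\lambda}_1(\sigma)$ into Lemma \ref{Dr_representation_formula}, the factors $e^{-m\hat{\lambda}_1(\sigma)}$ and $e^{m\hat{\lambda}_1(\sigma)}$ cancel, leaving the clean identity
\[
\Dr(\bm y) = \frac{(m-1)!\, e^{m}}{m^{m-1}} \cdot F_{\hat{\lambda}_1(\sigma),\bm y}(m) \cdot e^{-m\hat\Xi_1(\sigma)}.
\]

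Next I would control the two remaining factors separately. Stirling's formula yields $(m-1)!\, e^{m}/m^{m-1} = \sqrt{2\pi m}\,(1+O(1/m))$, and Proposition \ref{local_clt_denom} (whose hypotheses are precisely those assumed here) gives $F_{\hat{\lambda}_1(\sigma),\bm y}(m) \geq (2\pi \hat{v}(\sigma) m)^{-1/2} - C(K)\ln(m)/m \geq (2\pi K m)^{-1/2} - C(K)\ln(m)/m$, using $\hat{v}(\sigma)\le K$. Multiplying the two leading terms produces exactly $\sqrt{2\pi m}\cdot (2\pi \hat{v}(\sigma) m)^{-1/2} = \hat{v}(\sigma)^{-1/2} \geq K^{-1/2}$, while every correction term is $O(\ln(m)/\sqrt m) = o(1)$; hence for $m$ larger than a threshold $M(K)$ the product of the first two factors exceeds $\tfrac12 K^{-1/2}$, which gives $\Dr(\bm y)\ge (2\sqrt K)^{-1}e^{-m\hat\Xi_1(\sigma)}$ for all $m \geq M(K)$.

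The corollary is essentially bookkeeping once Proposition \ref{local_clt_denom} is in hand, so the only step that needs genuine attention is checking that the threshold $M(K)$ can be chosen uniformly over all admissible $\bm y$. This works out because the hypotheses bound the empirical moments of $\bm y$, the tilt $\hat{\lambda}_1(\sigma)$, and the tilted variance $\hat{v}(\sigma)$ solely through $K$, and the constant $C(K)$ from the local CLT depends only on $K$, so both the Stirling error and the normal-approximation error are controlled uniformly in $\bm y$; the slack factor $\tfrac12$ in the statement is exactly what absorbs them. The genuinely hard part — a local central limit theorem for sums of independent, non-identically-distributed tilted-exponential random variables together with an explicit $O(\ln(m)/m)$ error bound — has already been established in Proposition \ref{local_clt_denom}, so nothing further of substance is required here.
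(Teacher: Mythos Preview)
Your proposal is correct and follows essentially the same route as the paper: apply Lemma \ref{Dr_representation_formula} at $\lambda=\hat\lambda_1(\sigma)$, use Stirling on $(m-1)!/m^{m-1}$, lower bound $F_{\hat\lambda_1(\sigma),\bm y}(m)$ via Proposition \ref{local_clt_denom} together with $\hat v(\sigma)\le K$, and recognize the product of normalizing constants as $e^{m(\hat\lambda_1(\sigma)-\hat\Xi_1(\sigma))}$. The only cosmetic difference is that you simplify the normalizing-constant product first to obtain the clean identity $\Dr(\bm y)=\frac{(m-1)!e^m}{m^{m-1}}F_{\hat\lambda_1(\sigma),\bm y}(m)\,e^{-m\hat\Xi_1(\sigma)}$ before bounding, whereas the paper carries $e^{-m\hat\lambda}\prod_i\ZTexp{\hat\lambda}{y_i}$ to the end and collapses it there; the ingredients and logic are identical.
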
	
\begin{proof}
	Applying Lemma \ref{Dr_representation_formula} with $\hat{\lambda} = \hat{\lambda}_1(\sigma)$, we have, 
	\begin{align*}
	\Dr(\bm y) & =  \frac{(m-1)! \cdot  e^{m(1-\hat{\lambda})} \cdot F_{\hat{\lambda}, \bm y}(m)}{(m)^{m-1}} \cdot \prod_{i=1}^m \ZTexp{\hat{\lambda}}{y_i}. 
	\end{align*}
	Note by Stirling's Approximation, we have: 
	\begin{align*}
	\frac{(m-1)!}{m^{m-1}} & \geq \sqrt{2\pi (m-1)} \cdot e^{-(m-1)} \cdot \left( 1 - \frac{1}{m} \right)^{m-1} \\& \explain{(a)}{\geq} 
	\sqrt{2\pi (m-1)} \cdot e^{-m}
	\end{align*}
	In the step marked (a), we used the bound $1-x \geq e^{-\frac{x}{1-x}}, \; x \in (0,1)$.
From Proposition \ref{local_clt_denom}, we conclude that there exists a constant $M(K)$, depending only on $K$, such that,
\begin{align*}
F_{\hat{\lambda}, \bm y} (m) & \geq \frac{1}{\sqrt{2\pi \hat{v}(\sigma) m}} - \frac{C(K)\ln(m)}{m}
.\end{align*} 
In particular, this means that there exists $M(K)$ depending only on $K$ such that,
\begin{align*}
F_{\hat{\lambda}, \bm y} (m) & \geq \frac{1}{2\sqrt{2\pi K m}} \; \forall m \geq M(K).
\end{align*}
This gives us the lower bound: 
\begin{align*}
\Dr(\bm y) & \geq \frac{1}{2 \sqrt{2K}} \cdot e^{- m\hat{\lambda} } \cdot \prod_{i=1}^m \ZTexp{\hat{\lambda}}{y_i}, \; \forall m \geq M(K) \\
& = \frac{1}{2\sqrt{K}} \exp \left(  - m \max_{\lambda \in \mathbb{R}} \left( \lambda r - \frac{1}{m} \sum_{i=1}^m  
\ln \E_{E \sim \Exp{1}} e^{\lambda E} \gpdf(E-y_i) \right) \right) \\
& = \frac{1}{2\sqrt{K}} \exp \left( -m \cdot  \hat\Xi_1(\sigma) \right).
\end{align*}
In the last step, we used \eqref{xi_1_problem_def_argmax} and \eqref{xi_1_problem_def_max}.
\end{proof}
\subsection{Analysis of $\Nr$}
We recall the function $\Nr$ was defined as follows:
\begin{align*}
\Nr(\bm y, \bm Q) & \Mydef \E \left[ \prod_{i=1}^m \gpdf(y_i - | G_{1i}|^2 ) \gpdf(y_i -  |G_{2i}|^2 ) \bigg| \bm G ^\UH \bm G = {m \bm Q} \right],
\end{align*}
where the matrix $\bm Q$ is of the form:
\begin{align}
\label{Q_form}
\bm Q &  = \begin{bmatrix} 1 & q \\ q & 1 \end{bmatrix}, \; q \in (0,1).
\end{align}
We observe that $\Nr$ can be rewritten as:
\begin{align*}
\Nr(\bm y, \bm Q) & = \E \left[ \exp\left(\sum_{i=1}^m \ln \gpdf(y_i - | G_{1i}|^2 ) + \ln\gpdf(y_i -  |G_{2i}|^2 ) \right) \bigg| \frac{1}{m} \bm G ^\UH \bm G = {\bm Q} \right].
\end{align*}
The asymptotics of $\Nr$ are determined by the large deviation properties of the pair of random variables:
\begin{align*}
\left(\frac{1}{m}\sum_{i=1}^m \ln \gpdf(y_i - | G_{1i}|^2 ) + \ln\gpdf(y_i -  |G_{2i}|^2 ), \frac{1}{m} \bm G ^\UH \bm G \right).
\end{align*}
Both of these random variables are a sum of independent random variables. The Tilted Wishart distribution which is defined below will play a key role in our analysis. This distribution is the Cramer transform (or the exponential tilting) of the random variables defined above. 
\begin{definition}[The Tilted Wishart Distribution with Parameters $(\lambda, \phi, y)$]\label{titled_wishart_definition} A $2 \times 2$ Hermitian matrix $\bm S$ is said to be $\TWis{\lambda}{\phi}{y}$ if 
	\begin{align*}
	\bm S &   = \begin{bmatrix} s & \sqrt{s s^\prime} e^{\i \theta} \\ \sqrt{s s^\prime} e^{-i \theta} & s^\prime \end{bmatrix},
	\end{align*}
	and the random variables $s \in [0,\infty), s^\prime \in [0,\infty), \theta\in (-\pi, \pi]$ are sampled from the pdf:
	\begin{align*}
	h(s, s^\prime, \theta) & \Mydef \frac{1}{2 \cdot \pi \cdot \ZTWis{\lambda}{\phi}{y}} \cdot  \exp(-(1-\lambda)(s + s^\prime) + \phi \sqrt{s s^\prime} \cos(\theta)) \cdot \gpdf(s - y) \cdot  \gpdf(s^\prime - y).
	\end{align*}
	In the above display, the normalizing constant $\ZTWis{\lambda}{\phi}{y}$ is defined as:
	\begin{align*}
	\ZTWis{{\lambda}}{\phi}{y} & \Mydef  \frac{1}{2\pi }\int_0^\infty \int_0^\infty \int_{-\pi}^\pi \exp(-(1-\lambda)(s + s^\prime) + \phi \sqrt{s s^\prime} \cos(\theta)) \cdot \gpdf(s - y) \cdot  \gpdf(s^\prime - y) \diff \theta \diff s \diff s^\prime. 
	\end{align*} 
	%	and conditional on $s_1, s_2$, $\theta$ is sampled from the pdf: 
	%	\begin{align*}
	%	h(\theta| s_1, s_2) & = \frac{1}{2 \pi \cdot I_0(\phi \sqrt{s_1 s_2})}  \cdot e^{\phi \sqrt{s_1 s_2} \cos(\theta)} \mathbf{1}_{\theta \in (-\pi, \pi]}.
	%	\end{align*}
	%	In the above display $I_0$ is the modified Bessel function defined as: 
	%	\begin{align*}
	%	I_0(u) & \Mydef \frac{1}{\pi} \int_0^\pi e^{u \cos(\theta)} \diff \theta.
	%	\end{align*}
	We denote the covariance matrix of the tilted Wishart distribution by $\VTWis{\lambda}{\phi}{y}$, that is: 
	\begin{align*}
	\VTWis{\lambda}{\phi}{y} & = \E \left[ \vec{\bm S - \E \bm S} \vec{\bm S - \E \bm S } ^\UH \right].
	\end{align*}
\end{definition}
Similar to the analysis of $\Dr$, the analysis of $\Nr$ consists of two steps: First, a change of measure step which is given in Lemma \ref{Nr_representation_formula} and second, an application of the local central limit theorem which is given in Proposition \ref{local_clt_num}. 

We begin with the change of measure result. Let $\lambda, \phi \in \mathbb R$ be arbitrary. Let $\bm S_{1},\bm S_{2} \dots \bm S_{m}$  be independent Hermitian random matrices with $$\bm S_i \sim \TWis{\lambda}{\phi}{y_i},\; \forall \; i \; \in \; [m].$$
Define the random variable $\bm S$ as: 
%\textcolor{red}{Please check in the proof the R and S too. I think you have used S in the next steps.}
\begin{align*}
\bm S & = \sum_{i=1}^m \bm S_i.
\end{align*}
Let $H_{\lambda,\phi, \bm y}$ be the density of the random matrix $\bm S$. 
\begin{lemma} \label{Nr_representation_formula}
	For any $\bm y \in \mathbb R^m$ and any $2 \times 2$ positive definite Hermitian matrix $\bm Q$,  we have, 
	\begin{align*}
	\Nr(\bm y, \bm Q) & = \frac{\pi (m-1)! (m-2)!}{m^{2m-2} \cdot \det(\bm Q)^{m-2}} \cdot e^{m (1-\lambda)\Tr(\bm Q) - m \phi \Re(Q_{12})} \cdot \left( \prod_{i=1}^m \ZTWis{\lambda}{\phi}{y_i} \right) \cdot  H_{\lambda,\phi, \bm y}(m\bm Q).
	\end{align*}
\end{lemma}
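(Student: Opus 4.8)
The plan is to follow the template of the proof of Lemma~\ref{Dr_representation_formula}, promoting the scalar exponential tilting there to a matrix-valued tilting and the Gamma density to the complex Wishart density. First I would realize the conditioning variable $\bm G^\UH\bm G$ as a sum of i.i.d.\ rank-one Hermitian matrices: write $\bm g_i\in\mathbb C^2$ for the $i$-th column of $\bm G^\UH$ (equivalently, the conjugate transpose of the $i$-th row of $\bm G$), so that $\bm g_1,\dots,\bm g_m\explain{i.i.d.}{\sim}\cgauss{\bm 0}{\bm I_2}$ and $\bm S\Mydef\sum_{i=1}^m\bm g_i\bm g_i^\UH=\bm G^\UH\bm G\sim\Wis{m}{2}$. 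Each summand $\bm S_i\Mydef\bm g_i\bm g_i^\UH$ is rank one, hence of the form $\bm S_i=\begin{bmatrix} s_i & \sqrt{s_is_i'}\,e^{\i\theta_i}\\ \sqrt{s_is_i'}\,e^{-\i\theta_i} & s_i'\end{bmatrix}$ for a triple $(s_i,s_i',\theta_i)\in[0,\infty)^2\times(-\pi,\pi]$, and under the base law these triples are mutually independent with $s_i,s_i'\explain{i.i.d.}{\sim}\Exp{1}$ and $\theta_i\sim\unif{(-\pi,\pi]}$ (the ``overall phase'' of $\bm g_i$, the one degree of freedom not recorded by the triple, is uniform, independent of everything else, and does not enter $\bm g_i\bm g_i^\UH$). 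Two observations make the reduction go through: the integrand defining $\Nr$ equals $e^{T}$ with $T\Mydef\sum_{i=1}^m\left(\ln\gpdf(y_i-s_i)+\ln\gpdf(y_i-s_i')\right)$, and the conditioning event $\bm G^\UH\bm G=m\bm Q$ is exactly the event $\bm S=m\bm Q$; both are functions of $(s_i,s_i',\theta_i)_{i=1}^m$ only, so one may work entirely in the reduced parametrization.

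Next I would carry out the exponential change of measure, in analogy with the two ``setups'' in the proof of Lemma~\ref{Dr_representation_formula}. Consider two laws for $(s_i,s_i',\theta_i)_{i=1}^m$: the base law above, and the second law under which the $\bm S_i$ are independent with $\bm S_i\sim\TWis{\lambda}{\phi}{y_i}$. Comparing the per-coordinate densities in Definition~\ref{titled_wishart_definition} with the base density $(2\pi)^{-1}e^{-s-s'}$ of a triple $(s,s',\theta)$, the Radon--Nikodym derivative of the second law with respect to the first is $\left(\prod_{i=1}^m\ZTWis{\lambda}{\phi}{y_i}\right)^{-1}\exp\!\left(\sum_{i=1}^m\left(\lambda\,\Tr(\bm S_i)+\phi\,\Re((\bm S_i)_{12})+\ln\gpdf(y_i-s_i)+\ln\gpdf(y_i-s_i')\right)\right)$, which is precisely $\left(\prod_{i=1}^m\ZTWis{\lambda}{\phi}{y_i}\right)^{-1}\exp\!\left(\lambda\,\Tr(\bm S)+\phi\,\Re(S_{12})+T\right)$ --- a function of the pair $(\bm S,T)$ alone. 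Writing $G(\bm s,t)$ and $H_{\lambda,\phi,\bm y}(\bm s,t)$ for the joint densities of $(\bm S,T)$ under the base law and under the second law, this identity of Radon--Nikodym derivatives pushes forward to $H_{\lambda,\phi,\bm y}(\bm s,t)=\left(\prod_i\ZTWis{\lambda}{\phi}{y_i}\right)^{-1}\exp\!\left(\lambda\,\Tr(\bm s)+\phi\,\Re(s_{12})+t\right)G(\bm s,t)$, and integrating out $t$ recovers the marginal density $H_{\lambda,\phi,\bm y}(\bm s)$ of $\bm S=\sum_i\bm S_i$ with $\bm S_i\sim\TWis{\lambda}{\phi}{y_i}$, which is the object appearing in the lemma.

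To conclude I would write $\Nr(\bm y,\bm Q)=\E_{\mathrm{base}}\!\left[\,e^{T}\mid\bm S=m\bm Q\,\right]=G(m\bm Q)^{-1}\int e^{t}\,G(m\bm Q,t)\,\diff t$, substitute $G(m\bm Q,t)=\left(\prod_i\ZTWis{\lambda}{\phi}{y_i}\right)\exp\!\left(-\lambda\,\Tr(m\bm Q)-\phi\,\Re(mQ_{12})-t\right)H_{\lambda,\phi,\bm y}(m\bm Q,t)$, and integrate in $t$: this produces $H_{\lambda,\phi,\bm y}(m\bm Q)$ times the prefactor $\left(\prod_i\ZTWis{\lambda}{\phi}{y_i}\right)e^{-m\lambda\,\Tr(\bm Q)-m\phi\,\Re(Q_{12})}$. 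Finally $G(m\bm Q)$ is the $\Wis{m}{2}$ density evaluated at $m\bm Q$, which by the list of special distributions equals $\det(m\bm Q)^{m-2}e^{-\Tr(m\bm Q)}\big/\big(\pi(m-1)!(m-2)!\big)$; inserting this, using $\det(m\bm Q)=m^2\det(\bm Q)$ and $\Tr(m\bm Q)=m\,\Tr(\bm Q)$, and noting that the $e^{m\Tr(\bm Q)}$ from $G(m\bm Q)^{-1}$ combines with $e^{-m\lambda\Tr(\bm Q)}$ to give $e^{m(1-\lambda)\Tr(\bm Q)}$, yields the stated identity --- the polynomial prefactor being supplied verbatim by the Wishart normalizing constant and the power of $m$ by $\det(m\bm Q)$. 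In contrast to Corollary~\ref{local_clt_denom_corollary}, this lemma is an exact identity, so no Stirling estimate or local CLT is used at this stage; those enter only afterwards, when one evaluates $H_{\lambda,\phi,\bm y}(m\bm Q)$ via Proposition~\ref{local_clt_num}.

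The step I expect to demand the most care is the matrix-valued change of measure together with its measure-theoretic underpinnings: one must verify that the $m$-fold convolution of the rank-one laws $\TWis{\lambda}{\phi}{y_i}$ is absolutely continuous on the four-dimensional cone of positive definite $2\times2$ Hermitian matrices (true once $m\ge2$, by a non-degeneracy argument for the map $(s_i,s_i',\theta_i)_{i=1}^m\mapsto(\bm S,T)$), that the joint density $G(\bm s,t)$ is well defined, and that conditioning on $\bm G^\UH\bm G=m\bm Q$ genuinely coincides with conditioning on $\bm S=m\bm Q$ in the reduced coordinates --- which it does because the overall phases of the $\bm g_i$ are independent of $(\bm S,T)$ and may be discarded. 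Apart from these points, the computation is the faithful complex-Wishart, $2\times2$ analogue of the one-dimensional Gamma-distributed computation carried out in Lemma~\ref{Dr_representation_formula}.
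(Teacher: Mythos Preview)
Your proposal is correct and follows essentially the same approach as the paper: both set up two laws (base complex-Gaussian versus tilted Wishart) on the triples $(s_i,s_i',\theta_i)$, observe that the Radon--Nikodym derivative depends only on $(\bm S,T)$, push this forward to the joint densities, and then evaluate the base marginal at $m\bm Q$ via the complex Wishart formula. Your remarks on absolute continuity and on discarding the overall-phase degree of freedom are more explicit than what the paper writes, but the computation is otherwise identical.
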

\begin{proof}
	Let us index the entries of $\bm S_k, \; k \; \in [m]$ as follows: 
	\begin{align*}
	\bm S_{k} & = \begin{bmatrix} s_k & \sqrt{s_k s_{k}^\prime} e^{\i \theta_k} \\ \sqrt{s_k s_{k}^\prime} e^{-\i \theta_k} & r_k^\prime \end{bmatrix}
	\end{align*}
	Define the random variables:
	\begin{align*}
	\bm S = \sum_{k=1}^m \bm S_k, \; T  = \sum_{k=1}^m \ln \gpdf(y_k-r_k) + \ln \gpdf(y_k - r_k^\prime).
	\end{align*}
	Consider two possible probability distributions for $\bm S,T$: 
	\begin{description}
		\item [Setup 1:] $\bm S_k = \bm g_k \bm g_k^\UH$ where $\bm g_k \sim \cgauss{\bm 0_}{\bm I_2}$. Equivalently,  $s_i$ and $s_i^\prime$ are i.i.d. $\Exp{1}$ and $\theta_i$ are i.i.d. $\text{Unif}(-\pi,\pi]$. Let $H(\cdot,\cdot)$ be the joint pdf of $\bm S, T$ in this setup.
		\item [Setup 2: ] $\bm S_k$ are independent and distributed as $\bm S_k \sim \TWis{\lambda}{\phi}{y_k}$. Let $H_{\lambda, \phi, \bm y}(\cdot,\cdot)$ denote the joint pdf of $\bm S,T$ in this setup.  
	\end{description}
	We can compute $H_{\lambda, \phi,\bm y}$ in terms of $G$ as follows: 
	\begin{align*}
	H_{\lambda,\phi, \bm y}(\bm S,T) & = \frac{\exp(T+\lambda \cdot \Tr(\bm S) + \phi \cdot  \Re(S_{12}))}{\prod_{i=1}^m \ZTWis{\lambda}{\phi}{y_i}} \cdot H(\bm S,T).
	\end{align*}
	Let $H(\cdot |\bm S)$ denote the conditional density of $T$ given $\bm S$ and $H(\bm S)$ denote the marginal density of $\bm S$ under Setup 1. Analogously define $H_{\lambda, \phi, \bm y}(\cdot |\bm S)$ and $H_{\lambda, \phi,\bm y}(\bm S)$ under Setup 2. We can then compute $\Nr(\bm y, \bm Q)$ as follows: 
	\begin{align*}
	\Nr(\bm y, \bm Q) & \Mydef \E \left[ \prod_{i=1}^m \phi_{\sigma}(y_i - | G_{1i}|^2 ) \phi_{\sigma}(y_i -  |G_{2i}|^2 ) \bigg| \bm G ^\UH \bm G = m {\bm Q} \right] \\ 
	& \explain{}{=} \E \left[ \exp \left( \sum_{i=1}^m \ln \phi_{\sigma}(y_i -  |g_{1i}|^2) + \ln \phi_{\sigma}(y_i -  |g_{2i}|^2)  \right) \bigg| \bm G^\UH \bm G = m \bm Q \right] \\
	& \explain{(a)}{=}  \int e^{t} H(t|m\bm Q) \diff t \\
	& = \frac{e^{-m \lambda \Tr(\bm Q) - m\phi \Re(Q_{12})}}{H(m \bm Q)} \int e^{t +m \lambda \Tr(\bm Q) + m\phi \Re(Q_{12})} H(m \bm Q,t) \diff t \\
	& = \frac{e^{-m \lambda \Tr(\bm Q) - m\phi \Re(Q_{12})}}{H(m \bm Q)} \cdot \prod_{i=1}^m \ZTWis{\lambda}{\phi}{y_i} \cdot  \int H_{\lambda,\phi, \bm y}(m\bm Q,t) \diff t \\
	& =\frac{e^{-m \lambda \Tr(\bm Q) - m\phi \Re(Q_{12})}}{H(m \bm Q)} \cdot \prod_{i=1}^m \ZTWis{\lambda}{\phi}{y_i} \cdot  H_{\lambda,\phi, \bm y}(m\bm Q) \\
	& \explain{(b)}{=} \frac{\pi (m-1)! (m-2)!}{m^{2m-2}\cdot \det(\bm Q)^{m-2}}  \cdot e^{m (1-\lambda)\Tr(\bm Q) - m \phi \Re(Q_{12})} \cdot \left( \prod_{i=1}^m \ZTWis{\lambda}{\phi}{y_i} \right) \cdot  H_{\lambda,\phi, \bm y}(m\bm Q).
	\end{align*}
	In the step marked (a), we used the fact that under Setup 1, we have
	\begin{align*}
	(\bm S, T) & \explain{d}{=} \left(\bm G^\UH \bm G, \;  \sum_{i=1}^m \ln \phi_{\sigma}(y_i -  |g_{1i}|^2) + \ln \phi_{\sigma}(y_i -  |g_{2i}|^2) \right).
	\end{align*}
	In the step marked (b), we used the fact that under Setup 1, $\bm S$ is distributed as a complex Wishart random matrix and hence,
	\begin{align*}
	H(m \bm Q) & = \frac{1}{\pi} \cdot \frac{m^{2m-2}}{(m-1)! (m-2)!} \cdot \exp(-m \Tr(\bm Q)) \cdot \det(\bm Q)^{m-2}.
	\end{align*} 
	This concludes the proof of the lemma.
\end{proof}
Next, we will use a local central limit theorem to characterize the asymptotics of $H_{\lambda,\phi,\bm y}(m \bm Q)$. Note that Lemma \ref{Nr_representation_formula} holds for any $\lambda,\phi \in \mathbb R$. We will set $\lambda = \hat{\lambda}_2(q; \sigma), \phi = \hat{\phi}(q; \sigma)$, where
\begin{align}
\label{Xi2_argmax}
	(\hat{\lambda}_2(q; \sigma), \hat{\phi}(q; \sigma)) & \Mydef \arg \max_{(\lambda,\phi) \in \mathbb{R}} \left( 2  \lambda + q \phi -\hatE_Y  
\ln \ZTWis{\lambda}{\phi}{Y} \right).
\end{align} 
We also define
\begin{align}
\label{Xi2_max}
\hat{\Xi}_2(q;\sigma) & \Mydef \max_{(\lambda,\phi) \in \mathbb{R}} \left( 2  \lambda + q \phi -\hatE_Y  
\ln \ZTWis{\lambda}{\phi}{Y} \right).
\end{align}
The rational behind this choice of $\lambda,\phi$ is that the first order optimality conditions for the above concave variational problem give us:
\begin{align*}
2  & = \frac{1}{m} \sum_{i=1}^m \frac{\partial_{\lambda}\ZTWis{\hat{\lambda}_2(q; \sigma)} {\hat{\phi}(q; \sigma)}{y_i}}{\ZTWis{\hat{\lambda}_2(q; \sigma)} {\hat{\phi}(q; \sigma)}{y_i}}  \explain{(a)}{=} \frac{1}{m} \sum_{i=1}^m \E (s_i + s_i^\prime) \\
q & = \frac{1}{m} \sum_{i=1}^m \frac{\partial_{\phi}\ZTWis{\hat{\lambda}_2(q; \sigma)}{\hat{\phi}(q; \sigma)}{y_i}}{\ZTWis{\hat{\lambda}_2(q; \sigma)} {\hat{\phi}(q; \sigma)}{y_i}}  \explain{(a)}{=} \frac{1}{m} \sum_{i=1}^m \E \sqrt{s_i s_i^\prime} \cos(\theta_i). \\
\end{align*}
In the steps marked (a), we used the formula for the normalizing constant $\ZTWis{\lambda}{\phi}{y}$, given in Definition \ref{titled_wishart_definition}, to compute the partial derivatives. It is also clear by the symmetry of Definition \ref{titled_wishart_definition} that: 
\begin{align*}
\E s_i = \E s_i^\prime, \; \E \sqrt{s_i s_i^\prime} \sin(\theta) = 0.
\end{align*}
Hence, the first order optimality conditions imply: 
\begin{align*}
\E \bm S &  =  \sum_{i=1}^m \E \bm S_i = m \bm Q.
\end{align*}
By the Multivariate Central Limit Theorem, we expect that $m^{-\frac{1}{2}} \cdot (\bm S - m\bm Q)$ to be asymptotically Gaussian. We also define the covariance matrix of $m^{-\frac{1}{2}} \cdot (\bm S - m\bm Q)$ as $\hat{\bm V}(q;\sigma)$:
\begin{align}
\label{numer_localclt_variance}
\hat{\bm V}(q; \sigma) & \Mydef \frac{\E \vec{\bm S - \E \bm S} \vec{\bm S - \E \bm S}^\UH}{m} = \hatE \VTWis{\hat{\lambda}_2(\bm Q; \sigma)}{ \hat{\phi}(\bm Q; \sigma)}{Y}.
\end{align}
By the CLT, we expect
\begin{align*}
m^{-\frac{1}{2}} \cdot \vec{ \bm S - m\bm Q} & \approx \gauss{\bm 0}{\hat{\bm V}(q; \sigma)}.
\end{align*}
Hence,
\begin{align*}
 H_{\hat{\lambda}_2(q;\sigma),\hat{\phi}(q; \sigma), \bm y}(m \bm Q) & \approx \frac{1}{\sqrt{(2\pi m)^4 \det(\hat{\bm V}(q; \sigma))}}.
\end{align*}
The following proposition makes this argument rigorous. 
\begin{restatable}[A Local Central Limit Theorem]{proposition}{localcltnum} \label{local_clt_num} Suppose that there exists a constant $0<K< \infty$ such that:
	\begin{align*}
	|\hat{\lambda}_2(q;\sigma)| + |\hat{\phi}(q; \sigma)| \leq K, \; \hatE_Y |Y|^{40} \leq K, \; \frac{1}{K}  \leq \lambda_{\min}\left(\hat{\bm V}(q;\sigma) \right)  \leq \lambda_{\max}\left(\hat{\bm V}(q;\sigma)\right) \leq K.
	\end{align*}
	Then, there exists a constant $C(K)$, depending only on $K$ such that we have the following asymptotic expansion for  $H_{\hat{\lambda}_2(q;\sigma),\hat{\phi}(q; \sigma), \bm y}$: 
	\begin{align*}
	\left| H_{\hat{\lambda}_2(q;\sigma),\hat{\phi}(q; \sigma), \bm y}- \frac{1}{\sqrt{(2\pi m)^4 \det(\hat{\bm V}(q; \sigma))}}\right|  \leq \frac{C(K)\ln^5(m)}{m^2\sqrt{m}}.
	\end{align*}
\end{restatable}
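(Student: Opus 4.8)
The plan is to run the characteristic-function proof of a local central limit theorem, in the spirit of Proposition \ref{local_clt_denom} but now in the four real dimensions parametrized by $\vec{\bm S} = (S_{11},S_{22},\Re S_{12},\Im S_{12})$. The new difficulty compared to Proposition \ref{local_clt_denom} is that each summand $\bm S_i\sim\TWis{\hat\lambda_2(q;\sigma)}{\hat\phi(q;\sigma)}{y_i}$ is, by Definition \ref{titled_wishart_definition}, supported on the three-dimensional cone $\{\vec{\bm S}:S_{11},S_{22}\ge 0,\ (\Re S_{12})^2+(\Im S_{12})^2 = S_{11}S_{22}\}$, so the individual characteristic functions do not decay at infinity and only a sum of several summands possesses a density on $\mathbb{R}^4$.

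First I would invert the Fourier transform of $\bm S = \sum_{i=1}^m\bm S_i$: once $\prod_i\varphi_i\in L^1(\mathbb{R}^4)$ is verified (part of region (iii) below), where $\varphi_i$ is the characteristic function of $\vec{\bm S_i}$, we have
\[
H_{\hat\lambda_2(q;\sigma),\hat\phi(q;\sigma),\bm y}(m\bm Q) = \frac{1}{(2\pi)^4}\int_{\mathbb{R}^4} e^{-\i\langle\bm t,\,m\vec{\bm Q}\rangle}\prod_{i=1}^m\varphi_i(\bm t)\,\diff\bm t .
\]
Writing $\varphi_i(\bm t) = e^{\i\langle\bm t,\E\vec{\bm S_i}\rangle}\widetilde\varphi_i(\bm t)$ with $\widetilde\varphi_i$ the characteristic function of the centered vector $\vec{\bm S_i}-\E\vec{\bm S_i}$, and using the first-order optimality conditions of \eqref{Xi2_argmax}--\eqref{Xi2_max}, which give $\sum_i\E\vec{\bm S_i} = m\vec{\bm Q}$ (exactly the computation displayed just before this proposition), the linear phase cancels the centering. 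Substituting $\bm t = \bm s/\sqrt m$ then yields
\[
H_{\hat\lambda_2(q;\sigma),\hat\phi(q;\sigma),\bm y}(m\bm Q) = \frac{1}{(2\pi)^4 m^2}\int_{\mathbb{R}^4}\prod_{i=1}^m\widetilde\varphi_i\!\left(\tfrac{\bm s}{\sqrt m}\right)\diff\bm s ,
\]
and since $\tfrac{1}{(2\pi)^4 m^2}\int_{\mathbb{R}^4}e^{-\frac12\bm s^\UH\hat{\bm V}(q;\sigma)\bm s}\,\diff\bm s = \tfrac{1}{\sqrt{(2\pi m)^4\det\hat{\bm V}(q;\sigma)}}$, it remains to bound $\int_{\mathbb{R}^4}\big|\prod_i\widetilde\varphi_i(\bm s/\sqrt m) - e^{-\frac12\bm s^\UH\hat{\bm V}(q;\sigma)\bm s}\big|\,\diff\bm s$.

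I would split $\mathbb{R}^4$ into three regions. (i) For $\|\bm s\|\le R_m := C_1\sqrt{\ln m}$, Taylor-expanding $\log\widetilde\varphi_i$ to third order and summing gives $\sum_i\log\widetilde\varphi_i(\bm s/\sqrt m) = -\tfrac12\bm s^\UH\hat{\bm V}(q;\sigma)\bm s + O(\|\bm s\|^3/\sqrt m)$, the error controlled by $\tfrac1m\sum_i\E\|\vec{\bm S_i}-\E\vec{\bm S_i}\|^3$, which is bounded by the hypotheses on $\hat\lambda_2,\hat\phi$ and on the moments of $Y$ (only a few moments are needed here); exponentiating, integrating, and extending to all of $\mathbb{R}^4$ at cost $e^{-cR_m^2}$ (using $\hat{\bm V}(q;\sigma)\succeq K^{-1}\bm I$) recovers the leading term with a relative error that is polylogarithmic over $\sqrt m$. (ii) For $R_m\le\|\bm s\|\le\epsilon\sqrt m$, since $\hat{\bm V}(q;\sigma) = \tfrac1m\sum_i\VTWis{\hat\lambda_2}{\hat\phi}{y_i}$, a second-order expansion gives $\prod_i|\widetilde\varphi_i(\bm s/\sqrt m)| \le \exp\!\big(-c\,\bm s^\UH\hat{\bm V}(q;\sigma)\bm s\big)\le e^{-c\|\bm s\|^2/K}$ for $\epsilon$ small (constants uniform by the parameter and moment bounds), so this region contributes $O(e^{-cR_m^2/K})$, negligible once $C_1$ is large. (iii) For $\|\bm s\|\ge\epsilon\sqrt m$, equivalently $\|\bm t\|\ge\epsilon$, I would show $\int_{\|\bm t\|\ge\epsilon}\prod_i|\varphi_i(\bm t)|\,\diff\bm t\le C\rho^m$ for some $\rho<1$: the hypothesis $\hatE_Y|Y|^{40}\le K$ is used via Markov's inequality to guarantee that at least $m/2$ of the indices satisfy $|y_i|\le L$ for a suitable constant $L$; for these "moderate" indices the triple $(\hat\lambda_2,\hat\phi,y_i)$ lies in a fixed compact set, so a compactness-continuity argument gives $\sup_{\|\bm t\|\ge\epsilon}|\varphi_i(\bm t)|\le\bar\rho(L)<1$ uniformly, while the smoothness and rapid decay of the $\TWis{\hat\lambda_2}{\hat\phi}{y_i}$ density (forced by its Gaussian factors) make the product of a fixed number of moderate factors integrable on $\mathbb{R}^4$; combining these yields the exponential bound, which also legitimizes the Fourier inversion used above.

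The hard part will be region (iii). Because each $\bm S_i$ is supported on a three-dimensional cone, $|\varphi_i(\bm t)|$ does not tend to $0$ as $\|\bm t\|\to\infty$, and for large $y_i$ the law $\TWis{\hat\lambda_2}{\hat\phi}{y_i}$ concentrates near a single rank-one matrix, so $\sup_{\|\bm t\|\ge\epsilon}|\varphi_i(\bm t)|$ can be arbitrarily close to $1$ and there is no bound uniform in $i$; this is precisely why the strong moment hypothesis on $Y$ enters — it ensures a definite fraction of the summands are well-behaved, which suffices to kill the Fourier tail. Assembling (i)--(iii) and choosing $C_1$ (hence $R_m$) appropriately produces the stated error $C(K)\ln^5(m)/(m^2\sqrt m)$, the logarithmic power being an artifact of the cutoff radius $R_m$ together with the four-dimensional Gaussian tails.
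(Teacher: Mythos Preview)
Your overall architecture (Fourier inversion, centering via the first-order optimality conditions, three-region split) matches the paper's, and your handling of regions (i)--(ii) is essentially the same (the paper invokes a Berry--Esseen bound in region (i), but this is interchangeable with your direct third-order expansion). The divergence is in region (iii), and there you build on a factual error.

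You assert that because each $\bm S_i$ is supported on a three-dimensional cone, the individual characteristic functions $|\varphi_i(\bm t)|$ do not tend to zero as $\|\bm t\|\to\infty$. This is false: Lemma~\ref{tilted_wishart_properties}\,(5) in the paper proves
\[
\bigl|\E e^{\i\ip{\bm t}{\vec{\bm S_i}}}\bigr| \;\le\; \frac{C\,(1+|\hat\lambda_2|^{20}+|\hat\phi|^{20}+|y_i|^{20})}{\|\bm t\|^{1/3}} .
\]
The cone is a quadric hypersurface with enough curvature that an integration-by-parts argument (after diagonalizing $\bm t$) yields polynomial decay. The paper's treatment of region (iii) then uses AM--GM to bound $|\prod_i\varphi_i|^2 \le (\tfrac1m\sum_i|\varphi_i|^2)^m$, controls the average via this decay estimate together with $\hatE_Y|Y|^{40}\le K$ (this is exactly where the high moment enters), and feeds the result into Lemma~\ref{cf_bound_requirement}. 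No Markov-based extraction of ``moderate'' indices is used.

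Your softer compactness route is not absurd, but the justification you give for integrability of a fixed-size product --- ``smoothness and rapid decay of the $\TWis{\cdot}{\cdot}{\cdot}$ density forced by its Gaussian factors'' --- does not work as stated: a smooth, rapidly decaying density supported on a lower-dimensional submanifold does \emph{not} produce an $L^1(\mathbb{R}^4)$ Fourier transform (think of a density on a hyperplane). What would make your step go through is precisely the individual decay $\|\bm t\|^{-1/3}$ you deny, after which a product of at least $13$ moderate factors decays faster than $\|\bm t\|^{-4}$ and is integrable. Likewise, your compactness bound $\sup_{\|\bm t\|\ge\epsilon}|\varphi_i|\le\bar\rho<1$ quietly needs $|\varphi_i(\bm t)|\to 0$ at infinity to pass from pointwise $|\varphi_i|<1$ to a uniform bound. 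So the technical heart --- the decay of $\varphi_i$ --- is unavoidable, and without it your region (iii) has a genuine gap.
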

The proof of this proposition appears in Appendix \ref{local_clt_num_appendix} and closely follows classical proofs of local central limit theorems based on characteristic functions (see for example, \citet[Chapter 16]{feller2008introduction}).
We conclude our analysis of $\Nr$ with the following upper bound on $\Nr$ which is a straightforward corollary of Lemma \ref{Nr_representation_formula} and Proposition \ref{local_clt_num}.
\begin{corollary} \label{local_clt_num_corollary} Under the assumptions of Proposition 2, there exists $M(K) \in \mathbb N$ depending only on $K$ such that
	\begin{align*}
	\Nr(\bm y, \bm Q) & \leq \frac{C(K)}{m^2\cdot (1-q^2)^{m-2}}  \cdot  \exp \left(- m \cdot \hat{\Xi}_2(q;\sigma)  \right),
	\end{align*}
	for all $m \geq M(K)$.
\end{corollary}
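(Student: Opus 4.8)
The plan is to read this bound straight off the exact representation in Lemma~\ref{Nr_representation_formula} combined with the local central limit estimate of Proposition~\ref{local_clt_num}, choosing the free tilting parameters $(\lambda,\phi)$ in Lemma~\ref{Nr_representation_formula} to be the maximizers $(\hat{\lambda}_2(q;\sigma),\hat{\phi}(q;\sigma))$ from \eqref{Xi2_argmax}. Specializing Lemma~\ref{Nr_representation_formula} to $\bm Q=\begin{bmatrix} 1 & q \\ q & 1\end{bmatrix}$, for which $\Tr(\bm Q)=2$, $\Re(Q_{12})=q$ and $\det(\bm Q)=1-q^2$, and abbreviating $\hat{\lambda}_2=\hat{\lambda}_2(q;\sigma)$, $\hat{\phi}=\hat{\phi}(q;\sigma)$, one obtains
\begin{align*}
\Nr(\bm y,\bm Q) & = \frac{\pi(m-1)!(m-2)!}{m^{2m-2}(1-q^2)^{m-2}}\cdot e^{2m(1-\hat{\lambda}_2)-m\hat{\phi} q}\cdot \Big(\prod_{i=1}^m \ZTWis{\hat{\lambda}_2}{\hat{\phi}}{y_i}\Big)\cdot H_{\hat{\lambda}_2,\hat{\phi},\bm y}(m\bm Q).
\end{align*}
It then remains to estimate the three factors other than $(1-q^2)^{-(m-2)}$.

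For the factorial prefactor I would use Stirling's approximation exactly as in the proof of Corollary~\ref{local_clt_denom_corollary}: one has $(m-1)!(m-2)!\le C\,e^{-2m}m^{2m-2}$ for a universal constant $C$, so $\pi(m-1)!(m-2)!/m^{2m-2}\le Ce^{-2m}$, and after multiplying by $e^{2m(1-\hat{\lambda}_2)}$ this contributes only $Ce^{-2m\hat{\lambda}_2}$. For the product of normalizing constants, I would write, using the notation \eqref{hatE_notation},
\begin{align*}
\prod_{i=1}^m \ZTWis{\hat{\lambda}_2}{\hat{\phi}}{y_i} & = \exp\Big(m\,\hatE_Y\ln\ZTWis{\hat{\lambda}_2}{\hat{\phi}}{Y}\Big),
\end{align*}
so that the three exponential contributions combine into
\begin{align*}
\exp\Big(-m\big(2\hat{\lambda}_2+q\hat{\phi}-\hatE_Y\ln\ZTWis{\hat{\lambda}_2}{\hat{\phi}}{Y}\big)\Big) & = e^{-m\hat{\Xi}_2(q;\sigma)},
\end{align*}
by the definitions \eqref{Xi2_argmax}--\eqref{Xi2_max} of $(\hat{\lambda}_2,\hat{\phi})$ and $\hat{\Xi}_2$. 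Finally, for $H_{\hat{\lambda}_2,\hat{\phi},\bm y}(m\bm Q)$, Proposition~\ref{local_clt_num} identifies it with $\big((2\pi m)^4\det\hat{\bm V}(q;\sigma)\big)^{-1/2}$ up to an error of order $C(K)\ln^5(m)/(m^2\sqrt m)$; since $\hat{\bm V}(q;\sigma)$ is a $4\times4$ matrix with $\lambda_{\min}\ge 1/K$ we have $\det\hat{\bm V}(q;\sigma)\ge K^{-4}$, hence both the leading term and the error are $\le C(K)/m^2$ and so $H_{\hat{\lambda}_2,\hat{\phi},\bm y}(m\bm Q)\le C(K)/m^2$ for all $m$ beyond some $M(K)$.

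Assembling the three estimates yields $\Nr(\bm y,\bm Q)\le \frac{C(K)}{m^2(1-q^2)^{m-2}}\,e^{-m\hat{\Xi}_2(q;\sigma)}$, which is the claim. Since Lemma~\ref{Nr_representation_formula} and Proposition~\ref{local_clt_num} are already available, this argument is essentially bookkeeping; the only points that need a little care are tracking the polynomial-in-$m$ factors through Stirling so that precisely $m^{-2}$ (and nothing worse) survives, and confirming that the local-CLT error term is genuinely negligible against the $(2\pi m)^{-2}$-order leading term, which follows immediately from the eigenvalue bound on $\hat{\bm V}(q;\sigma)$. The substantive work — verifying that the hypotheses of Proposition~\ref{local_clt_num} (boundedness of $\hat{\lambda}_2,\hat{\phi}$, moment control on $Y$, and spectral control on $\hat{\bm V}$) actually hold over the relevant ranges of $q$ and $\sigma$ — lies outside this corollary and would be handled separately.
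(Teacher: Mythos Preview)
Your proposal is correct and follows essentially the same approach as the paper: both apply Lemma~\ref{Nr_representation_formula} with the optimal tilting parameters $(\hat{\lambda}_2,\hat{\phi})$, use Stirling's approximation on the factorial prefactor, invoke Proposition~\ref{local_clt_num} together with the eigenvalue bound $\lambda_{\min}(\hat{\bm V})\ge 1/K$ to get $H_{\hat{\lambda}_2,\hat{\phi},\bm y}(m\bm Q)\le C(K)/m^2$, and then identify the combined exponential factor as $e^{-m\hat{\Xi}_2(q;\sigma)}$. Your write-up is in fact slightly more explicit than the paper's in tracking how the three exponential contributions collapse to $\hat{\Xi}_2$.
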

\begin{proof}
	From Lemma 5, we know that
	\begin{align*}
	\Nr(\bm y, \bm Q) & = \frac{\pi (m-1)! (m-2)!}{m^{2m-2}} \cdot \det(\bm Q)^{m-2} \cdot e^{m (1-\lambda)\Tr(\bm Q) - m \phi \Re(Q_{12})} \cdot \left( \prod_{i=1}^m \ZTWis{\lambda}{\phi}{y_i} \right) \cdot  H_{\lambda,\phi, \bm y}(m\bm Q). 
	\end{align*}
	In Proposition \ref{local_clt_num}, we obtained the bound
	\begin{align*}
	\left|	H_{\hat{\lambda}, \hat{\phi}, \bm y}(m \bm Q) - \frac{1}{\sqrt{(2\pi m)^4 \det(\hat{\bm V}(q; \sigma))}}\right|  \leq \frac{C(K)\ln^5(m)}{m^2\sqrt{m}}.
	\end{align*}
	Note that under the assumptions of Proposition \ref{local_clt_num}, we have
	\begin{align*}
	\det(\hat{\bm V}(q; \sigma)) & \geq \lambda^4_{\min}(\hat{\bm V}(q; \sigma)) \geq \frac{1}{K^4}.
	\end{align*}
	This tells us, that there is a $M(K) \in \mathbb N$ depending only on $K$, such that, 
	\begin{align*}
	H_{\hat{\lambda}, \hat{\phi}, \bm y}(m \bm Q) & \leq \frac{C(K)}{m^2}, \; \forall m \geq M(K).
	\end{align*}
	By Stirling's approximation, we have 
	\begin{align*}
	\frac{\pi (m-1)! (m-2)!}{m^{2m-2}} & \leq \frac{\pi e^5}{e^{2m}}.
	\end{align*}
	These estimates give us the upper bound: 
	\begin{align*}
	\Nr(\bm y, \bm Q) & \leq \frac{C(K)e^{m(\Tr(\bm Q)-2)}}{m^2 \cdot \det(\bm Q)^{m-2}}  \cdot  \exp \left(- m \max_{(\lambda,\phi) \in \mathbb{R}} \left( \lambda \Tr(\bm Q) + \phi \Re(Q_{12}) - \frac{1}{m} \sum_{i=1}^m  
	\ln \ZTWis{\lambda}{\phi}{y_i} \right)  \right),
	\end{align*}
	for all $m \geq M(K)$.	
	Recalling the definition of $\hat{\Xi}_2(q;\sigma)$ (See \eqref{Xi2_argmax}) and the form of the matrix $\bm Q$ (see  \eqref{Q_form}) gives us the claim of the corollary.
\end{proof}
	\section{The Stochastic Laplace Method} \label{section_stochastic_laplace}
	Recall that in Lemmas \ref{second_moment_conditional} and \ref{introduce_main_integrals_lemma} we have shown the following upper bound on $\MI{\bm y, \bm z}{\bm A, \bm W}$:
	\begin{align*}
	\MI{\bm y, \bm z}{\bm A, \bm W} & \leq  \frac{2}{n-1} \E_{\bm y} \left[ \frac{ \int_0^1 \Nr \left(\bm y, \begin{bmatrix} 1 & q \\ q & 1 \end{bmatrix}\right) \cdot \frac{q \cdot (1-q^2)^{n-2}}{(1-q^2/2)^{\Delta m}} \diff q}{\Dr^2(\bm y)} \cdot  \mathbf{1}_{\mathcal{E}_m}\right] -1  + C \cdot m \cdot \sqrt{\P(\mathcal{E}_m^c)},
	\end{align*}
	where $\mathcal{E}_m$ is an arbitrary event depending on $\bm y$ and the functions $\Nr,\Dr$ were defined in \eqref{Nr_integral_def_eq} and \eqref{Dr_integral_def_eq}. Let us for the moment, also assume that the conditions required for Corollary \ref{local_clt_denom_corollary} and \ref{local_clt_num_corollary} are met. Then, tracking only the exponential order terms, we obtain,
	\begin{align}
	\label{laplace_integral}
	\MI{\bm y, \bm z}{\bm A, \bm W} & \lessapprox \E_{\bm y} \left[ \int_0^1 e^{-m \cdot \CalFhat(q; \delta, \Delta,\sigma) } \diff q \cdot \mathbf{1}_{\mathcal{E}_m}\right],
	\end{align}
	where,
	\begin{align}
	\label{calF_empirical_definition}
	\CalFhat(q; \delta, \Delta,\sigma) & = \hat{\Xi}_2(q;\sigma) - 2\hat\Xi_1(\sigma)  + \left( 1 - \frac{1}{\delta} \right) \ln(1-q^2) + \Delta \ln \left( 1- \frac{q^2}{2} \right).
	\end{align}
	Our goal will be to evaluate the integral in \eqref{laplace_integral} via the Laplace Method. However, we observe that the function $\CalFhat(q; \delta, \Delta,\sigma)$ is stochastic since it depends on the empirical distribution of the phase retrieval observations $\bm y$. It turns out that $\hat{\Xi}_2(q;\sigma)$, defined in \eqref{Xi2_argmax}, and $\hat\Xi_1(\sigma)$, defined in \eqref{xi_1_problem_def_max}, and hence $\CalFhat(q; \delta, \Delta,\sigma)$ concentrate around deterministic functions ${\Xi}_2(q;\sigma), \Xi_1(\sigma), \CalF(q; \delta, \Delta,\sigma)$ defined below:
	\begin{align}
	{\Xi}_1( \sigma) & \Mydef \max_{\lambda \in \mathbb{R}} \left( \lambda  - \E_Y  
	\ln \E_{E \sim \Exp{1}} e^{\lambda E} \gpdf(E-Y) \right),\label{xi_1_problem_def_max_pop} \\
	{\Xi}_2(q;\sigma) & \Mydef \max_{(\lambda,\phi) \in \mathbb{R}} \left( 2  \lambda + q \phi -\E_Y  
	\ln \ZTWis{\lambda}{\phi}{Y} \right),\label{Xi2_max_pop} \\
	\CalF(q; \delta, \Delta,\sigma) & = {\Xi}_2(q;\sigma) - 2\Xi_1(\sigma)  + \left( 1 - \frac{1}{\delta} \right) \ln(1-q^2) + \Delta \ln \left( 1- \frac{q^2}{2} \right) \label{calF_pop_def}.
	\end{align} 
	In the above display, the random variable $Y = |Z|^2 + \sigma \epsilon$ where $Z \sim \cgauss{0}{1}, \; \epsilon \sim \gauss{0}{1}$. We also define the deterministic counterparts to $\hat{\lambda}_1(\sigma)$, defined in \eqref{xi_1_problem_def_argmax} and $\hat{\lambda}_2(q;\sigma), \hat{\phi}(q;\sigma)$, defined in \eqref{Xi2_argmax}:
	\begin{align}
	\lambda_1(\sigma) & \Mydef \arg\max_{\lambda \in \mathbb{R}} \left( \lambda  - \E_Y  
	\ln \E_{E \sim \Exp{1}} e^{\lambda E} \gpdf(E-Y) \right),\label{xi_1_problem_def_argmax_pop} \\
({\lambda}_2(q;\sigma), \phi(q;\sigma) ) & \Mydef \max_{(\lambda,\phi) \in \mathbb{R}} \left( 2  \lambda + q \phi -\E_Y  
	\ln \ZTWis{\lambda}{\phi}{Y} \right).\label{Xi2_argmax_pop}
	\end{align}
	The convergence to these deterministic functions allows to design a high probability event $\mathcal{E}_m$ on which applying Laplace method to the stochastic function $\CalFhat(q; \delta, \Delta,\sigma)$ is essentially the same as applying it to the deterministic function $\CalF(q; \delta, \Delta,\sigma)$.  We state our concentration result in the proposition below. 
	\begin{proposition} \label{concentration_proposition} For any fixed $\sigma > 0$, we have the following convergence results:
		\begin{enumerate}
			\item Convergence of Moments: $\hatE Y^{k} \explain{P}{\rightarrow} \E Y^{k}$ for any $k \in \mathbb N$, where $Y = |Z|^2 + \sigma \epsilon, \; Z \sim \cgauss{0}{1}$ and $\epsilon \sim \gauss{0}{1}$. 
			\item For any $R \in (0,\infty)$, we have the uniform convergence of the functions: \begin{align*}\sup_{|\lambda| \leq R} |\hatE \VTexp{\lambda}{Y} - \E \VTexp{\lambda}{Y} | &\explain{P}{\rightarrow} 0, \\ \sup_{|\lambda| + |\phi| \leq R} \| \hatE \VTWis{\lambda}{\phi}{Y} - \E \VTWis{\lambda}{\phi}{Y}  \|  &\explain{P}{\rightarrow} 0.
			\end{align*}
			\item $\hat{\lambda}_1(\sigma)$ is tight in the sense that, there exists a constant $R \in (0,\infty)$, depending only on $\sigma$ such that,
			\begin{align*}
			\P \left( |\hat{\lambda}_1(\sigma)| > R\right) & \rightarrow 0.
			\end{align*}
			\item $\hat \Xi_1(\sigma) \explain{P}{\rightarrow} \Xi_1(\sigma)$
			\item For any $\eta \in (0,1)$, there exists  $R_\eta \in (0,\infty)$ (depending only on $\eta,\sigma$) such that:
			\begin{align*} 
			\P \left( \max_{0 \leq q \leq 1-\eta} |\hat\lambda_2(q;\sigma)| + |\hat \phi(q;\sigma)| > R_\eta \right)  \rightarrow 0.
			\end{align*}
			\item For any $\eta \in (0,1)$, we have,
			\begin{align*}
			\sup_{q \in [0,1-\eta]} |\hat \Xi_2(q;\sigma) - \Xi_2(q;\sigma)| & \explain{P}{\rightarrow} 0.
			\end{align*}
			\item For any $\eta \in (0,1)$, we have, 
			\begin{align*}
			\sup_{q \in [0,1-\eta]}|\hat \lambda_2 (q;\sigma) - \lambda_2(q;\sigma)| \explain{P}{\rightarrow} 0, \; 
			\sup_{q \in [0,1-\eta]}|\hat \phi (q;\sigma) - \phi(q;\sigma)| \explain{P}{\rightarrow} 0.
			\end{align*}
			\item For any $\eta \in (0,1)$, we have, 
			\begin{align*}
			\sup_{q \in [0,1-\eta]}\left| \frac{\diff^2  }{\diff q^2} \hat{\Xi}_2(q;\sigma) - \frac{\diff^2  }{\diff q^2} {\Xi}_2(q;\sigma)  \right| &\explain{P}{\rightarrow} 0.
			\end{align*}
		\end{enumerate}
	\end{proposition}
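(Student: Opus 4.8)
Every quantity in Proposition \ref{concentration_proposition} is a functional of the empirical measure $\hat\mu_m \Mydef \frac{1}{m}\sum_{i=1}^m \delta_{y_i}$, so the plan is to first identify the limit of $\hat\mu_m$ and then transfer it through the variational problems. Since $\bm x_\star\sim\unif{\mathbb S^{n-1}}$ and $\bm H_m$ is Haar, $\bm A\bm x_\star$ is uniform on $\mathbb S^{m-1}$; writing it as $\bm g/\|\bm g\|$ with $\bm g\sim\cgauss{\bm 0}{\bm I_m}$ gives the joint representation $y_i\explain{d}{=}R_m|g_i|^2+\sigma\epsilon_i$ with $R_m\Mydef m/\|\bm g\|^2$. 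Because $R_m\explain{P}{\rightarrow}1$ while $|g_i|^2\explain{i.i.d.}{\sim}\Exp{1}$ and $\epsilon_i\explain{i.i.d.}{\sim}\gauss{0}{1}$ have all moments finite, the binomial expansion $\hatE Y^k=\sum_{j=0}^k\binom{k}{j}R_m^j\sigma^{k-j}\cdot\frac{1}{m}\sum_{i=1}^m|g_i|^{2j}\epsilon_i^{k-j}$ together with the weak law of large numbers gives $\hatE Y^k\explain{P}{\rightarrow}\E(|Z|^2+\sigma\epsilon)^k=\E Y^k$, which is item 1. As the law of $Y$ is determined by its moments (these grow only factorially, since $|Z|^2\sim\Exp{1}$), item 1 upgrades to weak convergence in probability of $\hat\mu_m$ to the law of $Y$, and it simultaneously furnishes uniform integrability of $\{|Y|^p\}$ under $\hat\mu_m$ for every $p$.

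Items 2, 4, 6 and 8 are then ``continuity of a functional of $\hat\mu_m$'' assertions. For each I would (i) verify the analytic regularity of the relevant integrand — $\VTexp{\lambda}{y}$, $\ln\ZTWis{\lambda}{\phi}{y}$ and their $q$-derivatives are jointly continuous in all arguments and bounded by a fixed polynomial in $|y|$ uniformly over $(\lambda,\phi)$ in any compact set; this rests on $\ZTexp{\lambda}{y}$ and $\ZTWis{\lambda}{\phi}{y}$ being finite for \emph{all} real $\lambda,\phi$ (the Gaussian factors $\gpdf(s-y)$ defeat any exponential in $s$) and on the moment estimates for the tilted families from Appendix \ref{texp_properties_appendix} and the analogous properties of the Tilted Wishart distribution; (ii) combine weak convergence with uniform integrability to get $\hatE[\cdot]\to\E[\cdot]$ at each fixed parameter; and (iii) upgrade to uniform-in-parameter convergence via equicontinuity (the integrand is Lipschitz in $(\lambda,\phi)$ with polynomially-bounded Lipschitz constant, whose $\hat\mu_m$-average is tight by item 1), giving item 2. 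Items 4 and 6 require in addition that the maximization be effectively over a compact set, supplied by items 3 and 5, after which uniform convergence of the objective implies convergence of the maximum, uniformly in $q\in[0,1-\eta]$ for item 6. Item 8 is the envelope theorem ($\frac{d}{dq}\hat\Xi_2=\hat\phi(q;\sigma)$, so $\frac{d^2}{dq^2}\hat\Xi_2=\frac{d}{dq}\hat\phi(q;\sigma)$) followed by differentiating the first-order conditions, which expresses the $q$-derivative through the inverse Hessian of the objective, i.e.\ through $\hatE\VTWis{\hat\lambda_2}{\hat\phi}{Y}$, whose uniform convergence is again item-2-type given the non-degeneracy below.

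Items 3 and 5 are tightness of argmaxes, which I would obtain from concavity and coercivity. The empirical objectives in \eqref{xi_1_problem_def_argmax} and \eqref{Xi2_argmax} are concave, so it suffices to show that on the boundary of a large fixed ball the empirical gradient points strictly inward with probability tending to one. The population gradients already do so: for item 3 because $1-\E_Y[\text{mean of }\Texp{\lambda}{Y}]\to -\infty$ as $\lambda\to +\infty$ and $\to 1$ as $\lambda\to -\infty$; for item 5 because the population maximizer $(\lambda_2(q;\sigma),\phi(q;\sigma))$ is continuous, hence bounded, on the compact set $q\in[0,1-\eta]$ (blow-up occurs only as $q\uparrow 1$, which is why the uniform statements exclude a neighborhood of $1$). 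The empirical gradient converges to the population one uniformly on that ball and uniformly in $q$ — again item-2-type — so the empirical maximizer is trapped in the ball with high probability. Item 7 then follows from \emph{strong} concavity: the Hessian equals $-\hatE\VTWis{\hat\lambda_2}{\hat\phi}{Y}$ up to the linear part and is uniformly negative definite since the tilted-Wishart covariance is bounded away from singular on the relevant region; this, with the uniform convergence of objectives and of population maximizers, yields $\hat\lambda_2(q;\sigma)\to\lambda_2(q;\sigma)$ and $\hat\phi(q;\sigma)\to\phi(q;\sigma)$ uniformly on $[0,1-\eta]$ by a standard argmax-stability argument.

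\textbf{Main obstacle.} I expect the technical core to be the regularity and non-degeneracy of the Tilted Exponential and Tilted Wishart families: continuity and polynomial-in-$y$ bounds for $\VTexp{\lambda}{y}$, $\VTWis{\lambda}{\phi}{y}$ and the associated log-partition functions, and — more delicately — their uniform non-degeneracy (positive definiteness of the covariances) over the parameter ranges the argmaxes actually visit. This positive definiteness is exactly what powers the local CLTs (Propositions \ref{local_clt_denom}, \ref{local_clt_num}) and the argmax-stability arguments alike, and it degenerates as $q\uparrow 1$, the structural reason every uniform claim is restricted to $q\le 1-\eta$. Once these facts about the tilted families are in place, the rest (weak convergence of $\hat\mu_m$, the law of large numbers, equicontinuity, and the envelope / implicit-function bookkeeping) is routine.
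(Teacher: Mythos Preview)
Your proposal is correct and follows essentially the same architecture as the paper's proof: a WLLN/ULLN for $\hat\mu_m$ (handling the $R_m=m/\|\bm g\|^2$ dependence by exactly the coupling you describe), coercivity of the potentials for tightness of the argmaxes (items 3, 5), strong concavity for argmax stability (item 7), and the inverse-Hessian identity from the envelope theorem for the second derivative (item 8). The only packaging differences are that the paper obtains the uniform-in-parameter convergence of item 2 via Lipschitz-in-parameter and Lipschitz-in-argument bounds feeding into bracketing-number/Glivenko--Cantelli results from \citet{vanweak} rather than your moment-determinacy\,$+$\,equicontinuity route, and for items 3 and 5 it invokes an explicit coercivity bound on the maximizer's location (Proposition~\ref{nr_variational_prop}) in place of your gradient-on-the-boundary argument.
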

	The proof of this Proposition appears in Appendix \ref{concentration_appendix}. It uses standard empirical process theory results from \citet{vanweak} with some modification to account for the fact that the observations $y_1,y_2 \dots ,y_m$ are not independent. With the above concentration result, we suitably design an event $\mathcal{E}_m$ with $\P(\mathcal{E}_m) \rightarrow 1$ such that on the event $\mathcal{E}_m$, we are able to adapt the usual proof of Laplace Method to obtain the following conclusion.
	\begin{restatable}{proposition}{deltaDeltaCriteria} \label{delta_Delta_criteria} Suppose that $\delta,\Delta,\sigma$ are such that $ \CalF(q;\delta,\Delta,\sigma) >  \CalF(0;\delta,\Delta,\sigma) = 0 \; \forall \; q \in (0,1)$ and $ \frac{\diff^2 \CalF }{\diff q^2}(0;\delta,\Delta,\sigma) > 0$. Then, $\MI{\bm y, \bm z}{\bm A, \bm W} = o(m)$.
	\end{restatable}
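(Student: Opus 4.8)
The plan is to feed the bounds of Corollaries~\ref{local_clt_denom_corollary} and \ref{local_clt_num_corollary} into the conditional $\chi^2$-divergence bound of Lemmas~\ref{second_moment_conditional} and \ref{introduce_main_integrals_lemma}, which reduces everything to a one–dimensional integral of the \emph{random} exponent $\CalFhat(q;\delta,\Delta,\sigma)$ of \eqref{calF_empirical_definition}, and then to transfer that estimate from $\CalFhat$ to the deterministic $\CalF$ of \eqref{calF_pop_def} on a high-probability event furnished by Proposition~\ref{concentration_proposition}. Concretely, I would first fix $\eta\in(0,1)$ — whose value is pinned down only at the end, in terms of $\delta,\Delta,\sigma,\Xi_1(\sigma)$ — then a large $K=K(\eta,\delta,\Delta,\sigma)$ and a small tolerance $\epsilon>0$, and let $\mathcal{E}_m$ be the intersection of the events that: $\hatE_Y|Y|^k\le K$ for $k\le 40$; $|\hat\lambda_1(\sigma)|\le K$ and $\sup_{q\in[0,1-\eta]}(|\hat\lambda_2(q;\sigma)|+|\hat\phi(q;\sigma)|)\le K$; $\tfrac1K\le\hat v(\sigma)\le K$ and $\tfrac1K\le\lambda_{\min}(\hat{\bm V}(q;\sigma))\le\lambda_{\max}(\hat{\bm V}(q;\sigma))\le K$ for all $q\in[0,1-\eta]$; and $|\hat\Xi_1(\sigma)-\Xi_1(\sigma)|$, $\sup_{q\in[0,1-\eta]}|\hat\Xi_2(q;\sigma)-\Xi_2(q;\sigma)|$ and $\sup_{q\in[0,1-\eta]}|\tfrac{\diff^2}{\diff q^2}\hat\Xi_2(q;\sigma)-\tfrac{\diff^2}{\diff q^2}\Xi_2(q;\sigma)|$ are all $\le\epsilon$. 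Proposition~\ref{concentration_proposition}, together with the finiteness and strict positivity of the relevant moments and covariances of the Tilted Exponential and Tilted Wishart laws from Appendix~\ref{texp_properties_appendix}, gives $\P(\mathcal{E}_m)\to 1$, and on $\mathcal{E}_m$ the hypotheses of Corollaries~\ref{local_clt_denom_corollary} and \ref{local_clt_num_corollary} hold with this $K$ uniformly over $q\in[0,1-\eta]$. Substituting $\Dr(\bm y)\ge\tfrac{1}{2\sqrt K}e^{-m\hat\Xi_1(\sigma)}$ and $\Nr(\bm y,\bm Q)\le\tfrac{C(K)}{m^2(1-q^2)^{m-2}}e^{-m\hat\Xi_2(q;\sigma)}$, and using $m/n=\delta$ to write $(1-q^2)^{n-m}=e^{-m(1-1/\delta)\ln(1-q^2)}$ and $(1-q^2/2)^{-\Delta m}=e^{-\Delta m\ln(1-q^2/2)}$, the integrand of Lemma~\ref{introduce_main_integrals_lemma} becomes at most $\tfrac{C(K)}{m^2}\,q\,e^{-m\CalFhat(q;\delta,\Delta,\sigma)}$ for $q\in[0,1-\eta]$, which is the rigorous form of the heuristic \eqref{laplace_integral}.

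For the Laplace estimate on $[0,1-\eta]$ I would exploit three structural facts. First, $\CalFhat(0;\delta,\Delta,\sigma)=0$ identically: since $\ZTWis{\lambda}{\phi}{y}$ is even in $\phi$ and minimized at $\phi=0$ (its $\theta$-integral is a modified Bessel function), while $\ZTWis{\lambda}{0}{y}=\ZTexp{\lambda}{y}^2$, one gets $\hat\Xi_2(0;\sigma)=2\hat\Xi_1(\sigma)$, and the logarithmic terms vanish at $q=0$. Second, $\CalFhat(\cdot\,;\delta,\Delta,\sigma)$ is an even function of $q$ (replace $\phi\mapsto-\phi$ in \eqref{Xi2_argmax}), so $\tfrac{\diff}{\diff q}\CalFhat(0;\cdot)=0$. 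Third, on $\mathcal{E}_m$ one has $\tfrac{\diff^2}{\diff q^2}\CalFhat(q;\cdot)=\tfrac{\diff^2}{\diff q^2}\CalF(q;\cdot)+\big(\tfrac{\diff^2}{\diff q^2}\hat\Xi_2(q;\sigma)-\tfrac{\diff^2}{\diff q^2}\Xi_2(q;\sigma)\big)$, which differs from $\tfrac{\diff^2}{\diff q^2}\CalF$ by at most $\epsilon$ uniformly on $[0,1-\eta]$. As $\tfrac{\diff^2}{\diff q^2}\CalF(0;\cdot)>0$ by hypothesis and $\tfrac{\diff^2}{\diff q^2}\CalF$ is continuous, one picks $\epsilon_0>0$ with $\tfrac{\diff^2}{\diff q^2}\CalF\ge\tfrac12\tfrac{\diff^2}{\diff q^2}\CalF(0;\cdot)$ on $[0,\epsilon_0]$ and shrinks $\epsilon$; Taylor's theorem with vanishing value and first derivative at $0$ then yields $\CalFhat(q;\cdot)\ge c\,q^2$ on $[0,\epsilon_0]$ for a constant $c>0$, so $\int_0^{\epsilon_0}q\,e^{-m\CalFhat(q)}\diff q=O(1/m)$. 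On $[\epsilon_0,1-\eta]$ the hypothesis $\CalF>0$ on $(0,1)$ gives $\inf_{[\epsilon_0,1-\eta]}\CalF=:c_0>0$, so shrinking $\epsilon<c_0/2$ makes $\CalFhat\ge c_0/2$ there and that part of the integral is $O(e^{-mc_0/2})$. Hence $\int_0^{1-\eta}q\,e^{-m\CalFhat(q)}\diff q=O(1/m)$ on $\mathcal{E}_m$, and after the prefactor $\tfrac{2}{n-1}\cdot\tfrac{C(K)}{m^2}$ this portion of the $\chi^2$ bound is $O\big(1/(nm^3)\big)=o(1)$.

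The main obstacle is the boundary region $q\in[1-\eta,1]$, where the local central limit theorems — and with them the $\CalFhat$ picture — degenerate because $\bm Q$ approaches a rank-one matrix. There I would fall back on crude deterministic estimates: the pointwise bound $\gpdf\le(2\pi\sigma^2)^{-1/2}$ gives $\Nr(\bm y,\bm Q)\le(2\pi\sigma^2)^{-m}$; on $\mathcal{E}_m$ one still has $\Dr(\bm y)\ge\tfrac{1}{2\sqrt K}e^{-m\hat\Xi_1(\sigma)}$ with $\hat\Xi_1(\sigma)\le\Xi_1(\sigma)+\epsilon$; $(1-q^2/2)^{-\Delta m}\le 2^{\Delta m}$; and the Beta-tail is exactly $\int_{1-\eta}^1 q(1-q^2)^{n-2}\diff q=\tfrac{(2\eta-\eta^2)^{n-1}}{2(n-1)}$. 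The product contributes at most a polynomial in $n$ times $\exp\big(m[\,2\Xi_1(\sigma)+\Delta\ln 2-\ln(2\pi\sigma^2)\,]+(n-1)\ln(2\eta-\eta^2)\big)$; since $n/m\to 1/\delta$, choosing $\eta$ small enough that $\tfrac1\delta|\ln(2\eta-\eta^2)|>2\Xi_1(\sigma)+\Delta\ln 2-\ln(2\pi\sigma^2)+1$ makes this $\le C\,e^{-m}/n^2\to 0$. Because $\eta$ is selected only after $\delta,\Delta,\sigma$ there is no circularity, and since $\eta$ is a fixed constant the uniform-in-$q$ statements of Proposition~\ref{concentration_proposition} (which hold for every fixed $\eta$) are unaffected. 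Combining this with the previous paragraph and Lemmas~\ref{second_moment_conditional}--\ref{introduce_main_integrals_lemma}, on $\mathcal{E}_m$ the bracketed integral in Lemma~\ref{second_moment_conditional} equals $1+o(1)$, so $\MI{\bm y,\bm z}{\bm A,\bm W}\le o(1)+C\,m\,\sqrt{\P(\mathcal{E}_m^c)}=o(m)$. I expect the genuinely delicate points to be: (a) arranging, simultaneously for all $q\in[0,1-\eta]$ and on one high-probability event, that the local-CLT hypotheses of Corollaries~\ref{local_clt_denom_corollary}--\ref{local_clt_num_corollary} are met — this is precisely what the uniform-in-$q$ conclusions of Proposition~\ref{concentration_proposition} are tailored to supply — and (b) the boundary analysis above, where the only leverage is the super-exponential decay of the Beta weight near $q=1$ and one must check that it beats the exponential growth coming from the crude $\Nr$ and $\Dr$ bounds, which forces the stated choice of $\eta$. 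Everything else is a routine, if bookkeeping-heavy, adaptation of the classical Laplace method to a random phase function.
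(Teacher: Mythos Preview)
Your proposal is correct and follows essentially the same approach as the paper: the paper likewise defines $\mathcal{E}_m$ as an intersection of concentration events drawn from Proposition~\ref{concentration_proposition}, splits the $q$-integral into a boundary piece near $q=1$ (handled by the crude bound $\Nr\le(2\pi\sigma^2)^{-m}$, the Corollary~\ref{local_clt_denom_corollary} lower bound on $\Dr$, and the Beta-tail decay, with the cutoff $\epsilon_2$ chosen exactly as you describe so that the $(1-q^2)^{n-2}$ factor dominates), a middle piece where $\CalFhat\approx\CalF>0$, and a piece near $q=0$ handled by the Taylor expansion $\CalFhat(0)=0$, $\CalFhat'(0)=\hat\phi(0;\sigma)=0$, and control of $\tfrac{\diff^2}{\diff q^2}\CalFhat$ via item~(8) of Proposition~\ref{concentration_proposition}. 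The only cosmetic discrepancy is your remark that ``the bracketed integral equals $1+o(1)$'': with your bounds it is in fact $o(1)$ (hence the $\chi^2$-term is $\le o(1)-1<0$), but since mutual information is nonnegative this only strengthens the conclusion.
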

	The proof of this proposition can be found in Appendix \ref{delta_Delta_criteria_appendix}. The claim of this Proposition is very intuitive: It says that due to the concentration of $\CalFhat(q;\delta,\Delta,\sigma)$ to $\CalF(q;\delta,\Delta,\sigma)$, the stochastic and the deterministic integrals:
	\begin{align*}
	\int_0^1 e^{-m \CalFhat(q;\delta,\Delta,\sigma) } \diff q & \approx \int_0^1 e^{-m \CalF(q;\delta,\Delta,\sigma) } \diff q,
	\end{align*}
	behave very similarly. According to the standard Laplace method, the condition $ \CalF(q;\delta,\Delta,\sigma) >  \CalF(0;\delta,\Delta,\sigma) = 0$ ensures that,
	\begin{align*}
	\frac{1}{m} \ln \left(\int_0^1 e^{-m \CalF(q;\delta,\Delta,\sigma) } \diff q \right) & \rightarrow 0,
	\end{align*}
	whereas the positivity requirement on the second derivative ensures that the second order, subexponential factors in the Laplace integral are sufficiently well controlled to obtain $\MI{\bm y, \bm z}{\bm A, \bm W} = o(m)$.
	\section{Low Noise Asymptotics} \label{low_noise_section}
	Proposition \ref{delta_Delta_criteria} and Proposition \ref{bayes_risk_to_MI} tell us that if for some $\delta,\sigma$, we can find $\Delta > 0$ such that:
	\begin{align}
	\CalF(q;\delta,\Delta,\sigma) >  \CalF(0;\delta,\Delta,\sigma) \; \forall \; q \in (0,1), \; \frac{\diff^2 \CalF }{\diff q^2}(0;\delta,\Delta,\sigma) > 0, \label{analysis_question}
	\end{align}
	then,
	\begin{align*}
		\lim_{\substack{m,n \rightarrow \infty \\ m = n \delta}}\E_{\bm x_\star, \bm y, \bm A} \| \bm x_\star \bm x^\UH_\star - \E[ \bm x_\star \bm x_\star^\UH | \bm y, \bm A] \|^2 & = 1.
	\end{align*}
	Note that the Bayes risk increases monotonically with the noise level $\sigma$ (that is, the phase retrieval problem is harder for larger noise levels). Furthermore, the Bayes risk is atmost the risk of the trivial estimator $\hat{\bm x} = \bm 0$:
	\begin{align*}
	    \limsup_{\substack{m,n \rightarrow \infty \\ m = n \delta}}\E_{\bm x_\star, \bm y, \bm A} \| \bm x_\star \bm x^\UH_\star - \E[ \bm x_\star \bm x_\star^\UH | \bm y, \bm A] \|^2 &   \leq \E_{\bm x_\star, \bm y, \bm A} \| \bm x_\star \bm x^\UH_\star -\bm 0\|^2 = 1.
	\end{align*}
	Hence if show that the asymptotic Bayes risk is trivial (that is, equal to $1$) for an arbitrarily small $\sigma > 0$, it automatically implies the Bayes risk is trivial for larger values of noise. Consequently we will focus on verifying condition \eqref{analysis_question} for small values of noise, where the analysis of the variational problems involved simplifies considerably.  We show the following result:
	\begin{restatable}{proposition}{lownoiseprop} \label{lownoiseprop}
		Recall that $\CalF(q;\delta,\Delta,\sigma)$ was defined as:
		\begin{align*}
		\CalF(q; \delta, \Delta,\sigma) & = {\Xi}_2(q;\sigma) - 2\Xi_1(\sigma)  + \left( 1 - \frac{1}{\delta} \right) \ln(1-q^2) + \Delta \ln \left( 1- \frac{q^2}{2} \right).
		\end{align*}
		For any $\delta$ and $\Delta$ that satisfy
		\begin{align*}
		1 \leq  \delta < 2, \; 0< \Delta< \frac{2-\delta}{\delta},
		\end{align*}
		there exists a critical value of the noise level $\sigma_c(\delta,\Delta)>0$ such that, for any $0 < \sigma < \sigma_c(\delta,\Delta)$, we have
		\begin{enumerate}
			\item The function $\CalF(q;\delta,\Delta,\sigma)$ has a unique minimum at $q=0$ and $\CalF(q;\delta,\Delta,\sigma) > \CalF(0;\delta,\Delta,\sigma), \; \forall \; q \; \in (0,1)$. 
			\item $\frac{\diff^2 \CalF}{\diff q^2}(q; \delta,\Delta,\sigma) \bigg|_{q=0} > 0$. 
		\end{enumerate}
	\end{restatable}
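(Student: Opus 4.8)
\emph{Proof proposal.}  The plan is to push $\sigma$ to zero, where the variational problems defining $\Xi_1$ and $\Xi_2$ collapse to elementary one-dimensional problems involving the modified Bessel function $I_0$, analyze the resulting deterministic limit function, and then transport the conclusions back to small $\sigma>0$ by continuity.  First I would record that $\CalF(0;\delta,\Delta,\sigma)=0$ for every $\sigma$: since $\ln\ZTWis{\lambda}{\phi}{y}$ is convex and even in $\phi$ (Definition~\ref{titled_wishart_definition}), the inner maximum in \eqref{Xi2_max_pop} at $q=0$ is attained at $\phi=0$, and $\ZTWis{\lambda}{0}{y}=\ZTexp{\lambda}{y}^2$, so $\Xi_2(0;\sigma)=2\Xi_1(\sigma)$ and the claim follows from \eqref{calF_pop_def}.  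Next, as $\sigma\to 0$ the law of $Y=|Z|^2+\sigma\epsilon$ converges to $\Exp{1}$ and $\gpdf\to\delta_0$, so that $\ZTexp{\lambda}{y}\to e^{-(1-\lambda)y}$ and (the $\theta$-integral producing $I_0$) $\ZTWis{\lambda}{\phi}{y}\to e^{-2(1-\lambda)y}I_0(\phi y)$.  Using $\E_{\Exp{1}}Y=1$ the variable $\lambda$ drops out of both limiting problems, giving $\Xi_1(\sigma)\to 1$, $\Xi_2(q;\sigma)\to 2+g(q)$ with
\[
g(q)\;:=\;\max_{\phi\in\mathbb R}\bigl(q\phi-\Lambda(\phi)\bigr),\qquad \Lambda(\phi)\;:=\;\E_{Y\sim\Exp{1}}\ln I_0(\phi Y),
\]
hence $\CalF(q;\delta,\Delta,\sigma)\to\CalF_0(q):=g(q)+(1-\tfrac1\delta)\ln(1-q^2)+\Delta\ln(1-\tfrac{q^2}{2})$.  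Using the envelope identity $\partial_q\Xi_2=\hat\phi$ and the smoothness and strict convexity of the tilted families, I would argue that this convergence, together with that of the first two $q$-derivatives, holds uniformly on every interval $[0,1-\eta]$ — the deterministic analogue of Proposition~\ref{concentration_proposition}.  Note $\Lambda$ is smooth, strictly convex, even, with $\Lambda':[0,\infty)\to[0,1)$ a bijection ($\Lambda'(\phi)\uparrow\E_{\Exp1}Y=1$ by monotone convergence), so $g$ is convex, increasing, and $g'=(\Lambda')^{-1}$ on $[0,1)$.

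Now I would analyze $\CalF_0$.  \emph{Behaviour at $0$:} from $\ln I_0(x)=\tfrac{x^2}{4}-\tfrac{x^4}{64}+O(x^6)$ and $\E_{\Exp1}Y^2=2$ one gets $\Lambda(\phi)=\tfrac{\phi^2}{2}-\tfrac{3}{8}\phi^4+O(\phi^6)$, hence $g(q)=\tfrac{q^2}{2}+\tfrac38q^4+O(q^6)$, so $\CalF_0(0)=\CalF_0'(0)=0$ and $\CalF_0''(0)=1-2(1-\tfrac1\delta)-\Delta=\tfrac{2-\delta}{\delta}-\Delta>0$ by hypothesis.  \emph{Global lower bounds on $g$:} the termwise inequality $I_0(x)\le e^{x^2/4}$ gives $\Lambda(\phi)\le\tfrac{\phi^2}{2}$ and hence $g(q)\ge\tfrac{q^2}{2}$ on $[0,1)$; and $I_0(x)\le C e^x/\sqrt{x}$ for $x\ge1$ together with monotonicity for $x\le 1$ gives $\Lambda(\phi)\le\phi-\tfrac12\ln\phi+C'$, so optimizing $q\phi-\phi+\tfrac12\ln\phi$ at $\phi=\tfrac{1}{2(1-q)}$ yields $g(q)\ge\tfrac12\ln\tfrac1{1-q}-C_1$ on $[0,1)$.  \emph{Blow-up at $1$:} combining the second bound with $\ln(1-q^2)\ge\ln(1-q)$ and $\Delta\ln(1-\tfrac{q^2}{2})\ge-\Delta\ln 2$ gives $\CalF_0(q)\ge(\tfrac1\delta-\tfrac12)\ln\tfrac1{1-q}-C_1-\Delta\ln 2\to+\infty$ as $q\to1$, since $\delta<2$.

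The crux is upgrading these endpoint facts to $\CalF_0(q)>0$ on \emph{all} of $(0,1)$, which I would do by showing $\CalF_0$ is strictly increasing there.  Substituting $q=\Lambda'(\phi)=:u\in(0,1)$ into $\CalF_0'(q)=g'(q)-\tfrac{2(1-1/\delta)q}{1-q^2}-\tfrac{\Delta q}{1-q^2/2}$ and using $g'=(\Lambda')^{-1}$, the inequality $\CalF_0'>0$ becomes $\phi>\tfrac{2(1-1/\delta)u}{1-u^2}+\tfrac{\Delta u}{1-u^2/2}$ for all $\phi>0$; since $1-u^2/2\ge1-u^2$ and $2(1-\tfrac1\delta)+\Delta<1$ (exactly the hypothesis $\Delta<\tfrac{2-\delta}{\delta}$), the right side is $<\tfrac{u}{1-u^2}$, so it suffices to prove $\phi\bigl(1-\Lambda'(\phi)^2\bigr)\ge\Lambda'(\phi)$ for $\phi>0$.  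For the Bessel ratio $\rho=I_1/I_0$ the Riccati identity $\rho'(x)=1-\rho(x)/x-\rho(x)^2$ gives the \emph{pointwise} bound $x\bigl(1-\rho(x)^2\bigr)=x\rho'(x)+\rho(x)\ge\rho(x)$, with strict slack $x\rho'(x)>0$; the task is then to lift this to $\Lambda'(\phi)=\E_{Y}[Y\rho(\phi Y)]$.  \textbf{This lifting is the main obstacle}: a naive Cauchy--Schwarz bound on $\E[Y^2\rho(\phi Y)^2]$ loses a factor $2$ (only yielding the inequality up to the constant $\E_{\Exp1}Y^2=2$), so one must instead exploit the concavity of $\rho$ or integrate by parts against the $\Exp{1}$ density to recover the sharp constant.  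Should a clean monotonicity proof prove too stubborn on an intermediate window $q\in[\eta',1-\eta]$, an acceptable fallback is to combine the two lower bounds of the previous paragraph with the uniform-on-compacts control and verify positivity numerically on that compact window, but the monotonicity route is the intended one.

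Finally I would transfer to $\sigma>0$.  Fix $\delta,\Delta$ as in the statement; choose $\eta$ so that $\CalF_0\ge 2c_0>0$ on $[1-\eta,1)$.  A $\sigma$-continuous version of the estimates above — replacing the $I_0$-bounds by bounds on $\ZTexp{\lambda}{y},\ZTWis{\lambda}{\phi}{y}$ that vary continuously in $\sigma$ — keeps $\CalF(\,\cdot\,;\delta,\Delta,\sigma)\ge c_0$ on $[1-\eta,1)$ for small $\sigma$; on the compact middle $[\eta',1-\eta]$ the crux step gives $\min\CalF_0>0$ and uniform convergence gives $\CalF(\,\cdot\,;\delta,\Delta,\sigma)\ge\tfrac12\min_{[\eta',1-\eta]}\CalF_0>0$ for small $\sigma$; and near $q=0$, uniform convergence of $\CalF''$ keeps it positive on $[0,\eta']$, which with $\CalF(0;\,\cdot\,)=\CalF'(0;\,\cdot\,)=0$ gives $\CalF(q;\delta,\Delta,\sigma)>0$ on $(0,\eta']$ and the required second-derivative inequality at $q=0$.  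Taking $\sigma_c(\delta,\Delta)$ to be the minimum of the finitely many thresholds thus produced completes the proof.
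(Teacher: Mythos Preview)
Your architecture matches the paper's --- reduce to the $\sigma=0$ limit, split $[0,1)$ into three windows, and transfer back --- but the step you yourself flag as ``the main obstacle'' is a real gap, and your reduction makes it harder than necessary. By bounding $\tfrac{2(1-1/\delta)q}{1-q^2}+\tfrac{\Delta q}{1-q^2/2}<\tfrac{q}{1-q^2}$ you discard the strict inequality $2(1-\tfrac1\delta)+\Delta<1$ and are left with the \emph{borderline} inequality $\phi(1-\Lambda'(\phi)^2)\ge\Lambda'(\phi)$, whose margin is only $O(\phi^3)$ near $0$; lifting the pointwise Riccati identity through the $\Exp{1}$-average at this sharp constant is genuinely delicate. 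The paper avoids this by keeping the hypothesis inside the reduction: set $\phi_3(q):=2(1-\tfrac1\delta)\tfrac{q}{1-q^2}+\tfrac{\Delta q}{1-q^2/2}$, and use strict concavity of $\phi\mapsto q\phi-\Lambda(\phi)$ to rewrite $\CalF_0'(q)>0$ as $q>h(q):=\E_{Y\sim\Exp{1}}[Y\,\rho(\phi_3(q)Y)]$ with $\rho=I_0'/I_0$. Then $h(0)=0$, and $h'(0)=\tfrac{\E Y^2}{2}\,\phi_3'(0)=2(1-\tfrac1\delta)+\Delta<1$ \emph{by hypothesis}; the paper finishes by arguing $h$ is concave (from concavity of $\rho$ on $[0,\infty)$ together with the structure of $\phi_3$), whence $h(q)\le h'(0)\,q<q$. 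The ``sharp constant $1$'' issue never appears: it is absorbed into the slope computation, which is exactly the condition $\Delta<\tfrac{2-\delta}{\delta}$.

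Your transfer mechanism also differs. You plan to invoke uniform convergence of $\CalF$ and its first two $q$-derivatives as $\sigma\to 0$, but you do not indicate how to establish derivative convergence, and the paper neither proves nor needs it. For $q\approx 0$ the paper plugs the suboptimal test point $(\lambda,\phi)=(\lambda_1(\sigma),q)$ into the $\Xi_2$ variational problem and Taylor-expands in $q$ to obtain the quantitative bound $\Xi_2(q;\sigma)-2\Xi_1(\sigma)\ge(1-\sigma^2)\tfrac{q^2}{2}-Cq^3$ valid uniformly in small $\sigma$ (Lemma~\ref{small_q_analysis}); this yields both conclusions near $0$ directly. For $q\approx 1$ it again works directly at positive $\sigma$ (Lemma~\ref{q_close_to_1_lemma}) rather than at the limit. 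On the middle window it proves only the one-sided bound $\liminf_{\sigma\to 0}\min_{q\le 1-\eta}\bigl(\Xi_2(q;\sigma)-\Xi_2(q;0)\bigr)\ge 0$ (Lemma~\ref{low_noise_convergence}) together with $\Xi_2(0;\sigma)\to\Xi_2(0;0)$, which combined with the strict increase of $\CalF_0$ suffices. Your two-sided uniform-convergence route is more than is required and its justification is not sketched.
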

	Combined with 	Proposition \ref{delta_Delta_criteria} and Proposition \ref{bayes_risk_to_MI} it immediately gives us Theorem \ref{main_result} as a corollary. 
	\begin{corollary} Theorem \ref{main_result} holds.
	\end{corollary}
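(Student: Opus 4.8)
The plan is to pass to the low-noise limit $\sigma\to 0^+$, in which the two variational problems $\Xi_1(\sigma)$ and $\Xi_2(q;\sigma)$ collapse to explicit Bessel-function objects, verify the two assertions for the resulting limiting profile, and then transfer them back to small $\sigma>0$ by continuity and compactness. First I would record the exact identity $\CalF(q;\delta,\Delta,\sigma)=\Xi_2(q;\sigma)-\Xi_2(0;\sigma)+\bigl(1-\tfrac1\delta\bigr)\ln(1-q^2)+\Delta\ln\bigl(1-\tfrac{q^2}{2}\bigr)$, which holds because $\ZTWis{\lambda}{0}{y}=\ZTexp{\lambda}{y}^2$ forces $\Xi_2(0;\sigma)=2\Xi_1(\sigma)$; thus only the $q$-increment of $\Xi_2$ is relevant. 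As $\sigma\to 0$ the law of $Y=|Z|^2+\sigma\epsilon$ converges in every moment to $|Z|^2\sim\Exp{1}$, and by dominated convergence $\ZTexp{\lambda}{y}\to e^{-(1-\lambda)y}$ while, after carrying out the $\theta$-integral, $\ZTWis{\lambda}{\phi}{y}\to e^{-2(1-\lambda)y}I_0(\phi y)$, where $I_0$ is the modified Bessel function of the first kind of order $0$. Hence $\E_Y\ln\ZTWis{\lambda}{\phi}{Y}\to -2(1-\lambda)+\Psi(\phi)$ with $\Psi(\phi):=\E_{Y\sim\Exp{1}}[\ln I_0(\phi Y)]$; the $\lambda$-dependence then cancels in the objective $2\lambda+q\phi-\E_Y\ln\ZTWis{\lambda}{\phi}{Y}$, and routine control of the tails (using convexity and linear growth of $\Psi$) yields $\Xi_2(q;\sigma)-\Xi_2(0;\sigma)\to\Psi^\star(q)$, the Legendre transform of $\Psi$, locally uniformly on $[0,1)$. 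Therefore $\CalF(q;\delta,\Delta,\sigma)\to g(q):=\Psi^\star(q)+\bigl(1-\tfrac1\delta\bigr)\ln(1-q^2)+\Delta\ln\bigl(1-\tfrac{q^2}{2}\bigr)$ locally uniformly on $[0,1)$.

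The same argument applied to $\partial_q\CalF$ and $\partial_q^2\CalF$, which are controlled by the tilted-Wishart covariance $\VTWis{\lambda}{\phi}{y}$, gives local uniform convergence of these derivatives to $g'$ and $g''$. I would then analyze the limiting profile $g$ directly. Since $\Psi(0)=0$, $\Psi\ge0$, and $\Psi'(0)=0$, the transform satisfies $\Psi^\star(0)=0$ and $(\Psi^\star)'(0)=0$, so $g(0)=g'(0)=0$; and from $\ln I_0(x)=\tfrac{x^2}{4}+O(x^4)$ together with $\E_{Y\sim\Exp{1}}[Y^2]=2$ one gets $\Psi''(0)=1$, hence $(\Psi^\star)''(0)=1$ and $g''(0)=1-2\bigl(1-\tfrac1\delta\bigr)-\Delta=\tfrac{2-\delta}{\delta}-\Delta>0$ precisely under the hypothesis $\Delta<(2-\delta)/\delta$ --- this gives assertion 2 after transfer. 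For assertion 1, the classical asymptotics $\ln I_0(x)=x-\tfrac12\ln(2\pi x)+o(1)$ yield $\Psi(\phi)=\phi-\tfrac12\ln\phi+O(1)$ and therefore $\Psi^\star(q)=-\tfrac12\ln(1-q)+O(1)$ as $q\to1^-$; combined with the logarithmic terms, $g(q)=\bigl(\tfrac12-\tfrac1\delta\bigr)\ln(1-q)+O(1)\to+\infty$ as $q\to1^-$, since $\delta<2$.

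The remaining and decisive step is the global inequality $g(q)>0$ for all $q\in(0,1)$, with $q=0$ the unique minimizer. I see three candidate routes. (i) Prove $g$ is strictly convex on $[0,1)$, which amounts to $(\Psi^\star)''(q)=1/\Psi''\!\bigl((\Psi')^{-1}(q)\bigr)>\bigl(1-\tfrac1\delta\bigr)\tfrac{2+2q^2}{(1-q^2)^2}+\Delta\tfrac{1+q^2/2}{(1-q^2/2)^2}$ for every $q$, and requires sharp lower bounds on $\Psi''$, equivalently on $\tfrac{d}{dt}(I_1(t)/I_0(t))$. (ii) Prove $g'>0$ on $(0,1)$, i.e.\ $(\Psi')^{-1}(q)>q\bigl(\tfrac{2(1-1/\delta)}{1-q^2}+\tfrac{\Delta}{1-q^2/2}\bigr)$, which requires upper bounds on $\Psi'(\phi)=\E_{Y\sim\Exp{1}}\bigl[Y\,I_1(\phi Y)/I_0(\phi Y)\bigr]$. (iii) Lower-bound $\Psi^\star(q)\ge q\phi(q)-\Psi(\phi(q))$ at the explicit near-optimal choice $\phi(q)=\tfrac{1}{2(1-q)}$ and finish with an elementary one-variable estimate built on a sharp upper bound for $\ln I_0$ (such as $\ln I_0(x)\le x-\tfrac12\ln(1+2x)$). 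I expect route (iii) to be the most robust; this is the main obstacle, because the endpoint behavior of $g$ is local and explicit and the transfer step is soft, but pinning down $\Psi^\star$ across the whole interval requires genuinely quantitative modified-Bessel estimates.

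Granting $g>0$ on $(0,1)$, the transfer to small $\sigma$ is routine. Observe that $\CalF(0;\delta,\Delta,\sigma)=0$ and $\partial_q\CalF(0;\delta,\Delta,\sigma)=0$, the latter because $\CalF(\cdot\,;\delta,\Delta,\sigma)$ is even in $q$. Fix small $\varepsilon,\eta>0$. On $[0,\varepsilon]$: local uniform convergence of $\partial_q^2\CalF$ and $g''(0)>0$ give $\partial_q^2\CalF(q;\delta,\Delta,\sigma)\ge c>0$ for $\sigma$ small, so $\CalF(q;\delta,\Delta,\sigma)\ge cq^2/2>0$ there (this also delivers assertion 2). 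On $[\varepsilon,1-\eta]$: compactness gives $\inf g>0$, and uniform convergence gives $\CalF>0$. On $[1-\eta,1)$: lower-bound $\Xi_2(q;\sigma)$ by evaluating its variational objective at a judicious pair $(\lambda,\phi)$ uniformly over $\sigma\in(0,1]$, recovering the $\bigl(\tfrac12-\tfrac1\delta\bigr)\ln(1-q)$ blow-up and thereby fixing $\eta$, so $\CalF>0$ there too. Together these show $q=0$ is the unique minimizer of $\CalF(\cdot\,;\delta,\Delta,\sigma)$ on $[0,1)$ for $\sigma<\sigma_c(\delta,\Delta)$, which is the claim.
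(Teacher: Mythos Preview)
Your proposal targets Proposition~\ref{lownoiseprop} rather than the Corollary itself. The paper's proof of the Corollary is a short wrapper: pick $\Delta=(2-\delta)/(2\delta)>0$, invoke Proposition~\ref{lownoiseprop} to verify the hypotheses of Proposition~\ref{delta_Delta_criteria} for $0<\sigma<\sigma_c(\delta,\Delta)$, then apply Proposition~\ref{bayes_risk_to_MI} to get trivial Bayes risk for those $\sigma$, and finally extend to \emph{all} $\sigma>0$ by the observation that the Bayes risk is at most $1$ and is nondecreasing in the noise level. Your last sentence stops at ``$\sigma<\sigma_c(\delta,\Delta)$, which is the claim,'' so you are missing both the invocation of Propositions~\ref{delta_Delta_criteria} and~\ref{bayes_risk_to_MI} and, more substantively, the monotonicity-in-$\sigma$ step that lifts the conclusion from small noise to arbitrary noise; without it you have not proved Theorem~\ref{main_result}.

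On the substance of Proposition~\ref{lownoiseprop}, your architecture (zero-noise limit, Bessel functions, three $q$-regimes) matches the paper's, but you leave your self-identified ``main obstacle'' unresolved. The paper settles it via your route~(ii), not~(iii): writing $\phi_3(q)=2(1-\tfrac1\delta)\tfrac{q}{1-q^2}+\tfrac{\Delta q}{1-q^2/2}$, one has $g'(q)>0\iff q>\E\bigl[|Z|^2\,\tfrac{I_0'(\phi_3(q)|Z|^2)}{I_0(\phi_3(q)|Z|^2)}\bigr]$, and the right-hand side vanishes at $0$ with slope $1-\bigl(\tfrac{2-\delta}{\delta}-\Delta\bigr)<1$ and is argued to be concave in $q$ (via concavity and monotonicity of $I_0'/I_0$), hence stays strictly below the diagonal for all $q>0$. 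Your route-(iii) test point $\phi=\tfrac{1}{2(1-q)}$ is, however, exactly what the paper plugs in for the endpoint regime $q\approx1$ (Lemma~\ref{q_close_to_1_lemma}). Finally, the paper's transfer is lighter than your $C^2$-convergence plan: near $q=0$ it obtains the quadratic lower bound $\Xi_2(q,\sigma)-2\Xi_1(\sigma)\ge(1-\sigma^2)\tfrac{q^2}{2}-Cq^3$ directly by evaluating the $\Xi_2$ objective at the sub-optimal $(\lambda,\phi)=(\lambda_1(\sigma),q)$ and Taylor-expanding (Lemma~\ref{small_q_analysis}), and on compacts away from $0$ it needs only the one-sided bound $\liminf_{\sigma\to0}\min_q\bigl(\Xi_2(q;\sigma)-\Xi_2(q;0)\bigr)\ge0$ together with convergence at $q=0$ (Lemma~\ref{low_noise_convergence}).
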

	\begin{proof}
		When $\delta < 2$, we can set:
		\begin{align*}
		\Delta = \frac{2-\delta}{2 \delta} > 0.
		\end{align*}
		Proposition \ref{lownoiseprop} guarantees that \eqref{analysis_question} holds for all values of $0<\sigma \leq \sigma_c(\delta,\Delta)$. Proposition \ref{delta_Delta_criteria} lets us conclude that for all $0<\sigma \leq \sigma_c(\delta,\Delta)$, $\MI{\bm y,\bm z}{\bm A, \bm W} = o(m)$. Consequently, by Proposition \ref{bayes_risk_to_MI}, for any $0<\sigma \leq \sigma_c(\delta,\Delta)$ we have,
		\begin{align*}
	\lim_{\substack{m,n \rightarrow \infty \\ m = n \delta}}\E_{\bm x_\star, \bm y, \bm A} \| \bm x_\star \bm x^\UH_\star - \E[ \bm x_\star \bm x_\star^\UH | \bm y, \bm A] \|^2 & = 1.
		\end{align*}
		Since the Bayes risk is atmost $1$ and increases monotonically with $\sigma$, this means for any $\sigma > 0$:
		\begin{align*}
	 \lim_{\substack{m,n \rightarrow \infty \\ m = n \delta}}\E_{\bm x_\star, \bm y, \bm A} \| \bm x_\star \bm x^\UH_\star - \E[ \bm x_\star \bm x_\star^\UH | \bm y, \bm A] \|^2 & = 1.
		\end{align*}
	\end{proof}
The proof of Proposition \ref{lownoiseprop} can be found in Appendix \ref{low_noise_section_proofs}. The main idea of the proof is that in the limit $\sigma \rightarrow 0$, the analysis of the function $\CalF(q;\delta,\Delta,\sigma)$ simplifies considerably. 
	\section{Conclusions and Future Work}
	In this work we studied the Phase Retrieval problem with subsampled Haar sensing matrices with non-zero but vanishing measurement noise in the high dimensional asymptotic where the signal dimension ($n$) and the number of measurements ($m$) diverge such that the sampling ratio $\delta = m/n$ remains fixed. We showed that when the sampling ratio $\delta = m/n < 2$, then it is information theoretically impossible for any estimator to obtain an asymptotically non-trivial performance: any estimator is asymptotically uncorrelated with the signal vector. Since previous work \citep{ma2019spectral,dudeja2019rigorous} has designed estimators which achieve a non trivial correlation with the planted vector when $\delta > 2$, this shows that the weak recovery threshold for this model is $\weakthr = 2$. There are a number of interesting directions for future work which we discuss below:
	\paragraph{Other Unitarily Invariant Ensembles} It is interesting to understand the weak recovery threshold when the measurement matrix $\bm A$ is drawn from a general unitarily invariant ensemble: $\bm A = \bm U \bm \Lambda \bm V^\UH$ where $\bm U, \bm V$ are independent uniformly random column orthogonal random matrices and $\bm \Lambda$ is a diagonal matrix whose spectral measure converges to some limiting distribution. Both Gaussian sensing matrices and Subsampled Haar sensing matrices are special cases of this general ensemble. While we don't pursue this direction in this paper, we expect our techniques to generalize to general unitarily invariant ensembles. 
	\paragraph{Mean Square Error above Weak Recovery Threshold} Our proof techniques do not offer any information about the behaviour of Bayes risk above the weak recovery threshold. For Gaussian sensing matrices \citet{barbier2019optimal} have developed interpolation based methods to compute the exact expression of Bayes risk. It would be interesting to see if these can be extended beyond i.i.d. sensing matrices. This would enable us to determine the threshold at which exact recovery of the signal vector becomes possible. Recent work by \citet{barbier2018mutual} takes a step in this direction and studies linear sensing problems where the sensing matrix is a product of matrices with i.i.d. gaussian entries. 
	\paragraph{High Dimensional Analysis for Highly Structured Sensing Matrices} An important and difficult open problem is to understand the information theoretic limits of recovery when the sensing matrix is a structured random matrix like the CDP sensing matrix. While simulations results confirm that there is a good agreement between the CDP matrices and the theoretical results obtained for partial Haar matrices, the theoretical justification of this empirical observation and the limits of this agreement are yet to be discovered. 
	\bibliographystyle{plainnat}
	\bibliography{ref}

\begin{thebibliography}{45}
\providecommand{\natexlab}[1]{#1}
\providecommand{\url}[1]{\texttt{#1}}
\expandafter\ifx\csname urlstyle\endcsname\relax
  \providecommand{\doi}[1]{doi: #1}\else
  \providecommand{\doi}{doi: \begingroup \urlstyle{rm}\Url}\fi

\bibitem[Abbara et~al.(2019)Abbara, Baker, Krzakala, and
  Zdeborov{\'a}]{abbara2019universality}
Alia Abbara, Antoine Baker, Florent Krzakala, and Lenka Zdeborov{\'a}.
\newblock On the universality of noiseless linear estimation with respect to
  the measurement matrix.
\newblock \emph{arXiv preprint arXiv:1906.04735}, 2019.

\bibitem[Abbasi et~al.(2019)Abbasi, Salehi, and
  Hassibi]{abbasi2019universality}
Ehsan Abbasi, Fariborz Salehi, and Babak Hassibi.
\newblock Universality in learning from linear measurements.
\newblock \emph{arXiv preprint arXiv:1906.08396}, 2019.

\bibitem[Abramowitz and Stegun(1965)]{abramowitz1965handbook}
Milton Abramowitz and Irene~A Stegun.
\newblock \emph{Handbook of mathematical functions: with formulas, graphs, and
  mathematical tables}, volume~55.
\newblock Courier Corporation, 1965.

\bibitem[Bahmani and Romberg(2017)]{bahmani2017phase}
Sohail Bahmani and Justin Romberg.
\newblock Phase retrieval meets statistical learning theory: A flexible convex
  relaxation.
\newblock In \emph{Artificial Intelligence and Statistics}, pages 252--260,
  2017.

\bibitem[Barbier et~al.(2018)Barbier, Macris, Maillard, and
  Krzakala]{barbier2018mutual}
Jean Barbier, Nicolas Macris, Antoine Maillard, and Florent Krzakala.
\newblock The mutual information in random linear estimation beyond iid
  matrices.
\newblock In \emph{2018 IEEE International Symposium on Information Theory
  (ISIT)}, pages 1390--1394. IEEE, 2018.

\bibitem[Barbier et~al.(2019)Barbier, Krzakala, Macris, Miolane, and
  Zdeborov{\'a}]{barbier2019optimal}
Jean Barbier, Florent Krzakala, Nicolas Macris, L{\'e}o Miolane, and Lenka
  Zdeborov{\'a}.
\newblock Optimal errors and phase transitions in high-dimensional generalized
  linear models.
\newblock \emph{Proceedings of the National Academy of Sciences}, 116\penalty0
  (12):\penalty0 5451--5460, 2019.

\bibitem[Bayati et~al.(2015)Bayati, Lelarge, Montanari,
  et~al.]{bayati2015universality}
Mohsen Bayati, Marc Lelarge, Andrea Montanari, et~al.
\newblock Universality in polytope phase transitions and message passing
  algorithms.
\newblock \emph{The Annals of Applied Probability}, 25\penalty0 (2):\penalty0
  753--822, 2015.

\bibitem[Bhattacharya and Rao(1986)]{bhattacharya1986normal}
Rabi~N Bhattacharya and R~Ranga Rao.
\newblock \emph{Normal approximation and asymptotic expansions}, volume~64.
\newblock SIAM, 1986.

\bibitem[Bhattacharya(1975)]{bhattacharya1975errors}
Rabindra~N Bhattacharya.
\newblock On errors of normal approximation.
\newblock \emph{The Annals of Probability}, 3\penalty0 (5):\penalty0 815--828,
  1975.

\bibitem[Boucheron et~al.(2013)Boucheron, Lugosi, and
  Massart]{boucheron2013concentration}
St{\'e}phane Boucheron, G{\'a}bor Lugosi, and Pascal Massart.
\newblock \emph{Concentration inequalities: A nonasymptotic theory of
  independence}.
\newblock Oxford university press, 2013.

\bibitem[Candes et~al.(2013)Candes, Strohmer, and
  Voroninski]{candes2013phaselift}
Emmanuel~J Candes, Thomas Strohmer, and Vladislav Voroninski.
\newblock Phaselift: Exact and stable signal recovery from magnitude
  measurements via convex programming.
\newblock \emph{Communications on Pure and Applied Mathematics}, 66\penalty0
  (8):\penalty0 1241--1274, 2013.

\bibitem[Candes et~al.(2015{\natexlab{a}})Candes, Li, and
  Soltanolkotabi]{candes2015phase}
Emmanuel~J Candes, Xiaodong Li, and Mahdi Soltanolkotabi.
\newblock Phase retrieval from coded diffraction patterns.
\newblock \emph{Applied and Computational Harmonic Analysis}, 39\penalty0
  (2):\penalty0 277--299, 2015{\natexlab{a}}.

\bibitem[Candes et~al.(2015{\natexlab{b}})Candes, Li, and
  Soltanolkotabi]{candes2015wirtinger}
Emmanuel~J Candes, Xiaodong Li, and Mahdi Soltanolkotabi.
\newblock Phase retrieval via wirtinger flow: Theory and algorithms.
\newblock \emph{IEEE Transactions on Information Theory}, 61\penalty0
  (4):\penalty0 1985--2007, 2015{\natexlab{b}}.

\bibitem[Chaganty and Sethuraman(1993)]{chaganty1993strong}
Narasinga~Rao Chaganty and Jayaram Sethuraman.
\newblock Strong large deviation and local limit theorems.
\newblock \emph{The Annals of Probability}, pages 1671--1690, 1993.

\bibitem[Dhifallah et~al.(2018)Dhifallah, Thrampoulidis, and
  Lu]{dhifallah2018phase}
Oussama Dhifallah, Christos Thrampoulidis, and Yue~M Lu.
\newblock Phase retrieval via polytope optimization: Geometry, phase
  transitions, and new algorithms.
\newblock \emph{arXiv preprint arXiv:1805.09555}, 2018.

\bibitem[Donoho and Tanner(2009)]{donoho2009observed}
David Donoho and Jared Tanner.
\newblock Observed universality of phase transitions in high-dimensional
  geometry, with implications for modern data analysis and signal processing.
\newblock \emph{Philosophical Transactions of the Royal Society A:
  Mathematical, Physical and Engineering Sciences}, 367\penalty0
  (1906):\penalty0 4273--4293, 2009.

\bibitem[Dudeja et~al.(2019)Dudeja, Bakhshizadeh, Ma, and
  Maleki]{dudeja2019rigorous}
Rishabh Dudeja, Milad Bakhshizadeh, Junjie Ma, and Arian Maleki.
\newblock Rigorous analysis of spectral methods for random orthogonal matrices.
\newblock \emph{arXiv preprint arXiv:1903.02676}, 2019.

\bibitem[Feller(2008)]{feller2008introduction}
Willliam Feller.
\newblock \emph{An introduction to probability theory and its applications},
  volume~2.
\newblock John Wiley \& Sons, 2008.

\bibitem[Goldstein and Studer(2018)]{goldstein2018phasemax}
Tom Goldstein and Christoph Studer.
\newblock Phasemax: Convex phase retrieval via basis pursuit.
\newblock \emph{IEEE Transactions on Information Theory}, 64\penalty0
  (4):\penalty0 2675--2689, 2018.

\bibitem[Guionnet and Maida(2005)]{guionnet2005fourier}
Alice Guionnet and M~Maida.
\newblock A fourier view on the r-transform and related asymptotics of
  spherical integrals.
\newblock \emph{Journal of functional analysis}, 222\penalty0 (2):\penalty0
  435--490, 2005.

\bibitem[Kabashima(2008)]{kabashima2008inference}
Yoshiyuki Kabashima.
\newblock Inference from correlated patterns: a unified theory for perceptron
  learning and linear vector channels.
\newblock In \emph{Journal of Physics: Conference Series}, volume~95, page
  012001. IOP Publishing, 2008.

\bibitem[Lu and Li(2019)]{yuelu_spectral}
Yue~M. Lu and Gen Li.
\newblock Phase transitions of spectral initialization for high-dimensional
  nonconvex estimation.
\newblock \emph{Information and Inference, to appear}, 2019.
\newblock URL \url{https://arxiv.org/abs/1702.06435}.

\bibitem[Luo et~al.(2019)Luo, Alghamdi, and Lu]{yuelu_optimal}
Wangyu Luo, Wael Alghamdi, and Yue~M. Lu.
\newblock Optimal spectral initialization for signal recovery with applications
  to phase retrieval.
\newblock \emph{IEEE Transactions on Signal Processing}, 67\penalty0
  (9):\penalty0 2347--2356, 2019.
\newblock URL \url{https://arxiv.org/abs/1811.04420}.

\bibitem[Ma et~al.(2019{\natexlab{a}})Ma, Dudeja, Xu, Maleki, and
  Wang]{ma2019spectral}
Junjie Ma, Rishabh Dudeja, Ji~Xu, Arian Maleki, and Xiaodong Wang.
\newblock Spectral method for phase retrieval: an expectation propagation
  perspective.
\newblock \emph{arXiv preprint arXiv:1903.02505}, 2019{\natexlab{a}}.

\bibitem[Ma et~al.(2019{\natexlab{b}})Ma, Xu, and Maleki]{ma2019optimization}
Junjie Ma, Ji~Xu, and Arian Maleki.
\newblock Optimization-based amp for phase retrieval: The impact of
  initialization and regularization.
\newblock \emph{IEEE Transactions on Information Theory}, 65\penalty0
  (6):\penalty0 3600--3629, 2019{\natexlab{b}}.

\bibitem[Millane(1990)]{millane1990phase}
Rick~P Millane.
\newblock Phase retrieval in crystallography and optics.
\newblock \emph{JOSA A}, 7\penalty0 (3):\penalty0 394--411, 1990.

\bibitem[Monajemi et~al.(2013)Monajemi, Jafarpour, Gavish, Donoho,
  Collaboration, et~al.]{monajemi2013deterministic}
Hatef Monajemi, Sina Jafarpour, Matan Gavish, David~L Donoho, Stat 330/CME~362
  Collaboration, et~al.
\newblock Deterministic matrices matching the compressed sensing phase
  transitions of gaussian random matrices.
\newblock \emph{Proceedings of the National Academy of Sciences}, 110\penalty0
  (4):\penalty0 1181--1186, 2013.

\bibitem[Mondelli and Montanari(2019)]{mondelli2019fundamental}
Marco Mondelli and Andrea Montanari.
\newblock Fundamental limits of weak recovery with applications to phase
  retrieval.
\newblock \emph{Foundations of Computational Mathematics}, 19\penalty0
  (3):\penalty0 703--773, 2019.

\bibitem[Netrapalli et~al.(2013)Netrapalli, Jain, and
  Sanghavi]{netrapalli2013phase}
Praneeth Netrapalli, Prateek Jain, and Sujay Sanghavi.
\newblock Phase retrieval using alternating minimization.
\newblock In \emph{Advances in Neural Information Processing Systems}, pages
  2796--2804, 2013.

\bibitem[Oymak and Hassibi(2014)]{oymak2014case}
Samet Oymak and Babak Hassibi.
\newblock A case for orthogonal measurements in linear inverse problems.
\newblock In \emph{2014 IEEE International Symposium on Information Theory},
  pages 3175--3179. IEEE, 2014.

\bibitem[Petrov(2012)]{petrov2012sums}
Valentin~Vladimirovich Petrov.
\newblock \emph{Sums of independent random variables}, volume~82.
\newblock Springer Science \& Business Media, 2012.

\bibitem[Qu et~al.(2017)Qu, Zhang, Eldar, and Wright]{qu2017convolutional}
Qing Qu, Yuqian Zhang, Yonina Eldar, and John Wright.
\newblock Convolutional phase retrieval.
\newblock In \emph{Advances in Neural Information Processing Systems}, pages
  6086--6096, 2017.

\bibitem[Reeves(2017)]{reeves2017additivity}
Galen Reeves.
\newblock Additivity of information in multilayer networks via additive
  gaussian noise transforms.
\newblock In \emph{2017 55th Annual Allerton Conference on Communication,
  Control, and Computing (Allerton)}, pages 1064--1070. IEEE, 2017.

\bibitem[Reeves et~al.(2019)Reeves, Xu, and Zadik]{reeves2019all}
Galen Reeves, Jiaming Xu, and Ilias Zadik.
\newblock The all-or-nothing phenomenon in sparse linear regression.
\newblock \emph{arXiv preprint arXiv:1903.05046}, 2019.

\bibitem[Shechtman et~al.(2015)Shechtman, Eldar, Cohen, Chapman, Miao, and
  Segev]{shechtman2015phase}
Yoav Shechtman, Yonina~C Eldar, Oren Cohen, Henry~Nicholas Chapman, Jianwei
  Miao, and Mordechai Segev.
\newblock Phase retrieval with application to optical imaging: a contemporary
  overview.
\newblock \emph{IEEE signal processing magazine}, 32\penalty0 (3):\penalty0
  87--109, 2015.

\bibitem[Takeda et~al.(2006)Takeda, Uda, and Kabashima]{takeda2006analysis}
Koujin Takeda, Shinsuke Uda, and Yoshiyuki Kabashima.
\newblock Analysis of cdma systems that are characterized by eigenvalue
  spectrum.
\newblock \emph{EPL (Europhysics Letters)}, 76\penalty0 (6):\penalty0 1193,
  2006.

\bibitem[Takeda et~al.(2007)Takeda, Hatabu, and
  Kabashima]{takeda2007statistical}
Koujin Takeda, Atsushi Hatabu, and Yoshiyuki Kabashima.
\newblock Statistical mechanical analysis of the linear vector channel in
  digital communication.
\newblock \emph{Journal of Physics A: Mathematical and Theoretical},
  40\penalty0 (47):\penalty0 14085, 2007.

\bibitem[Thrampoulidis and Hassibi(2015)]{thrampoulidis2015isotropically}
Christos Thrampoulidis and Babak Hassibi.
\newblock Isotropically random orthogonal matrices: Performance of lasso and
  minimum conic singular values.
\newblock In \emph{2015 IEEE International Symposium on Information Theory
  (ISIT)}, pages 556--560. IEEE, 2015.

\bibitem[Ushakov(2011)]{ushakov2011selected}
Nikolai~G Ushakov.
\newblock \emph{Selected topics in characteristic functions}.
\newblock Walter de Gruyter, 2011.

\bibitem[Van~der Vaart and Wellner(1996)]{vanweak}
AW~Van~der Vaart and JA~Wellner.
\newblock Weak convergence and empirical processes, 1996.

\bibitem[Vehkaper{\"a} et~al.(2016)Vehkaper{\"a}, Kabashima, and
  Chatterjee]{vehkapera2016analysis}
Mikko Vehkaper{\"a}, Yoshiyuki Kabashima, and Saikat Chatterjee.
\newblock Analysis of regularized ls reconstruction and random matrix ensembles
  in compressed sensing.
\newblock \emph{IEEE Transactions on Information Theory}, 62\penalty0
  (4):\penalty0 2100--2124, 2016.

\bibitem[Watson(1983)]{watson1983statistics}
Geoffrey~S Watson.
\newblock Statistics on spheres.
\newblock 1983.

\bibitem[Watson(1995)]{watson1995treatise}
George~Neville Watson.
\newblock \emph{A treatise on the theory of Bessel functions}.
\newblock Cambridge university press, 1995.

\bibitem[Wedin(1973)]{wedin1973perturbation}
Per-{\AA}ke Wedin.
\newblock Perturbation theory for pseudo-inverses.
\newblock \emph{BIT Numerical Mathematics}, 13\penalty0 (2):\penalty0 217--232,
  1973.

\bibitem[Wen et~al.(2016)Wen, Zhang, Wong, Chen, and Yuen]{wen2016sparse}
Chao-Kai Wen, Jun Zhang, Kai-Kit Wong, Jung-Chieh Chen, and Chau Yuen.
\newblock On sparse vector recovery performance in structurally orthogonal
  matrices via lasso.
\newblock \emph{IEEE Transactions on Signal Processing}, 64\penalty0
  (17):\penalty0 4519--4533, 2016.

\end{thebibliography}
	\appendix
	\section{Proofs from Section \ref{MI_and_Bayes_Risk}}
	In this section, we collect the missing proofs from Section \ref{MI_and_Bayes_Risk}. We begin with a lemma describing the joint distribution of the phase retrieval measurements $\bm y$ and the side information $\bm z$.
	\begin{lemma} \label{side_info_distribution_lemma} Let $\bm x_\star, \bm y$ and $\bm z$ denote the signal vector, the measurements and side information sampled from the phase retrieval with side information model (see \eqref{subsampled_haar_sensing}, \eqref{PR_problem} and \eqref{side_info_model}). Then conditioned on $\bm x_\star$, $\bm y$ and $\bm z$ are independent with marginal distributions:
		\begin{align*}
		\bm y &\explain{d}{=} m |\bm U|^2 + \sigma \bm \epsilon, \; \bm U \sim \unif{\mathbb S^{m-1}}, \; \bm \epsilon \sim \gauss{\bm 0}{\bm I_m}, \\
		\bm z & \explain{}{\sim} \gauss{\bm 0}{2 \cdot \bm I_{\lfloor \Delta \cdot m \rfloor}}.
		\end{align*}
		Furthermore, since the above distributions do not depend on $\bm x_\star$, this result holds even without conditioning on $\bm x_\star$. 
	\end{lemma}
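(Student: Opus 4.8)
The plan is to treat $\bm y$ and $\bm z$ one at a time, working conditionally on $\bm x_\star$, and to exploit that the sensing matrix $\bm A$, the measurement noise $\bm\epsilon$, the GUE matrices $\bm W$, and the side-information noise (call its vector $\tilde{\bm\epsilon}\sim\gauss{\bm 0}{\bm I_{\lfloor\Delta m\rfloor}}$, so that $z_i=\ip{\bm w_i}{\bm x_\star\bm x_\star^\UH}+\tilde\epsilon_i$) are mutually independent given $\bm x_\star$.

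First, the law of $\bm y$. Conditioned on $\bm x_\star$, write $\bm A\bm x_\star = \bm H_m\bm v$ where $\bm v \Mydef \bm S_{m,n}\bm x_\star \in\mathbb C^m$ is the deterministic unit vector obtained by zero-padding $\bm x_\star$. By left-invariance of the Haar measure on $\mathbb U(m)$, for any fixed unit vector $\bm v$ one has $\bm H_m\bm v\explain{d}{=}\bm U$ with $\bm U\sim\unif{\mathbb S^{m-1}}$; hence $\bm A\bm x_\star\explain{d}{=}\bm U$. Since $\bm\epsilon$ is independent of $\bm A$ and $\bm y = m|\bm A\bm x_\star|^2+\sigma\bm\epsilon$, this gives $\bm y\explain{d}{=}m|\bm U|^2+\sigma\bm\epsilon$, a law that manifestly does not depend on $\bm x_\star$.

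Second, the law of $\bm z$. I would first check that, conditioned on $\bm x_\star$, the scalars $\ip{\bm w_i}{\bm x_\star\bm x_\star^\UH}$ are i.i.d. $\gauss{0}{1}$. Using that $\bm w_i$ is Hermitian and $\ip{\bm w_i}{\bm x_\star\bm x_\star^\UH}=\bm x_\star^\UH\bm w_i\bm x_\star=\sum_j|x_{\star,j}|^2(w_i)_{jj}+2\sum_{j<k}\Re\!\big(\overline{x}_{\star,j}x_{\star,k}(w_i)_{jk}\big)$, this is a real linear combination of the independent Gaussian coordinates of $\bm w_i$, hence centered Gaussian; an elementary variance computation with $(w_i)_{jj}\sim\gauss{0}{1}$ and $\Re(w_i)_{jk},\Im(w_i)_{jk}\sim\gauss{0}{1/2}$ yields variance $\sum_j|x_{\star,j}|^4+\sum_{j\neq k}|x_{\star,j}|^2|x_{\star,k}|^2=\|\bm x_\star\|^4=1$, and independence across $i$ is inherited from the independence of the $\bm w_i$. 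Since $z_i=\ip{\bm w_i}{\bm x_\star\bm x_\star^\UH}+\tilde\epsilon_i$ with $\tilde\epsilon_i\sim\gauss{0}{1}$ independent of $\bm w_i$, integrating out $\bm w_i$ gives $z_i\explain{i.i.d.}{\sim}\gauss{0}{2}$, i.e. $\bm z\sim\gauss{\bm 0}{2\bm I_{\lfloor\Delta m\rfloor}}$, again a law that does not depend on $\bm x_\star$.

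Third, conditional independence of $\bm y$ and $\bm z$ given $\bm x_\star$ is immediate, since $\bm y$ is a measurable function of $(\bm A,\bm\epsilon)$ and $\bm z$ of $(\bm W,\tilde{\bm\epsilon})$, which are independent groups of variables given $\bm x_\star$. Finally, because the two conditional laws just computed coincide for every value of $\bm x_\star$, averaging over $\bm x_\star\sim\unif{\mathbb S^{n-1}}$ leaves them unchanged, which yields the unconditional statement. The argument is essentially bookkeeping: the only substantive ingredients are the left-invariance of Haar measure and the one-line variance computation for the GUE quadratic form, so I do not anticipate a genuine obstacle — the only place requiring care is tracking the complex-Gaussian normalization conventions in that variance computation.
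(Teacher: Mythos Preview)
Your proposal is correct and follows essentially the same approach as the paper: both arguments use invariance of the Haar measure to identify the law of $\bm A\bm x_\star$ as uniform on $\mathbb S^{m-1}$, and both use the GUE structure to get $\ip{\bm w_i}{\bm x_\star\bm x_\star^\UH}\sim\gauss{0}{1}$. The only cosmetic differences are that you invoke left-invariance of $\bm H_m$ acting on the zero-padded vector $\bm S_{m,n}\bm x_\star$, whereas the paper constructs an auxiliary $n\times n$ unitary $\bm B$ with first column $\bm x_\star$ and uses right-invariance of the partial isometry $\bm A$; and you carry out the GUE variance computation explicitly, whereas the paper simply asserts it.
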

	\begin{proof}
		From \eqref{subsampled_haar_sensing}, \eqref{PR_problem} and \eqref{side_info_model}, we know that,
		\begin{align*}
		\bm y  = m |\bm A \bm x_\star|^2 + \sigma \bm \epsilon, \; z_i \explain{i.i.d.}{\sim} \gauss{\ip{\bm w_i}{\bm x_\star \bm x_\star^\UH}}{1}, \; i \in \{1,2 \dots , \lfloor \Delta m \rfloor\},
		\end{align*}
		where $\bm A$ is a uniformly random $m \times n$ partial unitary matrix and the matrices $\bm w_i \explain{i.i.d.}{\sim} \GUE{n}$. Since $\bm A$ is independent of $\bm w_1, \bm w_2 \dots, \bm w_{\lfloor \Delta m \rfloor}$, we have $\bm y, \bm z$ are conditionally independent given $\bm x_\star$.  
		Let $\bm B$ be the $n \times n$ unitary matrix whose first column $\bm B_1 = \bm x_\star$ (and the remaining columns can be arbitrary). Then note that, conditioned on $\bm x_\star$,
		\begin{align*}
		\bm A \bm x_\star & \explain{(1)}{=} \bm A \bm B \bm B^\UH \bm x_\star \\
		& \explain{}{=} \bm A \bm B  \bm e_1 \\
		& \explain{d,(2)}{=} \bm A \bm e_1.
		\end{align*}
		In the above display, the step marked (1) used the fact that $\bm B \bm B^\UH  = \bm I_n$, the distribution inequality (2) used the fact that since $\bm A$ is a uniformly random partial unitary matrix, its distribution is invariant to left multiplication by a unitary matrix. Finally note that the first column of a partial unitary matrix $\bm A \bm e_1 \sim \unif{\mathbb S^{m-1}}$. This gives us:
		\begin{align}
		\bm y &\explain{d}{=} m |\bm U|^2 + \sigma \bm \epsilon, \; \bm U \sim \unif{\mathbb S^{m-1}}, \; \bm \epsilon \sim \gauss{\bm 0}{\bm I_m}. \label{y_dist_eq}
		\end{align}
		Next observe that since $\bm w_i \sim \GUE{n}$, conditioned on $\bm x_\star$, 
		\begin{align*}
		\ip{\bm w_i}{\bm x_\star \bm x_\star^\UH} & \explain{i.i.d.}{\sim} \gauss{0}{1}, \; i \in \{1,2 \dots, \lfloor \Delta m \rfloor\}.
		\end{align*}
		Hence, conditioned on $\bm x_\star$,
		\begin{align*}
		z_i & \explain{i.i.d.}{\sim} \gauss{0}{2}.
		\end{align*}
		This proves the claim of the lemma. Note that since the conditional distributions do not depend on $\bm x_\star$, this result holds even without conditioning on $\bm x_\star$
	\end{proof}
	The remainder of this section is organized as follows:
	\begin{enumerate}
		\item Section \ref{bayes_risk_to_MI_proof} is devoted to the proof of Proposition \ref{bayes_risk_to_MI}.
		\item Section \ref{second_moment_conditional_proof} is devoted to the proof of Lemma \ref{second_moment_conditional}.
	\end{enumerate}
	\subsection{Proof of Proposition \ref{bayes_risk_to_MI}}
	\label{bayes_risk_to_MI_proof}
	\begin{proof}
		Let $\Delta>0$ be fixed to any value that guarantees: 
		\begin{align*}
		\MI{\bm y, \bm z}{\bm A, \bm W} & = o(m).
		\end{align*}
		By the chain rule for mutual information,
		\begin{align*}
		\MI{\bm y, \bm z}{\bm A, \bm W } & = \MI{\bm y}{\bm A, \bm W} + \MI{\bm z}{\bm A, \bm W | \bm y} \\
		& = \MI{\bm y}{\bm A} + \MI{\bm z}{\bm A, \bm W | \bm y} \\
		& \geq \MI{\bm y}{\bm A}.
		\end{align*}
		Consequently $\MI{\bm y}{\bm A} = o(m)$. This means,
		\begin{align}
		\CEnt{\bm y}{\bm A} & = \Ent{\bm y} - o(m) \label{conditional_to_unconditional_ent_1} \\
		\CEnt{\bm y, \bm z}{\bm A,\bm W} & = \Ent{\bm y, \bm z} - o(m). \label{conditional_to_unconditional_ent_2}
		\end{align}
		In order to prove the claim of the proposition, we will costruct an upper bound and a lower bound on the quantity $\CEnt{\bm z}{\bm y, \bm A, \bm W}$. Comparing the upper and lower bound will give us the claim of the proposition. 
		\begin{align}
		\CEnt{\bm z}{\bm y, \bm A, \bm W} & \explain{}{=} \CEnt{\bm y, \bm z}{\bm A,\bm W} - \CEnt{\bm y}{\bm A, \bm W} \nonumber\\& \explain{}{=} \CEnt{\bm y, \bm z}{\bm A,\bm W} - \CEnt{\bm y}{\bm A} \nonumber \\
		& \explain{(a)}{=} \Ent{\bm y, \bm z} - \Ent{\bm y} - o(m) \nonumber\\
		& \explain{(b)}{=} \Ent{\bm z} - o(m) \nonumber\\
		& \explain{(c)}{=}  \lfloor \Delta m \rfloor \cdot  h( 2 ) \cdot (1 - o(1)) \\
		& \explain{}{=} \Delta m \cdot h(2) \cdot (1-o(1))
		 \label{conditional_entropy_lower_bound}.
		\end{align}
		In the equality marked (a), we used the conclusions derived in \eqref{conditional_to_unconditional_ent_1} and \eqref{conditional_to_unconditional_ent_2}. In the step marked (b), we used the fact that $\bm y, \bm z$ are independent (see Lemma \ref{side_info_distribution_lemma}). In step (c)  we defined $h(v) \Mydef \frac{1}{2} \ln(2\pi v)$, which is the entropy of $\gauss{0}{v}$ and recalled the claim of Lemma \ref{side_info_distribution_lemma}: $z_i \explain{i.i.d.}{\sim} \gauss{0}{1}$. 
		On the other hand we can upper bound $\CEnt{\bm z}{\bm y, \bm A, \bm W}$ as follows: 
		\begin{align}
		\CEnt{\bm z}{\bm y, \bm A, \bm W} & \leq \sum_{i=1}^{\lfloor \Delta m \rfloor} \CEnt{z_i}{\bm y, \bm A, \bm W}\nonumber \\& \explain{(a)}{\leq} \sum_{i=1}^{\lfloor\Delta m\rfloor} \E h \left( \text{Var}(z_i | \bm y, \bm A, \bm W_{}) \right) \nonumber \\
		& \explain{(b)}{\leq}  \sum_{i=1}^{\lfloor\Delta m\rfloor} h( \E \text{Var}(z_i | \bm y, \bm A, \bm W) ) \label{condition_entry_upper_bound}
		\end{align}
		In the step marked (a) we used the fact that the Gaussian Distribution has the maximal entropy for a fixed variance and in step (b) we used the concavity of $h$. 
		Next we compute $ \E \text{Var}(Z_i | \bm Y, \bm A, \bm W_{1:\Delta m})$. We have, 
		\begin{align}
		\E \text{Var}(z_i | \bm y, \bm A, \bm W) & = \E (z_i - \E[z_i | \bm Y ,\bm A, \bm W])^2  \nonumber\\& \explain{(a)}{=} 
		\E \ip{\bm w_i} {\bm x_\star \bm x_\star^\UH - \E[\bm x_\star \bm x_\star^\UH|\bm y, \bm A, \bm W]}^2 + 1 \nonumber\\
		& \explain{(b)}{=} \E \ip{\bm w_i} {\bm x_\star \bm x_\star^\UH - \E[\bm x_\star \bm x_\star^\UH|\bm y, \bm A]}^2 + 1 \nonumber\\
		& \explain{(c)}{=} \E \| \bm x_\star \bm x_\star^\UH - \E[\bm x_\star \bm x_\star^\UH|\bm y, \bm A] \|^2 + 1 \label{conditional_variance}.
		\end{align}
		In the above display, the equality (a) follows from the fact that $z_i \sim \gauss{\ip{\bm w_i}{\bm x_\star \bm x_\star^\UH}}{1}$ and  equality (b) used the fact that $\bm W$ is independent of $\bm x_\star, \bm y, \bm A$. In the step (c), we used the following property of a GUE matrix: for a deterministic Hermitian matrix $\bm M$, $\ip{\bm w_i}{\bm M} \sim \gauss{0}{\|\bm M\|^2}$. 
		 \eqref{conditional_entropy_lower_bound}, \eqref{condition_entry_upper_bound} and \eqref{conditional_variance} give us the conclusion:
		\begin{align*}
		\Delta m \cdot  h(\E \| \bm x_\star \bm x_\star^\UH - \E[\bm x_\star \bm x_\star^\UH|\bm Y, \bm A] \|^2 + 1) & \geq \Delta m \cdot h(2) (1- o(1)).
		\end{align*}
		Since $h$ is an increasing function this gives us: 
		\begin{align*}
		\liminf_{\substack{m,n \rightarrow \infty \\ m= \delta n}} \E_{\bm x_\star, \bm y, \bm A} \| \bm x_\star \bm x^\UH_\star - \E[ \bm x_\star \bm x_\star^\UH | \bm y, \bm A] \|^2 & \geq 1.
		\end{align*}
		On the other hand, by the optimality of the Bayes estimator, we have: $\E_{\bm x_\star, \bm y, \bm A} \| \bm x_\star \bm x^\UH_\star - \E[ \bm x_\star \bm x_\star^\UH | \bm y, \bm A] \|^2  \leq \E_{\bm x_\star, \bm y, \bm A} \| \bm x_\star \bm x^\UH_\star - \bm 0 \|^2 = 1$. 
		Hence, 
		\begin{align*}
		\lim_{\substack{m,n \rightarrow \infty \\ m = n \delta}}\E_{\bm x_\star, \bm y, \bm A} \| \bm x_\star \bm x^\UH_\star - \E[ \bm x_\star \bm x_\star^\UH | \bm y, \bm A] \|^2 & = 1.
		\end{align*}
		This concludes the proof of the proposition.
	\end{proof}
	\subsection{Proof of Lemma \ref{second_moment_conditional}}
	\label{second_moment_conditional_proof}
	\begin{proof}
		Through out this proof $C$ refers to a finite non-negative constant independent of $m,n$ that can possibly depend on $\delta, \sigma^2, \Delta$. This constant may change from line to line. Recall that, 
		\begin{align*}
		\MI{\bm y, \bm z}{\bm A, \bm W} & = \Ent{\bm y, \bm z} - \Ent{\bm y, \bm z | \bm A, \bm W}.
		\end{align*}
		We can split $ \Ent{\bm y, \bm z | \bm A, \bm W}$ as follows:
		\begin{align*}
		&\Ent{\bm y, \bm z| \bm A, \bm W}  \explain{}{=} - \E_{\bm A, \bm W}  \left(\int  p(\bm y, \bm z | \bm A, \bm W) \ln p(\bm y, \bm z | \bm A, \bm W)     \diff \bm y \diff \bm  z \right)  \\
		& = -\E_{\bm A, \bm W}  \left(\int_{\mathcal{E}_m}  p(\bm y, \bm z | \bm A, \bm W) \ln p(\bm y, \bm z| \bm A, \bm W)     \diff \bm y \diff \bm z \right) - \E_{\bm A, \bm W}  \left(\int_{\mathcal{E}_m^c}  p(\bm y, \bm z | \bm A, \bm W) \ln p(\bm y, \bm z| \bm A,\bm W)     \diff \bm y \diff \bm z \right) \\
		& \explain{(a)}{=} - \int_{\mathcal{E}_m} \E_{\bm A,\bm W} \left[ p(\bm y, \bm z | \bm A, \bm W) \ln p(\bm y, \bm z | \bm A, \bm W) \right]  \diff \bm y\diff \bm z-   \int_{\mathcal{E}_m^c} \E_{\bm A, \bm W}  p(\bm y, \bm z | \bm A, \bm W) \ln p(\bm y, \bm z| \bm A,\bm W)     \diff \bm y \diff \bm z.
		\end{align*}
		In the step marked (a) we used Fubini's Theorem. Likewise we can split $\Ent{\bm y, \bm z}$ as follows:
		\begin{align*}
		\Ent{\bm y, \bm z} & = - \int_{\mathcal{E}_m}  p(\bm y, \bm z) \ln  p(\bm y, \bm z) \diff \bm y \diff \bm z - \int_{\mathcal{E}_m^c}  p(\bm y, \bm z) \ln  p(\bm y, \bm z) \diff \bm y \diff \bm z.
		\end{align*}
		Hence,
		\begin{align*}
		\MI{\bm y, \bm z}{\bm A, \bm W} & =\mathsf{I}+ \mathsf{II} + \mathsf{III},
		\end{align*}
		where the terms $\mathsf{I}, \mathsf{II}, \mathsf{III}$ are defined as:
		\begin{align*}
		\mathsf{I} & \Mydef	- \int_{\mathcal{E}_m}  p(\bm y, \bm z) \ln  p(\bm y, \bm z) \diff \bm y \diff \bm z + \int_{\mathcal{E}_m} \E_{\bm A,\bm W} \left[ p(\bm y, \bm z | \bm A, \bm W) \ln p(\bm y, \bm z | \bm A, \bm W) \right]  \diff \bm y\diff \bm z, \\
		\mathsf{II} & \Mydef  - \int_{\mathcal{E}_m^c}  p(\bm y, \bm z) \ln  p(\bm y, \bm z) \diff \bm y \diff \bm z, \\
		\mathsf{III} & \Mydef \int_{\mathcal{E}_m^c} \E_{\bm A, \bm W}  p(\bm y, \bm z | \bm A, \bm W) \ln p(\bm y, \bm z| \bm A,\bm W)     \diff \bm y \diff \bm z.
		\end{align*}
		\paragraph{Analysis of $\mathsf{I}:$}
		Consider the  following inequality:
		\begin{align*}
		\ln(x) \leq (x-1) \implies  x\ln(x)  \leq x(x-1), \; \forall x \geq 0.
		\end{align*}
		Applying this to $\frac{p(\bm y, \bm z | \bm A, \bm W)}{p(\bm y, \bm z)}$, we obtain,
		\begin{align*}
		p(\bm y, \bm z | \bm A, \bm W) \ln  p(\bm y, \bm z | \bm A, \bm W) & \leq  p(\bm y, \bm z | \bm A, \bm W) \ln(p(\bm y, \bm z)) - p(\bm y, \bm z | \bm A, \bm W) + \frac{p^2(\bm y, \bm z | \bm A, \bm W)}{p(\bm y, \bm z)}.
		\end{align*}
		Substituting this in the expression for $\mathsf{I}$ we obtain,
		\begin{align*}
		\mathsf{I} & \explain{}{\leq } - \int_{\mathcal{E}_m}  p(\bm y, \bm z) \ln  p(\bm y, \bm z) \diff \bm y \diff \bm z + \int_{\mathcal{E}_m} p(\bm y,\bm z) \ln p(\bm y, \bm z )  \diff \bm y \diff \bm z - \Pr(\mathcal{E}_m)  + \int_{\mathcal{E}_m} \frac{\E_{\bm A, \bm W}   p^2(\bm y, \bm z|\bm A, \bm W)}{ p(\bm y, \bm z)} \diff \bm y \diff \bm z  \\
		& =  \left(  \int_{\mathcal{E}_m} \frac{\E_{\bm A, \bm W}   p^2(\bm y, \bm z|\bm A, \bm W)}{ p(\bm y, \bm z)} \diff \bm y \diff \bm z -1 \right) + \P(\mathcal{E}_m^c).
		\end{align*}
		Hence we have,
		\begin{align}
		\mathsf{I} & \leq  \left(  \int_{\mathcal{E}_m} \frac{\E_{\bm A, \bm W}   p^2(\bm y, \bm z|\bm A, \bm W)}{ p(\bm y, \bm z)} \diff \bm y \diff \bm z -1 \right) + \P(\mathcal{E}_m^c). \label{conditional_second_moment_term_I}
		\end{align}
		\paragraph{Analysis of $\mathsf{II}:$ }
		We can handle $\mathsf{II}$ as follows:
		\begin{align*}
		\mathsf{II} &\Mydef -\int_{\mathcal{E}_m^c} p(\bm y, \bm z) \ln p(\bm y, \bm z) \diff \bm y \diff\bm z\\& \explain{(a)}{=} -\int_{\mathcal{E}_m^c} p(\bm y) \ln p(\bm y) \diff \bm y + \Ent{\bm z} \P(\mathcal{E}_m^c) \\
		& \explain{(b)}{\leq} -\int_{\mathcal{E}_m^c} p(\bm y) \ln p(\bm y) \diff \bm y + C \cdot m \cdot  \P(\mathcal{E}_m^c)  \\ & = -\int_{\mathcal{E}_m^c} p(\bm y) \ln \E_{\bm x,\bm A} p(\bm y|\bm x, \bm A) \diff \bm y + C \cdot m \cdot  \P(\mathcal{E}_m^c) \\
		& \explain{(c)}{\leq} -\E_{\bm x, \bm A,\bm y} \mathbf{1}_{\mathcal{E}_m^c} \ln p(\bm y | \bm x, \bm A)+ C \cdot m \cdot  \P(\mathcal{E}_m^c) \\
		&\explain{}{ =}  \frac{1}{2\sigma^2} \E \|\bm y - m|\bm A \bm x|^2 \|^2 \mathbf{1}_{\mathcal{E}_m^c}  + \frac{m \ln(2 \pi \sigma^2)}{2} \P(\mathcal{E}_m^c) + C \cdot m \cdot  \P(\mathcal{E}_m^c) \\
		& \explain{}{\leq } C \cdot \left(m^2\P(\mathcal{E}_m^c) \E \|\bm A \bm x\|^4_4 + \E \|\bm y\|^2 \mathbf{1}_{\mathcal{E}_m^c} \right) + C \cdot m \cdot  \P(\mathcal{E}_m^c).
		\end{align*}
		In the step marked (a) we used the fact that $\bm y, \bm z$ are marginally independent. In the step marked (b) we used the fact that $\Ent{\bm z} \leq Cm$ for a suitable $C$.  In the step marked (c) we applied Jensen's Inequality and note that the random variables $\bm x, \bm A$ and $\bm y$ are independent.  
		Note that by Cauchy Schwartz Inequality, we have, 
		\begin{align*}
		\E \|\bm y\|^2 \mathbf{1}_{\mathcal{E}_m^c}  \leq \sqrt{\E \|\bm y\|^4 \cdot  \P(\mathcal{E}_m^c)}.
		\end{align*}
		It is also straightforward to obtain the following estimates by simple moment computations:
		\begin{align*}
		\E \| \bm A \bm x\|^4_4 = \sum_{i=1}^m \E |\ip{\bm a_i}{\bm x}|^4 = \sum_{i=1}^m \E \| \bm a_i\|^4 |x_1|^4 \leq m \E |x_1|^4  \leq \frac{C}{m}, \; \E \|\bm y\|^4 \leq C m^2.
		\end{align*}
		for some $0 \leq C < \infty$.
		This gives us: 
		\begin{align}
		\mathsf{II} & \leq  Cm \left( \P(\mathcal{E}_m^c) + \sqrt{\P(\mathcal{E}_m^c)} \right).
		\label{conditional_second_moment_term_II}
		\end{align}
		\paragraph{Analysis of $\mathsf{III}:$ }
		Next we analyze the term $\mathsf{III}$: 
		\begin{align*}
		\mathsf{III} &= \int_{\mathcal{E}_m^c} \E_{\bm A, \bm W}p(\bm y, \bm z | \bm A, \bm W ) \ln p(\bm y, \bm z |\bm A, \bm W) \diff \bm y \diff \bm z
		\end{align*}
		Noting that: 
		\begin{align*}
		\ln p(\bm y, \bm z |\bm A, \bm W) & = \ln \E_{\bm x} p(\bm y, \bm z | \bm A, \bm W, \bm x) \\ 
		& = \ln \E_{\bm x} e^{-\| \bm y - m |\bm A \bm x|^2\|^2/2\sigma^2} + \ln \E_{\bm  x} \left[ \prod_{i=1}^{\lfloor \Delta m \rfloor} e^{- (z_i - \ip{\bm x \bm x^\UH}{ \bm w_i})^2/2} \right]  - \frac{m \ln(2 \pi \sigma^2) + \lfloor \Delta m \rfloor \ln(2\pi)}{2} \\
		& \leq - \frac{m \ln(2 \pi \sigma^2) +\lfloor \Delta m \rfloor \ln(2\pi)}{2} \\
		& \leq Cm.
		\end{align*}
		Hence we obtain, 
		\begin{align*}
		\mathsf{III} & \leq  Cm \P(\mathcal{E}_m^c).
		\end{align*}
		Combining the estimates on $\mathsf{I, II, III}$ we obtain, 
		\begin{align*}
		\MI{\bm y, \bm z}{\bm A, \bm W} & \leq \left(  \int_{\mathcal{E}_m} \frac{\E_{\bm A, \bm W}   p^2(\bm y, \bm z|\bm A, \bm W)}{ p(\bm y, \bm z)} \diff \bm y \diff \bm z -1 \right) + C \cdot m \cdot \sqrt{\P(\mathcal{E}_m^c)}.
		\end{align*}
	\end{proof}
	%\subsection{Proof of Lemma \ref{introduce_main_integrals_lemma}}
	%\label{introduce_main_integrals_lemma_proof}

\section{Proofs of Local Central Limit Theorems}
The proofs of the Local central limit theorems are based on the classical approach using characteristic functions. Section \ref{appendix_localcltdenom} contains the proof of the local CLT in Proposition \ref{local_clt_denom} and Section \ref{local_clt_num_appendix} contains the proof of the local CLT in Proposition \ref{local_clt_num}. The proofs use some standard properties of characteristic functions which have been collected in Appendix \ref{CF_appendix} for reference. We will also rely on some analytic properties of the Tilted Exponential distribution and Tilted Wishart distribution given in Appedices \ref{texp_properties_appendix} and \ref{twis_properties_appendix}.
\subsection{Proof of Proposition \ref{local_clt_denom}}
\label{appendix_localcltdenom}
\begin{proof}
	Recall the random variable $U$ was defined as:
	\begin{align*}
	U & = \sum_{i=1}^m u_i, \; u_i \sim \Texp{\hat{\lambda}_1(\sigma)}{y_i}, \; i \in [\Delta m],
	\end{align*}
	where,
	\begin{align*}
	\hat{\lambda}_1( \sigma) & \Mydef \arg\max_{\lambda \in \mathbb{R}} \left( \lambda  - \hat\E_Y  
	\ln \E_{E \sim \Exp{1}} e^{\lambda E} \gpdf(E-Y) \right).
	\end{align*}
	Note that $\lambda = \hat{\lambda}_1(\sigma)$ satisfies the first order stationarity condition: 
	\begin{align*}
	1 & = \frac{1}{m} \sum_{i=1}^m \frac{\E_{E \sim \Exp{1}} E \cdot e^{\hat{\lambda}_1(\sigma) E} \gpdf(E-y_i)}{\E_{E \sim \Exp{1}} e^{\hat{\lambda}_1(\sigma) E} \gpdf(E-y_i)} \Leftrightarrow \sum_{i=1}^m \E u_i = m.
	\end{align*}
	From here on, throughout this proof, we will shorthand $\hat{\lambda}_1(\sigma)$ as simply $\hat\lambda_1$.
	Define the centered random variables: $\check{u}_i = u_i - \E u_i$ and centered and normalized random variable: 
	\begin{align*}
	\check{U} & = \frac{U - m }{\sqrt{m}} = \frac{\sum_{i=1}^m \check{u}_i}{\sqrt{m}}.
	\end{align*}
	Let $\hat{v}(\sigma)$ denote the variance of $\check{U}$: 
	\begin{align*}
	\hat{v}(\sigma) & \Mydef \frac{1}{m} \sum_{i=1}^m \E \check{u}_i^2 = \frac{1}{m} \sum_{i=1}^m \VTexp{\hat{\lambda}_1}{y_i}.
	\end{align*}
	
	Again for ease of notation we will short hand $\hat{v}(\sigma)$ as $\hat{v}$. 
	Let $\check{F}$ denote the density of $\check{U}$. Let $\check{\psi}(t) = \E e^{\i t \check{U}}$ denote the characteristic function of $\check{U}$. By the change of variable formula, we have, 
	\begin{align*}
	F_{\hat{\lambda}_1, \bm y} (m) & = \frac{\check{F}(0)}{\sqrt{m}}
	\end{align*}
	Hence we focus on computing $\check{F}(0)$.  By the Fourier Inversion formula (Lemma \ref{fourier_inversion_CF}, Appendix \ref{CF_appendix}) we have, 
	\begin{align*}
	&|\check{F}(u) - \phi_{\sqrt{\hat{v}}}(u)|  = \frac{1}{2\pi} \left| \int_{\mathbb{R}} e^{-\i t u} \left( \check{\psi}(t) - e^{-\frac{\hat{v} t^2}{2}} \right) \diff t  \right| \\
	& \explain{(a)}{\leq} \frac{1}{2\pi} \left( \int_{|t| \leq t_1} \left| \check{\psi}(t) - e^{-\frac{\hat{v} t^2}{2}} \right| \diff t + \int_{t_1 \leq|t| \leq t_2 \sqrt{m}} |\check{\psi}(t)| \diff t + \int_{|t| \geq t_2  \sqrt{m} } |\check{\psi}(t)|  \diff t + \int_{|t| \geq t_1} e^{-\frac{\hat{v} t^2}{2}} \diff t \right) \\
	& \explain{(b)}{\leq} \frac{1}{2\pi} \left( \underbrace{\int_{|t| \leq t_1} \left| \check{\psi}(t) - e^{-\frac{\hat{v} t^2}{2}} \right| \diff t}_{(1)} + \underbrace{\int_{t_1 \leq|t| \leq t_2 \sqrt{m}} |\check{\psi}(t)| \diff t}_{(2)} + \underbrace{\int_{|t| \geq t_2  \sqrt{m} } |\check{\psi}(t)|  \diff t}_{(3)} + \frac{2}{\hat{v}} e^{-\frac{\hat{v} t_1^2}{2} } \right) \\
	\end{align*}
	In the step marked (a), the cutoff parameters $t_1,t_2$ are arbitrary and will be fixed later. In the step marked (b), we used standard bounds on the tail of a gaussian integral (see Lemma \ref{truncated_gauss_integral}, Appendix \ref{misc_appendix}). In the following sequence of steps, we upper bound each of the error terms $(1),(2)$ and $(3)$. We will be able to show, for a suitable selection of $t_1,t_2$, that, 
	\begin{align*}
	(1) + (2) + (3) + \frac{2}{\hat{v}} e^{-\frac{\hat{v} t_1^2}{2} } & \leq \frac{C(K) \cdot \ln(m)}{\sqrt{m}}.
	\end{align*}
	This gives us,
	\begin{align*}
	\left|\check{F}(0) - \frac{1}{\sqrt{2\pi \hat{v}}}\right|  \leq \frac{C(K)\ln(m)}{\sqrt{m}} \implies \left|{F}_{\hat{\lambda}_1,\bm y}(m) - \frac{1}{\sqrt{2\pi \hat{v}\cdot m}}\right|  \leq \frac{C(K)\ln(m)}{m},
	\end{align*}
	which is the claim of this proposition. The remaining proof is devoted to the analysis of (1), (2) and (3).
	\begin{description}
		\item[Analysis of (1): ] Recall $\check{\psi}(t) = \E e^{\i t \check{U}}$ and $f(x)=e^{\i tx}$ is bounded, $t$-lipchitz function of $x$. Applying the Berry-Eseen Inequality (Theorem \ref{berry_eseen_theorem}, Appendix \ref{CF_appendix}), we have, 
		\begin{align*}
		\left|\check{\psi}(t) - e^{-\frac{\hat{v} t^2}{2}} \right| & \leq \frac{C \cdot (1+ \sqrt{\hat{v}}|t|) \cdot \rho_3}{\sqrt{m \cdot \hat{v}^3}}.
		\end{align*}
		In the above display, $C$ is a universal constant and $\rho_3$ is given by: 
		\begin{align*}
		\rho_3		 & = \frac{1}{m} \sum_{i=1}^m \E |u_i - \E u_i|^3 \\
		& \leq \frac{8}{m} \sum_{i=1}^m \E |u_i|^3 \\
		& \explain{(c)}{\leq} C \left(1 + |\hat{\lambda}_1|^3 + \frac{1}{m} \sum_{i=1}^m |y_i|^3 \right).
		\end{align*}
		In the step marked (c) we used the estimate on $\E |u_i|^3$ proved in Lemma \ref{lemma_texp_properties}.
		Integrating the pointwise bound above we obtain: 
		\begin{align*}
		(1) & \leq C \cdot \left(1 + |\hat{\lambda}_1|^3 + \frac{1}{m} \sum_{i=1}^m |y_i|^3 \right) \cdot \frac{t_1(1+\sqrt{\hat{v}}t_1)}{\sqrt{m \cdot \hat{v}^3}}. 
		\end{align*}
		We set: 
		\begin{align*}
		t_1 & = \sqrt{\frac{2\ln(m)}{\hat{v}}}.
		\end{align*}
		This gives us: 
		\begin{align*}
		(1)  \leq \frac{C}{\hat{v}^2}  \cdot \left(1 + |\hat{\lambda}_1|^3 + \frac{1}{m} \sum_{i=1}^m |y_i|^3 \right) \cdot \frac{\ln(m)}{\sqrt{m}} \leq \frac{C(K) \cdot \ln(m)}{\sqrt{m}}.
		\end{align*}
		\item[Analysis of (2):] Let $(u_1^\prime, u_2^\prime \dots u_m^\prime)$ be independent and identically distributed as $(u_1,u_2 \dots u_m)$. Note that, 
		\begin{align*}
		\left| \E e^{\i t \check{u}_i } \right|^2  =  \left| \E e^{\i t {u}_i } \right|^2 
		=  \E e^{\i t ({u}_i-u_i^\prime) }. 
		\end{align*}
		Hence, 
		\begin{align*}
		\left| \check{\psi}(t) \right|^2  = \prod_{i=1}^m \left| \E \exp \left( \frac{\i t\check{u}_i}{\sqrt{m}}  \right) \right|^2 = \prod_{i=1}^m \E \exp \left( \frac{\i t({u}_i-u_i^\prime)}{\sqrt{m}}  \right).
		\end{align*}
		By the Taylor's theorem for CF (Theorem \ref{taylors_cf}, Appendix \ref{CF_appendix}), we have, 
		\begin{align*}
		\E \exp \left( \frac{\i t({u}_i-u_i^\prime)}{\sqrt{m}}  \right) & = 1 - \frac{\E(u_i - u_i^\prime)^2 \cdot t^2}{2m} + E_i, \; |E_i| \leq \frac{\E|u_i - u_i^\prime|^3 \cdot |t|^3}{6 m \sqrt{m}}.
		\end{align*}
		Now consider any $t \leq t_2 \sqrt{m}$:
		\begin{align*}
		|\check{\psi}(t)|^2 & = \prod_{i=1}^m \left( 1 - \frac{\E(u_i - u_i^\prime)^2 \cdot t^2}{2m} + E_i \right) \\& \explain{}{\leq} \prod_{i=1}^m  \left(1 - \frac{\E(u_i - u_i^\prime)^2 \cdot t^2}{2m} + \frac{\E|u_i - u_i^\prime|^3 \cdot |t|^3}{6 m \sqrt{m}} \right) \\
		& \leq \exp \left( - \frac{t^2}{2m} \sum_{i=1}^m \E(u_i - u_i^\prime)^2 + \frac{|t|^3}{6 m \sqrt{m}} \sum_{i=1}^m \E|u_i - u_i^\prime|^3 \right).
		\end{align*}
		Next we observe that, 
		\begin{align*}
		\frac{1}{2m} \sum_{i=1}^m \E (u_i - u_i^\prime)^2  &= \hat{v}.
		\end{align*}
		We set: 
		\begin{align*}
		t_2 & = \frac{\hat{v}}{2} \cdot \left( \frac{1}{m} \sum_{i=1}^m \E |u_i - u_i^\prime|^3 \right)^{-1}.
		\end{align*}
		This ensures, for any $|t| \leq t_2 \sqrt{m}$, we have,
		\begin{align*}
		|\check{\psi}(t)|^2 & \leq \exp \left( - \frac{t^2}{2m} \sum_{i=1}^m \E(u_i - u_i^\prime)^2 + \frac{t^3}{6 m \sqrt{m}} \sum_{i=1}^m \E|u_i - u_i^\prime|^3 \right) \leq  \exp\left(- \frac{\hat{v} t^2}{2}\right).
		\end{align*}
		Consequently,
		\begin{align*}
		(2) = \int_{t_1 \leq|t| \leq t_2 \sqrt{m}} |\check{\psi}(t)| \diff t 
		\leq \int_{t_1 \leq|t| \leq t_2 \sqrt{m}} e^{-\hat{v} t^2/4} \diff t 
		&\leq \int_{t_1 \leq|t|} e^{-\hat{v} t^2/4} \diff t \\
		&\explain{(d)}{\leq} \frac{4}{\hat{v}} \exp\left(-\frac{\hat{v} t^2_1}{4}\right) \\ &\explain{(e)}{=} \frac{4}{\hat{v} \sqrt{m}} \leq \frac{C(K)}{\sqrt{m}}.
		\end{align*}
		In the step marked (d), we used the standard bound on gaussian tail integrals (Lemma \ref{truncated_gauss_integral}) and in the step marked (e) we substituted the value of $t_1$ fixed in the analysis of (1). 
		Finally, to wrap up this step, we note that there exists a finite positive constant $C(K)$ such that, 
		\begin{align*}
		t_2 & \geq \frac{1}{C(K)}.
		\end{align*}
		Indeed, 
		\begin{align*}\frac{1}{m} \sum_{i=1}^m \E |u_i - u_i^\prime|^3 \leq \frac{8}{m} \sum_{i=1}^m \E |u_i |^3 \leq C \left(1 + |\hat{\lambda}_1|^3 + \frac{1}{m} \sum_{i=1}^m |y_i|^3 \right) \leq C(K),
		\end{align*}
		and,
		\begin{align*}
		t_2 & = \frac{\hat{v}}{2} \cdot \left( \frac{1}{m} \sum_{i=1}^m \E |u_i - u_i^\prime|^3 \right)^{-1} \geq \frac{1}{C(K)}.
		\end{align*}
		\item[Analysis of (3): ] Recall the term (3) was given by: 
		\begin{align*}
		(3)  = \int_{|t| \geq t_2  \sqrt{m} } |\check{\psi}(t)|  \diff t = \sqrt{m} \int_{|t| \geq t_2} |\check{\psi}(t \sqrt{m})| \diff t.
		\end{align*}
		
		By AM-GM for non-negative real numbers we have, 
		\begin{align*}
		\left| \check{\psi}(t\sqrt{m}) \right|^2  = \prod_{i=1}^m \left| \E e^{\i t u_i} \right|^2 \leq \left( \frac{1}{m} \sum_{i=1}^m  \left| \E e^{\i t u_i}  \right|^2 \right)^m.
		\end{align*}
		We use two different strategies to further control the above bound:
		\begin{enumerate}
			\item Applying Lemma \ref{lemma_texp_properties}, we obtain, 
			\begin{align*}
			\frac{1}{m} \sum_{i=1}^m  \left| \E e^{\i t u_i}  \right|^2 & \leq \frac{C}{|t|^2} \cdot \frac{1}{m} \sum_{i=1}^m (1+ |\hat{\lambda}_1| + |y|_i)^2 \leq \frac{C(K)}{|t|^2}.
			\end{align*} 
			\item The above bound tells us that for $|t| \geq \sqrt{2 C(K)}$, we have, 
			\begin{align*}
			\frac{1}{m} \sum_{i=1}^m  \left| \E e^{\i t u_i}  \right|^2 & \leq \frac{1}{2}.
			\end{align*}
			Applying Lemma \ref{cf_bound_requirement} in Appendix \ref{CF_appendix}, we can find a constant $0<\eta(K)<1$ depending only on $K$ such that, 
			\begin{align*}
			\frac{1}{m} \sum_{i=1}^m  \left| \E e^{\i t u_i}  \right|^2 & \leq (1-\eta(K)), \; \forall |t| \geq t_2.
			\end{align*}
		\end{enumerate}
		We can combine the above to bounds to control (3) as follows: 
		\begin{align*}
		(3) & = \sqrt{m} \int_{|t| \geq t_2} |\check{\psi}(t \sqrt{m})| \diff t \\
		& \leq \sqrt{m} \int_{|t| \geq t_2} \left( \frac{1}{m} \sum_{i=1}^m  \left| \E e^{\i t u_i}  \right|^2 \right)^{\frac{m}{2}} \diff t \\
		& \leq \sqrt{m} \cdot C(K) \int_{|t| \geq t_2} \left( \frac{1}{m} \sum_{i=1}^m  \left| \E e^{\i t u_i}  \right|^2 \right)^{\frac{m}{2}-1} \cdot \frac{1}{|t|^2} \diff t \\
		& \leq C(K) \cdot \sqrt{m} \cdot (1-\eta(K))^{\frac{m}{2}-1} \cdot \int_{|t| \geq t_2} \frac{1}{|t|^2} \diff t \\
		& \leq \frac{C(K)}{\sqrt{m}}.
		\end{align*}
	\end{description}
This concludes the proof of the proposition.
\end{proof}
\subsection{Proof of Proposition \ref{local_clt_num}}\label{local_clt_num_appendix}
\begin{proof}
Recall that the random variable $\bm S$ was defined as:
\begin{align*}
\bm S  = \sum_{k=1}^m \bm S_k, \; \bm S_{k}  = \begin{bmatrix} s_k & \sqrt{s_k s_{k}^\prime} e^{\i \theta_k} \\ \sqrt{s_k s_{k}^\prime} e^{-\i \theta_k} & r_k^\prime \end{bmatrix} \sim \TWis{\hat\lambda_2(q;\sigma)}{\hat\phi_2(q;\sigma)}{y_i},
\end{align*}
where $(\hat{\lambda}_2(q;\sigma),\hat{\phi}(q;\sigma))$ solved the concave variational problem:
\begin{align*}
(\hat{\lambda}_2(q; \sigma), \hat{\phi}(q; \sigma)) & \Mydef \arg \max_{(\lambda,\phi) \in \mathbb{R}} \left( 2  \lambda + q \phi -\hatE_Y  
\ln \ZTWis{\lambda}{\phi}{Y} \right).
\end{align*}
%\localcltnum*
	Throughout this proof for easy of notation we will omit the dependence of quantities like $\hat{\lambda}_2(q; \sigma), \hat{\phi}(q;\sigma)$ and $\hat{\bm V}(q; \sigma)$ on $q, \sigma$ and denote them by $\hat{\lambda}_2,\hat{\phi},\hat{\bm V}$.
	Since the optimizer of the variational problem lies in a compact set, we know that $\hat{\lambda}_2,\hat{\phi}$ satisfy the first order optimality conditions:
	\begin{align*}
	2  & = \frac{1}{m} \sum_{i=1}^m \frac{\partial_{\lambda}\ZTWis{\hat{\lambda}_2}{\hat\phi}{y_i}}{\ZTWis{\hat{\lambda}_2}{\hat\phi}{y_i}}  \explain{(a)}{=} \frac{1}{m} \sum_{i=1}^m \E (s_i + s_i^\prime) \\
	q & = \frac{1}{m} \sum_{i=1}^m \frac{\partial_{\phi}\ZTWis{\hat{\lambda}_2}{\hat{\phi}}{y_i}}{\ZTWis{\hat{\lambda}_2}{\hat\phi}{y_i}}  \explain{(a)}{=} \frac{1}{m} \sum_{i=1}^m \E \sqrt{s_i s_i^\prime} \cos(\theta_i). \\
	\end{align*}
	In the steps marked (a), we used the formula for the normalizing constant $\ZTWis{\lambda}{\phi}{y}$ given in Definition \ref{titled_wishart_definition} to compute the partial derivatives. It is also clear from Definition \ref{titled_wishart_definition} that: 
	\begin{align*}
	\E s_i = \E s_i^\prime, \; \E \sqrt{s_i s_i^\prime} \sin(\theta) = 0.
	\end{align*}
	Hence the first order optimality conditions imply: 
	\begin{align*}
	\E \bm S & = m\bm Q.
	\end{align*}
	
	Next we define the centered random variables:
	\begin{align*}
	\check{\bm S}_i & = \bm S_i - \E \bm S_i, \; \check{\bm S} = \frac{\bm S - \E \bm S}{\sqrt{m}} = \frac{1}{\sqrt{m}} \sum_{i=1}^m \check{\bm S}_i.
	\end{align*} 
	Note that, 
	\begin{align*}
	\E \vec{\check{\bm S}} \vec{\hat{\bm S}}^\UH & = \frac{1}{m} \sum_{i=1}^m \VTWis{\hat{\lambda}_2}{\hat{\phi}}{y_i} = \hat{\bm V}.
	\end{align*}
	Let $\check{H}$ denote the density of $\check{\bm S}$. We note that it is sufficient to study the asymptotics of $\check{H}(\bm 0)$ since by the change of variable formula we have: 
	\begin{align*}
	H_{\hat{\lambda}_2, \hat{\phi}, \bm y}(m \bm Q) & = \frac{\check{H}(\bm 0)}{m^2}.
	\end{align*}
	In the remainder of the proof we focus on developing asymptotic expansions for $\check{H}$. 
	We define the characteristic function of $\check{\bm S}$: 
	\begin{align*}
	\check{\Psi}(\bm t) & = \E \exp \left( \i \ip{\bm t}{\vec{\check{\bm S}}} \right).
	\end{align*}
	By the Fourier Inversion formula (Lemma \ref{fourier_inversion_CF}) we have,
	\begin{align*}
	\check{H}(\bm U) & = \frac{1}{(2\pi)^4} \int_{\mathbb{R}^4} e^{-\i \ip{\bm t}{ \vec{\bm U}}} \check{\Psi}(\bm t)  \diff \bm t.
	\end{align*}
	Applying the inversion formula to $\gauss{\bm 0}{\hat{\bm V} }$ gives us: 
	\begin{align*}
	\frac{1}{\sqrt{(2\pi)^4 \det(\hat{\bm V})}} e^{-\frac{1}{2} \vec{\bm U}^\UH \hat{\bm V}^{-1} \vec{\bm U}} & = \frac{1}{(2\pi)^4} \int_{\mathbb{R}^4} e^{-\i \ip{\bm t}{ \vec{\bm U}}} e^{-\frac{1}{2} \bm t^\UH \hat{\bm V} \bm t}  \diff \bm t.
	\end{align*}
	Setting $\bm U = 0$ and computing the error between the above two displays we obtain:
	Appendix \ref{CF_appendix}) we have, 
	\begin{align*}
	&(2\pi)^4\left|\check{H}(\bm 0) - \frac{1}{\sqrt{(2\pi)^4 \det(\hat{\bm V})}}\right| =  \left| \int_{\mathbb{R}^4} e^{-\i \ip{\bm t}{ \vec{\bm U}}} \left( \check{\Psi}(\bm t) - e^{-\frac{1}{2} \bm t^\UH \hat{\bm V} \bm t} \right) \diff \bm  t  \right| \\
	& \explain{(a)}{\leq} \left( \underbrace{\int_{\|\bm t\| \leq t_1} \left| \check{\Psi}(\bm t) - e^{-\frac{1}{2} \bm t^\UH \hat{\bm V} \bm t} \right| \diff t}_{(1)} + \underbrace{\int_{t_1 \leq\|\bm t\| \leq t_2 \sqrt{m}} |\check{\Psi}(\bm t)| \diff t}_{(2)} + \underbrace{\int_{\|\bm t\| \geq t_2  \sqrt{m} } |\check{\Psi}(\bm t)|  \diff t}_{(3)} + \underbrace{\int_{\|\bm t\| \geq t_1}  e^{-\frac{1}{2} \bm t^\UH \hat{\bm V} \bm t} \diff t}_{(4)} \right). \\
	\end{align*}
	In the step marked (a), the cutoff parameters $t_1,t_2$ are arbitrary and will be fixed later. We will be able to choose $t_1, t_2$ such that the following bound holds:
	\begin{align*}
	(1) + (2) + (3) + (4) & \leq \frac{C(K) \cdot \ln^5(m)}{\sqrt{m}}.
	\end{align*}
	This gives us,
	\begin{align*}
	\left|\check{H}(\bm 0) - \frac{1}{\sqrt{(2\pi)^4 \det(\hat{\bm V})}}\right|  \leq \frac{C(K)\ln^5(m)}{\sqrt{m}} \implies \left|	H_{\hat{\lambda}_2, \hat{\phi}, \bm y}(m \bm Q) - \frac{1}{\sqrt{(2\pi m)^4 \det(\hat{\bm V})}}\right|  \leq \frac{C(K)\ln^5(m)}{m^2\sqrt{m}}
	\end{align*}
	which is the claim of this proposition. The remaining proof is devoted to the analysis of (1), (2), (3) and (4).
	\begin{description}
		\item[Analysis of (1): ] Recall $\check{\Psi}(\bm t) = \E e^{\i \ip{\bm t}{ \vec{\check{\bm S}}}}$ and $f(\bm x)=e^{\i\ip{\bm  t}{\bm x}}$ is bounded, $\|\bm t\|$-lipchitz function of $\bm x$. Applying the Berry-Eseen Inequality (Theorem \ref{berry_eseen_theorem}, Appendix \ref{CF_appendix}), we have, 
		\begin{align*}
		\left|\check{\Psi}(\bm t) - e^{-\frac{1}{2} \bm t^\UH \hat{\bm V} \bm t} \right| & \leq \frac{C \cdot (1+ \|\hat{\bm V}\|^{1/2}\|\bm t\|) \cdot \rho_3}{\sqrt{m \cdot \lambda_{\min}^3(\hat{\bm V})}}.
		\end{align*}
		In the above display, $C$ is a universal constant and $\rho_3$ is given by: 
		\begin{align*}
		\rho_3		 & = \frac{1}{m} \sum_{i=1}^m \E \|\vec{\bm S_i} - \E\vec{\bm S_i}\|^3 \\
		& \leq \frac{8}{m} \sum_{i=1}^m \E \|\vec{\bm S_i} \|^3 \\
		& \leq \frac{16}{m} \sum_{i=1}^m \E( s_i^3 + \E {s_i^\prime}^3) \\
		& \explain{(a)}{\leq} C \left(1 + |\hat{\lambda}_2|^3 + |\hat{\phi}|^3 + \frac{1}{m} \sum_{i=1}^m |y_i|^3 \right).
		\end{align*}
		In the step marked (a) we used the estimate on $\E s_i^3$ proved in Lemma \ref{tilted_wishart_properties}.
		Recalling the assumptions
		\begin{align*}
		K^{-1} \leq \lambda_{\min}(\hat{\bm V}) \leq \lambda_{\max}(\hat{\bm V}) \leq K, \; |\hat{\phi}| + |\hat{\lambda}_2| < K, \; \frac{1}{m} \sum_{i=1}^m |y_i|^3  \leq K ,
		\end{align*}
		
		we obtain,
		\begin{align*}
		\left|\check{\Psi}(\bm t) - e^{-\frac{1}{2} \bm t^\UH \hat{\bm V} \bm t} \right| & \leq \frac{C(K)\cdot (1+ \|\bm t\|)}{\sqrt{m}}
		\end{align*}
		Integrating the pointwise bound above we obtain: 
		\begin{align*}
		(1) & \leq \frac{C(K)\cdot (1+  t_1) \cdot  t_1^4}{\sqrt{m}}
		\end{align*}
		We set: 
		\begin{align*}
		t_1^2 & = \frac{4 \ln(m)}{\lambda_{\min}(\hat{\bm V})}
		\end{align*}
		This gives us: 
		\begin{align*}
		(1)   \leq \frac{C(K) \cdot \ln^5(m)}{\sqrt{m}}.
		\end{align*}
		\item[Analysis of (2):] Let $(\tilde{\bm S}_1, \tilde{\bm S}_2 \dots \tilde{\bm S}_m)$ be independent and identically distributed as $(\bm S_1,\bm S_2 \dots \bm S_m)$. Note that, 
		\begin{align*}
		\left| \E e^{\i \ip{\bm t}{\vec{\check{\bm S}_i}}} \right|^2  =  \left| \E e^{\i \ip{\bm t}{\vec{{\bm S}_i}}} \right|^2 
		=  \E e^{\i \ip{\bm t}{\vec{{\bm S}_i-\tilde{\bm S}_i}}} 
		\end{align*}
		Hence, 
		\begin{align*}
		\left| \check{\Psi}(\bm t) \right|^2  = \prod_{i=1}^m 	\left| \E e^{\i \ip{\bm t}{\vec{\check{\bm S}_i}}/\sqrt{m}} \right|^2 = \prod_{i=1}^m \E e^{\i \ip{\bm t}{\vec{{\bm S}_i-\tilde{\bm S}_i}}/\sqrt{m}}.
		\end{align*}
		By the Taylor's theorem for CF (Theorem \ref{taylors_cf}, Appendix \ref{CF_appendix}), we have, 
		\begin{align*}
		\E \exp\left(\i \frac{ \ip{\bm t}{\vec{{\bm S}_i-\tilde{\bm S}_i}}}{\sqrt{m}}\right)& = 1 - \frac{\E\ip{\bm t}{\vec{{\bm S}_i-\tilde{\bm S}_i}}^2 }{2m} + E_i, 
		\end{align*}
		where $|E_i|$ is controlled by: 
		\begin{align*}
		|E_i| &\leq \frac{\E|\ip{\bm t}{\vec{{\bm S}_i-\tilde{\bm S}_i}}|^3 }{6 m \sqrt{m}} \leq \frac{\|\bm t\|^3 \E \| \vec{{\bm S}_i-\tilde{\bm S}_i}\|^3}{6 m \sqrt{m}}.
		\end{align*}
		Now consider any $\|\bm t\| \leq t_2 \sqrt{m}$:
		\begin{align*}
		|\check{\Psi}(\bm t)|^2 & = \prod_{i=1}^m \left(  1 - \frac{\E\ip{\bm t}{\vec{{\bm S}_i-\tilde{\bm S}_i}}^2 }{2m} + E_i \right) \\& \explain{}{\leq} \prod_{i=1}^m  \left( 1 - \frac{\E\ip{\bm t}{\vec{{\bm S}_i-\tilde{\bm S}_i}}^2 }{2m}  + \frac{\|\bm t\|^3 \E \| \vec{{\bm S}_i-\tilde{\bm S}_i}\|^3}{6 m \sqrt{m}} \right) \\
		& \leq \exp \left( -  \sum_{i=1}^m \frac{\E\ip{\bm t}{\vec{{\bm S}_i-\tilde{\bm S}_i}}^2 }{2m}  + \frac{\|\bm t\|^3 \E \| \vec{{\bm S}_i-\tilde{\bm S}_i}\|^3}{6 m \sqrt{m}} \right).
		\end{align*}
		Next we observe that, 
		\begin{align*}
		\frac{1}{2m} \sum_{i=1}^m \E\ip{\bm t}{\vec{{\bm S}_i-\tilde{\bm S}_i}}^2  &= \bm t^\UH \hat{\bm V} \bm t.
		\end{align*}
		We set: 
		\begin{align*}
		t_2 & =  3 \lambda_{\min}(\hat{\bm V}) \cdot \left( \frac{1}{m} \sum_{i=1}^m \E \| \vec{{\bm S}_i-\tilde{\bm S}_i}\|^3\right)^{-1}.
		\end{align*}
		This ensures, for any $|t| \leq t_2 \sqrt{m}$, we have,
		\begin{align*}
		|\check{\Psi}(\bm t)|^2 & \leq \exp \left( -  \sum_{i=1}^m \frac{\E\ip{\bm t}{\vec{{\bm S}_i-\tilde{\bm S}_i}}^2 }{2m}  + \frac{\|\bm t\|^3 \E \| \vec{{\bm S}_i-\tilde{\bm S}_i}\|^3}{6 m \sqrt{m}} \right) \\ &\leq  \exp\left(- \frac{ \bm t^\UH \hat{\bm V} \bm t}{2}\right) \leq \exp \left( - \frac{\lambda_{\min}(\hat{\bm V}) \|\bm t\|^2}{2} \right).
		\end{align*}
		Consequently,
		\begin{align}
		\label{recycle_argument}
		(2) &= \int_{t_1 \leq\|\bm t\| \leq t_2 \sqrt{m}} |\check{\Psi}(\bm t)| \diff \bm t \\ 
		&\leq \int_{t_1 \leq\|\bm t\| \leq t_2 \sqrt{m}}\exp \left( - \frac{\lambda_{\min}(\hat{\bm V}) \|\bm t\|^2}{4} \right) \diff \bm t \\
		& \explain{(a)}{\leq} C \int_{t_1}^\infty \exp \left( - \frac{\lambda_{\min}(\hat{\bm V})}{4} l^2 \right) l^3 \diff l \\
		& \explain{(b)}{\leq} C(K) \cdot \left( \frac{\lambda_{\min}(\hat{\bm V})t^2_1}{4}  + 1\right) \cdot \exp \left( - \frac{\lambda_{\min}(\hat{\bm V}) t_1^2}{4} \right)
		\end{align}
		In the step marked (a), we converted the integral into polar coordinates from cartesian coordinates. In the step marked (b), we used Lemma \ref{truncated_gauss_integral} and  used the assumption that $\lambda_{\min}(\hat{\bm V}) \geq K^{-1}$. 
		Recalling that we set:
		\begin{align*}
		t_1^2 & = \frac{4 \ln(m)}{\lambda_{\min}(\hat{\bm V})},
		\end{align*}
		we obtain,
		\begin{align*}
		(2) & \leq \frac{C(K) \cdot \ln(m)}{m}.
		\end{align*}
		Finally, to wrap up this step, we note that there exists a finite positive constant $C(K)$ such that, 
		\begin{align*}
		t_2 & \geq \frac{1}{C(K)}.
		\end{align*}
		Indeed, 
		\begin{align*} \frac{1}{m} \sum_{i=1}^m \E \| \vec{{\bm S}_i-\tilde{\bm S}_i}\|^3 \leq \frac{C}{m} \sum_{i=1}^m \E |s_i |^3 \leq C \left(1 + |\lambda|^3+ |\phi|^3 + \frac{1}{m} \sum_{i=1}^m |y_i|^3 \right) \leq C(K),
		\end{align*}
		and,
		\begin{align*}
		t_2 & = 3 \lambda_{\min}(\hat{\bm V}) \cdot \left( \frac{1}{m} \sum_{i=1}^m \E \| \vec{{\bm S}_i-\tilde{\bm S}_i}\|^3\right)^{-1} \geq \frac{1}{C(K)}.
		\end{align*}
		\item[Analysis of (3): ] Recall the term (3) was given by: 
		\begin{align*}
		(3)  = \int_{\|\bm t\| \geq t_2  \sqrt{m} } |\check{\Psi}(\bm t)|  \diff \bm t = m^2 \int_{\|\bm t\| \geq t_2} |\check{\Psi}(\bm t \sqrt{m})| \diff \bm t.
		\end{align*}
		
		By AM-GM for non-negative real numbers we have, 
		\begin{align*}
		|\check{\Psi}(\bm t \sqrt{m})|^2  = \prod_{i=1}^m \left| \E e^{\i \ip{\bm t}{\vec{\bm S_i}}} \right|^2 \leq \left( \frac{1}{m} \sum_{i=1}^m   \left| \E e^{\i \ip{\bm t}{\vec{\bm S_i}}} \right|^2 \right)^m.
		\end{align*}
		We use two different strategies to further control the above bound:
		\begin{enumerate}
			\item Applying Lemma \ref{tilted_wishart_properties}, we obtain, 
			\begin{align}
			\label{multivariate_cf_bound_1}
			\frac{1}{m} \sum_{i=1}^m   \left| \E e^{\i \ip{\bm t}{\vec{\bm S_i}}} \right|^2 & \leq \frac{C}{\|\bm t\|^{\frac{2}{3}}} \cdot \frac{1}{m} \sum_{i=1}^m (1+ |\hat{\lambda}_2|^{20} + |\hat{\phi}|^{20} +  |y|_i^{20})^2 \leq \frac{C(K)}{\|\bm t\|^{\frac{2}{3}}}.
			\end{align} 
			\item The above bound tells us that for $\|\bm t\| \geq \sqrt{8 C^3(K)}$, we have, 
			\begin{align*}
			\frac{1}{m} \sum_{i=1}^m   \left| \E e^{\i \ip{\bm t}{\vec{\bm S_i}}} \right|^2 & \leq \frac{1}{2}.
			\end{align*}
			Applying Lemma \ref{cf_bound_requirement} in Appendix \ref{CF_appendix}, we can find a constant $0<\eta(K)<1$ depending only on $K$ such that, 
			\begin{align}
			\label{multivariate_cf_bound_2}
			\frac{1}{m} \sum_{i=1}^m   \left| \E e^{\i \ip{\bm t}{\vec{\bm S_i}}} \right|^2 & \leq (1-\eta(K)), \; \forall \; \|\bm t\| \geq t_2 \geq \frac{1}{C(K)}.
			\end{align}
		\end{enumerate}
		We can combine the above to bounds to control (3) as follows: 
		\begin{align*}
		(3) & =m^2 \int_{\|\bm t\| \geq t_2} |\check{\Psi}(\bm t \sqrt{m})| \diff \bm t \\
		& \leq m^2 \int_{\|\bm t\| \geq t_2} \left( \frac{1}{m} \sum_{i=1}^m   \left| \E e^{\i \ip{\bm t}{\vec{\bm S_i}}} \right|^2 \right)^{\frac{m}{2}} \diff \bm t \\
		& \explain{(a)}{\leq} m^2 \cdot C(K) \cdot \int_{\|\bm t\| \geq t_2} \left( \frac{1}{m} \sum_{i=1}^m   \left| \E e^{\i \ip{\bm t}{\vec{\bm S_i}}} \right|^2 \right)^{\frac{m}{2}-9} \cdot \frac{1}{\|\bm t\|^6} \diff \bm t \\
		& \explain{(b)}{\leq} C(K) \cdot m^2 \cdot (1-\eta(K))^{\frac{m}{2}-9} \cdot \int_{\|\bm t\| \geq t_2} \frac{1}{\|\bm t\|^6} \diff t \\
		& \explain{(c)}{\leq} \frac{C(K)}{\sqrt{m}} \cdot \int_{t_2} ^\infty \frac{1}{l^6} \cdot l^3 \diff l \\
		& \leq \frac{C(K)}{\sqrt{m}} 
		\end{align*}
		In the above display, in step (a), we utilized the bound in  \eqref{multivariate_cf_bound_1}. In the step marked (b) we utilized the bound in \eqref{multivariate_cf_bound_2}. In the equation marked (c) we converted the integral into polar coordinates and checked that the integral was finite. 
		\item [Analysis of (4): ]
		We recall that: 
		\begin{align*}
		(4) & = \int_{\|\bm t\| \geq t_1}  e^{-\frac{1}{2} \bm t^\UH \hat{\bm V} \bm t} \diff t \\
		& \leq \int_{\|\bm t\| \geq t_1}  e^{-\frac{\lambda_{\min}(\hat{\bm V})}{2} \|\bm t\|^2} \diff t.
		\end{align*}
		After this, we can exactly repeat the arguments following   \eqref{recycle_argument} and obtain, 
		\begin{align*}
		(4) & \leq \frac{C(K)\ln(m)}{m}.
		\end{align*}
	\end{description}
This concludes the proof.
\end{proof}

\section{Concentration Analysis}
\label{concentration_appendix}
This section is devoted to proving the concentration result Proposition \ref{concentration_proposition}.
Throughout this section, we will use $Y$ to denote the random variable $|Z|^2 + \sigma \epsilon$, where $Z \sim \cgauss{0}{1}$ and $\epsilon \sim \gauss{0}{1}$. Hence, for any $f: \mathbb R \rightarrow \mathbb R$, $\E f(Y) = \E f(|Z|^2 + \sigma \epsilon)$. We also recall the $\hatE$ notation, for any real valued function $f$ on $\mathbb R$:
\begin{align*}
\hatE f(Y) & \Mydef \frac{1}{m} \sum_{i=1}^m f(y_i),
\end{align*}
where $y_1, y_2 \dots , y_m$ are the observations in the phase retrieval problem. The main intuition behind all of the results in this section is that the empirical measure of the measurements converges to the law of $Y$. Hence for a large class test functions $f$, $\hatE f(Y) \approx \E f(Y)$. This intuition is made rigorous in terms of a general Weak Law of Large Numbers (WLLN) and a Uniform WLLN (ULLN) for the empirical measure of the measurements in Section \ref{generalities}. We then use these general results to prove Proposition \ref{concentration_proposition} in Section \ref{concentration_proposition_proof}.

\subsection{A General Uniform Weak Law of Large Numbers}
\label{generalities}
The following proposition establishes a  weak law of large numbers (WLLN) for empirical averages of  measurements $y_1, y_2 \dots y_m$ in the phase retrieval model. 
\begin{proposition}[A WLLN] \label{wlln} Let $y_1, y_2 \dots y_m$ be the m measurements from the Phase Retrieval model. Let $f: \mathbb R \rightarrow \mathbb R$ satisfy the local lipchitz assumption:
	\begin{align*}
	|f(a) - f(b)| & \leq L \cdot (1+ |a|^k + |b|^k ) \cdot |a-b|,
	\end{align*}
	for some $L>0, \; k \in \mathbb N$. 
	Then we have, 
	\begin{align*}
	\frac{1}{m} \sum_{i=1}^m f(y_i) & \explain{P}{\rightarrow} \E f(|Z|^2 + \sigma \epsilon).
	\end{align*}
	In the above display, $Z, \epsilon$ are independent r.v.s with the distributions: $Z \sim \cgauss{0}{1}, \; \epsilon \sim \gauss{0}{1}$.
\end{proposition}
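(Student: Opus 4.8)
The plan is to reduce the claim to the classical weak law of large numbers for i.i.d.\ random variables by exploiting the stochastic representation of the measurement vector from Lemma~\ref{side_info_distribution_lemma}. Recall that $\bm y \explain{d}{=} m|\bm U|^2 + \sigma \bm\epsilon$ with $\bm U \sim \unif{\mathbb S^{m-1}}$ and $\bm\epsilon \sim \gauss{\bm 0}{\bm I_m}$ independent. First I would realize the uniform point on the sphere as $\bm U \explain{d}{=} \bm G/\|\bm G\|$, where $\bm G \sim \cgauss{\bm 0}{\bm I_m}$ is independent of $\bm\epsilon$. Writing $\beta_m \Mydef m/\|\bm G\|^2$, this yields the coupling $y_i \explain{d}{=} \beta_m |G_i|^2 + \sigma \epsilon_i$. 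I then introduce the ``idealized'' measurements $\tilde y_i \Mydef |G_i|^2 + \sigma\epsilon_i$; since $|G_i|^2 \sim \Exp{1} \explain{d}{=} |Z|^2$ for $Z \sim \cgauss{0}{1}$ and these are mutually independent, the $\tilde y_i$ are i.i.d.\ copies of $|Z|^2 + \sigma\epsilon$, and $y_i - \tilde y_i = (\beta_m - 1)|G_i|^2$.

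The argument then splits into two steps. In the first step, the polynomial growth assumption on $f$ together with finiteness of all moments of $|G_1|^2$ and $\epsilon_1$ gives $\E|f(\tilde y_1)| < \infty$, so the classical WLLN yields $\frac1m\sum_{i=1}^m f(\tilde y_i) \explain{P}{\rightarrow} \E f(|Z|^2 + \sigma\epsilon)$. In the second step, I bound the discrepancy using the local Lipschitz hypothesis:
\begin{align*}
\frac1m \sum_{i=1}^m |f(y_i) - f(\tilde y_i)| & \le \frac{L}{m} \sum_{i=1}^m \left(1 + |y_i|^k + |\tilde y_i|^k\right)|\beta_m-1|\,|G_i|^2 \\
& = L\,|\beta_m - 1| \cdot \frac1m\sum_{i=1}^m\left(1 + |y_i|^k + |\tilde y_i|^k\right)|G_i|^2.
\end{align*}
On the event $\mathcal{A}_m \Mydef \{|\beta_m - 1| \le 1/2\}$, which satisfies $\P(\mathcal{A}_m)\to 1$ because $\|\bm G\|^2/m \explain{P}{\rightarrow} 1$ (itself an instance of the WLLN applied to i.i.d.\ $\Exp{1}$ summands), one has $|y_i| \le \tfrac32|G_i|^2 + \sigma|\epsilon_i|$, hence $|y_i|^k + |\tilde y_i|^k \le C(k,\sigma)\big(|G_i|^{2k} + |\epsilon_i|^k\big)$ uniformly in $i$. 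Consequently, on $\mathcal{A}_m$ the average $\frac1m\sum_i(1+|y_i|^k+|\tilde y_i|^k)|G_i|^2$ is dominated by $\frac{C}{m}\sum_i\big(|G_i|^2 + |G_i|^{2k+2} + |\epsilon_i|^k|G_i|^2\big)$, which converges in probability to a finite constant by the WLLN. Since $|\beta_m-1| \explain{P}{\rightarrow} 0$, the product converges to $0$ in probability, and combining the two steps with $\P(\mathcal{A}_m)\to 1$ gives the proposition.

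The main obstacle I anticipate is the global coupling introduced by the single random scalar $\beta_m$, which rules out a direct application of an i.i.d.\ law of large numbers to $\{y_i\}$ and also makes the polynomial moments $\E|y_i|^k$ formally depend on all coordinates of $\bm G$. The remedy, carried out above, is to restrict to the high-probability event $\mathcal{A}_m$ on which $\beta_m \approx 1$, so that the moments appearing in the local-Lipschitz bound can be dominated, uniformly in $i$, by the genuinely i.i.d.\ quantities $|G_i|^2$ and $|\epsilon_i|$; since convergence in probability is unaffected by modification on events of vanishing probability, nothing is lost. This WLLN will subsequently be upgraded to a version uniform over families of test functions (the ULLN used in the proof of Proposition~\ref{concentration_proposition}), which requires the additional empirical-process machinery of \citet{vanweak} adapted to the non-independent observations.
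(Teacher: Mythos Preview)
Your proposal is correct and follows essentially the same approach as the paper: both realize the measurements via $y_i \explain{d}{=} \beta_m|G_i|^2 + \sigma\epsilon_i$ with $\beta_m = m/\|\bm G\|^2$, split into an i.i.d.\ WLLN for the idealized $\tilde y_i = |G_i|^2 + \sigma\epsilon_i$ and a local-Lipschitz estimate on the discrepancy, and conclude by combining $|\beta_m - 1| \explain{P}{\rightarrow} 0$ with the WLLN applied to the polynomial moments of $|G_i|^2$. The only cosmetic difference is that the paper handles the factor $\beta_m^k$ directly via the continuous mapping theorem rather than restricting to the event $\{|\beta_m-1|\le 1/2\}$; your treatment of the $|\epsilon_i|^k$ contributions is in fact slightly more explicit than the paper's.
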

\begin{proof}
	Recall that in the phase retrieval model, we have, 
	\begin{align*}
	(y_1, y_2 \dots y_m) & \explain{d}{=} \left( \frac{m |g_1|^2}{\|\bm g\|^2} + \sigma\epsilon_1, \frac{m |g_2|^2}{\|\bm g\|^2} + \sigma\epsilon_2 \dots \frac{m |g_m|^2}{\|\bm g\|^2} + \sigma\epsilon_m  \right).
	\end{align*}
	In the above display $\bm g$ and $\bm \epsilon$ are independent with $\bm g \sim \cgauss{\bm 0}{\bm I_m}$ and $\bm \epsilon \sim \gauss{\bm 0}{\bm I_m}$. To obtain the claim of the proposition we write,
	\begin{align*}
	\frac{1}{m} \sum_{i=1}^m f(y_i) - \E f(|Z|^2 + \sigma\epsilon)  &\explain{d}{=} \frac{1}{m} \sum_{i=1}^m f \left( \frac{m |g_i|^2}{\|\bm g\|^2} +\sigma \epsilon_i \right) - \E f(|Z|^2 + \sigma \epsilon) \\
	& = (1) + (2).
	\end{align*}
	where the terms $(1),(2)$ are defined below:
	\begin{align*}
	(1) & \Mydef \left(\frac{1}{m} \sum_{i=1}^m f \left( \frac{m |g_i|^2}{\|\bm g\|^2} + \sigma \epsilon_i \right) - \frac{1}{m} \sum_{i=1}^m f \left( |g_i|^2 + \sigma \epsilon_i \right) \right) , \\
	(2) & \Mydef \left(\frac{1}{m} \sum_{i=1}^m f \left( |g_i|^2 + \sigma \epsilon_i \right)- \E f(|Z|^2 + \sigma \epsilon) \right).
	\end{align*}
	Note that, 
	\begin{align*}
	(2) & \explain{P}{\rightarrow} 0,
	\end{align*}	
	by WLLN for sums of i.i.d. random variables. On the other hand, by the local lipchitz assumption on $f$:
	\begin{align}
	\label{analysis_1_wlln}
	(1) & \leq L \cdot \left( \frac{m}{\|\bm g\|^2} - 1 \right) \cdot \frac{1}{m} \sum_{i=1}^m |g_i|^2 \left( 1 + \frac{m^k |g_i|^{2k}}{\|\bm g\|^{2k}} + |g_i|^{2k} \right) \nonumber \\
	& = L \cdot  \left( \frac{m}{\|\bm g\|^2} - 1 \right) \cdot \left( \frac{1}{m} \sum_{i=1}^m |g_i|^2 + \left(\left(\frac{m}{\|\bm g\|^2} \right)^k +1 \right) \frac{1}{m} \sum_{i=1}^m |g_i|^{2k+2}  \right)
	\end{align}
	By WLLN and continuous mapping theorem: 
	\begin{align*}
	\frac{m}{\|\bm g\|^2} - 1 & \explain{P}{\rightarrow} 0 \\
	\frac{1}{m} \sum_{i=1}^m |g_i|^2 & \explain{P}{\rightarrow } \E|Z|^2 < \infty, \\
	\left(\left(\frac{m}{\|\bm g\|^2} \right)^k +1 \right) \cdot \frac{1}{m} \sum_{i=1}^m |g_i|^{2k+2} & \explain{P}{\rightarrow} \left( \frac{1}{(\E|Z|^2)^k} + 1 \right) \cdot \E|Z|^{2k+2}  < \infty. 
	\end{align*}
	Hence $(1) \explain{P}{\rightarrow} 0$. This proves the claim of the proposition.
\end{proof}
The following proposition proves a Uniform Law of Large Numbers (ULLN) for empirical averages of the measurements $y_1, y_2 \dots y_m$ using some results from empirical process theory \citep{vanweak}.
\begin{proposition}[A Uniform Law of Large Numbers]
	\label{ulln} Let $\mathcal{F}_T$ be a collection of functions $f_{\bm t}: \mathbb{R} \rightarrow \mathbb{R}$ indexed by a parameter $\bm t$ which takes values in the set $T$, a bounded subset of $\mathbb R^k$. Suppose that the collection $\mathcal{F}_T$ satisfies the following lipchitz conditions:
	\begin{align*}
	\text{Lipchitz in parameter: } & | f_{\bm t}(y) - f_{{\bm s}}(y) |  \leq L \cdot  \|\bm t - \bm s\| \cdot (1+|y|^l) \; \forall \; \bm t, \bm s \in T, \; y \in \mathbb{R}, \\
	\text{Lipchitz in argument: } & |f_{\bm t}(y) - f_{\bm t}(y^\prime)|  \leq L \cdot |y-y^\prime| \cdot (|y|^l + |y^\prime|^l + 1) \; \forall \; \bm t \in T, \; y,y^\prime \in \mathbb{R}.
	\end{align*}
	for some $L>0, l \in \mathbb{N}$. Then we have, 
	\begin{align*}
	\sup_{\bm t \in T} \left( \frac{1}{m} \sum_{i=1}^m f_{\bm t}(y_i) - \E f_{\bm t}(|Z|^2 + \sigma \epsilon) \right) & \explain{P}{\rightarrow} 0.
	\end{align*}
\end{proposition}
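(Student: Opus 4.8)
The plan is to reduce the uniform statement to finitely many pointwise statements, each handled by the weak law of large numbers already established in Proposition \ref{wlln}. The two hypotheses are tailored to this: the Lipschitz-in-parameter bound lets me replace $\bm t$ by a nearby point of a finite net of the bounded set $T$, with the error controlled by an empirical average of $1+|y_i|^l$; the Lipschitz-in-argument bound is exactly the local Lipschitz hypothesis of Proposition \ref{wlln}, so that proposition applies both to each $f_{\bm t}$ individually and --- after the elementary estimate $\big||a|^l-|b|^l\big|\le l(|a|^{l-1}+|b|^{l-1})|a-b|$ --- to the function $y\mapsto 1+|y|^l$ itself. Note also that all moments of $Y = |Z|^2+\sigma\epsilon$ are finite, since $Z\sim\cgauss{0}{1}$ and $\epsilon\sim\gauss{0}{1}$.

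Concretely, I would fix $\eta>0$ and a finite $\eta$-net $\bm t_1,\dots,\bm t_{N(\eta)}$ of $T$. For arbitrary $\bm t\in T$ choose $\bm t_j$ with $\|\bm t-\bm t_j\|\le\eta$ and, using both Lipschitz conditions, write
\[
\left|\frac1m\sum_{i=1}^m f_{\bm t}(y_i) - \E f_{\bm t}(Y)\right| \le \max_{r\le N(\eta)}\left|\frac1m\sum_{i=1}^m f_{\bm t_r}(y_i) - \E f_{\bm t_r}(Y)\right| + L\eta\left(\frac1m\sum_{i=1}^m (1+|y_i|^l) + \E(1+|Y|^l)\right).
\]
The right-hand side does not depend on $\bm t$, so it bounds the supremum over $T$. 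Letting $m\to\infty$: the maximum over the $N(\eta)$ net points tends to $0$ in probability by Proposition \ref{wlln} (each $f_{\bm t_r}$ satisfies its hypothesis with $k=l$), and $\frac1m\sum_i(1+|y_i|^l)\explain{P}{\rightarrow}\E(1+|Y|^l)<\infty$, again by Proposition \ref{wlln}. Hence, with probability tending to one, the supremum is at most $2L\eta\,\E(1+|Y|^l)+o(1)$; since $\eta>0$ was arbitrary this gives the claim.

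A route closer to the one the paper attributes to \citet{vanweak} is to first pass to the i.i.d.\ surrogate observations $\tilde y_i = |g_i|^2+\sigma\epsilon_i$ --- the difference $\frac1m\sum_i\big(f_{\bm t}(y_i)-f_{\bm t}(\tilde y_i)\big)$ is bounded uniformly in $\bm t$ by the Lipschitz-in-argument estimate times $\big|m/\|\bm g\|^2-1\big|\explain{P}{\rightarrow}0$, exactly as in the proof of Proposition \ref{wlln} --- and then to invoke a Glivenko--Cantelli theorem for the i.i.d.\ class $\{f_{\bm t}(\tilde y):\bm t\in T\}$, whose integrable envelope and polynomial bracketing numbers follow from the two Lipschitz conditions together with boundedness of $T$. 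There is no substantial obstacle here: the one genuinely delicate feature, the dependence among the $y_i$ coming from the normalization $\|\bm g\|^2$, has already been absorbed into Proposition \ref{wlln}, and what remains is standard $\eta$-net bookkeeping plus checking the moment and Lipschitz conditions. The only points requiring care are the order of limits ($m\to\infty$ before $\eta\to0$) and ensuring the envelope-type quantity $\frac1m\sum_i(1+|y_i|^l)$ is controlled in probability rather than merely in expectation.
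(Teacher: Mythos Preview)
Your proposal is correct, and in fact you describe two routes: the second one (pass to the i.i.d.\ surrogates $\tilde y_i=|g_i|^2+\sigma\epsilon_i$ via the Lipschitz-in-argument bound, then invoke a bracketing Glivenko--Cantelli theorem for the class $\{f_{\bm t}:\bm t\in T\}$) is exactly what the paper does, citing Theorems 2.7.11 and 2.4.1 of \citet{vanweak}.

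Your primary route, the finite $\eta$-net argument, is a genuinely different and more elementary proof. The paper handles the dependence and the uniformity \emph{separately}: term (1) absorbs the $\|\bm g\|^2$ normalization uniformly in $\bm t$ via the Lipschitz-in-argument estimate, and then term (2) needs a uniform LLN over the full (possibly uncountable) class, which is where the empirical process machinery enters. You instead reduce the uniformity to finitely many parameters \emph{first}, using the Lipschitz-in-parameter bound and the tightness of $\frac1m\sum_i(1+|y_i|^l)$, after which only the \emph{pointwise} result Proposition~\ref{wlln} is needed. The trade-off: your argument avoids importing bracketing numbers and Glivenko--Cantelli theory entirely, and is self-contained; the paper's argument, once the machinery is granted, would extend more readily to function classes with bounded bracketing numbers that are not literally Lipschitz in a finite-dimensional parameter.
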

\begin{proof}
	As in the proof of Proposition \ref{wlln}, we have the decomposition: 
	\begin{align*}
	\frac{1}{m} \sum_{i=1}^m f(y_i) - \E f(|Z|^2 + \sigma \epsilon) 
	& = (1) + (2).
	\end{align*} 
	where,
	\begin{align*}
	(1) & \Mydef \left(\frac{1}{m} \sum_{i=1}^m f_{\bm t} \left( \frac{m |g_i|^2}{\|\bm g\|^2} + \sigma \epsilon_i \right) - \frac{1}{m} \sum_{i=1}^m f_{\bm t} \left( |g_i|^2 + \sigma \epsilon_i \right) \right) , \\
	(2) & \Mydef \left(\frac{1}{m} \sum_{i=1}^m f_{\bm t} \left( |g_i|^2 + \sigma \epsilon_i \right)- \E f_{\bm t}(|Z|^2 + \sigma \epsilon) \right) .
	\end{align*}
	The analysis (1) is exactly the same as in Proposition \ref{wlln}. The upper bound in  \eqref{analysis_1_wlln} holds uniformly over $T$ and hence,
	\begin{align*}
	\sup_{\bm t \in T} \; (1) & \explain{P}{\rightarrow} 0.
	\end{align*}
	For the term (2), we appeal to standard empirical process theory results from \citet{vanweak}. By Theorem 2.7.11 of \citet{vanweak}, the function class $\mathcal{F}_T$ has bounded bracketting number. Consequently, by Theorem 2.4.1 of \citet{vanweak}, $\mathcal{F}_T$ is Glivenko-Cantelli, that is, 
	\begin{align*}
	\sup_{\bm t \in T} \; (2) & \explain{P}{\rightarrow} 0.
	\end{align*}
	This concludes the proof of the proposition. 
\end{proof}
Next we will apply the ULLN of Proposition \ref{ulln} to obtain uniform convergence of empirical averages of the log-normalizing constants and moments of the Tilted Exponential and Wishart distributions. In particular, we recall the definitions: 
\begin{align*}
\ln \ZTexp{\lambda}{y} & \Mydef \ln \E_{E \sim \Exp{1}} e^{\lambda E} \gpdf(y-E), \\
\ln \ZTWis{\lambda}{\phi}{y} & \Mydef \ln \E_{\bm g \sim \cgauss{\bm 0}{\bm I_2}} e^{\lambda(|g_1|^2 + |g_2|^2) + \phi \Re(g_1 \bar{g}_2)} \gpdf(y - |g_1|^2) \gpdf(y-|g_2|^2).
\end{align*}
For any $a,b,c,d \in \mathbb N$ we also define the moments of the tilted exponential and wishart distributions: 
\begin{align*}
\MTexp{\lambda}{y}{a} & \Mydef \E T^j, \; T \sim \Texp{\lambda}{y}  \\
\MTWis{\lambda}{\phi}{y}{a,b,c,d} & \Mydef \E S_{11}^a  \Re(S_{12})^b \Im(S_{12})^c S_{22}^d, \; \bm S \sim \TWis{\lambda}{\phi}{y}.
\end{align*}
Recalling the Definitions \ref{Texp_definition} and \ref{titled_wishart_definition}, we have, 
\begin{align*}
\MTexp{\lambda}{y}{a} & =  \frac{\E_{E \sim \Exp{1}} E^a e^{\lambda E} \gpdf(y-E)}{\E_{E \sim \Exp{1}} e^{\lambda E} \gpdf(y-E)}, \\
\MTWis{\lambda}{\phi}{y}{a,b,c,d} & = \frac{\E_{\bm g \sim \cgauss{\bm 0}{\bm I_2}} |g_1|^{2a} \Re(g_1 \bar{g}_2)^b \Im(g_1 \bar{g}_2)^c |g_2|^{2d} e^{\lambda(|g_1|^2 + |g_2|^2) + \phi \Re(g_1 \bar{g}_2)} \gpdf(y - |g_1|^2) \gpdf(y-|g_2|^2)}{\E_{\bm g \sim \cgauss{\bm 0}{\bm I_2}} e^{\lambda(|g_1|^2 + |g_2|^2) + \phi \Re(g_1 \bar{g}_2)} \gpdf(y - |g_1|^2) \gpdf(y-|g_2|^2)}.
\end{align*}
The following corollary applies the obtained ULLN to the above functions to obtain uniform convergence for these functions. 
\begin{corollary}[Uniform Convergence of Log-Normalizing Constants and Moments] 
	\label{ulln_corollary}
	For any $R>0$ and  $a,b,c,d \in \mathbb{N}$, we have,
	\begin{align*}
	1) &\sup_{|\lambda| \leq R} \left( \frac{1}{m} \sum_{i=1}^m \ln \ZTexp{\lambda}{y_i} - \E_{Z,\epsilon} \ln \ZTexp{\lambda}{|Z|^2 + \sigma \epsilon} \right)  \explain{P}{\rightarrow} 0, \\
	2) & \sup_{|\lambda| + |\phi|  \leq R} \left( \frac{1}{m} \sum_{i=1}^m \ln \ZTWis{\lambda}{\phi}{y_i} - \E_{Z,\epsilon} \ln \ZTWis{\lambda}{\phi}{|Z|^2 + \sigma \epsilon} \right)  \explain{P}{\rightarrow} 0, \\
	3) & \sup_{|\lambda| \leq R} \left( \frac{1}{m} \sum_{i=1}^m \MTexp{\lambda}{y_i}{a} - \E_{Z,\epsilon} \MTexp{\lambda}{|Z|^2 + \sigma \epsilon}{a} \right)  \explain{P}{\rightarrow} 0, \\
	4) & \sup_{|\lambda| + |\phi| \leq R} \left( \frac{1}{m} \sum_{i=1}^m \MTWis{\lambda}{\phi}{y_i}{a,b,c,d} - \E_{Z,\epsilon} \MTWis{\lambda}{\phi}{|Z|^2 + \sigma \epsilon}{(a,b,c,d)} \right)  \explain{P}{\rightarrow} 0.
	\end{align*}
\end{corollary}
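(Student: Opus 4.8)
<br>

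The plan is to derive each of the four statements as a direct application of the uniform law of large numbers in Proposition \ref{ulln}. For item (1) I would take $T = [-R,R] \subset \mathbb{R}$ and $f_\lambda(y) \Mydef \ln \ZTexp{\lambda}{y}$; for item (2), $T = \{(\lambda,\phi) : |\lambda| + |\phi| \leq R\} \subset \mathbb{R}^2$ and $f_{(\lambda,\phi)}(y) \Mydef \ln \ZTWis{\lambda}{\phi}{y}$; for items (3) and (4), the same parameter sets together with $f_\lambda(y) \Mydef \MTexp{\lambda}{y}{a}$ and $f_{(\lambda,\phi)}(y) \Mydef \MTWis{\lambda}{\phi}{y}{a,b,c,d}$. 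In every case $T$ is a bounded subset of a Euclidean space, so the only thing to check is the pair of hypotheses of Proposition \ref{ulln}: Lipschitz continuity in the parameter with polynomial growth in $y$, and Lipschitz continuity in the argument $y$ with polynomial growth in $y$.

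The key observation is that these Lipschitz estimates follow from differentiating under the integral sign and recognizing the resulting derivatives as moments of the Tilted Exponential or Tilted Wishart distribution. For instance $\partial_\lambda \ln \ZTexp{\lambda}{y} = \MTexp{\lambda}{y}{1}$ is the mean of $\Texp{\lambda}{y}$, while $\partial_y \ln \ZTexp{\lambda}{y} = \sigma^{-2}(\MTexp{\lambda}{y}{1} - y)$; likewise $\partial_\lambda \MTexp{\lambda}{y}{a}$ and $\partial_y \MTexp{\lambda}{y}{a}$ are combinations of tilted moments of order at most $a+1$, and the analogous identities hold for $\ln \ZTWis{\lambda}{\phi}{y}$ and $\MTWis{\lambda}{\phi}{y}{a,b,c,d}$ in terms of moments of the Tilted Wishart distribution. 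The moment bounds catalogued in Appendices \ref{texp_properties_appendix} and \ref{twis_properties_appendix} (in particular Lemma \ref{lemma_texp_properties} and Lemma \ref{tilted_wishart_properties}, which control $\E u_i^j$ and $\E s_i^j$ by a polynomial in $1 + |\lambda| + |\phi| + |y|$) show that as $(\lambda,\phi)$ ranges over the compact set $T$, all these derivatives are bounded in absolute value by a fixed polynomial in $y$ whose degree depends only on $R$, $a,b,c,d$ and $\sigma$. Integrating these pointwise derivative bounds along line segments in the parameter space and along segments in $y$ produces exactly the two Lipschitz conditions required, with a Lipschitz constant $L$ and growth exponent $l$ independent of $m$.

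With the hypotheses of Proposition \ref{ulln} verified, that proposition yields the one-sided uniform convergence asserted in each of (1)--(4) verbatim; applying it also to the family $\{-f_{\bm t}\}_{\bm t \in T}$, which satisfies the identical Lipschitz conditions, upgrades this to convergence of the supremum of the absolute difference should a two-sided statement be needed downstream. I expect the only real work to be bookkeeping: writing the derivatives of the log-normalizing constants and of the moment functionals explicitly as combinations of tilted moments, and quoting the correct polynomial-growth bounds from the appendices so that the Lipschitz constants are uniform over the compact parameter set. There is no conceptual obstacle once the moment estimates of Lemma \ref{lemma_texp_properties} and Lemma \ref{tilted_wishart_properties} are available.
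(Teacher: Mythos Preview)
Your proposal is correct and follows essentially the same approach as the paper: verify the two Lipschitz hypotheses of Proposition \ref{ulln} by differentiating in the parameter and in $y$, recognizing the derivatives as tilted moments, and invoking the polynomial moment bounds of Lemmas \ref{lemma_texp_properties} and \ref{tilted_wishart_properties}. The paper's proof is exactly this computation carried out explicitly for item (1) and sketched for (2)--(4), so there is nothing further to add.
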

\begin{proof}
	In order to prove the corollary, we just need to verify the lipchitz conditions in Proposition \ref{ulln}. In order to do so, we observe that,
	\begin{align*}
	\frac{\partial}{\partial y} \ln \ZTexp{\lambda}{\phi}  =  \frac{\MTexp{\lambda}{y}{1}-y}{\sigma^2}, \; \frac{\partial}{\partial \lambda} \ln \ZTexp{\lambda}{\phi}  = \MTexp{\lambda}{y}{1}.
	\end{align*}
	The moments of the Tilted Exponential distribution are bounded in Lemma \ref{lemma_texp_properties}. Using this we obtain, 
	\begin{align*}
	\max_{|\lambda| \leq R} \left| \frac{\partial}{\partial y} \ln \ZTexp{\lambda}{y}  \right| \leq C(R + |y|), \; 
	\max_{|\lambda| \leq R} \left| \frac{\partial}{\partial \lambda} \ln \ZTexp{\lambda}{y}  \right| \leq C(R + |y|).
	\end{align*}
	Integrating these derivative bounds gives us the following lipchitz estimates: 
	\begin{align*}
	\left|  \ln \ZTexp{\lambda}{y} - \ln   \ZTexp{\lambda}{y^\prime} \right| & \leq C \cdot (R+ |y| + |y^\prime|) \cdot  |y - y^\prime| \; \forall \; |\lambda| \leq R, \; y,y^\prime \in \mathbb R, \\
	\left|  \ln \ZTexp{\lambda}{y} -   \ln \ZTexp{\lambda^\prime}{y} \right| & \leq C \cdot (R + |y|) \cdot |\lambda - \lambda^\prime| \; \forall \; |\lambda| \leq R, \; |\lambda^\prime| \leq R, \; y \in \mathbb R,
	\end{align*}
	which verifies the assumptions of Proposition \ref{ulln} and hence (1) follows. Likewise the uniform convergence in (3) follows from the observation: 
	\begin{align*}
	\frac{\partial}{\partial y} \MTexp{\lambda}{y}{a}  &= \frac{\MTexp{\lambda}{y}{a+1} - \MTexp{\lambda}{y}{a}\MTexp{\lambda}{y}{1}}{\sigma^2}, \\ \frac{\partial}{\partial \lambda} \MTexp{\lambda}{y}{a} &= \MTexp{\lambda}{y}{a+1} - \MTexp{\lambda}{y}{a}\MTexp{\lambda}{y}{1}.
	\end{align*}
	The proofs of (2) and (4) are analogous and rely on moment bounds for the tilted wishart distribution given in Lemma \ref{tilted_wishart_properties}.
\end{proof}
\subsection{Proof of Proposition \ref{concentration_proposition}}
\label{concentration_proposition_proof}
We now present the proof of Proposition \ref{concentration_proposition}.
\begin{proof}
	Since polynomial functions are locally lipchitz, the claim (1) follows from the WLLN proved in Proposition \ref{wlln}. Item (2) is a special case of Corollary \ref{ulln_corollary}. The proofs of items (3-4) is very similar to (and easier) items (5-6) and is omitted. Hence we focus on proving claims 5-8. Define the concave (in $\lambda,\phi$) potential functions:
	\begin{align*}
	V_2(\lambda,\phi; q) & \Mydef 2\lambda  + \phi q - \E_Y
	\ln \ZTWis{\lambda}{\phi}{Y},  \\
	\hat V_2(\lambda,\phi; q) & \Mydef 2\lambda  + \phi q - \hatE_Y
	\ln \ZTWis{\lambda}{\phi}{Y}.
	\end{align*} 
	The potential functions are important because:
	\begin{align*}
	(\lambda_2(q;\sigma), \phi(q;\sigma))  = \arg \max_{\lambda,\phi \in \mathbb R} V_2(\lambda,\phi ; q), \; \Xi_2(q;\sigma) = \max_{\lambda,\phi \in \mathbb R} V_2(\lambda,\phi ; q).
	\end{align*}
	And likewise,
	\begin{align*}
	(\hat\lambda_2(q;\sigma), \hat\phi(q;\sigma))  = \arg \max_{\lambda,\phi \in \mathbb R} \hat V_2(\lambda,\phi ; q), \; \hat \Xi_2(q;\sigma) = \max_{\lambda,\phi \in \mathbb R} \hat V_2(\lambda,\phi ; q).
	\end{align*}
	The proof of this proposition relies on coercivity estimates for the above potential functions which have been proved in Appendix \ref{appendix_variational_analysis}.
	
	For the ease of notation, in this proof we will short hand $\Xi_2(q;\sigma),\hat{\Xi}_2(q;\sigma), \lambda_2(q;\sigma), \hat{\lambda}_2(q;\sigma), \phi(q;\sigma)$ and $\hat{\phi}(q;\sigma)$ as $\Xi_2(q),\hat{\Xi}_2(q),\lambda_2(q), \hat \lambda_2(q), \phi(q)$ and $\hat{\phi}(q)$, omitting the dependence on $\sigma$. We consider each of the claims (5-8) one by one:
	\begin{enumerate}
		\setcounter{enumi}{4}
		\item In Proposition \ref{nr_variational_prop} (Appendix \ref{appendix_variational_analysis}), we have shown that the solutions to the variation problems lie in the compact intervals: 
		\begin{align*}
		|\lambda_2(q)| + |\phi(q)| &\leq C \left(1 + q + \frac{1}{1-q} \right) \cdot (\E |Y|^2 + 1), \\
		|\hat\lambda_2(q)| + |\hat\phi(q)| &\leq C \left(1 + q + \frac{1}{1-q} \right) \cdot (\hatE |Y|^2 + 1) 
		\end{align*}
		On the other hand we know from Proposition \ref{wlln} that, 
		\begin{align*}
		\hatE Y^2 &\explain{P}{\rightarrow} \E Y^2 < \infty.
		\end{align*}
		Consequently, we can find constant $R$ that depends only on $\eta,\sigma$ such that, 
		\begin{align*}
		\max_{0 \leq q \leq 1-\eta} |\lambda_2(q)| + |\phi(q)|  \leq R, \; 
		\P \left( \max_{0 \leq q \leq 1-\eta} |\hat\lambda_2(q)| + |\hat \phi(q)| > R \right)  \rightarrow 0.
		\end{align*}
		For instance taking $R$ as:
		\begin{align*}
		R & = C \left(2 +  \frac{1}{1-\eta} \right)(2 + \E Y^2),
		\end{align*}
		is sufficient. This proves item (5) of the proposition. 
		\item We upper bound $\Xi_2(q) - \hat \Xi_2(q)$ and $\hat \Xi_2(q) - \Xi_2(q)$ separately:
		\begin{align*}
		\Xi_2(q) - \hat \Xi_2 (q) & = V_2(\lambda_2(q),\phi(q);q) - \hat V(\hat \lambda_2(q), \hat \phi(q);q) \\
		& =   V_2(\lambda_2(q),\phi(q);q) - \hat V_2(\lambda_2(q), \phi(q);q) + \underbrace{\hat V_2(\lambda_2(q), \phi(q);q)- \hat V_2(\hat \lambda_2(q), \hat \phi(q);q)}_{\leq 0} \\
		& \leq  V_2(\lambda_2(q),\phi(q);q) - \hat V_2(\lambda_2(q), \phi(q);q)  \\
		& \leq \sup_{q \in [0,1-\eta],|\lambda|+|\phi| \leq R} |V_2(\lambda,\phi;q) - \hat V_2(\lambda,\phi;q)|.
		\end{align*}
		Analogously, we can obtain $\hat\Xi_2(q) -  \Xi_2 (q) \leq \sup_{q \in [0,1-\eta],|\lambda| +|\phi| \leq R} |V_2(\lambda,\phi;q) - \hat V_2(\lambda,\phi;q)| $. Consequently we have, 
		\begin{align*}
		\sup_{q \in [0,1-\eta]} |\Xi_2(q) - \hat \Xi_2 (q)| & \leq \sup_{q \in [0,1-\eta],\lambda,\phi \in \mathbb R} |V_2(\lambda,\phi;q) - \hat V_2(\lambda,\phi;q)| \\
		& = \sup_{\lambda, \phi : |\lambda| + |\phi| \leq R} \left|\E_Y\ln \ZTWis{\lambda}{\phi}{Y} - \hatE_Y \ln \ZTWis{\lambda}{\phi}{Y} \right| \\&\explain{P}{\rightarrow} 0.
		\end{align*}
		In the last step we appealed to Corollary \ref{ulln_corollary}. This concludes the proof of item (6). 
		\item For the purpose of demonstrating convergence in probability it is sufficient to restrict ourselves to the event: 
		\begin{align*}
		\max_{0 \leq q \leq 1-\eta} |\hat\lambda_2(q)| + |\hat\phi(q)| \leq R,
		\end{align*}
		since this event occurs with probability tending to $1$. 
		Proposition \ref{nr_variational_prop} shows that the function \\$\E_Y
		\ln \ZTWis{\lambda}{\phi}{Y}$ is strongly convex on compact intervals. Hence for some universal constant $C<\infty$, we have,
		\begin{align*}
		V_2(\lambda, \phi;q) & \leq V_2(\lambda_2(q), \phi(q);q) - \frac{1}{C} \cdot (|\lambda - \lambda_2(q)|^2 + |\phi - \phi(q)|^2) \; \forall \lambda, \phi: \;  |\lambda| + |\phi| \leq R, \; \forall q \in [0, 1-\eta].
		\end{align*}
		Applying the strong convexity estimate to $\lambda = \hat \lambda_2(q), \; \phi = \hat \phi(q)$ gives us: 
		\begin{align*}
		|\hat \lambda_2(q) - \lambda_2 (q) |^2 + |\hat \phi(q) - \phi(q)|^2 &\leq C (V_2(\lambda_2(q), \phi(q);q)- V_2(\hat \lambda_2(q), \hat \phi(q);q)) \\
		& =C \cdot \left(  (1) + (2) + (3) \right).
		\end{align*}
		In the above display, we defined the terms (1), (2) and (3) as: 
		\begin{align*}
		(1) & = V_2(\lambda_2(q), \phi(q);q)- \hat V_2( \lambda_2(q), \phi(q);q), \\
		(2) & = \hat V_2( \lambda_2(q),  \phi(q);q) - \hat V_2(\hat \lambda_2(q), \hat \phi(q);q), \\
		(3) & = \hat V_2(\hat \lambda_2(q), \hat \phi(q);q) - V_2(\hat \lambda_2(q), \hat \phi(q);q).
		\end{align*}
		Since $(\hat \lambda_2 (q), \hat \phi (q))$ maximizes $\hat V_2(\lambda,\phi;q)$, we have,
		\begin{align*}
		(2) & \leq 0.
		\end{align*}
		On the other hand, both (1) and (2) can be bounded by:
		\begin{align*}
		(1)  &\leq \sup_{\lambda, \phi : |\lambda| + |\phi| \leq R, q \in [0,1-\eta]}  \left| V_2(\lambda, \phi;q) - \hat V_2(\lambda,\phi;q) \right|, \\ (2) &\leq \sup_{\lambda, \phi : |\lambda| + |\phi| \leq R, q \in [0,1-\eta]}  \left| V_2(\lambda, \phi;q) - \hat V_2(\lambda,\phi;q) \right|.
		\end{align*}
		Hence we have obtained,
		\begin{align*}
		|\hat \lambda_2(q) - \lambda_2 (q) |^2 + |\hat \phi(q) - \phi(q)|^2 \leq 2C \cdot \sup_{\lambda, \phi : |\lambda| + |\phi| \leq R, q \in [0,1-\eta]}  \left| V_2(\lambda, \phi;q) - \hat V_2(\lambda,\phi;q) \right|.
		\end{align*}
		Corollary \ref{ulln_corollary} gives us the uniform convergence: 
		\begin{align*}
		&\sup_{\lambda, \phi : |\lambda| + |\phi| \leq R, q \in [0,1-\eta]}  \left| V_2(\lambda, \phi;q) - \hat V_2(\lambda,\phi;q) \right| =\\&\hspace{4cm} \sup_{\lambda, \phi : |\lambda| + |\phi| \leq R} \left|\E_Y\ln \ZTWis{\lambda}{\phi}{Y} - \hatE_Y \ln \ZTWis{\lambda}{\phi}{Y} \right|\explain{P}{\rightarrow} 0.
		\end{align*}
		Hence we obtain,
		\begin{align*}
		\sup_{q \in [0,1-\eta]}  |\hat \lambda_2(q) - \lambda_2(q) |^2 + |\hat \phi(q) - \phi(q)|^2 \explain{P}{\rightarrow} 0.
		\end{align*}
		This shows claim (7) of the proposition.
		\item A simple computation shows that: 
		\begin{align*}
		\frac{\diff^2 \Xi_2(q)}{\diff q^2} -\frac{\diff^2 \hat\Xi_2(q)}{\diff q^2} = \bm e_2^\UH \left( \nabla^2_{\lambda,\phi} V_2(\lambda_2(q), \phi(q);q)^{-1} - \nabla^2_{\lambda,\phi} \hat V_2(\hat \lambda_2(q), \hat \phi(q);q)^{-1}   \right) \bm e_2.
		\end{align*}
		Hence,
		\begin{align*}
		\sup_{q \in [0,1-\eta]} \left|\frac{\diff^2 \Xi_2(q)}{\diff q^2} -\frac{\diff^2 \hat\Xi_2(q)}{\diff q^2} \right| & \leq \sup_{q \in [0,1-\eta]}\left\| \nabla^2_{\lambda,\phi} V_2(\lambda_2(q), \phi(q);q)^{-1} - \nabla^2_{\lambda,\phi} \hat V_2(\hat \lambda_2(q), \hat \phi(q);q)^{-1} \right\|
		\end{align*}
		Hence it is sufficient to show that,
		\begin{align*}
		\sup_{q \in [0,1-\eta]}\left\| \nabla^2_{\lambda,\phi} V_2(\lambda_2(q), \phi(q);q)^{-1} - \nabla^2_{\lambda,\phi} \hat V_2(\hat \lambda_2(q), \hat \phi(q);q)^{-1} \right\| & \explain{P}{\rightarrow } 0.
		\end{align*}
		By triangle inequality, we can write,
		\begin{align*}
		\sup_{q \in [0,1-\eta]}\left\|\nabla^2 \hat V_2( \hat \lambda_2 (q), \hat \phi (q);q) - \nabla^2 \ V_2( \lambda_2 (q),  \phi (q);q)\right\|  &\leq (1) + (2),
		\end{align*}
		where we define the terms (1) and (2) as: 
		\begin{align*}
		(1) & \Mydef\sup_{q \in [0,1-\eta]} \left\|\nabla^2 \hat V_2( \hat \lambda_2 (q), \hat \phi (q);q) - \nabla^2 \ V_2( \hat\lambda_2 (q),  \hat\phi (q);q)\right\|, \\
		(2) & \Mydef \sup_{q \in [0,1-\eta]} \left\|\nabla^2 V_2( \hat \lambda_2 (q), \hat \phi (q);q) - \nabla^2 \ V_2( \lambda_2 (q),  \phi (q);q)\right\|.
		\end{align*}
		We control the first term as follows: 
		\begin{align*}
		(1) & \leq \sup_{q \in [0,1-\eta], \lambda, \phi: |\lambda| + |\phi| \leq R} \left\|\nabla^2 \hat V_2( \lambda ,  \phi;q ) - \nabla^2 \ V_2( \lambda ,  \phi;q)\right\| \\
		& = \sup_{\lambda, \phi: |\lambda| + |\phi| \leq R} \| \nabla^2 \E_Y\ln \ZTWis{\lambda}{\phi}{Y} - \nabla^2 \hatE_Y \ln \ZTWis{\lambda}{\phi}{Y} \|
		\end{align*}
		Noting that the entries of matrix  $\nabla^2_{\lambda, \phi}\ln \ZTWis{\lambda}{\phi}{Y}$ are moments of the Tilted Wishart distribution and appealing to Corollary \ref{ulln_corollary} gives us the uniform convergence:
		\begin{align}
		\label{Pconv_hessian_uniform_eq}
		(1) \leq \sup_{q \in [0,1-\eta], \lambda, \phi: |\lambda| + |\phi| \leq R} \left\|\nabla^2 \hat V_2( \lambda ,  \phi;q ) - \nabla^2 \ V_2( \lambda ,  \phi;q)\right\| \explain{P}{\rightarrow} 0.
		\end{align}
		To control the second term, we first note that $\nabla^2 V_2(\lambda, \phi;q)$ is independent of $q$. It is also easy to check that it is locally lipchitz of $\lambda,\phi$, consequently we have the estimate,
		\begin{align*}
		\left\|\nabla^2 V_2( \hat \lambda_2 (q), \hat \phi (q);q) - \nabla^2 \ V_2( \lambda_2(q),  \phi (q);q)\right\| & \leq C \left( |\lambda_2(q) - \hat{\lambda}_2(q)| + |\phi(q) - \hat{\phi}(q)| \right),
		\end{align*}
		for some constant $C$ depending only on $R$ (in particular, $C$ does not depend on $q$). Combining this with the conclusion obtained in item (4) of the lemma gives us: 
		\begin{align*}
		(2) & \explain{P}{\rightarrow} 0.
		\end{align*}
		Hence we  have,
		\begin{align}
		\label{unif_conv_hess_eq}
		\sup_{q \in [0,1-\eta]}\left\|\nabla^2 \hat V_2( \hat \lambda_2 (q), \hat \phi (q);q) - \nabla^2 \ V_2( \lambda_2 (q),  \phi (q);q)\right\|  & \explain{P}{\rightarrow } 0.
		\end{align}
		In order to obtain the analogous result for the inverse-hessian, we note that by Proposition \ref{nr_variational_prop}, $V_2(\lambda,\phi;q)$ is strongly concave on compact sets. Furthermore, $\nabla^2 V_2(\lambda,\phi;q)$ does not depend on $q$. Hence we have, 
		\begin{align*}
		\lambda_{\max}(\nabla^2 V_2(\lambda,\phi;q)) & \leq - \frac{1}{C}, \; \forall |\lambda| + |\phi| \leq R, \forall q,
		\end{align*}
		for a large enough universal constant $C$. Recalling the uniform convergence in \eqref{Pconv_hessian_uniform_eq}, we have, 
		\begin{align*}
		\P \left( \max_{\lambda,\phi: |\lambda| + |\phi| \leq R} \lambda_{\max}(\nabla^2 \hat V_2(\lambda,\phi;q))  \leq - \frac{1}{2C} \right) & \rightarrow 1.
		\end{align*}
		Since both $V,\hat V$ are concave functions (c.f. Proposition \ref{nr_variational_prop}), we have, 
		\begin{align}
		\label{tightness_op_norm_eq}
		\sup_{q \in [0,1-\eta]}\| \nabla^2 V_2(\lambda_2(q),\phi(q);q)^{-1} \|_{op} = O(1), \; \sup_{q \in [0,1-\eta]} \| \nabla^2 \hat V_2(\hat \lambda_2(q),\hat \phi(q);q)^{-1} \|_{op} = O_P(1). 
		\end{align}
		\citet{wedin1973perturbation} has shown the following perturbation bounds for matrix inverse for any two invertible matrices $A,B$:
		\begin{align*}
		\|A^{-1} - B^{-1}\| & \leq \sqrt{2} \cdot \max(\|A^{-1}\|_{op}, \|B^{-1}\|_{op} ) \cdot \|A - B\|.
		\end{align*}
		Combining the tightness result in \eqref{tightness_op_norm_eq} and the uniform convergence of hessians (see \eqref{Pconv_hessian_uniform_eq}) gives us,
		\begin{align*}
		\sup_{q \in [0,1-\eta]}\| \nabla^2 V_2(\lambda_2(q),\phi(q);q)^{-1} -  \nabla^2 \hat V_2(\hat \lambda_2(q),\hat \phi(q);q)^{-1} \| & \explain{P}{\rightarrow} 0. 
		\end{align*}
		This concludes the proof of item (8).
	\end{enumerate}
\end{proof}
\section{Proof of Proposition \ref{delta_Delta_criteria}}
\label{delta_Delta_criteria_appendix}
Recall that we had introduced the following functions:
\begin{align*}
\CalF(q;\delta,\Delta,\sigma) & = {\Xi}_2(q;\sigma) - 2\Xi_1(\sigma)  + \left( 1 - \frac{1}{\delta} \right) \ln(1-q^2) + \Delta \ln \left( 1- \frac{q^2}{2} \right) \\
\CalFhat(q;\delta,\Delta,\sigma) & = \hat{\Xi}_2(q;\sigma) - 2\hat \Xi_1(\sigma)  + \left( 1 - \frac{1}{\delta} \right) \ln(1-q^2) + \Delta \ln \left( 1- \frac{q^2}{2} \right),
\end{align*}
where,
\begin{align*}
{\Xi}_2(q;\sigma) & \Mydef \max_{(\lambda,\phi) \in \mathbb{R}} \left( 2\lambda  + \phi q - \E_Y
\ln \ZTWis{\lambda}{\phi}{Y} \right), \\
\hat{\Xi}_2(q;\sigma) & \Mydef \max_{(\lambda,\phi) \in \mathbb{R}} \left( 2\lambda  + \phi q - \hatE_Y
\ln \ZTWis{\lambda}{\phi}{Y} \right), \\
\Xi_1(\sigma) &\Mydef  \max_{\lambda \in \mathbb{R}} \left( \lambda - \E_Y  
\ln \E_{E \sim \Exp{1}} e^{\lambda E} \gpdf(E-Y) \right),\\
\hat \Xi_1(\sigma) & \Mydef  \max_{\lambda \in \mathbb{R}} \left( \lambda - \E_Y  
\ln \hatE_{E \sim \Exp{1}} e^{\lambda E} \gpdf(E-Y) \right).
\end{align*}
Consider any $\delta$ that satisfies the assumptions of  Proposition \ref{delta_Delta_criteria}:
\begin{align}
\label{below_pt_consequence1}
\CalF(0;\delta,\Delta,\sigma) < \CalF(q;\delta,\Delta,\sigma) \; \forall \; q \; \in \; (0,1),
\end{align}
and,
\begin{align}
\label{below_pt_consequence2}
\frac{\diff^2 \CalF }{\diff q^2}(0;\delta,\Delta,\sigma) > 0.
\end{align}
In Lemmas \ref{second_moment_conditional} and \ref{introduce_main_integrals_lemma}, we showed that,
\begin{align}
\label{MI_decomposition}
\MI{\bm y, \bm z}{\bm A, \bm W}  & \leq \frac{2}{n-1} \E_{\bm y} \left[ \frac{ \int_0^1 \Nr \left(\bm y, \begin{bmatrix} 1  & q \\ q & 1 \end{bmatrix}\right) \cdot \frac{q(1-q^2)^{n-2}}{(1-q^2/2)^{\Delta m}}   \diff q}{\Dr^2(\bm y, 1)} \cdot  \mathbf{1}_{\mathcal{E}_m}\right] + C \cdot m \cdot \sqrt{\P(\mathcal{E}_m^c)}.
\end{align}
We will set $\mathcal{E}_m$ as: 
\begin{align}
\mathcal{E}_m & = \mathcal{E}^{(1)}_m(L) \cap \mathcal{E}^{(2)}_m(R,\eta) \cap \mathcal{E}^{(3)}_m(R,\eta) \cap \mathcal{E}^{(4)}_m(\eta)  \cap \mathcal{E}^{(5)}_m(\eta,\epsilon_2) \cap \mathcal{E}_m^{(6)}(R,\epsilon_2)
\end{align}
where:
\begin{align}
\label{important_events_eqs}
\mathcal{E}^{(1)}_m(L)  &= \left\{\bm y: 1 + \hatE Y^{40} \leq L \right\}, \\
\mathcal{E}^{(2)}_m(R,\eta)  &= \left\{\bm y: \sup_{|\lambda| \leq R} \left| \hatE \VTexp{\lambda}{Y} - \E \VTexp{\lambda}{Y}  \right| \leq \eta \right\}, \\ 
\mathcal{E}^{(3)}_m(R,\eta) & = \left\{  \bm y:  \sup_{|\lambda| + |\phi| \leq R} \left\| \hatE \VTWis{\lambda}{\phi}{Y} - \E \VTWis{\lambda}{\phi}{Y}  \right\| \leq \eta\right\}, \\
\mathcal{E}^{(4)}_m(\eta) & = \left\{  \bm y:  \sup_{q \leq 1/2} \left| \frac{\diff^2}{\diff q^2} \CalF(q;\delta,\Delta,\sigma)  - \frac{\diff^2}{\diff q^2} \CalFhat(q;\delta,\Delta,\sigma)   \right| \leq \eta\right\}, \\
\mathcal{E}^{(5)}_m(\eta,\epsilon_2) & = \left\{ \bm y: |\Xi_1(\sigma) - \hat \Xi_1(\sigma) | \leq \eta, \; \sup_{q \in [0,1-\epsilon_2]} |\Xi_2(q;\sigma) - \hat \Xi_2(q;\sigma)| \leq \eta \right\}, \\
\mathcal{E}_m^{(6)}(R,\epsilon_2) &  = \left\{\bm y: |\hat{\lambda}_1(\sigma)| \leq R, \; \sup_{q \in [0,1-\epsilon_2]} |\hat{\lambda}_2(q;\sigma)| + |\hat{\phi}(q;\sigma)|  \leq R  \right\}.
\end{align}
In the above display $L,R,\eta,\epsilon_2$ are parameters which will be set appropriately later. Recall that the notation $\hatE$ is used to denote empirical averages: 
\begin{align*}
\hatE f(Y) & = \frac{1}{m} \sum_{i=1}^m f(y_i),
\end{align*}
and the notation $\E f(Y) = \E_{Z,\epsilon} f(|Z|^2 + \sigma \epsilon)$ where $Z \sim \cgauss{0}{1}, \; \epsilon \sim \gauss{0}{1}$. 
Recall the upper bound in  \eqref{MI_decomposition}. Our goal in this section is to show $\MI{\bm y, \bm z}{\bm A, \bm W} = o(m)$. Towards this goal, the remainder of this section is organized as follows:
\begin{enumerate}
	\item In Lemma \ref{failure_event_bound} we show that $\P(\mathcal{E}_m^c) = o(1)$. 
	\item In Lemmas \ref{lb_dr_final_lemma} and \ref{final_ub_nr_lemma} we show that under the event $\mathcal{E}_m$, the assumptions of Corollary \ref{local_clt_denom_corollary} and \ref{local_clt_num_corollary} are met, and hence we can use them to obtain an upper bound on $\Nr$ and a lower bound on $\Dr$. 
	\item Finally the proof of Proposition \ref{delta_Delta_criteria} is restated and proved.
\end{enumerate}
\begin{lemma}[Analysis of $\P(\mathcal{E}_m)$]\label{failure_event_bound} For any $\epsilon_2 \in (0,1)$, there exists a critical value $R_c(\epsilon_2)$ such that, for any $L> 1 + \E Y^{40}$, any $R>R_c(\epsilon_2)$ and any $\eta > 0$, we have, for the event,
\begin{align*}
\mathcal{E}_m  = \mathcal{E}^{(1)}_m(L) \cap \mathcal{E}^{(2)}_m(R,\eta) \cap \mathcal{E}^{(3)}_m(R,\eta) & \cap \mathcal{E}^{(4)}_m(\eta)  \cap \mathcal{E}^{(5)}_m(\eta,\epsilon_2) \cap\mathcal{E}_m^{(6)}(R,\epsilon_2), \\ &\P(\mathcal{E}_m) \rightarrow 1.
\end{align*}
\end{lemma}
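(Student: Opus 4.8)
The plan is to establish that each of the six constituent events occurs with probability tending to one and then conclude via the finite union bound
\[
\P(\mathcal{E}_m^c) \;\le\; \sum_{j=1}^{6} \P\big((\mathcal{E}^{(j)}_m)^c\big),
\]
so that $\P(\mathcal{E}_m)\to 1$ follows once every summand on the right vanishes. Each of the six convergences will be read off directly from Proposition \ref{concentration_proposition}.

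First, for $\mathcal{E}^{(1)}_m(L)$: item (1) of Proposition \ref{concentration_proposition} gives $\hatE Y^{40}\explain{P}{\rightarrow}\E Y^{40}<\infty$, so for any fixed $L>1+\E Y^{40}$ we have $\P(\mathcal{E}^{(1)}_m(L))\to 1$. The events $\mathcal{E}^{(2)}_m(R,\eta)$ and $\mathcal{E}^{(3)}_m(R,\eta)$ assert precisely that the suprema in item (2) of Proposition \ref{concentration_proposition} are at most $\eta$; since those suprema tend to $0$ in probability for each fixed $R$, both events have probability $\to 1$ for every $\eta>0$. For $\mathcal{E}^{(4)}_m(\eta)$, note that the last two summands of $\CalF$ and $\CalFhat$, namely $(1-1/\delta)\ln(1-q^2)+\Delta\ln(1-q^2/2)$, are deterministic and independent of $\bm y$, while the summands $-2\Xi_1(\sigma)$ and $-2\hat\Xi_1(\sigma)$ do not depend on $q$; hence
\[
\frac{\diff^2}{\diff q^2}\CalF(q;\delta,\Delta,\sigma)-\frac{\diff^2}{\diff q^2}\CalFhat(q;\delta,\Delta,\sigma)=\frac{\diff^2}{\diff q^2}\Xi_2(q;\sigma)-\frac{\diff^2}{\diff q^2}\hat\Xi_2(q;\sigma),
\]
and applying item (8) of Proposition \ref{concentration_proposition} with its free parameter set to $1/2$ (so that $[0,1/2]\subset[0,1-1/2]$) yields $\P(\mathcal{E}^{(4)}_m(\eta))\to 1$. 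The event $\mathcal{E}^{(5)}_m(\eta,\epsilon_2)$ is the conjunction of item (4) ($\hat\Xi_1(\sigma)\explain{P}{\rightarrow}\Xi_1(\sigma)$) and item (6) with free parameter $\epsilon_2$ ($\sup_{q\in[0,1-\epsilon_2]}|\hat\Xi_2(q;\sigma)-\Xi_2(q;\sigma)|\explain{P}{\rightarrow}0$), so it too has probability $\to 1$. Finally, item (3) supplies a constant $R_0(\sigma)$ with $\P(|\hat\lambda_1(\sigma)|>R_0(\sigma))\to 0$, and item (5), applied with free parameter $\epsilon_2$, a constant $R_{\epsilon_2}$ with $\P(\max_{0\le q\le 1-\epsilon_2}|\hat\lambda_2(q;\sigma)|+|\hat\phi(q;\sigma)|>R_{\epsilon_2})\to 0$; setting $R_c(\epsilon_2)\Mydef\max(R_0(\sigma),R_{\epsilon_2})$, any $R>R_c(\epsilon_2)$ makes $\P(\mathcal{E}^{(6)}_m(R,\epsilon_2))\to 1$.

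Combining these six convergences through the displayed union bound proves the lemma. The argument is mostly bookkeeping; the only points that need a moment's care are the cancellation of the $\bm y$-independent and $q$-constant terms that reduces $\mathcal{E}^{(4)}_m$ to item (8) of Proposition \ref{concentration_proposition}, and the matching of tightness radii so that one constant $R_c(\epsilon_2)$ simultaneously dominates the tightness bound for $\hat\lambda_1(\sigma)$ from item (3) and the tightness bound for $(\hat\lambda_2(\cdot;\sigma),\hat\phi(\cdot;\sigma))$ on $[0,1-\epsilon_2]$ from item (5). Everything else is a verbatim instance of Proposition \ref{concentration_proposition}, so no new ideas beyond that proposition are required.
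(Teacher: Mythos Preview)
Your proof is correct and follows essentially the same approach as the paper's own proof: both verify each constituent event has probability tending to one by invoking the corresponding items of Proposition \ref{concentration_proposition}, reduce the second-derivative condition in $\mathcal{E}^{(4)}_m$ to item (8) via the same cancellation you describe, and conclude by the union bound.
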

\begin{proof}
	This lemma is essentially a consequence of the concentration analysis in Proposition \ref{concentration_proposition}.
	By claim (1) of  Proposition \ref{wlln} we know that,
	\begin{align*}
	\hatE Y^{40} & \explain{P}{\rightarrow} \ \E Y^{40} < \infty.
	\end{align*}
	Consequently any $L > \E Y^{40}$ we have,
	\begin{align*}
	\P(\mathcal{E}^{(1)}_m(L)) & \rightarrow 1.
	\end{align*}
	For any $\epsilon_2 >0$. Claims (3) and (5) of Proposition \ref{concentration_proposition} guarantee the existence of $R_c(\epsilon_2)$ such that,
	\begin{align*}
	\P\mathcal(\mathcal{E}_m^{(6)}(R,\epsilon_2)) & \rightarrow 0, \; \forall \; R > R_c(\epsilon_2), \; 
	\forall \; \epsilon_2 > 0.
	\end{align*}
	Claim (2) of Proposition \ref{concentration_proposition} gives for any $R \in (0,\infty), \;  \eta > 0$,
	\begin{align*}
	\P(\mathcal{E}^{(2)}_m(R,\eta))  \rightarrow 1, \; \mathcal{E}^{(3)}_m(R,\eta)  \rightarrow 1. 
	\end{align*}
	Like wise Claim (4) and (6) \ref{concentration_proposition} guarantee for any $\epsilon_2 \in (0,1)$ and in $\eta > 0$, we have, $\P(\mathcal{E}^{(5)}_m(\eta,\epsilon_2)) \rightarrow 1$. Finally we observe that:
	\begin{align*}
	 \left| \frac{\diff^2}{\diff q^2} \CalF(q;\delta,\Delta,\sigma)  - \frac{\diff^2}{\diff q^2} \CalFhat(q;\delta,\Delta,\sigma)   \right| & =  \left| \frac{\diff^2}{\diff q^2} \Xi_2(q;\sigma)  - \frac{\diff^2}{\diff q^2} \Xi_2(q;\sigma)   \right|,
	\end{align*}
	Hence Claim (8) of Proposition \ref{concentration_proposition} shows that for any $\eta > 0$, we have,$\P(\mathcal{E}_n^{(4)}(\eta))  \rightarrow 1$. Finally a union bound gives us the claim $\P(\mathcal{E}_m) \rightarrow 1$.
\end{proof}
\begin{lemma}[A Lower Bound on $\Dr$] \label{lb_dr_final_lemma} For any $R,L \in (0,\infty)$, there exists a critical value of $\eta$ denoted by $\eta_1(R)$ depending only on $R$ such that for any $\eta < \eta_1(R), \; \epsilon_2>0$ on the event $\mathcal{E}_m^{(1)}(L) \cap \mathcal{E}^{(6)}_m(R,\epsilon_2) \cap \mathcal{E}_m^{(2)}(R,\eta) \cap \mathcal{E}_m^{(6)}(R,\epsilon_2)$, we have the lower bound,
	\begin{align}
	\label{lb_dr_eq}
	\Dr(\bm y, 1) & \geq \frac{1}{C(L,R)} e^{ - m  \cdot \hat \Xi_1}, \; \forall m \geq M(L,R).
	\end{align}	
	where $C(L,R), M(L,R)$ are large enough, finite constants depending only on $L,R$.
\end{lemma}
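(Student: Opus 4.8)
The plan is to verify, on the stated intersection of events, all three hypotheses of Proposition \ref{local_clt_denom} with a single constant $K = K(L,R)$, and then read off the conclusion from Corollary \ref{local_clt_denom_corollary}. Recall the hypotheses: (i) $|\hat{\lambda}_1(\sigma)| \leq K$; (ii) $\hatE_Y(|Y|+|Y|^2+|Y|^3) \leq K$; and (iii) $K^{-1} \leq \hat{v}(\sigma) \leq K$, where $\hat{v}(\sigma) = \hatE_Y \VTexp{\hat{\lambda}_1(\sigma)}{Y}$. Hypotheses (i) and (ii) are immediate: $\mathcal{E}_m^{(6)}(R,\epsilon_2)$ forces $|\hat{\lambda}_1(\sigma)| \leq R$, and on $\mathcal{E}_m^{(1)}(L)$ the power-mean inequality gives $\hatE|Y|^j \leq (\hatE Y^{40})^{j/40} \leq \max(1,L)$ for $j=1,2,3$, so $\hatE_Y(|Y|+|Y|^2+|Y|^3) \leq 3\max(1,L)$.

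The substantive step is the two-sided bound on $\hat{v}(\sigma)$ in (iii). For the upper bound, the moment estimates for the Tilted Exponential distribution (Lemma \ref{lemma_texp_properties}) give $\VTexp{\lambda}{y} \leq \MTexp{\lambda}{y}{2} \leq C(1+|\lambda|^2+|y|^2)$; combined with $|\hat{\lambda}_1(\sigma)| \leq R$ on $\mathcal{E}_m^{(6)}$ and $\hatE Y^2 \leq \max(1,L)$ on $\mathcal{E}_m^{(1)}$, this yields $\hat{v}(\sigma) \leq C(1+R^2+L)$. For the lower bound, the purpose of $\mathcal{E}_m^{(2)}(R,\eta)$ is precisely to control $\hat{v}$ at the \emph{random} point $\hat{\lambda}_1(\sigma)$: since $|\hat{\lambda}_1(\sigma)| \leq R$ on $\mathcal{E}_m^{(6)}$, that event gives
\[
\hat{v}(\sigma) = \hatE_Y \VTexp{\hat{\lambda}_1(\sigma)}{Y} \geq \E_Y \VTexp{\hat{\lambda}_1(\sigma)}{Y} - \eta \geq \Big(\inf_{|\lambda|\leq R} \E_Y \VTexp{\lambda}{Y}\Big) - \eta .
\]
Here $2\eta_1(R) := \inf_{|\lambda|\leq R} \E_Y \VTexp{\lambda}{Y}$ is a strictly positive deterministic constant depending only on $R$: the distribution $\Texp{\lambda}{y}$ is non-degenerate on $[0,\infty)$ so $\VTexp{\lambda}{y}>0$ pointwise, and $\lambda \mapsto \E_Y \VTexp{\lambda}{Y}$ is continuous, hence its infimum over the compact interval $[-R,R]$ is attained and positive. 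Taking $\eta_1(R)$ to be this half-infimum, any $\eta < \eta_1(R)$ gives $\hat{v}(\sigma) \geq 2\eta_1(R)-\eta \geq \eta_1(R) > 0$.

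Setting $K = K(L,R) := \max\{R,\,3\max(1,L),\,C(1+R^2+L),\,\eta_1(R)^{-1}\}$, all hypotheses of Proposition \ref{local_clt_denom} hold on $\mathcal{E}_m^{(1)}(L)\cap\mathcal{E}_m^{(2)}(R,\eta)\cap\mathcal{E}_m^{(6)}(R,\epsilon_2)$. Corollary \ref{local_clt_denom_corollary} then produces $M(K)\in\mathbb{N}$ (depending only on $K$, hence only on $L,R$) with $\Dr(\bm y,1) \geq \tfrac{1}{2\sqrt{K}}\,e^{-m\hat{\Xi}_1}$ for all $m\geq M(K)$, and we take $C(L,R) = 2\sqrt{K(L,R)}$, $M(L,R) = M(K(L,R))$, which is \eqref{lb_dr_eq}. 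I expect the only genuine subtlety to be the deterministic positivity $\inf_{|\lambda|\leq R}\E_Y\VTexp{\lambda}{Y}>0$ (which should already be recorded among the properties of the Tilted Exponential distribution in the appendix); everything else is bookkeeping, tracking which event furnishes which of the three hypotheses.
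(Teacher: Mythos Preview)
Your proposal is correct and follows essentially the same approach as the paper: verify the three hypotheses of Proposition~\ref{local_clt_denom} using the events $\mathcal{E}_m^{(1)}$, $\mathcal{E}_m^{(2)}$, $\mathcal{E}_m^{(6)}$ (with the critical step being the positivity of $\inf_{|\lambda|\leq R}\E_Y\VTexp{\lambda}{Y}$ via continuity and non-degeneracy on a compact interval), then invoke Corollary~\ref{local_clt_denom_corollary}. The only cosmetic difference is that for the upper bound on $\hat v(\sigma)$ you bound $\hatE_Y\VTexp{\hat\lambda_1}{Y}$ directly via the moment estimates of Lemma~\ref{lemma_texp_properties}, whereas the paper first bounds $\E_Y\VTexp{\hat\lambda_1}{Y}$ and then transfers via $\mathcal{E}_m^{(2)}$; both routes are fine.
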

\begin{proof}
	Recall that from Corollary \ref{local_clt_denom_corollary}, we obtained the lower bound: 
	\begin{align*}
	\Dr(\bm y, 1) & \geq \frac{1}{2\sqrt{K}} \exp \left( -m \max_{\lambda \in \mathbb{R}} \left( \lambda - \hatE_Y  
	\ln \E_{E \sim \Exp{1}} e^{\lambda E} \gpdf(E-Y) \right) \right) = \frac{1}{2\sqrt{K}} e^{-m \hat \Xi_1}, \; \forall m \geq M(K).
	\end{align*}
	provided we can verify: 
	\begin{itemize}
		\item $\hatE (|Y|+|Y|^2 + |Y|^3) \leq K$: This can be ensured by taking $K \geq  3L$ and observing that under event $\mathcal{E}_m^{(1)}(L)$ we have $1 + \hatE Y^{40} \leq L$.
		\item $\hat{\lambda}$ which is the solution of the variational problem: 
		\begin{align*}
		\hat{\lambda} & = \arg \max_{\lambda \in \mathbb{R}} \left( \lambda - \hatE_Y  
		\ln \E_{E \sim \Exp{1}} e^{\lambda E} \gpdf(E-Y) \right), 
		\end{align*}
		lies in a compact set $|\hat{\lambda}_1(\sigma)| \leq K$. Taking $K \geq R$  guarantees this under the event $\mathcal{E}_m^{(6)}(R,\epsilon_2)$.
		\item Finally we need to check: 
		\begin{align}
		\label{verify_eq}
		\frac{1}{K} \leq \hatE \VTexp{\hat\lambda}{Y} \leq K,
		\end{align}
		for some value of $K$. Note that event $\mathcal{E}_m^{(6)}(R,\epsilon_2)$, guarantees $|\hat{\lambda}_1(\sigma)| \leq R$.
		The function $\lambda \mapsto  \E \VTexp{\lambda}{Y}$ is strictly positive and finite on compact sets, that is:
		\begin{align*}
		0 < \min_{|\lambda| \leq R} \E \VTexp{\lambda}{Y} \leq \max_{|\lambda| \leq R} \E \VTexp{\lambda}{Y} < \infty.
		\end{align*}
		This can be checked by observing $\lambda \mapsto  \E \VTexp{\lambda}{Y}$ is continuous and if $ \E \VTexp{\lambda}{Y} = 0$ for some $\lambda$ then, $\VTexp{\lambda}{Y} \explain{a.s.}{=}  0$. This is clearly not possible since $\Texp{\lambda}{y}$ is not deterministic for any finite $\lambda,y$. Hence there exists a constant depending only on $R$ such that,
		\begin{align*}
		\frac{1}{C_1(R)}\leq  \E \VTexp{\hat \lambda}{Y} \leq C_1(R).
		\end{align*}
		The event $\mathcal{E}_m^{(2)}(R,\eta)$ guarantees:
		\begin{align*}
		\sup_{|\lambda| \leq R} \left| \hatE \VTexp{\lambda}{Y} - \E \VTexp{\lambda}{Y}  \right| \leq \eta.
		\end{align*} 
		Since $|\hat{\lambda}_1(\sigma)| \leq R$, the above error bound holds for $\lambda = \hat \lambda_1(\sigma)$. Taking $\eta \leq (2 C_1(R))^{-1}$ guarantees:
		\begin{align*}
		\frac{1}{2C_1(R)}  \leq \hatE \VTexp{\hat\lambda_1(\sigma)}{Y} \leq C_2(R) + \frac{1}{C_2(R)}.
		\end{align*} This verifies \eqref{verify_eq} for a suitable $K$.
	\end{itemize}
	Hence, all the requirements of Proposition \ref{denominator_variational_problem_prop} are satisfied which gives us the claim of the lemma.
\end{proof}
\begin{lemma}[An Upper Bound on $\Nr$]  \label{final_ub_nr_lemma} We have the following upper bounds on $\Nr$: 
	\begin{enumerate}
		\item Unconditional Upper Bound: For any $\bm y \in \mathbb R^m$, for any $q \in [0,1)$, we have, 
		\begin{align*}
		\Nr\left(\bm y, \begin{bmatrix} 1 & q \\ q & 1 \end{bmatrix}\right) & \leq e^{C_\Nr \cdot m},
		\end{align*}
		for a universal constant $C_\Nr$ which depends only on the noise level $\sigma$.
		\item For any $R,L \in (0,\infty)$, there exists a critical value of $\eta$ denoted by $\eta_2(R)$ depending only on $R$ such that for any $\eta < \eta_2(R), \; \epsilon_2>0$, we have the upper bound,
		\begin{align*}
		\Nr\left(\bm y, \begin{bmatrix} 1 & q \\ q & 1 \end{bmatrix}\right) & \leq\frac{C(L,R) \cdot e^{-m \hat \Xi_2(q)}}{m^2 \cdot (1-q^2)^{m-2}} \; \forall \; q \in [0,1-\epsilon_2], \; \forall \; \bm y \in \mathcal{E}_m^{(1)}(L) \cap \mathcal{E}_m^{(3)}(R,\eta)\cap \mathcal{E}_m^{(6)}(R,\epsilon_2).
		\end{align*} 
		In the above display, $C(L,R)$ is a constant depending only on the choice of $L,R$.
	\end{enumerate}
\end{lemma}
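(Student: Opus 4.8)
The first, unconditional bound will be entirely elementary: since $\gpdf$ is a Gaussian density, $\gpdf(x)\le(2\pi\sigma^2)^{-1/2}$ for every $x$, so in the definition \eqref{Nr_integral_def_eq} the random integrand is bounded pointwise by $(2\pi\sigma^2)^{-m}$, and taking the conditional expectation gives $\Nr(\bm y,\bm Q)\le(2\pi\sigma^2)^{-m}=e^{C_\Nr m}$ with $C_\Nr\Mydef|\ln(2\pi\sigma^2)|$; nothing about $\bm y$ or $q$ enters.

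For the second bound the plan is to verify, on the event $\mathcal{E}_m^{(1)}(L)\cap\mathcal{E}_m^{(3)}(R,\eta)\cap\mathcal{E}_m^{(6)}(R,\epsilon_2)$, the hypotheses of Corollary \ref{local_clt_num_corollary} (equivalently of Proposition \ref{local_clt_num}) with a single constant $K=K(L,R)$ that does not depend on $q$, and then to invoke the corollary separately at each $q\in[0,1-\epsilon_2]$. On $\mathcal{E}_m^{(1)}(L)$ one has $\hatE_Y|Y|^{40}=\hatE_Y Y^{40}\le L$; on $\mathcal{E}_m^{(6)}(R,\epsilon_2)$ one has $|\hat{\lambda}_2(q;\sigma)|+|\hat{\phi}(q;\sigma)|\le R$ for all such $q$. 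The remaining requirement is the two-sided control $K^{-1}\le\lambda_{\min}(\hat{\bm V}(q;\sigma))\le\lambda_{\max}(\hat{\bm V}(q;\sigma))\le K$ of the covariance $\hat{\bm V}(q;\sigma)=\hatE_Y\VTWis{\hat{\lambda}_2(q;\sigma)}{\hat{\phi}(q;\sigma)}{Y}$, and here I would copy the argument used in Lemma \ref{lb_dr_final_lemma}: the map $(\lambda,\phi)\mapsto\E_Y\VTWis{\lambda}{\phi}{Y}$ is continuous and positive definite — positive definite because the Tilted Wishart distribution is non-degenerate for every finite $(\lambda,\phi,y)$ — hence on the compact set $\{|\lambda|+|\phi|\le R\}$ it obeys $C_1(R)^{-1}\bm I\preceq\E_Y\VTWis{\lambda}{\phi}{Y}\preceq C_1(R)\bm I$ for some finite $C_1(R)$. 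Taking $\eta_2(R)\Mydef(2C_1(R))^{-1}$, for $\eta<\eta_2(R)$ the bound $\sup_{|\lambda|+|\phi|\le R}\|\hatE_Y\VTWis{\lambda}{\phi}{Y}-\E_Y\VTWis{\lambda}{\phi}{Y}\|\le\eta$ supplied by $\mathcal{E}_m^{(3)}(R,\eta)$, together with $|\hat{\lambda}_2(q;\sigma)|+|\hat{\phi}(q;\sigma)|\le R$, yields $(2C_1(R))^{-1}\bm I\preceq\hat{\bm V}(q;\sigma)\preceq(C_1(R)+1)\bm I$ uniformly in $q\in[0,1-\epsilon_2]$.

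With $K=K(L,R)$ chosen to dominate $\max\{L,R,2C_1(R),C_1(R)+1\}$, Corollary \ref{local_clt_num_corollary} applies at every $q\in[0,1-\epsilon_2]$ with this common $K$ and gives $\Nr(\bm y,\bm Q)\le C(K)m^{-2}(1-q^2)^{-(m-2)}e^{-m\hat{\Xi}_2(q;\sigma)}$ for $m\ge M(K)$, i.e.\ the asserted bound with $C(L,R)=C(K)$, $M(L,R)=M(K)$. For the finitely many $m<M(L,R)$ I would fall back on part 1: on $\mathcal{E}_m^{(1)}(L)$ the exponent $\hat{\Xi}_2(q;\sigma)$ is bounded above uniformly in $q$ (from its variational definition and $\hatE_Y Y^2\le L$), while $1-q^2\ge 2\epsilon_2-\epsilon_2^2>0$ on $[0,1-\epsilon_2]$, so the claimed right-hand side is bounded below by a positive constant depending only on $L,\epsilon_2$, and enlarging $C(L,R)$ absorbs these finitely many $m$. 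The only genuinely delicate step is the $q$-uniform two-sided control of $\hat{\bm V}(q;\sigma)$, where the continuity and non-degeneracy of the population Tilted Wishart covariance must be combined with the concentration event $\mathcal{E}_m^{(3)}$ and the tightness event $\mathcal{E}_m^{(6)}$; everything else is matching constants against Corollary \ref{local_clt_num_corollary}.
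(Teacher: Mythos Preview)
Your proof is correct and follows essentially the same route as the paper: part~1 via the pointwise bound $\gpdf\le(2\pi\sigma^2)^{-1/2}$, and part~2 by verifying the three hypotheses of Corollary~\ref{local_clt_num_corollary} with a common $K=K(L,R)$ on the stated event, using continuity and non-degeneracy of $(\lambda,\phi)\mapsto\E_Y\VTWis{\lambda}{\phi}{Y}$ on $\{|\lambda|+|\phi|\le R\}$ together with $\mathcal{E}_m^{(3)}(R,\eta)$ to transfer the eigenvalue bounds to $\hat{\bm V}(q;\sigma)$. One small remark: your treatment of the finitely many $m<M(L,R)$ would force $C(L,R)$ to depend on $\epsilon_2$ as well (through the lower bound on $1-q^2$), whereas the statement asserts dependence only on $L,R$; the paper simply leaves the restriction $m\ge M(K)$ implicit, as it is only ever applied for large $m$.
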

\begin{proof}
	\begin{enumerate}
		\item We recall the definition of $\Nr$:
		\begin{align*}
		\Nr(\bm y, \bm Q) & \Mydef \E \left[ \prod_{i=1}^m \gpdf(y_i - m | G_{1i}|^2 ) \gpdf(y_i - m |G_{2i}|^2 ) \bigg| \bm G ^\UH \bm G = {\bm Q} \right].
		\end{align*}
		Observing that, $\gpdf(x)  \leq (2\pi \sigma^2)^{-1/2}$,	
		we obtain, $\forall \bm y \in \mathbb R^m, \; \forall q \in (0,1)$,
		\begin{align*}
		\Nr\left(\bm y, \begin{bmatrix} 1 & q \\ q & 1\end{bmatrix}\right) & \leq e^{C_\Nr \cdot m},
		\end{align*}
		for a universal constant $C_\Nr < \infty$ that depends only on $\sigma$. 
		\item Recall that  Corollary \ref{local_clt_num_corollary} shows,
		\begin{align*}
		\Nr\left(\bm y, \begin{bmatrix} 1 & q \\ q & 1 \end{bmatrix}\right) & \leq  \frac{C(K)e^{-m \hat \Xi_2(q)}}{m^2 \cdot (1-q^2)^{m-2}}.
		\end{align*}
		provided we can show: 
		\begin{itemize}
			\item $\hatE Y^{40} \leq K$. This is true under the event $\mathcal{E}_m^{(1)}(L)$ if we choose $K \geq L$. 
			\item The minimizing arguments $(\hat{\lambda}_2(q;\sigma), \hat{\phi}(q;\sigma) )$ satisfy $|\hat{\lambda}_2(q;\sigma)| +  |\hat{\phi}(q;\sigma)| \leq K$ for any $q \in [0,1-\epsilon_2]$. This is guaranteed by the event  $\mathcal{E}_m^{(6)}(R,\epsilon_2)$ if $K \geq R$.
			\item  Finally, we need to check: 
			\begin{align}
			\frac{1}{K}  \leq \lambda_{\min}\left(\hatE\VTWis{\hat\lambda}{\hat \phi}{Y}\right) \leq  \lambda_{\max}\left(\hatE\VTWis{\hat\lambda}{\hat \phi}{Y}\right) \leq K.
			\label{verify_eq_2}
			\end{align}
			The event $\mathcal{E}_m^{(6)}(R,\epsilon_2)$ guarantees $|\hat \lambda_2(q;\sigma)| + |\hat \phi(q;\sigma)| \leq R, \; \forall \; q \; \in [0,1-\epsilon_2]$. The matrix  function $(\lambda,\phi) \mapsto \E\VTWis{\lambda}{ \phi}{Y}$ is:
			\begin{enumerate}
				\item Bounded on the compact set $|\lambda| + |\phi| \leq R$. Indeed:
				\begin{align*}
				\| \E\VTWis{\lambda}{ \phi}{Y} \|  \leq \E\|\VTWis{\lambda}{ \phi}{Y}\| \explain{(a)}{\leq} C(1 + |\lambda|^2 + |\phi|^2 + \E Y^{2}) \leq C(1+R^2).
				\end{align*}
				In the inequality marked (a), we used the moment bounds for the tilted Wishart distribution derived in Claim (4) of Lemma \ref{tilted_wishart_properties}. 
				\item Strictly positive definite on the compact set $|\lambda| + |\phi| \leq R$. To see this we note that if $\lambda_{\min}(\E\VTWis{\lambda}{ \phi}{Y}) = 0$ for some $\lambda,\phi$ then since $$\lambda_{\min}(\E\VTWis{\lambda}{ \phi}{Y}) \geq \E \lambda_{\min}(\VTWis{\lambda}{ \phi}{Y}),$$ we have $\lambda_{\min}(\VTWis{\lambda}{ \phi}{Y}) = 0$ almost surely (with respect to the distribution of $Y$). This contradicts Claim (6) of Lemma \ref{tilted_wishart_properties}.
			\end{enumerate}
			Hence, there exists a positive and finite constant $C_2(R)$ depending only on $R$ such that,
			\begin{align*}
			\frac{1}{C_2(R)} & \leq \lambda_{\min}\left(\E\VTWis{\hat \lambda}{ \hat\phi}{Y}\right) \leq  \lambda_{\max}\left(\E\VTWis{\hat \lambda}{\hat \phi}{Y}\right) \leq C_2(R).
			\end{align*}
			The event $\mathcal{E}_m^{(3)}(R,\eta)$ guarantees: 
			\begin{align*}
			\left| \lambda_{\min}\left(\E\VTWis{\hat\lambda}{ \hat\phi}{Y}\right) - \lambda_{\min}\left(\hatE\VTWis{\hat\lambda}{ \phi}{Y}\right) \right| &\leq \eta, \\ \left| \lambda_{\max}\left(\E\VTWis{\lambda}{ \phi}{Y}\right) - \lambda_{\max}\left(\hatE\VTWis{\lambda}{ \phi}{Y}\right) \right| &\leq \eta.
			\end{align*}
			Choosing $\eta \leq  (2C_2(R))^{-1}$, we have,
			\begin{align*}
			\frac{1}{2 C_2(R)}  \leq \lambda_{\min}\left(\hatE\VTWis{\hat\lambda}{\hat \phi}{Y}\right) \leq  \lambda_{\max}\left(\hatE\VTWis{\hat\lambda}{\hat \phi}{Y}\right) \leq C_2(R) + \frac{1}{2C_2(R)},
			\end{align*} which verifies \eqref{verify_eq_2} for a suitable $K$. 
		\end{itemize}
		Hence all the assumptions of Corollary \ref{local_clt_num_corollary} have been verified, which gives us the claim in item (2) of the lemma.
	\end{enumerate}
\end{proof}

Finally we restate and prove Proposition \ref{delta_Delta_criteria}.
\deltaDeltaCriteria*

\begin{proof}
	In Lemmas \ref{second_moment_conditional} and \ref{introduce_main_integrals_lemma}, we showed that,
	\begin{align}
	&\MI{\bm y, \bm z}{\bm A, \bm W}  \leq \frac{1}{n-1} \E_{\bm y} \left[ \frac{ \int_0^1 \Nr \left(\bm y, \begin{bmatrix} 1  &\sqrt{b} \nonumber \\ \sqrt{b} & 1 \end{bmatrix}\right) \cdot \frac{(1-b)^{n-2}}{(1-b/2)^{\Delta m}} \diff b}{\Dr^2(\bm y, 1)} \cdot  \mathbf{1}_{\mathcal{E}_m}\right] + C \cdot m \cdot \sqrt{\P(\mathcal{E}_m^c)} \nonumber\\
	& = \frac{2}{n-1} \E_{\bm y} \left[ \frac{ \int_0^1 \Nr \left(\bm y, \begin{bmatrix} 1  & q \\ q & 1 \end{bmatrix}\right) \cdot \frac{q(1-q^2)^{n-2}}{(1-q^2/2)^{\Delta m}}   \diff q}{\Dr^2(\bm y, 1)} \cdot  \mathbf{1}_{\mathcal{E}_m}\right] + C \cdot m \cdot \sqrt{\P(\mathcal{E}_m^c)} \nonumber\\
	& \explain{(a)}{=} (1) + (2)+ (3) +  C \cdot m \cdot \sqrt{\P(\mathcal{E}_m^c)}. \label{upper_bound_MI_eq}
	\end{align}
	In the step marked (a), we split the integral into three parts: 
	\begin{align*}
	(1) & \Mydef \frac{2}{n-1} \E_{\bm y} \left[ \frac{ \int_{1-\epsilon_2}^1 \Nr \left(\bm y, \begin{bmatrix} 1  & q \\ q & 1 \end{bmatrix}\right) \cdot \frac{q(1-q^2)^{n-2}}{(1-q^2/2)^{\Delta m}}   \diff q}{\Dr^2(\bm y, 1)} \cdot  \mathbf{1}_{\mathcal{E}_m}\right], \\
	(2) & \Mydef \frac{2}{n-1} \E_{\bm y} \left[ \frac{ \int_{\epsilon_1}^{1-\epsilon_2} \Nr \left(\bm y, \begin{bmatrix} 1  & q \\ q & 1 \end{bmatrix}\right) \cdot \frac{q(1-q^2)^{n-2}}{(1-q^2/2)^{\Delta m}}   \diff q}{\Dr^2(\bm y, 1)} \cdot  \mathbf{1}_{\mathcal{E}_m}\right], \\
	(3) & \Mydef \frac{2}{n-1} \E_{\bm y} \left[ \frac{ \int_{0}^{\epsilon_1} \Nr \left(\bm y, \begin{bmatrix} 1  & q \\ q & 1 \end{bmatrix}\right) \cdot \frac{q(1-q^2)^{n-2}}{(1-q^2/2)^{\Delta m}}   \diff q}{\Dr^2(\bm y, 1)} \cdot  \mathbf{1}_{\mathcal{E}_m}\right]. \\
	\end{align*}In the above display $\epsilon_1,\epsilon_2 \in (0,1)$ are parameters which will be set appropriately. We also recall that we had set: \begin{align*}
	\mathcal{E}_m & = \mathcal{E}^{(1)}_m(L) \cap \mathcal{E}^{(2)}_m(R,\eta) \cap \mathcal{E}^{(3)}_m(R,\eta) \cap \mathcal{E}^{(4)}_m(\eta)  \cap \mathcal{E}^{(5)}_m(\eta,\epsilon_2) \cap \mathcal{E}_m^{(6)}(R,\epsilon_2). 
	\end{align*}
	where the various events have been defined in Equations \ref{important_events_eqs}. We now describe how to set the parameters $L,R,\eta, \epsilon_1,\epsilon_2$ so that each of the terms in \eqref{upper_bound_MI_eq} is $o(m)$. We also draw the readers attention to the point that the parameter $\epsilon_2$ used to define the cutoff points for the integrals (1) and (2) is same as the $\epsilon_2$ in the definition of the event $\mathcal{E}^{(5)}_m(\eta,\epsilon_2), \mathcal{E}_m^{(6)}(R,\epsilon_2)$. Notice also the same parameter $R$ is involved in the definitions of the events $\mathcal{E}^{(2)}_m(R,\eta), \mathcal{E}^{(3)}_m(R,\eta),\mathcal{E}_m^{(6)}(R,\epsilon_2)$.
	\begin{description}
		\item [Analysis of $(1)$: ] By Lemma \ref{final_ub_nr_lemma}, we know that,
		\begin{align*}
		\Nr \left(\bm y, \begin{bmatrix} 1  & q \\ q & 1 \end{bmatrix}\right) & \leq e^{C_\Nr \cdot m}.
		\end{align*}
		We next appeal to Lemma \ref{lb_dr_final_lemma}. We enforce the requirement
		\begin{align}
		\eta &< \eta_1(R) \label{enforce_eq1}
		\end{align} and obtain,
		\begin{align*}
		\Dr(\bm y, 1) & \geq \frac{1}{C(L,R)} \cdot e^{-m \hat \Xi_1}.
		\end{align*}
		The event $\mathcal{E}_m^{(5)}(\eta,\epsilon_2)$ guarantees that $\hat \Xi_1(\sigma) \leq \Xi_1(\sigma) + \eta$. By enforcing:
		\begin{align}
		\eta &\leq 1, \label{enforced_eq2}
		\end{align} 
		we have $\hat \Xi_1(\sigma) \leq \Xi_1(\sigma)+1$ which is an absolute constant (depending only on the noise level).  Consequently, we have $\Dr(\bm y, 1) \geq C(L,R)^{-1} \cdot e^{- C_\Dr \cdot m}$ for some universal constant $C_\Dr \in (0,\infty)$ depending only on the noise level. Hence we have, for $\min(n,m) \geq 4$
		\begin{align*}
		(1) & \leq \frac{2 \cdot C^2(L,R) \cdot e^{(C_\Nr + 2C_\Dr) \cdot m}}{n-1} \cdot \int_{1-\epsilon_2}^1 (1-q^2)^{n-2} \diff q \\&\leq \frac{2 \cdot C^2(L,R)}{n-1} \cdot e^{(C_\Nr+ 2 C_\Dr) \cdot m} \cdot (1-(1-\epsilon_2)^2)^{\frac{n}{2}} \\ & \leq  \frac{2 \cdot C^2(L,R)}{n-1} \cdot e^{(C_\Nr+ 2 C_\Dr) \cdot m} \cdot (2 \epsilon_2)^{\frac{n}{2}}  \\& =  \frac{2 \cdot C^2(L,R)}{n-1} \cdot \exp \left(  m \cdot  \left( C_\Nr + 2 C_\Dr + \frac{\ln(2)}{2\delta} - \frac{\ln \frac{1}{\epsilon_2}}{2\delta} \right)\right)
		\end{align*}
		We set: 
		\begin{align}
		\epsilon_2  &= \frac{1}{2} \cdot e^{-2\delta(C_\Nr + 2C_\Dr)}< 1, \label{epsilon2_setting}
		\end{align}
		which gives us $(1) = O(1/n) = o(1)$.
		\item[Analysis of $\P(\mathcal{E}_m^c)$: ] As  suggested by Lemma  \ref{failure_event_bound}, we set $L > \E |Y|^{40}$. For example, we can set $L = 1 + \E Y^{40}$. We will also enforce the constraint $R > R_c(\epsilon_2)$ for example by setting $R = R_c(\epsilon_2) + 1$ (note that $\epsilon_2$ has been set in  \eqref{epsilon2_setting}).  This ensures that $\P(\mathcal{E}_m^c) = o(1)$. At this set we have set $R, \epsilon_2, L$ and we are still free to set $\eta>0,\epsilon_1 \in (0,1)$ arbitrarily subject to the requirements in  \eqref{enforce_eq1}-\eqref{enforced_eq2}. 
		\item [Analysis of (2):] We enforce:
		\begin{align}
		\eta & < \min( \eta_1(R), \eta_2(R)) \label{enforce_eq3}
		\end{align} which is enough to satisfy the assumptions of Lemma \ref{lb_dr_final_lemma} and item (2) of Lemma \ref{final_ub_nr_lemma}, which  gives us,
		\begin{align}
		\label{q_not_close_to_1_ub}
		\frac{\Nr \left(\bm y, \begin{bmatrix} 1  & q \\ q & 1 \end{bmatrix}\right)}{\Dr^2(\bm y,1)} & \leq \frac{C \cdot e^{-m (\hat \Xi_2(q)-2\hat \Xi_1)}}{m^2 \cdot (1-q^2)^{m-2}} \; \forall \; q \in [0,1-\epsilon_2].
		\end{align}
		This allows us to upper bound the term (2) as follows: 
		\begin{align*}
		(2) & \Mydef \frac{2}{n-1} \E_{\bm y} \left[ \frac{ \int_{\epsilon_1}^{1-\epsilon_2} \Nr \left(\bm y, \begin{bmatrix} 1  & q \\ q & 1 \end{bmatrix}\right) \cdot \frac{q(1-q^2)^{n-2}}{(1-q^2/2)^{\Delta m}}   \diff q}{\Dr^2(\bm y, 1)} \cdot  \mathbf{1}_{\mathcal{E}_m}\right] \\
		& \leq \frac{C}{(n-1) \cdot m^2} \cdot  \E_{\bm y} \left[ \int_{\epsilon_1}^{1-\epsilon_2} e^{-m\CalFhat(q;\delta,\Delta,\sigma)} \diff q\cdot \mathbf{1}_{\mathcal{E}_m}\right]
		\end{align*}
		Since event $\mathcal{E}_m^{(5)}(\eta,\epsilon_2)$ guarantees $|\hat \Xi_1(\sigma) - \Xi_1(\sigma)| \leq \eta, \; \sup_{q \in [0,1-\epsilon_2]} |\hat \Xi_2(q;\sigma) - \Xi_2(q;\sigma)| \leq \eta$, we have, $$|\CalFhat(q;\delta,\Delta,\sigma) - \CalF(q;\delta,\Delta,\sigma)| \leq 3 \eta \; \forall \; q \in [0,1-\epsilon_2].$$ Since $\delta < \critthr(\sigma^2,\Delta)$ and $\epsilon_1 > 0$, Recall that we have, $\inf_{q \in [\epsilon_1,1]} \CalF(q;\delta,\Delta,\sigma) > 0 = \CalF(0;\delta,\Delta,\sigma)$ (see  \eqref{below_pt_consequence1}). Hence, we can enforce that $\eta,\epsilon_1$  satisfy:
		\begin{align}
		\eta <  \frac{1}{6}\inf_{q \in [\epsilon_1,1]} \CalF(q;\delta,\Delta,\sigma) \label{enforce_eq4}
		\end{align}
		This guarantees, for any $q \in [\epsilon_1, 1-\epsilon_2]$, 
		\begin{align*}
		\CalFhat(q;\delta,\Delta,\sigma) \geq \CalF(q;\delta,\Delta,\sigma) - 3 \eta \geq \inf_{q \in [\epsilon_1,1]} \CalF(q;\delta,\Delta,\sigma) - 3 \eta \geq \frac{1}{2} \inf_{q \in [\epsilon_1,1]} \CalF(q;\delta,\Delta,\sigma) > 0.
		\end{align*}
		Hence,
		\begin{align*}
		(2)  \leq \frac{C}{(n-1) \cdot m^2} \cdot \exp \left( - \frac{m}{2} \cdot \inf_{q \in [\epsilon_1,1]} \CalF(q;\delta,\Delta,\sigma) \right) = o(1).
		\end{align*}
		\item[Analysis of (3): ]
		We recall that Term (3) was given by: 
		\begin{align*}
		(3) & = \frac{2}{n-1} \E_{\bm y} \left[ \frac{ \int_0^{\epsilon_1} \Nr \left(\bm y, \begin{bmatrix} 1  & q \\ q & 1 \end{bmatrix}\right) \cdot \frac{q(1-q^2)^{n-2}}{(1-q^2/2)^{\Delta m}}   \diff q}{\Dr^2(\bm y, 1)} \cdot  \mathbf{1}_{\mathcal{E}_m}\right].
		\end{align*}
		The upper bound in \eqref{q_not_close_to_1_ub} applies to $q \in [0,\epsilon_1]$. Hence we obtain, 
		\begin{align*}
		(3) & \leq \frac{C}{(n-1) \cdot m^2} \cdot  \E_{\bm y} \left[ \int_{0}^{\epsilon_1} e^{-m\CalFhat(q;\delta,\Delta,\sigma)} \diff q\cdot \mathbf{1}_{\mathcal{E}_m}\right]
		\end{align*}
		Next we approximate $\CalFhat$ by its taylors expansion at $q = 0$. First observe that, $\CalFhat(0;\delta,\Delta,\sigma) = 0$ and,
		\begin{align*}
		\frac{\diff \CalFhat}{\diff q}(q; \delta,\Delta,\sigma) & = \hat{\phi}(q;\sigma) - 2\left( 1- \frac{1}{\delta} \right) \frac{q}{1-q^2} - \frac{\Delta q}{1- q^2/2} \implies  \frac{\diff \CalFhat}{\diff q}(0;\delta,\Delta,\sigma) = 0.
		\end{align*}
		We enforce the constraint \begin{align}\eta &< \frac{1}{4}\frac{\diff^2 \CalF}{\diff q^2}(0;\delta,\Delta,\sigma). \label{enforce_eq5} \end{align} We set $\epsilon_1 \in (0,1/2)$ which guarantees:
		\begin{align*}
		\left| \frac{\diff^2 \CalF}{\diff q^2}(q;\delta,\Delta,\sigma) - \frac{\diff^2 \CalF}{\diff q^2}(0;\delta,\Delta,\sigma) \right| & \leq \frac{1}{2} \frac{\diff^2 \CalF}{\diff q^2}(0;\delta,\Delta,\sigma).
		\end{align*}
		\eqref{below_pt_consequence2} and the fact that $\CalF(\cdot; \delta,\Delta,\sigma)$ has a continuous second derivative at $q=0$ ensures this is possible. Hence we have,
		\begin{align*}
		\frac{\diff^2 \CalF}{\diff q^2}(q;\delta,\Delta,\sigma) > 2 \eta, \; \forall \; q < \epsilon_1.
		\end{align*}
		The event $\mathcal{E}_m^{(4)}(\eta)$ guarantees: 
		\begin{align*}
		\sup_{q \in [0,1/2]}\left| \frac{\diff^2 \CalFhat}{\diff q^2}(q;\delta,\Delta,\sigma) - \frac{\diff^2 \CalF}{\diff q^2}(q;\delta,\Delta,\sigma) \right| \leq \eta \implies \frac{\diff^2 \CalFhat}{\diff q^2}(q;\delta,\Delta,\sigma) > \eta, \; \forall \; q < \epsilon_1.
		\end{align*}
		Then by Taylor's theorem, we have, $\forall \; q \in [0,\epsilon_1)$,
		\begin{align*}
		\CalFhat(q;\delta,\Delta,\sigma) & \geq \CalFhat(0;\delta,\Delta,\sigma) + \frac{\diff \CalFhat}{\diff q}(0;\delta,\Delta,\sigma) \cdot q + \left( \inf_{x \in [0,\epsilon_1)} \frac{\diff^2 \CalFhat}{\diff q^2}(x;\delta,\Delta,\sigma)  \right) \cdot \frac{q^2}{2} \geq \frac{\eta q^2}{2}.
		\end{align*}
		Hence we obtain,
		\begin{align*}
		(3) & \leq \frac{C}{(n-1) \cdot m^2} \cdot \int_0^{\epsilon_1} e^{-\frac{\eta q^2}{2} \cdot m}  \leq \frac{C}{(n-1)\cdot m^2} =  o(1).
		\end{align*}
		Finally we note that set $\eta$ as,
		\begin{align*}
		\eta = \min \left( 1, \eta_1(R), \eta_2(R),\frac{1}{6}\inf_{q \in [\epsilon_1,1]} \CalF(q;\delta,\Delta,\sigma), \frac{1}{4}\frac{\diff^2 \CalF}{\diff q^2}(0;\delta,\Delta,\sigma) \right)
		\end{align*}
		satisfies requirements in  \eqref{enforce_eq1},\eqref{enforced_eq2},\eqref{enforce_eq3}, \eqref{enforce_eq4} and \eqref{enforce_eq5} and also ensures $\eta$ is a fixed positive constant.
	\end{description}
This concludes the proof of the proposition.
\end{proof}
\section{Proofs from Section \ref{low_noise_section}}
\label{low_noise_section_proofs}
This section is devoted to proving Proposition \ref{lownoiseprop}. Recall that the function $\CalF(q;\delta,\Delta, \sigma)$ was defined as:
\begin{align*}
\CalF(q; \delta, \Delta,\sigma) & \Mydef {\Xi}_2(q;\sigma) - 2\Xi_1(\sigma)  + \left( 1 - \frac{1}{\delta} \right) \ln(1-q^2) + \Delta \ln \left( 1- \frac{q^2}{2} \right),
\end{align*}
where the functions, $\Xi_1,\Xi_2$ are defined as follows:
\begin{align*}
{\Xi}_2(q;\sigma) & \Mydef \max_{(\lambda,\phi) \in \mathbb{R}} \left( 2\lambda  + \phi q - \E_Y
\ln \ZTWis{\lambda}{\phi}{Y} \right), \\
\Xi_1(\sigma) &\Mydef  \max_{\lambda \in \mathbb{R}} \left( \lambda - \E_Y  
\ln \ZTexp{\lambda}{Y} \right).
\end{align*}
In the above display the random variable $Y = |G|^2 + \sigma \epsilon$, where $G \sim \cgauss{0}{1}$ and $\epsilon \sim \gauss{0}{1}$. Our goal is to identify conditions on $(\delta,\Delta,\sigma)$ such that,
\begin{align}
\CalF(q;\delta,\Delta,\sigma) > \CalF(0;\delta,\Delta,\sigma) \; \forall \; q \in (0,1), \; \frac{\diff^2}{\diff q^2} \CalF(q;\delta,\Delta,\sigma) > 0. \label{goal_low_noise_section_proof}
\end{align}
We will not be able to solve this for a general $\sigma > 0 $, but only for small enough $\sigma$ since in the limit $\sigma \rightarrow 0$, the variational problems in the definition of $\Xi_2, \Xi_1$ simplify considerably. 

We first begin with a heuristic derivation of the zero noise limit of the functions $\Xi_2(q;\sigma)$ and $\Xi_1(\sigma)$. Recalling the definition of $\ZTexp{\lambda}{y}$ (Definition \ref{Texp_definition}):
\begin{align*}
\ln \ZTexp{\lambda}{Y} &= \E_{E \sim \Exp{1}} e^{\lambda E} \gpdf(E-|G|^2 - \sigma \epsilon)  \\ &= e^{(\lambda - 1)(|G|^2 + \sigma \epsilon)} \E_{\omega \sim \gauss{0}{1}} e^{\sigma(\lambda-1) \omega}  \mathbf{1}_{|G|^2 + \sigma \epsilon + \sigma\omega \geq 0} \\ &\explain{$\sigma \rightarrow 0$}{\rightarrow} e^{(\lambda - 1)|G|^2}.
\end{align*}
This gives us,
\begin{align*}
 \lambda - \E_Y  \ln \ZTexp{\lambda}{Y} & \explain{$\sigma \rightarrow 0$}{\rightarrow} 1.
\end{align*}
In the zero noise limit, the variational problem in the definition of $\Xi_1$ is trivial. Hence, it makes sense to extend the definition of $\Xi_1(\sigma)$ to include $\sigma = 0$ as $\Xi_1(0) \Mydef 1$. 
Likewise, recalling Definition \ref{titled_wishart_definition}, we have,
\begin{align*}
 \ZTWis{\lambda}{\phi}{Y} & = \ZTWis{\lambda}{\phi}{|G|^2 + \sigma \epsilon} \\ & =   
 \E \exp \left((\lambda-1)(2Y + \sigma(\omega_1 + \omega_2)) + \phi \sqrt{(Y+\sigma \omega_1)(Y+\sigma \omega_2)} \cos(\theta) \right) \mathbf{1}_{Y + \sigma \omega_1 \geq 0, y + \sigma \omega_2 \geq 0} \\
 & \explain{$\sigma  \rightarrow 0$}{\rightarrow} e^{2(\lambda-1)|G|^2} \E_\theta e^{\phi |G|^2 \cos(\theta)}\\
 & = e^{2(\lambda-1)|G|^2} I_0(|G|^2 \phi).
\end{align*}
In the last step we used the definition of Modified Bessel function $I_0(x) \Mydef \E e^{x \cos \theta}$. Hence we extend the definition of $\Xi_2(q;\sigma)$ to $\sigma = 0$ as:
\begin{align*}
\Xi_2(q;0) & \Mydef 2 + \max_{\phi \in \mathbb R} q \phi -  \E_{Z \sim \cgauss{0}{1}} \ln I_0(\phi |Z|^2).
\end{align*}
This allows to guess the correct zero noise limit of $\CalF(q;\delta,\Delta,\sigma)$ as:
\begin{align*}
\CalF(q;\delta,\Delta,0) & \Mydef {\Xi}_2(q;0) - 2\Xi_1(0)  + \left( 1 - \frac{1}{\delta} \right) \ln(1-q^2) + \Delta \ln \left( 1- \frac{q^2}{2} \right).
\end{align*}
The remainder of this section is organized as follows:
\begin{enumerate}
	\item In Section \ref{zero_noise_analysis} we analyze the zero noise limit function $\CalF(q;\delta,\Delta,0)$ and find a condition on $(\delta,\Delta)$ such that  \eqref{goal_low_noise_section_proof} holds for $\CalF(q;\delta,\Delta,0)$. 
	\item In Section \ref{zero_noise_convergence}, we show that $\Xi_1(\sigma)$ converges to $\Xi_1(0)$ and $\Xi_2(q;\sigma)$ converges to $\Xi_2(q;0)$ in an appropriate sense. 
	\item Finally Section \ref{low_noise_prop_proof} contains the proof of Proposition \ref{lownoiseprop}.
\end{enumerate}
Throughout this section, unlike in other parts of this paper, $C$ denotes a universal constant that does \emph{not} depend on $\sigma$. As before this constant may change from line to line.
\subsection{Analysis in the Low Noise Limit} \label{zero_noise_analysis}
The following lemma shows that if $\delta < 2$, and $\Delta$ is small enough (but positive), the function $\CalF(q;\delta,\Delta,0)$ is strictly increasing. 
\begin{lemma}[Limiting Variational Problems] \label{zero_noise_variational_problems} Consider the following functions for $q \in [0,1)$: 
	\begin{align*}
	\Xi_2(q;0) & \Mydef 2 + \max_{\phi \in \mathbb R} q \phi -  \E_{Z \sim \cgauss{0}{1}} \ln I_0(\phi |Z|^2), \\
	\phi_2(q;0) & \Mydef \arg \max_{\phi \in \mathbb R} q \phi - \E_{Z \sim \cgauss{0}{1}} \ln I_0(\phi |Z|^2).
	\end{align*}
	Then we have, 
	\begin{enumerate}
		\item The function $\phi \mapsto q \phi -  \E_{Z \sim \cgauss{0}{1}} \ln I_0(\phi |Z|^2)$ has a unique maximizer $\phi_2(q;0)$ which satisfies: $0 \leq \phi_2(q;0)<\infty$ for any $q \in [0, 1)$.   Furthermore, $\max_{q \in [0,1-\eta]} \phi_2(q;0) < \infty$ for any $\eta \in (0,1)$. 
		\item The function: 
		\begin{align*}
		f(q) \Mydef \Xi_2(q;0)  + \left( 1 - \frac{1}{\delta} \right) \ln(1-q^2) + \Delta \ln \left( 1 - \frac{q^2}{2} \right),
		\end{align*}
		is a strictly increasing function of $q$ with $f(0)< f(q) \; \forall q \in [0,1)$,  provided,
		\begin{align*}
		0<\delta < 2, \; 0<  \Delta < \frac{2-\delta}{\delta}.
		\end{align*} 
	\end{enumerate}
\end{lemma}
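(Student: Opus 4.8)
The plan here is routine. I would write $I_0(x)=\mathbb E_{\Theta\sim\mathrm{Unif}(-\pi,\pi]}e^{x\cos\Theta}$, so that $K(x):=\ln I_0(x)$ is the cumulant generating function of the bounded, non-degenerate variable $\cos\Theta$; hence $K$ is real-analytic and strictly convex on $\mathbb R$, with $K(0)=K'(0)=0$, and $\rho:=K'=I_1/I_0$ (the Bessel ratio) is odd with $\rho(0)=0$, $0<\rho(x)<1$, $\rho'(x)>0$ for $x>0$, and $\rho(x)\uparrow 1$ as $x\to\infty$. Setting $\psi(\phi):=\mathbb E_{Z\sim\mathcal{CN}(0,1)}K(\phi|Z|^2)=\mathbb E_{E\sim\mathsf{Exp}(1)}K(\phi E)$ and differentiating under the expectation (legitimate since $\rho,\rho'$ are bounded and $E$ has all moments), $\psi$ is smooth, even, strictly convex, with $\psi(0)=0$, $\psi'(\phi)=\mathbb E[E\rho(\phi E)]$, $\psi'(0)=0$, $\psi'$ strictly increasing, and $\psi'(\phi)\to\mathbb E[E]=1$ as $\phi\to\infty$ by monotone convergence; thus $\psi'\colon[0,\infty)\to[0,1)$ is a continuous increasing bijection. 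For $q\in[0,1)$ the map $\phi\mapsto q\phi-\psi(\phi)$ is then strictly concave, with derivative $q-\psi'(\phi)$ which is $\ge 0$ at $\phi=0$ and tends to $q-1<0$, so it has a unique maximizer $\phi_2(q;0)=(\psi')^{-1}(q)\in[0,\infty)$, characterized by $\psi'(\phi_2(q;0))=q$. Being the inverse of a continuous increasing bijection, $\phi_2(\cdot\,;0)$ is continuous and increasing on $[0,1)$, hence $\max_{q\in[0,1-\eta]}\phi_2(q;0)=\phi_2(1-\eta;0)<\infty$. This gives item~1.

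\textbf{Part 2, reduction.} Since the maximizer in $\Xi_2(q;0)=2+\sup_\phi\{q\phi-\psi(\phi)\}$ is unique and interior, Danskin's theorem yields $\tfrac{\diff}{\diff q}\Xi_2(q;0)=\phi_2(q;0)$, so
\begin{equation*}
f'(q)=\phi_2(q;0)-\frac{2q(1-1/\delta)}{1-q^2}-\frac{\Delta q}{1-q^2/2}=:\phi_2(q;0)-R(q).
\end{equation*}
As $f$ is continuous on $[0,1)$, it is enough to show $f'(q)>0$ for $q\in(0,1)$, which gives strict monotonicity and in particular $f(q)>f(0)$ there. First I would check the purely algebraic fact that $\Delta<\tfrac{2-\delta}{\delta}$ (with $\delta<2$, so $\tfrac{2-\delta}{\delta}>0$) forces $R(q)<\tfrac{q}{1-q^2}$ on $(0,1)$: dividing by $q$, this is equivalent to $\Delta(1-q^2)<\tfrac{2-\delta}{\delta}\,(1-q^2/2)$, which holds since $1-q^2\le 1-q^2/2$ (this works for all $\delta<2$, in particular also for $\delta<1$). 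Hence everything reduces to the \emph{sharp} lower bound $\phi_2(q;0)\ge \tfrac{q}{1-q^2}$ on $[0,1)$; equivalently, since $\psi'$ is increasing and $q\mapsto q/(1-q^2)$ is a bijection of $[0,1)$ onto $[0,\infty)$ with inverse $q(c):=\tfrac{\sqrt{1+4c^2}-1}{2c}$,
\begin{equation*}
\psi'(c)\le q(c)\ \ \text{for all }c\ge 0,\qquad\text{i.e.}\qquad \mathbb E_{E\sim\mathsf{Exp}(1)}\!\Big[E\,\rho\big(\tfrac{qE}{1-q^2}\big)\Big]\le q .
\end{equation*}

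\textbf{Part 2, the crux.} This Bessel-ratio inequality is the heart of the matter, and it is essentially tight: from $\rho(x)=\tfrac x2-\tfrac{x^3}{16}+\cdots$ one gets $\phi_2(q;0)=q+\tfrac32q^3+\cdots$ against $\tfrac{q}{1-q^2}=q+q^3+\cdots$, agreeing to leading order (and $\phi_2(q;0)-\tfrac{q}{1-q^2}=\tfrac12q^3+\cdots>0$ near $0$); near $q=1$ both sides behave like $\tfrac1{2(1-q)}$, which I would get from $\psi'(\phi)=1-\tfrac1{2\phi}+o(1/\phi)$, itself a consequence of $\rho(x)=1-\tfrac1{2x}+o(1/x)$ plus negligibility of the region $x\lesssim 1$ in $\mathbb E[E\rho(\phi E)]$. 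The plan is to prove $\psi'(c)\le q(c)$ by a single-crossing/comparison argument: $\psi'(0)=q(0)=0$ and $\psi'(c)<q(c)$ for small $c$ by the expansion above; and at a hypothetical first crossing $c_0$ (so $\psi'(c_0)=q(c_0)=:q_0$, hence $c_0=q_0/(1-q_0^2)$) I would use the Riccati identity $\rho'(x)=1-\rho(x)/x-\rho(x)^2$ to compute
\begin{equation*}
\psi''(c_0)=1+q_0^2-\mathbb E\big[E^2\rho(c_0E)^2\big],\qquad q'(c_0)=\frac{(1-q_0^2)^2}{1+q_0^2},
\end{equation*}
so the required $\psi''(c_0)<q'(c_0)$ (which contradicts a crossing from below) reduces to the variance-type estimate $\mathbb E[E^2\rho(c_0E)^2]>\tfrac{4q_0^2}{1+q_0^2}$ under the constraint $\mathbb E[E\rho(c_0E)]=q_0$. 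Proving this last estimate is the main obstacle: Cauchy--Schwarz only gives $\mathbb E[E^2\rho(c_0E)^2]\ge q_0^2$, which is too weak, so I expect to need finer quantitative properties of $\rho$ — e.g.\ the patched elementary bounds $\rho(x)\le\min\{x/2,\ 2x/(1+2x)\}$ (both provable from the Riccati equation by the same first-touch argument) and their consequences for the Laplace-type integrals $\mathbb E[E^k\rho(c_0E)^j]$, or Amos-type two-sided bounds on $I_1/I_0$ from the literature. Once $\phi_2(q;0)\ge \tfrac{q}{1-q^2}>R(q)$ is established for $q\in(0,1)$, we get $f'(q)>0$ and item~2 follows.
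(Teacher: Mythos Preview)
Part 1 is correct and essentially identical to the paper's argument.

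For Part 2, you and the paper both reduce to $f'(q)>0$, equivalently $\phi_2(q;0)>\phi_3(q)$ with $\phi_3:=R$. At this point your route diverges: you insert the intermediate quantity $q/(1-q^2)$ and attempt the sharp, $(\delta,\Delta)$-independent bound $\phi_2(q;0)\ge q/(1-q^2)$ via a first-crossing argument. The resulting variance-type estimate $\mathbb E[E^2\rho(c_0E)^2]>4q_0^2/(1+q_0^2)$ you correctly flag as ``the main obstacle'' and do not prove; as you note, Cauchy--Schwarz is too weak and the inequality is asymptotically tight at both endpoints, so closing it would require quantitative Bessel-ratio control beyond the Riccati identity that you have not supplied. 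This is a genuine gap.

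The paper avoids the intermediate $q/(1-q^2)$ altogether. By strict concavity of $\phi\mapsto q\phi-\psi(\phi)$, the condition $\phi_2(q;0)>\phi_3(q)$ is \emph{equivalent} to the gradient being positive at $\phi_3(q)$, i.e.\ to
\[
q\ >\ \psi'(\phi_3(q))\ =\ \mathbb E\Big[|Z|^2\,\tfrac{I_0'(\phi_3(q)|Z|^2)}{I_0(\phi_3(q)|Z|^2)}\Big]\ =:\ G(q).
\]
The paper then works with $G$ directly: $G(0)=0$; $G'(0)=\tfrac12\,\mathbb E|Z|^4\cdot\phi_3'(0)=2\bigl(1-\tfrac1\delta\bigr)+\Delta$, which is $<1$ exactly by the hypothesis $\Delta<(2-\delta)/\delta$; and the paper argues that $G$ is concave and increasing on $[0,1)$, invoking the concavity of the Bessel ratio $I_0'/I_0$ (a result of Watson cited among the paper's background facts) together with convexity of $\phi_3$. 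Concavity plus $G(0)=0$ and $G'(0)<1$ immediately forces $G(q)<q$ on $(0,1)$, hence $f'(q)>0$. The ingredient your plan never uses is precisely this concavity of $I_1/I_0$; with it, the hypothesis on $(\delta,\Delta)$ enters through a single derivative computation at $q=0$, and no global sharp lower bound on $\phi_2(q;0)$ is needed.
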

\begin{proof}
	\begin{enumerate}
		\item The function $\phi \mapsto q \phi -  \E_{Z \sim \cgauss{0}{1}} \ln I_0(\phi |Z|^2)$ is strictly concave (see Fact \ref{bessel_properties}, item (5), Appendix \ref{misc_appendix}). Hence, $q \phi -  \E_{Z \sim \cgauss{0}{1}} \ln I_0(\phi |Z|^2)$ has at most one maximizer. Next observe that any maximizer must lie in $[0,\infty]$. This is because $\E \ln I_0(|\phi| |Z|^2) = \E \ln I_0(-|\phi| |Z|^2)$  since $I_0$ is even (see Fact \ref{bessel_properties}, Appendix \ref{misc_appendix}), but $q |\phi| \geq - q |\phi|$. This shows that if $\phi_2(q;0)$ exists, we must have, $\phi_2(q; 0) \geq 0$. In order to show existence of $\phi_2(0,q)$ it is sufficient to find a solution to the first order optimality conditions:
		\begin{align*}
		\E |Z|^2 \cdot \frac{I_0^\prime(\phi |Z|^2)}{I_0(\phi |Z|^2)}   = q.
		\end{align*}
		Note that:
		\begin{align*}
		\E |Z|^2 \cdot \frac{I_0^\prime(\phi |Z|^2)}{I_0(\phi |Z|^2)} \bigg | _{\phi = 0} = 0.
		\end{align*}
		In order to check that $\phi_2(q;0) < \infty$, it is sufficient to show that, 
		\begin{align*}
		\lim_{\phi \rightarrow \infty} q - \E |Z|^2 \cdot  \frac{I_0^\prime(\phi |Z|^2)}{I_0(\phi |Z|^2)} < 0.
		\end{align*}
		By Monotone convergence theorem and the fact that $\frac{I_0^\prime(x)}{I_0(x)} \uparrow 1$ as $x \uparrow \infty$ (see Fact \ref{bessel_properties}, Appendix \ref{misc_appendix}), we have, 
		\begin{align*}
		\lim_{\phi \rightarrow \infty} q - \E |Z|^2 \cdot  \frac{I_0^\prime(\phi |Z|^2)}{I_0(\phi |Z|^2)} = q - 1< 0 , \; \forall \; q < 1.
		\end{align*}
		This confirms $\phi_2(q;0) < \infty$ for any $q<1$. Further inspection of the stationarity condition: 
		\begin{align*}
		\E |Z|^2 \cdot \frac{I_0^\prime(\phi |Z|^2)}{I_0(\phi |Z|^2)} \bigg | _{\phi = \phi_2(q;0)}  = q
		\end{align*}
		reveals that $\phi_2(q;0)$ is an increasing function of $q$ since the function on the left is an increasing function of $\phi$ (see Fact \ref{bessel_properties}, Appendix \ref{misc_appendix}). Hence,
		\begin{align*}
		\max_{q \in [0,1-\eta]} \phi_2(q;0)  = \phi_2(q,1-\eta)< \infty.
		\end{align*}
		This concludes the proof of item (1).
		\item It is sufficient to show that the function
		\begin{align*}
		f(q) \Mydef \Xi_2(q;0)  + \left( 1 - \frac{1}{\delta} \right) \ln(1-q^2) + \Delta \ln \left( 1 - \frac{q^2}{2} \right),
		\end{align*}
		is strictly increasing or: 
		\begin{align*}
		\frac{\diff f(q)}{\diff q}> 0.
		\end{align*}
		We can compute the first derivative:
		\begin{align*}
		\frac{\diff f(q) }{\diff q} = \phi_2(q;0) - 2 \left(1 - \frac{1}{\delta} \right) \frac{q}{1-q^2} - \frac{\Delta q}{1-\frac{q^2}{2}}.
		\end{align*} 
		Hence,
		\begin{align*}
		\frac{\diff f(q)}{\diff q} > 0 \Leftrightarrow \phi_2(q;0) > 2 \left(1 - \frac{1}{\delta} \right) \frac{q}{1-q^2} + \frac{\Delta q}{1-\frac{q^2}{2}} \Mydef \phi_3(q)
		\end{align*}
		Note that since $\phi_2(q;0)$ is the maximizing argument of the strictly concave function $q \phi - \E \ln I_0(\phi |Z|^2)$, we have, 
		\begin{align*}
		\frac{\diff f(q)}{\diff q} > 0 \Leftrightarrow \frac{\diff}{\diff \phi} \left( q \phi - \E \ln I_0(\phi |Z|^2) \right) \bigg|_{\phi = \phi_3(q)} > 0 \Leftrightarrow q > \E |Z|^2 \cdot  \frac{I_0^\prime(\phi_3(q) \cdot|Z|^2)}{I_0(\phi_3(q) \cdot |Z|^2)}
		\end{align*}
		Next we make the following sequence of observations: 
		\begin{enumerate}
			\item $\phi_3(0) = 0$, hence,
			\begin{align*}
			\E \frac{I_0^\prime(\phi_3(q) \cdot|Z|^2)}{I_0(\phi_3(q) \cdot |Z|^2)} \bigg |_{q=0} & = 0.
			\end{align*}
			\item We can compute the first derivative: 
			\begin{align*}
			\frac{\diff}{\diff q} \E |Z|^2 \frac{I_0^\prime(\phi_3(q) \cdot|Z|^2)}{I_0(\phi_3(q) \cdot |Z|^2)} \bigg | _{q=0} & = \frac{\diff}{\diff \phi} \left( \E |Z|^2 \frac{I_0^\prime(\phi \cdot|Z|^2)}{I_0(\phi \cdot |Z|^2)} \right) \bigg |_{\phi = 0} \cdot \frac{\diff \phi_3(q)}{\diff q} \bigg |_{q=0} \\ &\explain{(a)}{=} \frac{\E |Z|^4}{2} \cdot \left(2 \left( 1 - \frac{1}{\delta} \right) + \Delta \right) \\ &= 1 - \left( \frac{2-\delta}{\delta} - \Delta \right) < 1,
			\end{align*}
			where the step marked (a) used Fact \ref{bessel_properties} and the definition of $\phi_3(q)$ to compute the relevant derivatives.
			\item Finally we note that, the function,
			\begin{align*}
			q \mapsto \E |Z|^2 \cdot  \frac{I_0^\prime(\phi_3(q) \cdot|Z|^2)}{I_0(\phi_3(q) \cdot |Z|^2)},
			\end{align*}
			is concave and increasing since $\frac{I_0^\prime(x)}{I_0(x)}$ is concave and increasing (Fact \ref{bessel_properties}, Appendix \ref{misc_appendix}) and $\phi_3(q)$ is convex and increasing. 
		\end{enumerate}
		The above three observations immediately imply: 
		\begin{align*}
		\E |Z|^2 \cdot  \frac{I_0^\prime(\phi_3(q) \cdot|Z|^2)}{I_0(\phi_3(q) \cdot |Z|^2)} < q, \; \forall \; q > 0 \implies \frac{\diff f }{\diff q}(q) > 0, \; \forall \; q > 0.
		\end{align*}
		Furthermore,
		\begin{align*}
		\frac{\diff f }{\diff q}(0) = 0.
		\end{align*}
		Hence $f(q)$ is a stricly increasing function of $q$ and hence so is $\CalF(q;\delta,\Delta,0)$.
		This concludes the proof of item (2).
	\end{enumerate}
\end{proof}
\subsection{Convergence to the Low Noise Limit}\label{zero_noise_convergence}
The following lemma shows that $\lim_{\sigma \rightarrow 0} \Xi_1(\sigma) = \Xi_1(0) = 1$.
\begin{lemma} \label{low_noise_denom_variation_pr}
	Recall that $\Xi_1(\sigma)$ and $\lambda_1(\sigma)$ denote the optimal value and solution of the variational problem: 
	\begin{align*}
	\Xi_1(\sigma) &\Mydef  \max_{\lambda \in \mathbb{R}} \left( \lambda - \E_Y  
	\ln \E_{E \sim \Exp{1}} e^{\lambda E} \gpdf(E-Y) \right), \\
	\lambda_1(\sigma) & \Mydef \arg\max_{\lambda \in \mathbb{R}} \left( \lambda - \E_Y  
	\ln \E_{E \sim \Exp{1}} e^{\lambda E} \gpdf(E-Y) \right).
	\end{align*}
	Then we have,
	\begin{enumerate}
		\item $\lambda_1(\sigma) \leq 1$ for all $\sigma > 0$.
		\item $\Xi_1(\sigma)$ is a decreasing function of $\sigma$.
		\item $\lim_{\sigma \rightarrow 0} \Xi_1(\sigma) = 1$. 
	\end{enumerate}
\end{lemma}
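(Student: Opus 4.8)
The plan is to convert the one–dimensional concave program defining $\Xi_1(\sigma)$ into an explicit formula and then extract all three claims from it. Completing the square in the Gaussian integral and writing $\beta=1-\lambda$ gives
\[
\ZTexp{\lambda}{y}\;=\;e^{-\beta y+\sigma^2\beta^2/2}\,\Phi\!\Big(\tfrac{y-\sigma^2\beta}{\sigma}\Big),
\]
where $\Phi$ is the standard normal CDF; equivalently, $\Texp{\lambda}{y}$ is $\gauss{y-\sigma^2\beta}{\sigma^2}$ conditioned to lie in $[0,\infty)$. Since $Y=|Z|^2+\sigma\epsilon$ with $|Z|^2\sim\Exp{1}$ has $\E_Y Y=1$, and $\lambda+\beta=1$, the objective collapses to
\[
g_\sigma(\lambda)\;\Mydef\;\lambda-\E_Y\ln\ZTexp{\lambda}{Y}\;=\;1-\tfrac{\sigma^2\beta^2}{2}-\E_Y\ln\Phi\!\Big(\tfrac{Y-\sigma^2\beta}{\sigma}\Big),\qquad \beta=1-\lambda .
\]
This $g_\sigma$ is concave in $\lambda$ (log–partition functions are convex) and coercive (as $\beta\to+\infty$ the $-\sigma^2\beta^2/2$ term dominates, using $-\ln\Phi(t)\le\tfrac12 t^2+C\ln(2+|t|)$ for $t\le 0$; as $\beta\to-\infty$ the argument of $\Phi$ grows and the $\Phi$–term stays bounded), so $\lambda_1(\sigma)=\arg\max g_\sigma$ is well defined and unique.

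For claim (1), by concavity it is enough to check $g_\sigma'(1)\le 0$. Differentiating, $g_\sigma'(\lambda)=1-\E_Y\MTexp{\lambda}{Y}{1}$, and at $\lambda=1$ the tilt $\Texp{1}{y}$ is simply $\gauss{y}{\sigma^2}$ truncated to $[0,\infty)$, whose mean equals $y+\sigma\,\varphi(y/\sigma)/\Phi(y/\sigma)\ge y$ (here $\varphi$ is the standard normal density). Hence $\E_Y\MTexp{1}{Y}{1}\ge\E_Y Y=1$, so $g_\sigma'(1)\le 0$ and $\lambda_1(\sigma)\le 1$, indeed $<1$ because $\varphi/\Phi>0$. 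For claim (2), put $w=\sigma\beta=\sigma(1-\lambda)$ and write $Y=R+\sigma\epsilon$ with $R\sim\Exp{1}$: the formula above becomes
\[
\Xi_1(\sigma)\;=\;1+\max_{w\in\mathbb R}\Big[-\tfrac{w^2}{2}-\E_{R,\epsilon}\ln\Phi\!\Big(\tfrac{R}{\sigma}+\epsilon-w\Big)\Big],
\]
so that the only dependence on $\sigma$ enters through the single monotone factor $R/\sigma$ inside $\Phi$. Since $-\ln\Phi$ is monotone, for every fixed $w$ the bracket depends monotonically on $\sigma$, and this monotone dependence is inherited by $\Xi_1(\sigma)$, which is claim (2).

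For claim (3), the lower bound is immediate: evaluating at $\lambda=0$ and using $\ln\Phi\le 0$,
\[
\Xi_1(\sigma)\;\ge\; g_\sigma(0)\;=\;1-\tfrac{\sigma^2}{2}-\E_Y\ln\Phi\!\Big(\tfrac{Y-\sigma^2}{\sigma}\Big)\;\ge\;1-\tfrac{\sigma^2}{2}\;\longrightarrow\;1 .
\]
For the matching upper bound, write the stationarity condition for the maximizer $\beta^\star=1-\lambda_1(\sigma)\ge 0$ in the explicit form, which reads $\sigma\beta^\star=\E_Y\big[(\varphi/\Phi)\big(\tfrac{Y-\sigma^2\beta^\star}{\sigma}\big)\big]$. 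A short fixed–point analysis then forces $\sigma\beta^\star\to 0$ as $\sigma\to 0$: if $\sigma\beta^\star$ stayed bounded away from $0$ (or were unbounded) along a subsequence, the right–hand side, which for $Y<\sigma^2\beta^\star$ behaves like $\sigma^{-1}(\sigma^2\beta^\star-Y)$, would be of order $\sigma^{-1}\E_R(c-R)_+=\sigma^{-1}(c-1+e^{-c})$, inconsistent with the left–hand side $c/\sigma$ unless $c=0$. Once $\sigma\beta^\star\to 0$, the explicit formula gives $\Xi_1(\sigma)=1-\tfrac{(\sigma\beta^\star)^2}{2}-\E_Y\ln\Phi\big(\tfrac{Y}{\sigma}-\sigma\beta^\star\big)$, and $\E_Y\ln\Phi\big(\tfrac{Y}{\sigma}-\sigma\beta^\star\big)\to 0$ by dominated convergence (here $R/\sigma\to\infty$ a.s. since $R>0$ a.s., and the integrand is dominated via $|\ln\Phi(t)|\le C(1+t^2)$ together with the tail bound $\E[(\epsilon-\sigma\beta^\star)^2\mathbf 1_{\{\epsilon<\sigma\beta^\star-R/\sigma\}}]\to 0$). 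Hence $\Xi_1(\sigma)\to 1$.

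The routine ingredients are the square completion, the exponential–family derivative identity, and elementary Mills–ratio estimates. The crux is the upper bound in claim (3): one must quantitatively locate the optimizer and show $\sigma(1-\lambda_1(\sigma))\to 0$, and then pass the $\sigma\to 0$ limit through $\E_Y\ln\Phi(\cdot)$ in the left tail of $Y$, where the integrand is only quadratically—not boundedly—controlled. This is the delicate point and is exactly why the analysis in Section \ref{low_noise_section} is carried out only for small $\sigma$.
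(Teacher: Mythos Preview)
Your reduction to the explicit formula $\Xi_1(\sigma)=1-\min_t\big[t^2/2+\E_{R,\epsilon}\ln\Phi(R/\sigma+\epsilon+t)\big]$ (equivalently your $w$-parametrization) is exactly what the paper does, and your treatments of (1), (2), and the lower bound in (3) are correct. For (1) you use the truncated-normal mean formula where the paper invokes the Chebychev association inequality; both arguments are valid and yours is arguably more direct.

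The gap is in the upper bound of (3). Your fixed-point computation does not match your hypothesis. You suppose $\sigma\beta^\star=w^\star$ stays bounded away from $0$ (or is unbounded), but the expression $\sigma^{-1}\E_R(c-R)_+$ and ``left-hand side $c/\sigma$'' only arise when $\sigma^2\beta^\star\to c$, i.e.\ when $w^\star\sim c/\sigma$. In the case you actually stated, $w^\star\to c\in(0,\infty)$, the threshold is $\sigma^2\beta^\star=\sigma w^\star\to 0$, so $\E_R(\sigma c-R)_+\asymp (\sigma c)^2/2$ and your displayed identity $c=c-1+e^{-c}$ never appears. Thus the bounded case is not handled, and the unbounded case is handled only for one particular growth rate. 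The subsequent dominated-convergence step also leans on $w^\star\to 0$, which you have not established.

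The paper sidesteps this entirely. It fixes a reference level $\sigma_0$ (say $\sigma_0=1$), uses coercivity there to trap the minimizer in some $[t_1,t_2]$, and then observes that the monotonicity from part (2) transfers the same trapping to all $\sigma\le\sigma_0$: for each $t\notin[t_1,t_2]$ the objective at $\sigma$ dominates the objective at $\sigma_0$, which is already positive, while at $t=0$ the objective is $\le 0$. With the minimizer confined to a compact uniformly in small $\sigma$, pointwise convergence of the convex objectives to $t^2/2$ upgrades to uniform convergence, and the minimum goes to $0$. This avoids any fixed-point analysis and any delicate tail domination; I'd recommend replacing your upper-bound argument with this compactness-plus-convexity route.
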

\begin{proof}
	First we can write $\E_{E \sim \Exp{1}} e^{\lambda E} \gpdf(E-Y)$ as follows: 
	\begin{align*}
	\E_{E \sim \Exp{1}} e^{\lambda E} \gpdf(E-Y)  &= e^{(\lambda - 1)Y} \E_{\omega \sim \gauss{0}{1}} e^{\sigma(\lambda-1) \omega}  \mathbf{1}_{Y + \sigma\omega \geq 0}
	\end{align*}
	Note that $Y \explain{d}{=} |Z|^2 + \epsilon, \; Z \sim \cgauss{0}{1}, \; \epsilon \sim \gauss{0}{\sigma^2}$. 
	Hence, we have,
	\begin{align}
	\Xi_1(\sigma) & = \max_{\lambda \in \mathbb{R}} 1 - \E_Y \ln  \E_{\omega \sim \gauss{0}{1}} e^{\sigma(\lambda-1) \omega}  \mathbf{1}_{Y + \sigma\omega \geq 0} \nonumber\\
	& = 1 - \min_{\gamma \in \mathbb{R}} \E_Y \ln  \E_{\omega \sim \gauss{0}{1}} e^{\sigma \gamma \omega}  \mathbf{1}_{Y + \sigma\omega \geq 0}. \label{need_a_reference_later}
	\end{align}
	Likewise,
	\begin{align*}
	\lambda_1(\sigma^2) = 1 +  \arg \min_{\gamma \in \mathbb{R}} \E_Y \ln  \E_{\omega \sim \gauss{0}{1}} e^{\sigma \gamma  \omega}  \mathbf{1}_{Y + \sigma\omega \geq 0}
	\end{align*}
	Now we consider the three claims one by one: 
	\begin{enumerate}
		\item Observe that, by the Chebychev Association Inequality ( Fact \ref{chebychev_association} , Appendix \ref{misc_appendix}), 
		\begin{align*}
		\E_\omega e^{-\sigma |\gamma| \omega}  \mathbf{1}_{Y + \sigma\omega \geq 0}  \leq  \E_{\omega} e^{-\sigma |\gamma| \omega} \cdot \P (Y + \sigma\omega \geq 0) = \E_{\omega} e^{\sigma |\gamma| \omega} \cdot \P (Y + \sigma\omega \geq 0) \leq \E_{\omega} e^{\sigma |\gamma| \omega}  \mathbf{1}_{Y + \sigma\omega \geq 0}.
		\end{align*}
		This shows that $\lambda_1(\sigma^2) \leq 1$.
		\item A gaussian integral shows that: 
		\begin{align*}
		\E_\omega e^{\sigma \gamma \omega}  \mathbf{1}_{Y + \sigma\omega \geq 0}  = \frac{1}{\sqrt{2\pi}} \int_{-\frac{Y}{\sigma}}^\infty e^{\sigma \gamma \omega - \frac{\omega^2}{2}} = \frac{e^{\frac{\gamma^2 \sigma^2}{2}}}{\sqrt{2\pi}} \int_{-\frac{Y}{\sigma}}^\infty e^{-\frac{(\omega - \gamma \sigma)^2}{2}} = e^{\frac{\gamma^2 \sigma^2}{2}} \cdot \Phi \left( \frac{Y}{\sigma} + \gamma \sigma  \right)
		\end{align*}
		Hence,
		\begin{align*}
		\Xi_1(\sigma) = 1 - \min_{t} \left( \E_Y \ln \Phi \left( \frac{Y}{\sigma} + t  \right) + \frac{t^2}{2} \right) =  1 - \min_{t} \left( \E_{Z,\epsilon} \ln \Phi \left( \frac{|Z|^2}{\sigma} + \epsilon + t  \right) + \frac{t^2}{2} \right)
		\end{align*}
		Note that $\Phi \left( \frac{|Z|^2}{\sigma} + \epsilon + t  \right)$ increases as $\sigma \downarrow 0$. 
		Consequently, we have, $\Xi_1(\sigma)$ is a decreasing function of $\sigma$.
		\item Recall that in the previous step, we showed that,
		\begin{align*}
		\Xi_1(\sigma)  =  1 - \min_{t} \left( \E_{Z,\epsilon} \ln \Phi \left( \frac{|Z|^2}{\sigma} + \epsilon + t  \right) + \frac{t^2}{2} \right).
		\end{align*} 
		Proposition \ref{denominator_variational_problem_prop} shows that for any $\sigma > 0$, the objective in the definition of $\Xi_1$ is cooercive. Consequently we can identify $-\infty < t_1 < t_2< \infty $ such that, 
		\begin{align*}
		\left( \E_{Z,\epsilon} \ln \Phi \left( |Z|^2 + \epsilon + t  \right) + \frac{t^2}{2} \right) > 0, \; \forall \; t \in (-\infty,t_1) \cup (t_2,\infty).
		\end{align*}
		Since $\Phi$ is an increasing function,
		\begin{align*}
		\left( \E_{Z,\epsilon} \ln \Phi \left( \frac{|Z|^2}{\sigma} + \epsilon + t  \right) + \frac{t^2}{2} \right) > 0, \; \forall \; t \in (-\infty,t_1) \cup (t_2,\infty), \; \forall \sigma \leq 1.
		\end{align*}
		On the other hand,
		\begin{align*}
		\left( \E_{Z,\epsilon} \ln \Phi \left( \frac{|Z|^2}{\sigma} + \epsilon + t  \right) + \frac{t^2}{2} \right) \bigg|_{t = 0} & \leq 0.
		\end{align*}
		Hence we have,
		\begin{align*}
		\Xi_1(\sigma)  =  1 - \min_{t \in [t_1,t_2]} \left( \E_{Z,\epsilon} \ln \Phi \left( \frac{|Z|^2}{\sigma} + \epsilon + t  \right) + \frac{t^2}{2} \right).
		\end{align*}
		Observing that $\left( \E_{Z,\epsilon} \ln \Phi \left( \frac{|Z|^2}{\sigma} + \epsilon + t  \right) + \frac{t^2}{2} \right)$ is a convex function such that for every fixed $t$ we have,
		\begin{align*}
		\lim_{\sigma \rightarrow 0} \left( \E_{Z,\epsilon} \ln \Phi \left( \frac{|Z|^2}{\sigma} + \epsilon + t  \right) + \frac{t^2}{2} \right) = \frac{t^2}{2}.
		\end{align*}
		Due to convexity, this convergence can be made uniform on compact sets: 
		\begin{align*}
		\lim_{\sigma \rightarrow 0} \max_{t \in [t_1,t_2]} \left| \E_{Z,\epsilon} \ln \Phi \left( \frac{|Z|^2}{\sigma} + \epsilon + t  \right) \right| = 0.
		\end{align*}
		This uniform convergence immediately yields $\lim_{\sigma \rightarrow 0} \Xi_1(\sigma) = 1$. 
	\end{enumerate}
\end{proof}
The following lemma analyzes the convergence of $\Xi_2(q;\sigma)$ to $\Xi_2(q;0)$. For our purposes, it turns out, that we don't need to show that $\Xi_2(q;\sigma) \rightarrow \Xi_2(q;0)$ as $\sigma \rightarrow 0$. It is sufficient to show the weaker result that $\Xi_2(q;\sigma)$ is asymptotically lower bounded by $\Xi_2(q;0)$ as $\sigma \rightarrow 0$. This is the content of the following lemma.
\begin{lemma} \label{low_noise_convergence} For any $0< \eta< 1$, we have, $$\liminf_{\sigma \rightarrow 0} \min_{q \in [0,1-\eta]} \Xi_2(q,\sigma) - \Xi_2(q;0) \geq 0.$$ Furthermore we have,
	\begin{align*}
	\lim_{\sigma \rightarrow 0} \Xi_2(0,\sigma) & = \Xi_2(0,0).
	\end{align*}
\end{lemma}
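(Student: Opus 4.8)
I will lower bound $\Xi_2(q;\sigma)$ by restricting the maximization defining it to $\lambda = 1$ and then compare with the variational problem defining $\Xi_2(q;0)$. For any $\phi \in \mathbb{R}$,
\[
\Xi_2(q;\sigma) \;\geq\; 2 + q\phi - \E_Y \ln\ZTWis{1}{\phi}{Y},
\]
since the term $\lambda = 1$ contributes $2\lambda = 2$. Taking $\phi = \phi_2(q;0)$, the maximizer of $q\phi - \E_{Z\sim\cgauss{0}{1}}\ln I_0(\phi|Z|^2)$ — which exists, is nonnegative, and satisfies $\phi_2(q;0) \le \Phi_{\max} := \max_{q\in[0,1-\eta]}\phi_2(q;0) < \infty$ by Lemma \ref{zero_noise_variational_problems} — and subtracting $\Xi_2(q;0) = 2 + q\phi_2(q;0) - \E_Z\ln I_0(\phi_2(q;0)|Z|^2)$, we get
\[
\min_{q\in[0,1-\eta]}\bigl(\Xi_2(q;\sigma) - \Xi_2(q;0)\bigr) \;\geq\; -\sup_{\phi\in[0,\Phi_{\max}]}\Bigl|\E_Y\ln\ZTWis{1}{\phi}{Y} - \E_Z\ln I_0(\phi|Z|^2)\Bigr| .
\]
It therefore suffices to show that the right-hand side tends to $0$ as $\sigma\to 0$; the second assertion is handled separately below.

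\textbf{Pointwise convergence via dominated convergence.} With $Y = |Z|^2 + \sigma\epsilon$, $Z\sim\cgauss{0}{1}$, $\epsilon\sim\gauss{0}{1}$, substituting $s = Y+\sigma u$, $s' = Y+\sigma v$ in Definition \ref{titled_wishart_definition} with $\lambda = 1$ gives
\[
\ZTWis{1}{\phi}{Y} = \E_{u,v,\theta}\Bigl[e^{\phi\sqrt{(Y+\sigma u)(Y+\sigma v)}\cos\theta}\,\mathbf{1}_{\{Y+\sigma u\geq 0,\,Y+\sigma v\geq 0\}}\Bigr],
\]
with $u,v\sim\gauss{0}{1}$ and $\theta$ uniform on $(-\pi,\pi]$, independent of $Z,\epsilon$. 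As $\sigma\to 0$ we have $Y\to|Z|^2 > 0$ a.s., $Y/\sigma\to+\infty$ a.s.\ so the indicator converges to $1$, and $\sqrt{(Y+\sigma u)(Y+\sigma v)}\to|Z|^2$; using $\sqrt{ab}\le (a+b)/2$ on the region where both factors are nonnegative, the exponent is at most $|\phi|\bigl(Y + \frac{\sigma}{2}(u+v)\bigr)$, which provides an integrable (in $u,v$) envelope, so dominated convergence in $(u,v,\theta)$ gives $\ZTWis{1}{\phi}{Y}\to \E_\theta e^{\phi|Z|^2\cos\theta} = I_0(\phi|Z|^2)$, hence $\ln\ZTWis{1}{\phi}{Y}\to\ln I_0(\phi|Z|^2)$ a.s. To pass to the $\E_Y$-expectation I need a $\sigma$-uniform integrable envelope for $\ln\ZTWis{1}{\phi}{Y}$: the same inequality gives $\ln\ZTWis{1}{\phi}{Y} \le |\phi|Y + \phi^2\sigma^2/4$, while $I_0 \ge 1$ gives $\ZTWis{1}{\phi}{Y} \ge \E_{u,v}\mathbf{1}_{\{Y+\sigma u\geq 0,\,Y+\sigma v\geq 0\}} = \Phi(Y/\sigma)^2 \ge \Phi(\epsilon)^2$ because $|Z|^2/\sigma \ge 0$. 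Both $|\phi|Y + \phi^2\sigma^2/4$ and $-2\ln\Phi(\epsilon)$ are integrable, uniformly in $\sigma \le 1$ and $\phi\in[0,\Phi_{\max}]$ (the integrability of $\ln\Phi(\epsilon)$ being exactly as in Lemma \ref{low_noise_denom_variation_pr}), so dominated convergence yields $\E_Y\ln\ZTWis{1}{\phi}{Y}\to\E_Z\ln I_0(\phi|Z|^2)$ for each fixed $\phi$.

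\textbf{Uniformity in $\phi$ and the case $q=0$.} Both $\phi\mapsto\E_Y\ln\ZTWis{1}{\phi}{Y}$ and $\phi\mapsto\E_Z\ln I_0(\phi|Z|^2)$ are convex, being $\E$-averages of cumulant generating functions; since pointwise convergence of convex functions is uniform on compact sets, $\sup_{\phi\in[0,\Phi_{\max}]}\bigl|\E_Y\ln\ZTWis{1}{\phi}{Y} - \E_Z\ln I_0(\phi|Z|^2)\bigr| \to 0$, which proves the $\liminf$ claim. For the last assertion, note that for each $\lambda$ the map $\phi\mapsto 2\lambda - \E_Y\ln\ZTWis{\lambda}{\phi}{Y}$ is nonincreasing in $|\phi|$, since $\ZTWis{\lambda}{\phi}{y}$ depends on $\phi$ only through $|\phi|$ and is nondecreasing in $|\phi|$ (integrating out $\theta$ turns the integrand into $I_0(|\phi|\sqrt{ss'})$ times nonnegative factors, and $I_0$ is increasing on $[0,\infty)$ by Fact \ref{bessel_properties}); hence the maximizer defining $\Xi_2(0;\sigma)$ has $\phi = 0$. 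Directly from Definitions \ref{titled_wishart_definition} and \ref{Texp_definition}, $\ZTWis{\lambda}{0}{y} = \ZTexp{\lambda}{y}^2$, so
\[
\Xi_2(0;\sigma) = \max_{\lambda\in\mathbb{R}}\bigl(2\lambda - 2\E_Y\ln\ZTexp{\lambda}{Y}\bigr) = 2\,\Xi_1(\sigma),
\]
which by Lemma \ref{low_noise_denom_variation_pr} converges to $2 = \Xi_2(0;0)$.

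\textbf{Main obstacle.} The delicate point is the dominated convergence step, specifically producing a $\sigma$-uniform integrable \emph{lower} envelope for $\ln\ZTWis{1}{\phi}{Y}$ that remains valid on the small-probability event $Y < 0$ that arises when $|Z|^2$ is small; this is resolved exactly as in Lemma \ref{low_noise_denom_variation_pr}, via $\ZTWis{1}{\phi}{Y}\ge\Phi(Y/\sigma)^2\ge\Phi(\epsilon)^2$ and the integrability of $\ln\Phi(\epsilon)$. Everything else is routine.
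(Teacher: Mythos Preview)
Your proof is correct and follows essentially the same approach as the paper: lower-bound $\Xi_2(q;\sigma)$ by plugging $\lambda=1$ and $\phi=\phi_2(q;0)$ into the variational problem, then use dominated convergence plus convexity-in-$\phi$ to get uniform convergence of the resulting error term, and handle $q=0$ via the factorization $\ZTWis{\lambda}{0}{y}=\ZTexp{\lambda}{y}^2$ together with Lemma~\ref{low_noise_denom_variation_pr}. In fact you supply more detail than the paper on the dominated-convergence envelopes (the upper bound via AM--GM and the lower bound $\ZTWis{1}{\phi}{Y}\ge \Phi(\epsilon)^2$), which the paper asserts without justification.
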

\begin{proof}
	We lower bound $\Xi_2(q,\sigma^2)$ as follows:
	\begin{align*}
	&{\Xi}_2(q,\sigma)  \Mydef \max_{(\lambda,\phi) \in \mathbb{R}} \left( 2\lambda  + \phi q - \E_Y
	\ln \E e^{((\lambda-1)(2Y + \sigma(\omega_1 + \omega_2)) + \phi \sqrt{(Y+\sigma \omega_1)(Y+\sigma \omega_2)} \cos(\theta)} \mathbf{1}_{Y + \sigma \omega_1 \geq 0, y + \sigma \omega_2 \geq 0} \right) \\
	& \geq 2 + q \phi_2(q;0) - \E_Y \ln \E_{\omega_1,\omega_2} I_0 \left( \phi_2(q;0) \cdot \sqrt{(Y+\sigma \omega_1)(Y+\sigma \omega_2)} \right) \mathbf{1}_{Y + \sigma \omega_1 \geq 0, y + \sigma \omega_2 \geq 0} \\
	& = \Xi_2(q;0) + \E_{} \ln I_0(\phi_2(q;0) |Z|^2) - \E_Y \ln \E_{\omega_1,\omega_2} I_0 \left( \phi_2(q;0)  \sqrt{(Y+\sigma \omega_1)(Y+\sigma \omega_2)} \right) \mathbf{1}_{Y + \sigma \omega_1 \geq 0, y + \sigma \omega_2 \geq 0}.
	\end{align*}
	In the above display, we recall that $\phi_2(q;0)$ was defined as,
	\begin{align*}
	\phi_2(q;0) & \Mydef \arg \max_{\phi \in \mathbb R} q \phi - \E_{Z \sim \cgauss{0}{1}} \ln I_0(\phi |Z|^2).
	\end{align*}
	Note that for any fixed $\phi \in \mathbb R$, we have, by Dominated convergence,
	\begin{align*}
	\E_Y \ln \E_{\omega_1,\omega_2} I_0 \left( \phi \sqrt{(Y+\sigma \omega_1)(Y+\sigma \omega_2)} \right) \mathbf{1}_{Y + \sigma \omega_1 \geq 0, y + \sigma \omega_2 \geq 0} \rightarrow \E_{Z \sim \cgauss{0}{1}} \ln I_0(\phi |Z|^2), \; \text{as } \sigma \rightarrow 0.
	\end{align*}
	Observing that the function on the left hand side is convex in $\phi$ we have the above convergence holds uniformly on all compact sets. Lemma \ref{zero_noise_variational_problems} guarantees that $\sup_{q \in [0,1-\eta]} |\phi_2(q;0)| < \infty$. Consequently, we have,
	\begin{align*}
	\sup_{q \in [0,1-\eta]}  \left|  \E_{} \ln I_0(\phi_2(q;0) |Z|^2) - \E_Y \ln \E_{\omega_1,\omega_2} I_0 \left( \phi_2(q;0) \cdot \sqrt{(Y+\sigma \omega_1)(Y+\sigma \omega_2)} \right) \mathbf{1}_{Y + \sigma \omega_1 \geq 0, y + \sigma \omega_2 \geq 0} \right| \explain{$\sigma \rightarrow 0$}{\rightarrow} 0.
	\end{align*}
	Combining this with the lower bound on $\Xi_2(q,\sigma)$ immediately gives:
	\begin{align*}
	\liminf_{\sigma \rightarrow 0} \min_{q \in [0,1-\eta]} \Xi_2(q,\sigma) - \Xi_2(q;0) &\geq 0.
	\end{align*}
	Finally when $q = 0$ we note that, $\Xi_2(0,\sigma) = 2 \Xi_1(0,\sigma)$. Lemma \ref{low_noise_denom_variation_pr} guarantees that $\Xi_2(0,\sigma) \rightarrow 1$ as $\sigma \rightarrow 0$. Note that since $I_0(x)$ is minimized at $x = 0$ (see Fact \ref{bessel_properties}, Appendix \ref{misc_appendix}), we have $\Xi_2(0,0) =2$. Hence we indeed have $\Xi_2(0,\sigma) \rightarrow \Xi_2(0,0)$ as $\sigma \rightarrow 0$. 
\end{proof}
\subsection{Proof of Proposition \ref{lownoiseprop}} \label{low_noise_prop_proof}
Recall that our goal is to find conditions on $(\delta,\Delta,\sigma)$ such that $\CalF(q;\delta,\Delta,\sigma) > \CalF(0;\delta,\Delta,\sigma)$, where,
\begin{align*}
\CalF(q; \delta, \Delta,\sigma) & \Mydef {\Xi}_2(q;\sigma) - 2\Xi_1(\sigma)  + \left( 1 - \frac{1}{\delta} \right) \ln(1-q^2) + \Delta \ln \left( 1- \frac{q^2}{2} \right).
\end{align*}
The following lemma provides a lower bound on the curvature of $\Xi_2(q;\sigma) - 2 \Xi_1(\sigma)$ in the neighborhood of $q \approx 0$.  
\begin{lemma}[Analysis for $q \approx 0$] \label{small_q_analysis}
	There exists a universal constant $C$ (independent of $\sigma$) such that, for any $0 \leq q < 1/2, \; \sigma < 1$ we have, 
	\begin{align*}
	\Xi_2(q,\sigma) - 2 \Xi_1(\sigma) & \geq (1-\sigma^2) \cdot\frac{q^2}{2} - Cq^3.
	\end{align*}
\end{lemma}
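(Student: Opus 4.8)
The plan is to lower-bound $\Xi_2(q;\sigma)$ by keeping only the choice $\lambda=\lambda_1(\sigma)$ in its defining variational problem and then optimising the remaining parameter $\phi$ in a small neighbourhood of $0$. Set $L(\phi)\Mydef\E_Y\ln\ZTWis{\lambda_1(\sigma)}{\phi}{Y}$. From Definition \ref{titled_wishart_definition} one has $\ZTWis{\lambda}{0}{y}=\ZTexp{\lambda}{y}^2$ — the $\theta$–integral contributes $1$ and the $(s,s')$–integral factorises — so $L(0)=2\,\E_Y\ln\ZTexp{\lambda_1(\sigma)}{Y}=2(\lambda_1(\sigma)-\Xi_1(\sigma))$, and hence for every $\phi_0\in\mathbb R$,
\begin{align*}
\Xi_2(q;\sigma)-2\Xi_1(\sigma)\ \ge\ 2\lambda_1(\sigma)+q\phi_0-L(\phi_0)-\big(2\lambda_1(\sigma)-L(0)\big)\ =\ q\phi_0-\big(L(\phi_0)-L(0)\big).
\end{align*}
So it remains to Taylor-expand $L$ around $0$ and pick $\phi_0$ to extract its quadratic behaviour.

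First I would compute the low-order derivatives of $L$ at $0$ (differentiation under the integral is legitimate since $\ZTWis{\lambda}{\cdot}{y}$ is real-analytic in $\phi$). Because $\int_{-\pi}^{\pi}\cos\theta\,\diff\theta=0$ we get $L'(0)=0$; and since $\partial_\phi^2\ln\ZTWis{\lambda}{\phi}{y}\big|_{\phi=0}=\partial_\phi^2\ZTWis{\lambda}{0}{y}/\ZTWis{\lambda}{0}{y}$, using $\tfrac{1}{2\pi}\int_{-\pi}^{\pi}\cos^2\theta\,\diff\theta=\tfrac12$ and the factorisation gives $L''(0)=A(\sigma)\Mydef\tfrac12\,\E_Y\big(\MTexp{\lambda_1(\sigma)}{Y}{1}\big)^2$. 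By Taylor's theorem $L(\phi_0)-L(0)\le\tfrac12 A(\sigma)\phi_0^2+\tfrac{B}{6}|\phi_0|^3$, where $B$ is a bound for $|L'''|$ on $[-1,1]$; such a $B$ exists uniformly in $\sigma\in(0,1)$ because $|L'''(\phi)|$ is at most a universal constant times $\E_Y\E\,|\Re S_{12}|^3$ for $\bm S\sim\TWis{\lambda_1(\sigma)}{\phi}{Y}$, which the tilted-Wishart moment bounds of Lemma \ref{tilted_wishart_properties} control in terms of $|\lambda_1(\sigma)|$, $|\phi|$ and $\E_Y|Y|^c$; here $|\lambda_1(\sigma)|$ is uniformly bounded ($\lambda_1(\sigma)\le1$ by Lemma \ref{low_noise_denom_variation_pr} and bounded below by the coercivity in Proposition \ref{denominator_variational_problem_prop}) and $\E_Y|Y|^c\le C_c$ uniformly for $\sigma\le1$. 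The first-order condition for $\Xi_1$ reads $\E_Y\MTexp{\lambda_1(\sigma)}{Y}{1}=1$, so Jensen gives $A(\sigma)\ge\tfrac12$; choosing $\phi_0=q/A(\sigma)$ we then have $q\phi_0-\tfrac12 A(\sigma)\phi_0^2=q^2/(2A(\sigma))$ and $|\phi_0|\le 2q<1$ (using $q<\tfrac12$), so the remainder is at most $\tfrac{B}{6}(2q)^3$. Altogether
\begin{align*}
\Xi_2(q;\sigma)-2\Xi_1(\sigma)\ \ge\ \frac{q^2}{2A(\sigma)}-\frac{4B}{3}\,q^3.
\end{align*}

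The crux is the quantitative bound $A(\sigma)\le 1+\sigma^2/2$, which is what produces the exact coefficient $1-\sigma^2$. The point is that $\Texp{\lambda_1(\sigma)}{y}$ is a $\gauss{y-\sigma^2(1-\lambda_1(\sigma))}{\sigma^2}$ distribution truncated to $[0,\infty)$; equivalently, $\{\Texp{\lambda_1(\sigma)}{y}\}_y$ is an exponential family in the natural parameter $y/\sigma^2$ with the coordinate itself as sufficient statistic, so $\tfrac{\diff}{\diff y}\MTexp{\lambda_1(\sigma)}{y}{1}=\sigma^{-2}\,\VTexp{\lambda_1(\sigma)}{y}$, and since a one-sided truncation of a Gaussian strictly reduces variance, $\VTexp{\lambda_1(\sigma)}{y}<\sigma^2$; hence $y\mapsto\MTexp{\lambda_1(\sigma)}{y}{1}$ is $1$-Lipschitz. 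Consequently, with $Y'$ an independent copy of $Y$,
\begin{align*}
\mathrm{Var}_Y\!\big(\MTexp{\lambda_1(\sigma)}{Y}{1}\big)=\tfrac12\,\E\big(\MTexp{\lambda_1(\sigma)}{Y}{1}-\MTexp{\lambda_1(\sigma)}{Y'}{1}\big)^2\le\tfrac12\,\E(Y-Y')^2=\mathrm{Var}(Y)=1+\sigma^2,
\end{align*}
using $\mathrm{Var}(|Z|^2)=1$ and $\mathrm{Var}(\sigma\epsilon)=\sigma^2$; together with $\E_Y\MTexp{\lambda_1(\sigma)}{Y}{1}=1$ this yields $\E_Y\big(\MTexp{\lambda_1(\sigma)}{Y}{1}\big)^2\le 2+\sigma^2$, i.e.\ $A(\sigma)\le 1+\sigma^2/2$. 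Therefore $\frac{q^2}{2A(\sigma)}\ge\frac{q^2}{2+\sigma^2}\ge\frac{(1-\sigma^2)q^2}{2}$, the last step because $(1-\sigma^2)(2+\sigma^2)=2-\sigma^2-\sigma^4\le 2$, and the lemma follows with $C=4B/3$. I expect the main obstacles to be the uniform-in-$\sigma$ control of $|L'''|$ (equivalently, of $\lambda_1(\sigma)$ and of third moments of the tilted Wishart law) and the clean identification of $\Texp{\lambda_1(\sigma)}{y}$ as a truncated Gaussian, from which the $1$-Lipschitz property — and hence the precise constant — falls out.
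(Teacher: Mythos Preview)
Your overall strategy coincides with the paper's: fix $\lambda=\lambda_1(\sigma)$ in the variational problem for $\Xi_2$, note that $\ZTWis{\lambda}{0}{y}=\ZTexp{\lambda}{y}^2$, and Taylor-expand $L(\phi)=\E_Y\ln\ZTWis{\lambda_1(\sigma)}{\phi}{Y}$ about $\phi=0$. The paper, however, simply takes $\phi_0=q$ (not the optimiser $q/A$), and bounds $L''(0)$ pointwise in $y$ via Chebychev's association inequality together with $\lambda_1(\sigma)\le 1$: this yields $\partial_\phi^2\ln\ZTWis{\lambda_1}{\phi}{y}\big|_{\phi=0}\le\tfrac12(y^2+\sigma^2)$, hence $L''(0)\le\tfrac12(\E Y^2+\sigma^2)=1+\sigma^2$, and then $q\cdot q-\tfrac12(1+\sigma^2)q^2=\tfrac12(1-\sigma^2)q^2$ falls out directly. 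Your route to the second-derivative bound is genuinely different and in fact sharper ($A(\sigma)\le 1+\sigma^2/2$ rather than $1+\sigma^2$): recognising $\Texp{\lambda_1}{y}$ as a half-line truncated $\gauss{y-\sigma^2(1-\lambda_1)}{\sigma^2}$, using that truncation reduces variance to get $y\mapsto\MTexp{\lambda_1}{y}{1}$ is $1$-Lipschitz, and then transporting $\mathrm{Var}(Y)$ through is an elegant alternative to the paper's pointwise Chebychev step.

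There is, however, a real gap in your control of the cubic remainder. You invoke Lemma~\ref{tilted_wishart_properties} for moment bounds and Proposition~\ref{denominator_variational_problem_prop} for a lower bound on $\lambda_1(\sigma)$, but both results are stated with constants that \emph{depend on $\sigma$} (the paper is explicit about this at the start of Appendix~\ref{appendix_variational_analysis} and in Lemma~\ref{tilted_wishart_properties}), so they cannot deliver a $B$ that is uniform over $\sigma\in(0,1)$ as the lemma demands. In fact, as $\sigma\downarrow 0$ the variational problem for $\Xi_1$ degenerates and there is no a~priori $\sigma$-free lower bound on $\lambda_1(\sigma)$; the paper never uses one. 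Instead, the paper bounds $L'''$ by writing it as $T_3-3T_2T_1+2T_1^3$ with each $T_i$ a ratio of the form $\E[\,\cdot\,e^{(\lambda_1-1)(\cdot)}\mathbf{1}]/\E[e^{(\lambda_1-1)(\cdot)}\mathbf{1}]$, and then uses Chebychev's association inequality (which needs only $\lambda_1(\sigma)\le 1$, not a lower bound) together with $1\le I_0(x)\le e^{x}$ to strip out the $\lambda_1$-dependence entirely, arriving at $|L'''(q)|\le C(\E|Y|^3+1)e^{qY}$ with $C$ independent of $\sigma$. Your second-derivative argument is fine, but for the remainder you should replace the appeal to Lemma~\ref{tilted_wishart_properties} by this direct Chebychev computation.
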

\begin{proof}
	We can write $\ZTWis{\lambda}{\phi}{Y}$ (c.f. Definition \ref{titled_wishart_definition}) as: 
	\begin{align*}
	\ZTWis{{\lambda}}{\phi}{y} & \Mydef  \frac{1}{2\pi }\int_0^\infty \int_0^\infty \int_{-\pi}^\pi \exp(-(1-\lambda)(s + s^\prime) + \phi \sqrt{s s^\prime} \cos(\theta)) \cdot \gpdf(s - y) \cdot  \gpdf(s^\prime - y) \diff \theta \diff s \diff s^\prime \\
	& = \E \exp \left((\lambda-1)(2y + \sigma(\omega_1 + \omega_2)) + \phi \sqrt{(y+\sigma \omega_1)(y+\sigma \omega_2)} \cos(\theta) \right) \mathbf{1}_{y + \sigma \omega_1 \geq 0, y + \sigma \omega_2 \geq 0}.
	\end{align*}
	In the above display $\omega_1, \omega_2, \theta$ are independent r.v.s with distributions $\omega_1 \sim \gauss{0}{1}, \; \omega_2 \sim \gauss{0}{1}, \; \theta \sim \text{Uniform}[-\pi,\pi]$.
	We lower bound $\Xi_2$ as follows:
	\begin{align*}
	{\Xi}_2(q,\sigma) & = \max_{(\lambda,\phi) \in \mathbb{R}} \left( 2\lambda  + \phi q - \E_Y
	\ln \ZTWis{\lambda}{\phi}{Y} \right) \\
	& \geq  \left( 2\lambda_1(\sigma^2)  + q^2 - \E_Y
	\ln \ZTWis{\lambda_1(\sigma)}{q}{Y} \right).
	\end{align*}
	Next we will approximate $ \ln \ZTWis{\lambda_1(\sigma)}{q}{Y}$ by its taylor series around $q \approx 0$. We can compute the first three derivatives: 
	\begin{align*}
	\frac{ \diff  }{\diff q} \ln \ZTWis{\lambda_1(\sigma)}{q}{y} \bigg|_{q = 0}  &= 0, \\  \frac{ \diff^2  }{\diff q^2} \ln \ZTWis{\lambda_1(\sigma)}{q}{Y} \bigg|_{q = 0}  &=  \frac{\E (y + \sigma \omega_1)(y + \sigma \omega_2) \cos^2 (\theta) e^{(\lambda_1(\sigma) -1)(2y + \sigma(\omega_1 + \omega_2))} \mathbf{1}_{y + \sigma \omega_1 \geq 0, y + \sigma \omega_2 \geq 0}}{\E  e^{(\lambda_1(\sigma) -1)(2y + \sigma(\omega_1 + \omega_2))} \mathbf{1}_{y + \sigma \omega_1 \geq 0, y + \sigma \omega_2 \geq 0}} \\
	& = \frac{1}{2} \cdot \frac{\E (y + \sigma \omega_1)(y + \sigma \omega_2)  e^{(\lambda_1(\sigma) -1)(2y + \sigma(\omega_1 + \omega_2))} \mathbf{1}_{y + \sigma \omega_1 \geq 0, y + \sigma \omega_2 \geq 0}}{\E  e^{(\lambda_1(\sigma) -1)(2y + \sigma(\omega_1 + \omega_2))} \mathbf{1}_{y + \sigma \omega_1 \geq 0, y + \sigma \omega_2 \geq 0}} \\
	& = \frac{1}{2} \left( \frac{\E (y + \sigma \omega_1)e^{(\lambda_1(\sigma) -1)(y + \sigma \omega_1)} \mathbf{1}_{y + \sigma \omega_1 \geq 0}}{\E e^{(\lambda_1(\sigma) -1)(y + \sigma \omega_1)} \mathbf{1}_{y + \sigma \omega_1 \geq 0}} \right)^2 \\
	& \explain{(a)}{\leq} \frac{1}{2} \cdot  \left( \E (y + \sigma \omega_1) \mathbf{1}_{y + \sigma \omega_1 \geq 0} \right)^2\\
	& \leq \frac{1}{2} \cdot \E (y + \sigma \omega_1)^2
	\\ & =  \frac{y^2}{2} + \frac{\sigma^2}{2}.
	\end{align*}
	In the step marked (a), we used the fact that $\lambda_1(\sigma^2) \leq 1$ (see Lemma \ref{low_noise_denom_variation_pr}) and Chebychev's Association Inequality (Fact \ref{chebychev_association}, Appendix \ref{misc_appendix}). Similarly we control the third derivative: 
	\begin{align*}
	\frac{ \diff^3  }{\diff q^3} \ln \ZTWis{\lambda_1(\sigma)}{q}{Y} & = T_3-3T_2 T_1 + 2T_1^3,
	\end{align*}
	where, for $\; i = 1,2,3$:
	\begin{align*}
	T_i & \Mydef \frac{\E (y + \sigma \omega_1)^{\frac{i}{2}}(y + \sigma \omega_2)^{\frac{i}{2}} \cos^i (\theta) e^{(\lambda_1(\sigma) -1)(2y + \sigma(\omega_1 + \omega_2)) + q \sqrt{(y+\sigma \omega_1)(y+\sigma \omega_2)} \cos(\theta)} \mathbf{1}_{y + \sigma \omega_1 \geq 0, y + \sigma \omega_2 \geq 0}}{\E  e^{(\lambda_1(\sigma) -1)(2y + \sigma(\omega_1 + \omega_2)) + q \sqrt{(y+\sigma \omega_1)(y+\sigma \omega_2)} \cos(\theta)} \mathbf{1}_{y + \sigma \omega_1 \geq 0, y + \sigma \omega_2 \geq 0}}.
	\end{align*}
	We can control $T_i$ as follows, for any $q \in [0,1]$ and any $\sigma \leq 1$, we have,
	\begin{align*}
	|T_i| & \leq \frac{\E (y + \sigma \omega_1)^{\frac{i}{2}}(y + \sigma \omega_2)^{\frac{i}{2}} e^{(\lambda_1(\sigma) -1)(2y + \sigma(\omega_1 + \omega_2)) + q \sqrt{(y+\sigma \omega_1)(y+\sigma \omega_2)} \cos(\theta)} \mathbf{1}_{y + \sigma \omega_1 \geq 0, y + \sigma \omega_2 \geq 0}}{\E  e^{(\lambda_1(\sigma) -1)(2y + \sigma(\omega_1 + \omega_2)) + q \sqrt{(y+\sigma \omega_1)(y+\sigma \omega_2)} \cos(\theta)} \mathbf{1}_{y + \sigma \omega_1 \geq 0, y + \sigma \omega_2 \geq 0}} \\
	& \explain{(a)}{=} \frac{\E (y + \sigma \omega_1)^{\frac{i}{2}}(y + \sigma \omega_2)^{\frac{i}{2}} e^{(\lambda_1(\sigma) -1)(2y + \sigma(\omega_1 + \omega_2))} I_0( q \sqrt{(y+\sigma \omega_1)(y+\sigma \omega_2)}) \mathbf{1}_{y + \sigma \omega_1 \geq 0, y + \sigma \omega_2 \geq 0}}{\E  e^{(\lambda_1(\sigma) -1)(2y + \sigma(\omega_1 + \omega_2))} I_0( q \sqrt{(y+\sigma \omega_1)(y+\sigma \omega_2)}) \mathbf{1}_{y + \sigma \omega_1 \geq 0, y + \sigma \omega_2 \geq 0}} \\
	& \explain{(b)}{\leq} \frac{\E (y + \sigma \omega_1)^{\frac{i}{2}}(y + \sigma \omega_2)^{\frac{i}{2}} e^{(\lambda_1(\sigma) -1)(2y + \sigma(\omega_1 + \omega_2))} e^{ q \sqrt{(y+\sigma \omega_1)(y+\sigma \omega_2)}} \mathbf{1}_{y + \sigma \omega_1 \geq 0, y + \sigma \omega_2 \geq 0}}{\E  e^{(\lambda_1(\sigma) -1)(2y + \sigma(\omega_1 + \omega_2))}  \mathbf{1}_{y + \sigma \omega_1 \geq 0, y + \sigma \omega_2 \geq 0}} \\
	& \leq \frac{\E (y + \sigma \omega_1)^{\frac{i}{2}}(y + \sigma \omega_2)^{\frac{i}{2}} e^{(\lambda_1(\sigma) -1)(2y + \sigma(\omega_1 + \omega_2))} e^{ \frac{q}{2} \cdot  (y+\sigma \omega_1+ y+\sigma \omega_2)} \mathbf{1}_{y + \sigma \omega_1 \geq 0, y + \sigma \omega_2 \geq 0}}{\E  e^{(\lambda_1(\sigma) -1)(2y + \sigma(\omega_1 + \omega_2))}  \mathbf{1}_{y + \sigma \omega_1 \geq 0, y + \sigma \omega_2 \geq 0}} \\
	& = \left( \frac{\E (y + \sigma \omega_1)^{\frac{i}{2}}e^{(\lambda_1(\sigma) -1 + \frac{q}{2})(y + \sigma \omega_1)} \mathbf{1}_{y + \sigma \omega_1 \geq 0}}{\E e^{(\lambda_1(\sigma) -1  + \frac{q}{2})(y + \sigma \omega_1 )} \mathbf{1}_{y + \sigma \omega_1 \geq 0}} \right)^2 \\
	& \explain{(c)}{\leq} \left( \E (y + \sigma \omega_1)^{\frac{i}{2}} e^{\frac{q}{2}(y + \sigma \omega_1)} \mathbf{1}_{y + \sigma \omega_1 \geq 0}\right)^2 \\
	& \leq \E |y + \sigma \omega_1|^{i} e^{q(y + \sigma \omega_1)} \\
	& \leq C (|y|^3 + 1) e^{qy}.
	\end{align*}
	where, $C$ is a universal constant independent of $\sigma$. In the step marked (a), we used the definition of Modified Bessel Function (see Fact \ref{bessel_properties}, Appendix \ref{misc_appendix}). In the step marked (b), we used $1 \leq I_0(x) \leq e^x$ for any $x \in \mathbb R$. 
	In the step marked (c) we recalled $\lambda_1(\sigma) \leq 1$ and applied Chebychev's Association Inequality. In conclusion, we obtained the following: 
	\begin{align*}
	\frac{ \diff  }{\diff q} \ln \ZTWis{\lambda_1(\sigma)}{q}{y} \bigg|_{q = 0} &= 0, \\ \frac{ \diff^2  }{\diff q^2} \ln \ZTWis{\lambda_1(\sigma)}{q}{y} \bigg|_{q = 0} &\leq \frac{y^2 + \sigma^2}{2}, \\ \frac{ \diff^3  }{\diff q^3} \ln \ZTWis{\lambda_1(\sigma)}{q}{y} &\leq C(|y|^3 +1) e^{qy}. 
	\end{align*}
	This allows us to upper bound $ \ln \ZTWis{\lambda_1(\sigma)}{q}{Y}$ for any $q<1/2, \sigma < 1$ as follows:
	\begin{align*}
	\E\ln \ZTWis{\lambda_1(\sigma)}{q}{Y}& \leq \E_Y \left[ \ln \ZTWis{\lambda_1(\sigma)}{0}{Y} + \frac{q^2}{4} \cdot (\sigma^2 + Y^2) + C q^3 (|Y|^3 + 1)e^{\frac{Y}{2}}  \right].
	\end{align*}
	Observing that $\E Y^2 = \E|Z|^4 + \sigma^2 = 2 + \sigma^2$. We obtain,
	\begin{align*}
	\Xi_2(q,\sigma) - 2\Xi_1(\sigma) \geq \frac{q^2}{2} \left(1 - \sigma^2 \right) - C q^3.
	\end{align*}
\end{proof}
Next we show that at $q \rightarrow 1$, $\Xi_2(q;\sigma) - 2 \Xi_1(\sigma) \rightarrow \infty$ in the following lemma. 
\begin{lemma}[Analysis at $q \approx 1$] \label{q_close_to_1_lemma} There exists a universal finite constant $C>0$ (independent of $\sigma,q$) such that, for all $\sigma \leq 1$,
	\begin{align*}
	\Xi_2(q,\sigma) - 2 \Xi_1(\sigma) & \geq - C  - \frac{\ln(1-q)}{2}, \; \forall \sigma > 0, \; \forall \; q \; \in \; [0,1).
	\end{align*}
\end{lemma}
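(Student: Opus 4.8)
The plan is to bound $\Xi_2(q,\sigma)$ from below and $\Xi_1(\sigma)$ on both sides, uniformly in $\sigma\in(0,1]$. First I would record the uniform estimates $1\le\Xi_1(\sigma)\le C$ and $|\lambda_1(\sigma)|\le C$ for $\sigma\le 1$: using the identity $\Xi_1(\sigma)=1-\min_{t\in\mathbb{R}}\big(\E_{Z,\epsilon}\ln\Phi(\tfrac{|Z|^2}{\sigma}+\epsilon+t)+\tfrac{t^2}{2}\big)$ derived inside the proof of Lemma~\ref{low_noise_denom_variation_pr} (with $\Phi$ the standard normal CDF), the lower bound $\Xi_1(\sigma)\ge 1$ is immediate (take $t=0$ and use $\ln\Phi\le0$), while $\Xi_1(\sigma)\le C$ follows by taking $t=0$ and bounding $-\E\ln\Phi(\tfrac{|Z|^2}{\sigma}+\epsilon)$; the only delicate region is $\{Y=|Z|^2+\sigma\epsilon\lesssim0\}$, where $-\ln\Phi\lesssim Y^2/\sigma^2$, and there $\E[Y^2\mathbf 1_{Y<0}]\le\sigma^2/2$, so the contribution is $O(1)$. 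Boundedness of $\lambda_1(\sigma)$ comes from coercivity of the $\Xi_1$-variational problem (this is the $\sigma\le 1$ version of the coercivity already used elsewhere). Consequently $|2\Xi_1(\sigma)|\le C$, and from now on any constant coming from $\Xi_1$ or $\lambda_1$ may be treated as universal.

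The core step is a lower bound on $\Xi_2(q,\sigma)$ obtained by inserting $\lambda=\lambda_1(\sigma)$ into its variational definition. The structural fact I would use is the exact factorization obtained by integrating out $\theta$ in Definition~\ref{titled_wishart_definition} via $\tfrac1{2\pi}\int_{-\pi}^{\pi}e^{z\cos\theta}\diff\theta=I_0(z)$ and recognizing the density of $\Texp{\lambda}{y}$:
\begin{align*}
\ZTWis{\lambda}{\phi}{y}=\ZTexp{\lambda}{y}^{2}\cdot\E_{W_1,W_2\sim\Texp{\lambda}{y}}\,I_0\!\big(\phi\sqrt{W_1W_2}\big),\qquad W_1,W_2\ \text{independent}.
\end{align*}
Since $I_0$ is convex and nondecreasing on $[0,\infty)$, AM--GM gives $I_0(\phi\sqrt{W_1W_2})\le I_0\!\big(\phi\tfrac{W_1+W_2}2\big)\le\tfrac12\big(I_0(\phi W_1)+I_0(\phi W_2)\big)$, hence $\E_{W_1,W_2}I_0(\phi\sqrt{W_1W_2})\le\E_W I_0(\phi W)$. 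Plugging $\lambda=\lambda_1(\sigma)$ into $2\lambda+q\phi-\E_Y\ln\ZTWis{\lambda}{\phi}{Y}$ and using $\Xi_1(\sigma)=\lambda_1(\sigma)-\E_Y\ln\ZTexp{\lambda_1(\sigma)}{Y}$, this yields, for every $\phi\ge0$,
\begin{align*}
\Xi_2(q,\sigma)-2\Xi_1(\sigma)\ \ge\ q\phi-\E_Y\ln\E_{W\sim\Texp{\lambda_1(\sigma)}{Y}}I_0(\phi W).
\end{align*}
This is exactly the noisy analogue of the zero–noise identity in Lemma~\ref{zero_noise_variational_problems}, with $|Z|^2\sim\Exp{1}$ replaced by $W\sim\Texp{\lambda_1(\sigma)}{Y}$ and the normalization $\E|Z|^2=1$ replaced by the first–order stationarity relation $\E_Y\E_{\Texp{\lambda_1(\sigma)}{Y}}[W]=1$.

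It then remains to choose $\phi$ and estimate the right-hand side. I would use the standard Bessel bound $I_0(x)\le C_1\,e^{x}/\sqrt{1+x}$ for $x\ge0$ (cf.\ Fact~\ref{bessel_properties}); the denominator $\sqrt{1+x}$ is precisely what delivers the coefficient $\tfrac12$ in front of $\ln(1-q)$. Taking $\phi=\phi_q:=c/(1-q)$ for a suitable universal $c$, the claim reduces to
\begin{align*}
\E_Y\ln\E_{W\sim\Texp{\lambda_1(\sigma)}{Y}}\!\Big[\frac{e^{\phi_q W}}{\sqrt{1+\phi_q W}}\Big]\ \le\ q\,\phi_q+C+\tfrac12\ln(1-q),
\end{align*}
which I would prove by completing the square in the Gaussian factor $\gpdf[\sigma](w-y)$ of $\Texp{\lambda_1(\sigma)}{y}$: then $\E_W[e^{\phi W}g(\phi W)]$ becomes a (truncated) Gaussian average of $g$ against a law concentrated near $y+\sigma^2\beta$, $\beta=\phi-(1-\lambda_1(\sigma))$. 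The exponential part contributes at most $q\phi_q$ after using $\E_Y Y=1$ and the definition of $\Xi_1(\sigma)$, while the factor $(1+\phi W)^{-1/2}$ evaluated at the concentration point contributes $\asymp\phi_q^{-1/2}$, i.e.\ the gain $+\tfrac12\ln\phi_q=\tfrac12\ln\tfrac1{1-q}+O(1)$.

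The hard part will be making this last estimate uniform in $\sigma\in(0,1]$: unlike elsewhere in the paper, $C$ may not depend on $\sigma$, so the $\sigma$–dependent moment bounds for $\Texp{\lambda}{y}$ used in the local‑CLT proofs are unavailable. One must argue directly from the completed‑square representation, splitting into the regime $\sigma^2\phi_q\lesssim1$ (where $\Texp{\lambda_1(\sigma)}{Y}$ is essentially $\delta_{\max(Y,0)}$, so the analysis collapses onto the zero–noise computation of Lemma~\ref{zero_noise_variational_problems}) and $\sigma^2\phi_q\gg1$ (where the width‑$\sigma$ Gaussian spread dominates), and controlling the contribution of $\{Y\lesssim0\}$, which has probability $O(\sigma)$ and is negligible uniformly in $\sigma$. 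Combining the two displays gives $\Xi_2(q,\sigma)-2\Xi_1(\sigma)\ge q\phi_q-\big(q\phi_q+C+\tfrac12\ln(1-q)\big)=-C-\tfrac12\ln(1-q)$, which is the assertion.
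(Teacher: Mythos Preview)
Your factorization $\ZTWis{\lambda}{\phi}{y}=\ZTexp{\lambda}{y}^{2}\,\E_{W_1,W_2}I_0(\phi\sqrt{W_1W_2})$ and the resulting inequality
\[
\Xi_2(q,\sigma)-2\Xi_1(\sigma)\ \ge\ q\phi-\E_Y\ln\E_{W\sim\Texp{\lambda_1(\sigma)}{Y}}I_0(\phi W)
\]
are both correct. The problem is that this lower bound is simply not strong enough: for every fixed $\sigma>0$ the right–hand side stays \emph{bounded} as $q\to 1$, whereas the lemma asks for growth of order $-\tfrac12\ln(1-q)\to+\infty$.

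The reason is the Gaussian tail of $\Texp{\lambda_1(\sigma)}{Y}$. Since $|\lambda_1(\sigma)|\le C$, the law $\Texp{\lambda_1}{Y}$ is (up to truncation at $0$) a Gaussian with variance $\sigma^2$, so by completing the square
\[
\ln\E_W I_0(\phi W)\ =\ \phi\,Y+\tfrac{\sigma^2}{2}\phi^2-\tfrac12\ln\bigl(\phi(Y+\sigma^2\phi)\bigr)+O(1)\qquad(\phi\to\infty,\ Y>0).
\]
Hence $q\phi-\E_Y\ln\E_W I_0(\phi W)\approx -(1-q)\phi-\tfrac{\sigma^2}{2}\phi^2+\tfrac12\ln\phi+O(1)$, whose maximizer is $\phi=O(1/\sigma)$ regardless of how close $q$ is to $1$, and whose maximum value is a finite ($\sigma$–dependent) constant. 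In particular your displayed claim
\[
\E_Y\ln\E_W\!\Big[\tfrac{e^{\phi_q W}}{\sqrt{1+\phi_q W}}\Big]\ \le\ q\phi_q+C+\tfrac12\ln(1-q),\qquad \phi_q=\tfrac{c}{1-q},
\]
fails: the left side exceeds the right by $\tfrac{\sigma^2c^2}{2(1-q)^2}$ minus a term that is only logarithmic in $1/(1-q)$. Your proposed regime split $\sigma^2\phi_q\lesssim 1$ vs.\ $\sigma^2\phi_q\gg1$ does not help, because for any fixed $\sigma>0$ the second regime is nonempty and there the bound is uniformly $O(1)$, not $-\tfrac12\ln(1-q)$.

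The paper's proof sidesteps this by \emph{not} fixing $\lambda$ at the bounded value $\lambda_1(\sigma)$: it plugs in $\lambda=-\tfrac{1}{4(1-q)}$ together with $\phi=\tfrac{1}{2(1-q)}$. With this coupled choice, after the Bessel bound $I_0(x)\le Cx^{-1/2}e^x$ and AM--GM, the exponential pieces cancel exactly, leaving only the factor $\sqrt{1-q}$ that produces $-\tfrac12\ln(1-q)$. The freedom to send $\lambda\to-\infty$ is precisely what suppresses the $e^{\sigma^2\phi^2/2}$ blow–up that your choice $\lambda=\lambda_1(\sigma)$ cannot avoid.
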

\begin{proof}
	We lower bound $\Xi_2$ as follows:
	\begin{align*}
	{\Xi}_2(q,\sigma) & \Mydef \max_{(\lambda,\phi) \in \mathbb{R}} \left( 2\lambda  + \phi q - \E_Y
	\ln \ZTWis{\lambda}{\phi}{Y} \right) \\
	& \geq  -\frac{1}{2(1-q)}+ \frac{q}{2(1-q)} - \E_Y \left[  \ln \ZTWis{-\frac{1}{4(1-q)}}{\frac{1}{2(1-q)}}{Y} \right] \\
	& = -2 - \E_Y \left[  \ln \ZTWis{-\frac{1}{4(1-q)}}{\frac{1}{2(1-q)}}{Y} \right].
	\end{align*}
	Recall that, 
	\begin{align*}
	\ZTWis{-\frac{1}{4(1-q)}}{\frac{1}{2(1-q)}}{y} & =  \E e^{ -(2y + \sigma(\omega_1 + \omega_2)) \left( \frac{1}{4(1-q)} + 1 \right) +  \frac{ \sqrt{y + \sigma \omega_1} \sqrt{y + \sigma \omega_2} \cos(\theta) }{2(1-q)}} \mathbf{1}_{y + \sigma \omega_1 \geq 0,y + \sigma \omega_2 \geq 0 } \\
	& \explain{(a)}{\leq} e^{-\frac{y}{2(1-q)}} \cdot \E e^{-\frac{\sigma(\omega_1 + \omega_2)}{4(1-q)}} \cdot I_0 \left( \frac{\sqrt{y + \sigma \omega_1} \sqrt{y + \sigma \omega_2}}{2(1-q)} \cdot   \right) \mathbf{1}_{y + \sigma \omega_1 \geq 0,y + \sigma \omega_2 \geq 0 } \\
	& \explain{(b)}{\leq} e^{-\frac{y}{2(1-q)}} \cdot\E e^{-\frac{\sigma(\omega_1 + \omega_2)}{4(1-q)}} \cdot I_0 \left( \frac{2y + \sigma \omega_1 + \sigma \omega_2}{4(1-q)}\right) \mathbf{1}_{y + \sigma \omega_1 \geq 0,y + \sigma \omega_2 \geq 0 }.
	\end{align*}
	In the above display $\omega_2,\omega_2, \theta$ are independent with $\omega \sim \gauss{0}{1}, \omega_2 \sim \gauss{0}{1}, \theta \sim \text{Uniform}[-\pi,\pi]$. In the step marked (a), we used the definition of Bessel Function $I_0$ (see Fact \ref{bessel_properties}). In the step marked (b), we used AM-GM Inequality and the fact that $I_0(x)$ is increasing on $x \geq 0$ (Fact \ref{bessel_properties}). Further applying the upper bound $I_0(x) \leq C x^{-\frac{1}{2}} \cdot e^{x}, x \geq 0$ (see Fact \ref{bessel_properties}), gives,
	\begin{align*}
	\ZTWis{-\frac{1}{4(1-q)}}{\frac{1}{2(1-q)}}{y} & \leq C \cdot  \sqrt{1-q} \cdot   \E  \frac{1}{\sqrt{|2y + \sigma \omega_1 + \sigma  \omega_2}|}.
	\end{align*}
	Hence we have,
	\begin{align*}
	\E_{Y}  \ln  \ZTWis{-\frac{1}{4(1-q)}}{\frac{1}{2(1-q)}}{Y} &  = \E_{Z,\epsilon} \ln  \ZTWis{-\frac{1}{4(1-q)}}{\frac{1}{2(1-q)}}{|Z|^2 + \epsilon} \\
	& \leq \E_{Z} \ln \E_\epsilon  \ZTWis{-\frac{1}{4(1-q)}}{\frac{1}{2(1-q)}}{|Z|^2 + \epsilon} \\
	& \leq \ln C + \frac{\ln(1-q)}{2} + \E_Z \ln \E_{\epsilon,\omega_1,\omega_2} \frac{1}{\sqrt{|2y + \sigma \omega_1 + \sigma  \omega_2}|} \\
	& \explain{(c)}{\leq} \ln C + \frac{\ln(1-q)}{2} + \ln(4) - \frac{1}{2} \E_Z \ln |Z|^2 \\
	& \explain{(d)}{\leq} C + \frac{\ln(1-q)}{2}
	\end{align*}
	In the step marked (c), we appealed to Lemma \ref{fractional_moments}. In the step marked (d), we used the fact that $\E \ln |Z|^2 = \int_0^\infty \ln(r) e^{-r}\diff r \approx - 0. 58$ is finite.  Hence we have the lower bound on $\Xi_2$:
	\begin{align*}
	\Xi_2(q,\sigma) & \geq - C  - \frac{\ln(1-q)}{2}.
	\end{align*}
	Lemma \ref{low_noise_denom_variation_pr} shows that $\Xi_1(\sigma) \leq \Xi_1(1)$ which is an absolute constant, consequently,
	\begin{align*}
	\Xi_2(q,\sigma) - 2 \Xi_1(\sigma) & \geq - C - \frac{\ln(1-q)}{2}
	\end{align*}
\end{proof}

We finally put together all the different auxiliary results we have established so far and prove Proposition \ref{lownoiseprop} which is restated below for convenience. 
\lownoiseprop*
\begin{proof}
	We will prove the above claims in 3 steps: 1) Step 1: Analysis around $q \approx 0$, 2) Step 2: Analysis around $q \approx 1$ and 3) Step 3: Analysis for all other values of $q$. 
	\begin{description}
		\item[Step 1: $q \approx 0.$] Lemma \ref{small_q_analysis} guarantees the existence of a universal constant $C_1>0$ independent of $\sigma,\delta,\Delta$ such that, for any $q \in [0,0.25], \; \sigma < 1$, we have, 
		\begin{align*}
		\CalF(q; \delta,\Delta, \sigma) - \CalF(0;\delta,\Delta,\sigma) & \geq  \frac{q^2}{2} \cdot \left(\frac{2-\delta}{\delta} -\Delta - \sigma^2\right) - C_1 q^3 - \left( 1 - \frac{1}{\delta} \right) q^4 - \frac{\Delta}{2} q^4 \\
		& \geq \frac{q^2}{2} \cdot \left(\frac{2-\delta}{\delta} -\Delta - \sigma^2\right) - (C_1 + 2) \cdot  q^3
		\end{align*}
		In particular ensuring that, 
		\begin{align*}
		\sigma & \leq \frac{1}{2} \left( \frac{2-\delta}{\delta} -\Delta \right), \; q \leq \frac{1}{8(C_1 + 2)} \cdot \left( \frac{2-\delta}{\delta} - \Delta \right), 
		\end{align*}
		gives us,
		\begin{align}
		\label{small_q_conclusion}
		\CalF(q; \delta,\Delta, \sigma) & \geq \frac{q^2}{8} \cdot \left( \frac{2-\delta}{\delta} -\Delta \right).
		\end{align}
		Note that $\CalF(0; \delta, \Delta, \sigma) = 0$. Hence,  \eqref{small_q_conclusion} verifies claim (1) of the proposition for small $q$: 
		\begin{align*}
		\CalF(q; \delta,\Delta, \sigma) & \geq  \CalF(0; \delta, \Delta, \sigma) + \frac{q^2}{8} \cdot \left( \frac{2-\delta}{\delta} -\Delta \right), \; \forall \; q \in [0,\eta_1(\delta,\Delta)],  \; \sigma \leq \sigma_1(\delta,\Delta), 
		\end{align*}
		where,
		\begin{align*}
		\eta_1(\delta,\Delta) \Mydef \frac{1}{8(C_1 + 2)} \cdot \left( \frac{2-\delta}{\delta} - \Delta \right), \; \sigma_1(\delta,\Delta) \Mydef \frac{1}{2} \left( \frac{2-\delta}{\delta} -\Delta \right).
		\end{align*}
		Furthermore since,
		\begin{align*}
		\frac{\diff  \CalF}{\diff q}(q; \delta,\Delta,\sigma) \bigg |_{q= 0} = 0 \implies \CalF(q;\delta,\Delta,\sigma) = \CalF(0;\delta,\Delta,\sigma) + \frac{q^2}{2} \cdot \frac{\diff^2 \CalF}{\diff q^2}(q; \delta,\Delta,\sigma) \bigg|_{q=0} + o(q).
		\end{align*}
		Comparing the above display with \eqref{small_q_conclusion}, gives us claim (2) of the proposition: 
		\begin{align*}
		\frac{\diff^2 \CalF}{\diff q^2}(q; \delta,\Delta,\sigma) \bigg|_{q=0} & \geq \frac{1}{4} \cdot \left( \frac{2-\delta}{\delta} -\Delta \right) > 0.
		\end{align*}
		\item [Step 2: $q \approx 1$.] Lemma \ref{q_close_to_1_lemma} guarantees the existence of a universal constant $C_2$ such that,
		\begin{align*}
		\Xi_2(q,\sigma) - 2 \Xi_1(\sigma) & \geq - C_2  - \frac{\ln(1-q)}{2}, \; \forall \sigma > 0, \; \forall \; q \; \in \; [0,1), \; \forall \sigma \leq 1.
		\end{align*}
		Consequently, 
		\begin{align*}
		\CalF(q; \delta,\Delta,\sigma) & \geq  - \left( \frac{2-\delta}{2\delta} \right) \ln(1-q) + \left(1 - \frac{1}{\delta} \right) \ln(1+q) + \Delta \ln \left(1 - \frac{q^2}{2}  \right) - C_2 \\& \geq  - \left( \frac{2-\delta}{2\delta} \right) \ln(1-q) - (C_2 + 1).
		\end{align*}
		Hence we have,
		\begin{align*}
		\CalF(q; \delta,\Delta,\sigma)  \geq 1 \geq 0 = \CalF(0; \delta,\Delta,\sigma)  \; \forall \; q \in [1-\eta_2(\delta),1), \; \sigma \leq 1,
		\end{align*}
		where,
		\begin{align*}
		\eta_2(\delta)  & = \exp \left( - \frac{\delta(C_2 + 2)}{2-\delta} \right) > 0.
		\end{align*}
		This verifies claim (1) of the proposition for large $q$. 
		\item [Case 3: Other values of $q$.] In Steps (1) and (2), we have verified Claim (1) for $q \in [0,\eta_1(\delta,\Delta)] \cup [\eta_2(\delta),1]$. Now we focus our attention to:
		\begin{align*}
		q \in [\eta_1(\delta,\Delta), 1 - \eta_2(\delta)].
		\end{align*}
		Note that it is sufficient to show that,
		\begin{align*}
		f(q; \delta,\Delta,\sigma) \Mydef \Xi_2(q,\sigma) + \left( 1 - \frac{1}{\delta} \right) \ln(1-q^2) + \Delta \ln \left( 1 - \frac{q^2}{2} \right),
		\end{align*}
		satisfies $f(q; \delta,\Delta,\sigma) > f(0; \delta,\Delta, \sigma) \; \forall q \in [\eta_1(\delta,\Delta), 1 - \eta_2(\delta)]$.  
		In Lemma \ref{zero_noise_variational_problems}, we had shown that the function:
		\begin{align*}
		f(q; \delta,\Delta,0) \Mydef \Xi_2(q;0)  + \left( 1 - \frac{1}{\delta} \right) \ln(1-q^2) + \Delta \ln \left( 1 - \frac{q^2}{2} \right),
		\end{align*}
		is strictly increasing and has the property that $f(0; \delta,\Delta,0)< f(q; \delta,\Delta,0), \; \forall \; q \in (0,1)$. Consequently, $\eta_3(\delta,\Delta)$ defined below is strictly positive: 
		\begin{align}
		\label{eta_3_def}
		\eta_3(\delta,\Delta) \Mydef \min_{q \in [\eta_1(\delta,\Delta),1)} f(q; \delta,\Delta,0) - f(0; \delta,\Delta,0) = f(\eta_1(\delta,\Delta); \delta, \Delta,0) - f(0; \delta,\Delta,0) > 0.
		\end{align}
		Furthermore, Lemma \ref{low_noise_convergence} shows that $f(q; \delta,\Delta,\sigma)$ is asymptotically lower bounded by $f(q; \delta,\Delta,0)$ in the following sense: 
		\begin{align*}
		\liminf_{\sigma \rightarrow 0} \min_{q \in [0,1-\eta_2(\delta)]} f(q; \delta,\Delta,\sigma) - f(q; \delta, \Delta,0) & \geq 0.
		\end{align*}
		Furthermore, it also guarantees $f(0; \delta,\Delta, \sigma) \rightarrow f(0;\delta,\Delta,0)$ as $\sigma \rightarrow 0$. 
		Consequently there exists $\sigma_3(\delta,\Delta)>0$ such that, 
		\begin{align}
		f(q; \delta,\Delta,\sigma) - f(q; \delta, \Delta,0) &\geq - \frac{\eta_3(\delta,\Delta)}{3}, \; \forall q \in [0,1-\eta_2(\delta)], \; \forall \sigma \leq \sigma_3(\delta,\Delta), \label{small_sigma_conseq_1}\\
		|f(0; \delta,\Delta,\sigma) - f(0; \delta, \Delta,0)| & \leq \frac{\eta_3(\delta,\Delta)}{3} \; \forall \sigma \leq \sigma_3(\delta,\Delta) \label{small_sigma_conseq_2}.
		\end{align}
		Hence, $ \forall q \in [\eta_1(\delta,\Delta),1-\eta_2(\delta)]$,
		\begin{align*}
		f(q; \delta,\Delta,\sigma) &  \explain{ \eqref{small_sigma_conseq_1}}{\geq}  f(q; \delta, \Delta, 0)  - \frac{\eta_3(\delta,\Delta)}{3} \\
		& \hspace{0.2cm}\explain{}{\geq}\hspace{0.2cm} f(0; \delta, \Delta,0) + \left(f(q; \delta, \Delta, 0)  -f(0; \delta, \Delta,0)\right) - \frac{\eta_3(\delta,\Delta)}{3} \\
		& \explain{ \eqref{eta_3_def}}{\geq} f(0; \delta, \Delta,0) + \frac{2\eta_3(\delta,\Delta)}{3} \\
		& \explain{\eqref{small_sigma_conseq_2}}{\geq} f(0;\delta,\Delta,\sigma) + \frac{\eta_3(\delta,\Delta)}{3} \\
		& \hspace{0.2cm} > \hspace{0.2cm} f(0; \delta,\Delta,\sigma).
		\end{align*}
	\end{description}
	This concludes the proof of the proposition. 
\end{proof}
\section{Properties of the Tilted Exponential and Wishart Distributions}
\subsection{Properties of the Tilted Exponential Distribution}
\label{texp_properties_appendix}
The following lemma collects some properties of $\Texp{\lambda}{y}$ random variables which were used to prove the local CLT given in Proposition \ref{local_clt_denom}.
\begin{lemma}[Properties of $\Texp{\lambda}{y}$ Distribution] \label{lemma_texp_properties}
	Let $T \sim \Texp{\lambda}{y}$. We have,
	\begin{enumerate}
		\item Moment Bounds: For any $k \in \mathbb{N}$ we have: 
		\begin{align*}
		\E |T|^k & \leq C_k \left( |y|^k + |\lambda|^k + 1 \right).
		\end{align*}
		In the above display, $C_k$ is a universal constant independent of $y,\lambda$ but depends on $k$.
		\item Decay of characteristic function: For any $t \in \mathbb R$
		\begin{align*}
		| \E e^{it T}| & \leq \frac{C(1+|y|+|\lambda|)}{|t|},
		\end{align*}
		where $C$ is a constant independent of $t,y,\lambda$. 
	\end{enumerate}
\end{lemma}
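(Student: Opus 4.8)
The plan rests on a single structural observation: $\Texp{\lambda}{y}$ is a truncated Gaussian. Completing the square in the exponent of the density $f(u)\propto e^{-(1-\lambda)u}\gpdf(u-y)\mathbf{1}_{u\ge0}$ shows that $f(u)$ is proportional to $\gpdf(u-\mu)\mathbf{1}_{u\ge0}$ with $\mu\Mydef y-\sigma^2(1-\lambda)$, and that $\ZTexp{\lambda}{y}=e^{(\mu^2-y^2)/2\sigma^2}\cdot\P(W\ge0)$ where $W\sim\gauss{\mu}{\sigma^2}$. Equivalently, $T\explain{d}{=}(W\mid W\ge0)\explain{d}{=}\sigma(Z-a\mid Z\ge a)$ with $Z\sim\gauss{0}{1}$ and $a\Mydef-\mu/\sigma$. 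First I would record this identity together with the elementary bound $|\mu|\le|y|+\sigma^2(1+|\lambda|)$; everything afterwards reduces to Gaussian tail estimates, and (as the paper's conventions permit) all constants may depend on $\sigma$.

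For the moment bounds I would split on the sign of $\mu$. If $\mu\ge0$ then $p\Mydef\P(W\ge0)\ge\tfrac12$, so $\E|T|^k=\E[W^k\mathbf{1}_{W\ge0}]/p\le2\,\E|W|^k\le C_k(|\mu|^k+1)$ by the usual Gaussian moment bound. If $\mu<0$ then $a>0$, and on $\{Z\ge a\}$ one has $W=\sigma(Z-a)$, hence $\E|T|^k=\sigma^k\,\E[(Z-a)^k\mid Z\ge a]$, i.e.\ $\sigma^k$ times the $k$-th moment of the Gaussian overshoot above the nonnegative level $a$. The key point is that this overshoot is stochastically dominated, \emph{uniformly in} $a\ge0$, by the half-normal $(Z\mid Z\ge0)$: this follows from log-concavity of $1-\Phi$, which makes $a\mapsto\P(Z>a+t\mid Z>a)$ non-increasing for each fixed $t\ge0$. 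Thus $\E[(Z-a)^k\mid Z\ge a]\le\E[Z^k\mid Z\ge0]\le C_k$, and in both cases $\E|T|^k\le C_k(|\mu|^k+1)\le C_k(|y|^k+|\lambda|^k+1)$.

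For the decay of the characteristic function I would write $\E e^{\i t T}=p^{-1}\int_0^\infty e^{\i t u}\gpdf(u-\mu)\,\diff u$ and integrate by parts once in $u$: the boundary term at $+\infty$ vanishes and the one at $0$ contributes $\gpdf(\mu)/(\i t)$, giving $\bigl|\int_0^\infty e^{\i t u}\gpdf(u-\mu)\,\diff u\bigr|\le|t|^{-1}\bigl(\gpdf(\mu)+\int_0^\infty|\tfrac{\diff}{\diff u}\gpdf(u-\mu)|\,\diff u\bigr)$. The total variation of $\gpdf(\cdot-\mu)$ on $[0,\infty)$ equals $\gpdf(\mu)$ when $\mu\le0$ (the density is monotone there) and $2\gpdf(0)-\gpdf(\mu)$ when $\mu>0$, so the bracket is at most a $\sigma$-dependent constant times $\gpdf(\mu\vee0)$. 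When $\mu>0$ this bracket is an absolute ($\sigma$-dependent) constant and $p\ge\tfrac12$, yielding $|\E e^{\i t T}|\le C/|t|$. When $\mu\le0$, I would instead use $\gpdf(\mu)/p=\sigma^{-1}\,\gpdf[1](a)/(1-\Phi(a))$ --- the Gaussian hazard rate at $a=-\mu/\sigma\ge0$, where $\gpdf[1]$ and $\Phi$ are the standard Gaussian density and CDF --- together with the elementary Mills-ratio inequality $\gpdf[1](a)/(1-\Phi(a))\le a+\sqrt{2/\pi}$ valid for $a\ge0$, to conclude $\gpdf(\mu)/p\le\sigma^{-2}|\mu|+\sigma^{-1}\sqrt{2/\pi}\le C(1+|y|+|\lambda|)$, hence $|\E e^{\i t T}|\le C(1+|y|+|\lambda|)/|t|$.

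The one genuinely delicate point --- and the reason a naive argument fails --- is the regime $\mu\ll0$, where $p=\P(W\ge0)$ is exponentially small: bounding $|\E e^{\i t T}|$ by $C_\sigma/(p|t|)$ is useless, and likewise $\E|T|^k$ cannot be controlled by dividing $\E|W|^k$ by $p$. The resolution in both parts is a cancellation: $\gpdf(\mu)$ is exponentially small of exactly the same order as $p$, so the ratio $\gpdf(\mu)/p$ is just the Gaussian hazard rate, which grows only linearly in $|\mu|$ (hence in $|y|$ and $|\lambda|$); in the moment bound this cancellation reappears as the uniform-in-$a$ boundedness of the overshoot moments via log-concavity of $1-\Phi$. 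The remaining ingredients --- completing the square, Gaussian moment bounds, the single integration by parts, and the Mills-ratio inequality --- are routine, and I would not belabor them.
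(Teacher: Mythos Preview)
Your proof is correct but takes a genuinely different route from the paper. The key structural difference is that you first identify $\Texp{\lambda}{y}$ as the law of $W\mid W\ge 0$ with $W\sim\gauss{\mu}{\sigma^2}$, $\mu=y-\sigma^2(1-\lambda)$, and then work entirely with Gaussian tail machinery; the paper never makes this identification explicit and instead works directly with the tilted-exponential density. For the moment bound, the paper splits at the threshold $|y|+\sigma^2|\lambda|$ (essentially your $|\mu|$), bounds the sub-threshold part trivially, and handles the tail via Chebyshev's Association Inequality against the $\Exp{1}$ base measure to get $\E T^k\mathbf{1}_{T\ge|y|+\sigma^2|\lambda|}\le k!$; you instead use stochastic domination of the Gaussian overshoot by the half-normal via log-concavity of $1-\Phi$. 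For the characteristic function, both proofs integrate by parts once and reduce to bounding $f(0)+\|f'\|_1$. The paper controls $\|f'\|_1$ by writing $f'(u)=f(u)(\lambda-1-u/\sigma^2)$ and invoking the Part~1 moment bound, and controls $f(0)$ by a direct reciprocal-integral estimate; you instead exploit the Gaussian form to compute the total variation explicitly and then use the Mills-ratio inequality $\gpdf[1](a)/(1-\Phi(a))\le a+\sqrt{2/\pi}$ to absorb the dangerous $1/p$ factor when $\mu\le 0$. Your approach is cleaner and more conceptual --- the truncated-Gaussian reduction and Mills ratio are exactly the canonical tools for this situation --- while the paper's Chebyshev-association argument has the mild advantage of not being tied to the Gaussian form of $\gpdf$ and would adapt more readily to other noise densities.
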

\begin{proof}
	\begin{enumerate}
		\item Since $T \geq 0$, $|T| = T$. We first observe that, 
		\begin{align*}
		\E T^k & =\E T^k \mathbf{1}_{T \leq |y| + \sigma^2 |\lambda|}+ \E T^k \mathbf{1}_{T \geq |y| + \sigma^2 |\lambda|} \\
		& \leq (|y| + \sigma^2 |\lambda|)^k + \E T^k \mathbf{1}_{T \geq |y| + \sigma^2 |\lambda|}.
		\end{align*} Let $E \sim \Exp{1}$. Using the formula for the density of $\Texp{\lambda}{y}$ distribution in Definition \ref{Dr_representation_formula}, it is easy to see that, 
		\begin{align*}
		\E T^k \mathbf{1}_{T \geq |y| + \sigma^2 |\lambda|} & = \frac{\E E^k e^{\lambda E} \gpdf(E-y)\mathbf{1}_{E \geq |y| + \sigma^2 |\lambda|}}{\E e^{\lambda E} \gpdf(E-y)}.
		\end{align*}
		We observe that $f(e) = e^k$ is increasing and $g(e) = e^{\lambda e} \gpdf(e-y)$ is decreasing when $e \geq |y| + \sigma^2 |\lambda|$. Consequently by Chebychev's Association Inequality (Lemma \ref{chebychev_association}, Appendix \ref{misc_appendix}) we obtain,
		\begin{align*}
		\E T^3 \mathbf{1}_{T \geq |y| + \sigma^2 |\lambda|} &  \leq \E E^k = k!.
		\end{align*}
		Hence for a suitable constant $C_k$, independent of $\lambda,y$ we have, 
		\begin{align*}
		\E T^k & \leq C_k(|y|^k + |\lambda|^k + 1).
		\end{align*}
		\item Let $f(u)$ denote the pdf of $\Texp{\lambda}{y}$. We bound the characteristic function as follows: 
		\begin{align*}
		|\E e^{it T} |& = \left| \int_0^\infty e^{it u} f(u)  \diff u \right| \\
		& = \frac{1}{|t|} \left| \int_0^\infty \frac{\diff}{\diff u} e^{it u} f(u)  \diff u \right| \\
		& = \frac{1}{|t|} \cdot \left|  - f(0) + \int_0^\infty f^\prime(u) e^{it u} \diff u \right| \\
		& \leq \frac{f(0) + \|f^\prime\|_1}{|t|}.
		\end{align*}
		We further upper bound $\|f^\prime\|_1$. Note that:
		\begin{align*}
		f^\prime(u) & = f(u) \cdot \left( \lambda - 1 - \frac{u}{\sigma^2} \right).
		\end{align*}
		Consequently, for a suitable constant $C$ (independent of $\lambda,y$) we obtain the estimate,
		\begin{align*}
		\|f^\prime\|_1 & \leq |\lambda| + 1 + \frac{\E T}{\sigma^2} \\& \leq C(1+ |y| + |\lambda|).
		\end{align*}
		In the last step, we used the estimate on $\E T$ from part (1) of this lemma. 
		Next we upper bound $f(0)$. Note that,
		\begin{align*}
		f(0) & = \left( \int_0^\infty \exp\left( u\left(\lambda - 1 + \frac{y}{\sigma^2}\right)  - \frac{u^2}{2\sigma^2} \right) \diff u \right)^{-1} \\
		& \leq \left( \int_0^1 \exp\left( u\left(\lambda - 1 + \frac{y}{\sigma^2}\right)  - \frac{u^2}{2\sigma^2} \right) \diff u \right)^{-1} \\
		& \explain{(a)}{\leq} \left( \int_0^1 \exp\left( u\left(\lambda - 1 + \frac{y}{\sigma^2}- \frac{1}{2\sigma^2}\right)   \right) \diff u \right)^{-1} \\
		& \leq  \left( \int_0^{(|\lambda| + |y|/\sigma^2 + 1 + 1/2\sigma^2)^{-1}/2} \exp\left( u\left(\lambda - 1 + \frac{y}{\sigma^2}- \frac{1}{2\sigma^2}\right)   \right) \diff u \right)^{-1} \\
		& \explain{(b)}{\leq}  \left( \int_0^{(|\lambda| + |y|/\sigma^2 + 1 + 1/2\sigma^2)^{-1}/2} \left( 1+ u\left(\lambda - 1 + \frac{y}{\sigma^2}- \frac{1}{2\sigma^2}\right)   \right) \diff u \right)^{-1} \\
		& \explain{(c)}{\leq} C(|\lambda| + |y| + 1).
		\end{align*}
		In the step marked (a) we used $u^2 \leq u, \; u \in (0,1)$. In the step marked (b) we used the lower bound $e^{x} \geq 1+ x$. Finally in the step marked (c), we observed that the integrand is larger than $1/2$ in the domain of integration. 
		Combining the bounds on $f(0)$ and $\|f^\prime\|_1$ gives us the required result: 
		\begin{align*}
		| \E e^{it T}| & \leq \frac{C(1+|y|+|\lambda|)}{|t|}.
		\end{align*}
	\end{enumerate}
\end{proof}
\subsection{Properties of the Tilted Wishart Distribution}
\label{twis_properties_appendix}
The following lemma collects some properties of the tilted Wishart distribution which were used to prove the Local CLT given in Proposition \ref{local_clt_num}.
\begin{lemma}\label{tilted_wishart_properties}
	Suppose that:
	\begin{align*}
	\bm S = \begin{bmatrix} r & \sqrt{r r^\prime} e^{\i \theta} \\ \sqrt{r r^\prime} e^{-\i \theta} & r_2 \end{bmatrix} \sim \TWis{\lambda}{\phi}{y} 
	\end{align*} Then, there exists a universal constant $0<C< \infty$ depending only on $\sigma$ such that:
	\begin{enumerate}
		\item Equivalent Characterization: For any bounded measurable function $f$ we have,
		\begin{align*}
		\E f(\bm S) & = \frac{\E_{\bm g \sim \cgauss{\bm 0}{\bm I_2}} f(\bm g \bm g^\UH) e^{\ip{\bm \Lambda}{\bm g \bm g^\UH}} \gpdf(y-|g_1|^2)\gpdf(y-|g_2|^2)}{ \E_{\bm g \sim \cgauss{\bm 0}{\bm I_2}} e^{\ip{\bm \Lambda}{\bm g \bm g^\UH}}\gpdf(y-|g_1|^2)\gpdf(y-|g_2|^2)}.
		\end{align*}
		In the above display, $\bm \Lambda$ is given by: 
		\begin{align*}
		\bm \Lambda & = \begin{bmatrix} \lambda & \phi/2 \\ \phi/2 & \lambda \end{bmatrix}.
		\end{align*}
		\item The density:
		\begin{align*}
		\tilde{h}_{\lambda, \phi, y}(g,g^\prime) & = \frac{e^{-(1-\lambda)(|g| + |g^\prime|^2) + \phi \Re(g \bar{g}^\prime)} \gpdf(|g|^2 - y) \gpdf(|g^\prime|^2 - y)}{\ZTWis{\lambda}{\phi}{y}},
		\end{align*}
		on $\mathbb{C}^2$ is locally bounded, that is: 
		\begin{align*}
		\tilde{h}_{\lambda,\phi,y} (g_0,g^\prime_0) & \leq C(1+|\lambda|^4 + |\phi|^4 + |y|^4) (1 + |g_0|^{12} + |g_0^\prime|^{12}).
		\end{align*}
		\item Tail Bound: With probability $1-\epsilon$,
		\begin{align*}
		r & \leq C \sqrt{1+ |y|^2 + |\phi|^2 + |\lambda|^2} + C \sqrt{\ln \frac{1}{\epsilon}}.
		\end{align*} 
		The analogous result holds for $r^\prime$.
		\item Moment Bounds: For any $k \in \mathbb N$,  There exists a universal constant $C_k$ depending only on $k$ such that 
		\begin{align*}
		\E r^k &\leq C (1+ |\lambda|^k + |\phi|^k + |y|^k).
		\end{align*}
		\item Decay of Characteristic Function: 
		\begin{align*}
		\left|\E e^{\i \ip{\bm T}{\bm S}} \right | & \leq \frac{ C\cdot(1+|\lambda|^{20} + |\phi|^{20} + |y|^{20})}{\|\bm T\|^{\frac{1}{3}}}.
		\end{align*}
		\item For any $y,\lambda,\phi \in \mathbb R$, we have,
		\begin{align*}
		0< \lambda_{\min} \left( \VTWis{\lambda}{\phi}{y} \right) < \lambda_{\max} \left( \VTWis{\lambda}{\phi}{y} \right)< \infty.
		\end{align*}
	\end{enumerate}
\end{lemma}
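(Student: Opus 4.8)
The plan is to establish the two inequalities separately, exploiting that $\vec{\bm S}$ is a \emph{real} $4$-vector, so $\VTWis{\lambda}{\phi}{y}$ is a real symmetric positive semidefinite $4\times 4$ matrix. Consequently $\lambda_{\max}(\VTWis{\lambda}{\phi}{y})$ is controlled by the trace, while $\lambda_{\min}(\VTWis{\lambda}{\phi}{y}) > 0$ is equivalent to the statement that $\mathrm{Var}(\ip{\bm a}{\vec{\bm S}}) > 0$ for every nonzero $\bm a \in \mathbb{R}^4$, where $\bm S \sim \TWis{\lambda}{\phi}{y}$.

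For the finiteness of $\lambda_{\max}$, I would use $\lambda_{\max}(\VTWis{\lambda}{\phi}{y}) \le \mathrm{Tr}(\VTWis{\lambda}{\phi}{y})$ and observe that, in the parametrization of Definition~\ref{titled_wishart_definition}, $S_{11} = s$, $S_{22} = s'$, and $\Re(S_{12})^2 + \Im(S_{12})^2 = s s' \le (s^2 + s'^2)/2$, so $\mathrm{Tr}(\VTWis{\lambda}{\phi}{y}) \le 2(\E s^2 + \E s'^2)$. This is finite by the moment bound in Claim (4) of this lemma, which gives $\E s^2 \le C(1 + |\lambda|^2 + |\phi|^2 + |y|^2)$, and by the symmetry of Definition~\ref{titled_wishart_definition} the same bound holds for $\E s'^2$.

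For the strict positivity of $\lambda_{\min}$, I would argue by contradiction: suppose $\mathrm{Var}(\ip{\bm a}{\vec{\bm S}}) = 0$ for some nonzero $\bm a = (a_1,a_2,a_3,a_4)$. Then the continuous function $L(s,s',\theta) := a_1 s + a_2 s' + a_3 \sqrt{s s'}\cos\theta + a_4 \sqrt{s s'}\sin\theta$ equals a constant $c$ almost surely under the law of $(s,s',\theta)$. The density $h$ of Definition~\ref{titled_wishart_definition} is strictly positive on the open set $U := (0,\infty)^2 \times (-\pi,\pi)$ — each factor (the exponential tilt and the two Gaussian densities $\gpdf(s-y), \gpdf(s'-y)$) is strictly positive, and $\ZTWis{\lambda}{\phi}{y} \in (0,\infty)$. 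Hence $\{L \ne c\} \cap U$ is open (by continuity of $L$) and Lebesgue-null, so it is empty and $L \equiv c$ on $U$. Holding $s, s' > 0$ fixed and varying $\theta$ then forces $a_3 = a_4 = 0$, since a nonzero combination of $\cos\theta$ and $\sin\theta$ is not constant; after that, $a_1 s + a_2 s' \equiv c$ on $(0,\infty)^2$ forces $a_1 = a_2 = 0$, contradicting $\bm a \ne 0$.

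I expect the only point needing care is the passage from ``$\ip{\bm a}{\vec{\bm S}}$ has zero variance'' to ``$L$ is identically $c$ on $U$'', which uses the \emph{strict} positivity of $h$ on an open set together with the continuity of $L$, not merely an almost-everywhere statement. Everything else — the trace bound (fed by Claim (4)) and the linear-independence argument over $\theta$ and then $(s,s')$ — is routine, and no ingredient beyond the explicit form of the density and the already-established moment bounds is required.
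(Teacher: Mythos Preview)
Your proof is correct, and for $\lambda_{\max} < \infty$ it matches the paper's use of Claim~(4). For $\lambda_{\min} > 0$, however, you take a genuinely different route: the paper argues that if some $\ip{\bm T}{\bm S}$ were deterministic then $|\E e^{it\ip{\bm T}{\bm S}}| \equiv 1$, contradicting the $O(\|\bm T\|^{-1/3})$ decay established in Claim~(5). Your argument instead works directly from the explicit density, using its strict positivity on $(0,\infty)^2 \times (-\pi,\pi)$ together with the continuity of $L$ to upgrade an almost-sure constancy to an identical constancy, and then a clean linear-independence check in $\theta$ and $(s,s')$. The paper's approach is shorter once Claim~(5) is in hand, but that claim is by far the hardest part of the lemma; your argument is more elementary and fully self-contained, requiring nothing beyond the form of the density in Definition~\ref{titled_wishart_definition} and the moment bound from Claim~(4).
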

\begin{proof}
	Throughout this proof, we use $C$ to denote constants that depend only on the noise level $\sigma$ and in particular are independent of the parameters $\lambda,\phi,y$. 
	\begin{enumerate}
		\item We write $g_1 = \sqrt{r} e^{\i \omega}, g_2 = \sqrt{r^\prime} e^{\i \omega^\prime}$. Using standard properties of the complex  gaussian distribution, we know that $r, r^\prime \sim \Exp{1}$ and $\omega, \omega^\prime \sim \text{Unif}(-\pi,\pi]$. Consequently, 
		\begin{align*}
		\bm g \bm g^\UH & = \begin{bmatrix} r & \sqrt{r r^\prime} e^{\i (\omega - \omega^\prime)} \\ \sqrt{r r^\prime} e^{\i (\omega^\prime - \omega)} & r^\prime  \end{bmatrix}
		\end{align*}
		Let $\theta \sim \text{Unif}(-\pi,\pi]$. Then we have $e^{\i (\omega - \omega^\prime)} \explain{d}{=} e^{\i \theta}$. Consequently, 
		\begin{align*}
		\E f(\bm g \bm g^\UH) e^{\ip{\bm \Lambda}{\bm g \bm g^\UH}}\gpdf(y-|g_1|^2)\gpdf(y-|g_2|^2) =\hspace{8cm} \\ \hspace{2cm} \frac{1}{2\pi}\int_0^\infty \int_0^\infty \int_{-\pi}^\pi f(r,r^\prime,\theta) e^{-(1-\lambda)(r + r^\prime) + \phi \sqrt{r r^\prime} \cos(\theta)} \gpdf(y-r)\gpdf(y-r^\prime) \diff r \diff r^\prime \diff \theta.
		\end{align*}
		Comparing this with the density of $\bm S$ from Definition \ref{titled_wishart_definition} gives us the claim of item (1). Note that the Tilted Wishart distribution is supported on rank-1 Hermitian matrices. In particular, it does not have a density with respect to the Lebesgue measure on Hermitian matrices. The advantage of the alternate way of computing expectations of functions of $\bm S$ is that they can be computed by integrating with respect to the proper PDF $\tilde{h}_{\lambda, \phi, y}(g,g^\prime)$ on $\mathbb{C}^2$: 
		\begin{align*}
		\E f(\bm S) & = \int_{\mathbb{C}^2} f \left( \begin{bmatrix} |g|^2 & g\bar{g}^\prime \\ \bar{g}g^\prime & |g^\prime|^2 \end{bmatrix}  \right) \tilde{h}_{\lambda,\phi,y}(g,g^\prime) \diff g \diff g^\prime,
		\end{align*}
		where the pdf $\tilde{h}_{\lambda, \phi, y}(g,g^\prime)$ is given by: 
		\begin{align*}
		\tilde{h}_{\lambda, \phi, y}(g,g^\prime) & = \frac{e^{-(1-\lambda)(|g| + |g^\prime|^2) + \phi \Re(g \bar{g}^\prime)} \gpdf(|g|^2 - y) \gpdf(|g^\prime|^2 - y)}{\ZTWis{\lambda}{\phi}{y}}.
		\end{align*}
		This density function is much nicer, in particular it is locally bounded.
		\item  We first note that $\ln \tilde{h}_{\lambda,\phi,y}$ is a degree 4 polynomial in $g,g^\prime$. Consequently it is local lipchitz, that is, 
		\begin{align*}
		&|\ln \tilde{h}_{\lambda,\phi,y}(g,g^\prime) - \ln \tilde{h}_{\lambda,\phi,y}(g_0,g_0^\prime)| \\& \hspace{2cm} \leq C (1+ |g| + |g^\prime| + |g_0| + |g_0^\prime|)^3 (1+ |\lambda| + |\phi| + |y|)( |g - g_0| + |g^\prime - g_0^\prime|).
		\end{align*}
		In particular this means that there exists a large enough constant $C$ depending only on $\sigma$ such that, 
		\begin{align*}
		\forall g,g^\prime : \; \max(|g - g_0|, |g^\prime - g_0^\prime|) \leq R, \; |\ln \tilde{h}_{\lambda,\phi,y}(g,g^\prime) - \ln \tilde{h}_{\lambda,\phi,y}(g_0,g_0^\prime)| \leq \ln(2),
		\end{align*}
		where,
		\begin{align*}
		R & \Mydef \frac{1}{C(1+ |\lambda| + |\phi| + |y|)(1+ |g_0|^3 + |g_0^\prime|^3)}.
		\end{align*}
		We can use this to show that $\tilde{h}_{\lambda,\phi,y}$ is locally bounded. We have: 
		\begin{align*}
		\tilde{h}_{\lambda,\phi,y} (g_0,g^\prime_0) & = \frac{\tilde{h}_{\lambda,\phi,y}(g_0,g_0^\prime)}{\int_{\mathbb{C}^2} \tilde{h}_{\lambda,\phi,y}(g,g^\prime) \diff g \diff g^\prime } \\ 
		& = \left( \int_{\mathbb{C}^2} \exp \left(\ln \tilde{h}_{\lambda,\phi,y}(g,g^\prime) - \ln \tilde{h}_{\lambda,\phi,y}(g_0,g^\prime_0)\right) \diff g \diff g^\prime \right)^{-1} \\
		& \leq \left( \int_{|g-g_0| \leq R, |g^\prime - g_0^\prime| \leq R} \exp \left(\ln \tilde{h}_{\lambda,\phi,y}(g,g^\prime) - \ln \tilde{h}_{\lambda,\phi,y}(g_0,g^\prime_0)\right) \diff g \diff g^\prime \right)^{-1} \\
		& \explain{}{\leq} \left(\frac{1}{2} \int_{|g-g_0| \leq R, |g^\prime - g_0^\prime| \leq R} \diff g \diff g^\prime \right)^{-1} \\
		& = \frac{2}{\pi^2 R^4} \\
		& \leq C(1+|\lambda|^4 + |\phi|^4 + |y|^4) (1 + |g_0|^{12} + |g_0^\prime|^{12}).
		\end{align*}
		\item We begin by computing the log-mgf of $r$: 
		\begin{align*}
		\ln \E e^{t r} & = A-B,
		\end{align*}
		where,
		\begin{align*}
		A & \Mydef  \left( \frac{1}{2\pi }\int_0^\infty \int_0^\infty \int_{-\pi}^\pi e^{(\lambda - 1 + t) r + (\lambda - 1)r^\prime + \phi \sqrt{r r^\prime} \cos(\theta)} \gpdf(r_1-y) \gpdf(r_2 - y) \diff \theta \diff r \diff r^\prime \right) \\ B & \Mydef \ln \ZTWis{\lambda}{\phi}{y}.
		\end{align*}
		We upper bound (A): 
		\begin{align*}
		A & = \frac{1}{2\pi }\int_0^\infty \int_0^\infty \int_{-\pi}^\pi e^{(\lambda - 1 + t) r + (\lambda - 1)r^\prime + \phi \sqrt{r r^\prime} \cos(\theta)} \gpdf(r_1-y) \gpdf(r_2 - y) \diff \theta \diff r \diff r^\prime \\
		& \explain{(a)}{\leq} \int_0^\infty \int_0^\infty  e^{(\lambda - 1 + t+|\phi|) r + (\lambda - 1+|\phi|)r^\prime } \gpdf(r_1-y) \gpdf(r_2 - y)  \diff r \diff r^\prime \\
		& \leq  \int_{-\infty}^\infty \int_{-\infty}^\infty  e^{(|\lambda| + 1 + |t|+|\phi|) (r+r^\prime)  } \gpdf(r_1-y) \gpdf(r_2 - y)  \diff r \diff r^\prime \\
		& \explain{(b)}{=} \exp \left( 2y(|\lambda| + 1 + |t| + |\phi| ) + \sigma^2 (|\lambda| + 1 + |t| + |\phi|)^2 \right).
		\end{align*}
		In the step marked (a), we used the fact the fact that $\sqrt{r r^\prime} \cos(\theta) \leq r+ r^\prime$. In the step marked (b) we used the formula for the MGF of a gaussian distribution. 
		Hence, there exists a universal constant $C$ depending only on $\sigma$ such that:
		\begin{align*}
		\ln A & \leq C \left( 1 + |\lambda|^2 + |\phi|^2 + |y|^2 + |t|^2 \right).
		\end{align*}
		Next we upper bound $(B)$: 
		\begin{align*}
		B & = \ln\left( \frac{1}{2\pi }\int_0^\infty \int_0^\infty \int_{-\pi}^\pi e^{(\lambda - 1 + t) r + (\lambda - 1)r^\prime + \phi \sqrt{r r^\prime} \cos(\theta)} \gpdf(r-y) \gpdf(r^\prime - y) \diff \theta \diff r \diff r^\prime \right) \\& \explain{(c)}{\geq } \frac{1}{2\pi} \int_0^\infty \int_0^\infty \int_{-\pi}^\pi \left( (\lambda + t) r +  \lambda  r^\prime + \phi \sqrt{r r^\prime} \cos(\theta) + \ln \gpdf(r - y) + \ln \gpdf(r^\prime - y) \right)  e^{-r - r^\prime} \diff \theta \diff r \diff r^\prime \\
		& \explain{(d)}{\geq} - C \left(1 + |y|^2 + |\lambda| + |t| \right)
		\end{align*}
		In the step marked (c) we applied Jensen's inequality and in the step marked (d) we used performed the integration involving the moments of the $\Exp{1}$ distribution and used straightforward algebraic bounds. 
		This gives us: 
		\begin{align*}
		\ln \E e^{t r} & \leq C \left( 1 + |\lambda|^2 + |\phi|^2 + |y|^2 + |t|^2 \right).
		\end{align*}
		For notational convenience we define: 
		\begin{align*}
		\kappa \Mydef  1 + |\lambda|^2 + |\phi|^2 + |y|^2.
		\end{align*}
		By Markov's Inequality we have,
		\begin{align*}
		\P (r > x) & \leq \exp \left( C\kappa  - \frac{x^2}{4C} \right).
		\end{align*}
		Setting the tail probability to $\epsilon$ gives us the result: 
		\begin{align*}
		\P \left( r > C \sqrt{1+ |y|^2 + |\phi|^2 + |\lambda|^2} + C \sqrt{\ln \frac{1}{\epsilon}} \right) & \leq \epsilon,
		\end{align*}
		for a suitable constant $C$.
		\item Integrating the tail bound obtained above gives us the moment bound:
		\begin{align*}
		\E r^k & = k \int_0^\infty  x^{k-1} \P(r > x) \diff x  \\ 
		& = k \left( \int_0^{2C \sqrt{\kappa}} x^{k-1} \cdot 1 \diff x + \int_{2C \sqrt{\kappa}}^\infty x^{k-1} \cdot e^{ C\kappa  - \frac{x^2}{4C}}  \right) \\
		&  \leq C_k \cdot  \kappa^{\frac{k-1}{2}}  + \int_{2 C \sqrt{\kappa}}^\infty \exp \left( C\kappa  - \frac{x^2}{4C} \right) \cdot k \cdot x^{k-1}  \diff x \\
		& \explain{(a)}{\leq } C_k \cdot  \kappa^{\frac{k-1}{2}} \\
		& \leq C_k (1+ |\lambda|^k + |\phi|^k + |y|^k).
		\end{align*}
		In the step marked (a), we used the a bound on the truncated gaussian integral given in Lemma \ref{truncated_gauss_integral} in Appendix \ref{misc_appendix}.
		\item Let $\bm T$ be a $2 \times 2$ Hermitian matrix. The characteristic function of the Tilted Wishart distribution evaluated at $\bm T$ is given by $\E e^{\i \ip{\bm T}{ \bm S}}$. 
		Let $\bm w \in \mathbb{C}^2$ be a random vector sampled from the pdf $h_{\lambda, \phi, y}$: 
		\begin{align*}
		\bm w & = \begin{bmatrix} w \\ w^\prime \end{bmatrix} \sim h_{\lambda, \phi, y}.
		\end{align*}
		Using the alternate characterization derived in item (1) of this lemma we have, 
		\begin{align*}
		\E e^{\i \ip{\bm T}{\bm S}} & = \E e^{\i \ip{\bm T} {\bm w \bm w^\UH}}
		\end{align*}
		Consider the spectral decomposition of $\bm T$: 
		\begin{align*}
		\bm T & = \bm B  \begin{bmatrix} t & 0 \\ 0 & t^\prime \end{bmatrix} \bm B^\UH, \; \bm B = \begin{bmatrix} \bm b_1^\UH \\ \bm b_2^\UH \end{bmatrix}.
		\end{align*}
		Since we have, 
		\begin{align*}
		\|\bm T\|^2 & = t^2 + {t^\prime}^2 \implies \max(|t|,|t^\prime|)  \geq \frac{\|\bm T \|}{\sqrt{2}}.
		\end{align*}
		We will assume that infact, 
		\begin{align*}
		|t| & \geq  \frac{\|\bm T \|}{\sqrt{2}}.
		\end{align*}
		Define the random vector:
		\begin{align*}
		\bm z & = \begin{bmatrix} z \\ z^\prime \end{bmatrix} = \bm B^\UH\bm g.
		\end{align*}
		We will often use the polar representation of $\bm z$: 
		\begin{align*}
		\bm z & = \begin{bmatrix} s e^{\i \nu} \\ s^\prime e^{\i \nu^\prime} \end{bmatrix}
		\end{align*}
		Let $d(z,z^\prime)$ denote the density of $\bm z$. This density can be obtained by a simple unitary transformation of the density $h_{\lambda,\phi,y}$. While the exact formula is complicated it is easy to see that it is of the form: 
		\begin{align*}
		d(s e^{\i \nu}, s^\prime e^{i \nu^\prime}) & = \frac{\exp \left( \sum_{k,l: k+l \leq 4} a_{k,l}(\nu,\nu^\prime) s^k {s^\prime}^l\right)}{\ZTWis{\lambda}{\phi}{y}}
		\end{align*}
		The exact formula for the coefficients $a_{k,l}(\nu)$ is not important. It is sufficient to see they satisfy the bound: 
		\begin{align*}
		|a_{k,l}(\nu,\nu^\prime)| & \leq C(1 + |y| + |\lambda| + |\phi| ).
		\end{align*}
		We can now analyze the decay of the characteristic function:
		\begin{align*}
		|\E e^{\i \ip{\bm T} {\bm S}}|  & = |\E \exp \left( \i t |z|^2 + \i t^\prime |z^\prime|^2 \right)| \\& \leq |\E e^ { \i t |z|^2 + \i t^\prime |z^\prime|^2 } \mathbf{1}_{|z| \leq \epsilon, |z^\prime| < R}| + |\E e^ { \i t |z|^2 + \i t^\prime |z^\prime|^2 }\mathbf{1}_{|z| > \epsilon,|z^\prime| < R}| + |\E e^{ \i t |z|^2 + \i t^\prime |z^\prime|^2 }\mathbf{1}_{|z^\prime| > R}| \\
		& \leq \underbrace{\P(|z| \leq \epsilon,|z^\prime| \leq R)}_{(\mathsf{I})} +  \underbrace{|\E \exp \left( \i t |z|^2 + \i t^\prime |z^\prime|^2 \right)\mathbf{1}_{|z| >\epsilon, |z^\prime| < R }|}_{(\mathsf{II})} + \underbrace{\P(|z^\prime|> R)}_{(\mathsf{III})}
		\end{align*}
		In the above display $0< \epsilon < 1 < R$ are parameters which will be chosen later. We analyze the terms $(\mathsf{I}), (\mathsf{II})$  and $(\mathsf{III})$ separately. 
		\begin{description}
			\item[Analysis of $(\mathsf{I})$:] Recall that in part (2) of this Lemma we had shown the density of $\bm w$ is locally bounded: 
			\begin{align*}
			h_{\lambda, \phi,y}(w,w^\prime)  &  \leq C(1+|\lambda|^4 + |\phi|^4 + |y|^4) (1 + |w|^{12} + |w^\prime|^{12}).
			\end{align*}
			The density of $z,z^\prime$, denoted by $d(z,z^\prime)$ is a unitary transformation of the density $h_{\lambda,\phi,\phi}$. Consequently, we have the estimate: 
			\begin{align}
			\label{transformed_density_form}
			d(z,z^\prime) & \leq  C(1+|\lambda|^4 + |\phi|^4 + |y|^4) \cdot R^{12}, \; \forall |z| \leq \epsilon, \; |z^\prime| \leq R.
			\end{align}
			Using this we can easily bound $A$:
			\begin{align*}
			(1) & = \P(|z| \leq \epsilon,|z^\prime| \leq R)  \leq C(1+|\lambda|^4 + |\phi|^4 + |y|^4) \cdot R^{12} \cdot \epsilon^2 \cdot R^2  \\
			& \leq C \cdot (1+|\lambda|^4 + |\phi|^4 + |y|^4) R^{14}\cdot \epsilon^2.
			\end{align*}
			\item[Analysis of $(\mathsf{II})$:] Recall that term $B$ was given by: 
			\begin{align*}
			(\mathsf{II})  &= |\E \exp \left( \i t |z|^2 + \i t^\prime |z^\prime|^2 \right)\mathbf{1}_{|z| >\epsilon, |z^\prime| < R }| \\
			& = \left| \int_{-\pi}^\pi \int_{-\pi}^\pi \int_{0}^R \int_{\epsilon}^\infty \exp(\i t s^2 + \i t^\prime {s^\prime}^2) d(s e^{\i \nu},s^\prime e^{\i \nu^\prime})  s s^\prime \diff s \diff s^\prime \diff \nu \diff \nu^\prime\right| \\
			& =  \frac{1}{|t|}\left| \int_{-\pi}^\pi \int_{-\pi}^\pi \int_{0}^R \int_{\epsilon}^\infty \frac{\partial\exp(\i t s^2 + \i t^\prime {s^\prime}^2)}{\partial s} d(s e^{\i \nu},s^\prime e^{\i \nu^\prime})   s^\prime \diff s \diff s^\prime \diff \nu \diff \nu^\prime\right| \\
			&\explain{(a)}{\leq} (\mathsf{IIa}) + (\mathsf{IIb}).
			\end{align*}
			In the step marked (a), we applied integration by parts and defined the terms $(\mathsf{IIa}),(\mathsf{IIb})$ as follows:
			\begin{align*}
			(\mathsf{IIa}) & =\frac{1}{|t|} \left| \int_{-\pi}^\pi \int_{-\pi}^\pi \int_{0}^R  e^{\i t \epsilon^2 + \i t^\prime {s^\prime}^2} d(\epsilon e^{\i \nu},s^\prime e^{\i \nu^\prime})   s^\prime  \diff s^\prime \diff \nu \diff \nu^\prime \right|. \\
			(\mathsf{IIb}) & = \frac{1}{|t|} \left|\int_{-\pi}^\pi \int_{-\pi}^\pi \int_{0}^R \int_{\epsilon}^\infty e^{\i t s^2 + \i t^\prime {s^\prime}^2} \frac{\partial }{\partial s}d(s e^{\i \nu},s^\prime e^{\i \nu^\prime})   s^\prime \diff s \diff s^\prime \diff \nu \diff \nu^\prime \right|.
			\end{align*}
			The previously obtained bound on $d(z,z^\prime)$ immediately gives the following bound:
			\begin{align*}
			(\mathsf{IIa}) & \leq \frac{C}{|t|} \cdot (1+|\lambda|^4 + |\phi|^4 + |y|^4) \cdot R^{14}.
			\end{align*}
			We can control $(\mathsf{IIb})$ as follows:
			\begin{align*}
			(\mathsf{IIb}) &\explain{(b)}{\leq} \frac{1}{|t|} \left( \int_{-\pi}^\pi \int_{-\pi}^\pi \int_{0}^R \int_{\epsilon}^\infty \left| \frac{\partial }{\partial s}d(s e^{\i \nu},s^\prime e^{\i \nu^\prime}) \right|   s^\prime \diff s \diff s^\prime \diff \nu \diff \nu^\prime\right) \\
			& \explain{(c)}{=}  \frac{1}{|t|} \left(  \int_{-\pi}^\pi \int_{-\pi}^\pi \int_{0}^R \int_{\epsilon}^\infty \left| \sum_{ k+l \leq 4} a_{ij}(\nu,\nu^\prime) s^{k-1} {s^{\prime}}^{l} \right| d(s e^{\i \nu},s^\prime e^{\i \nu^\prime})   s^\prime \diff s \diff s^\prime \diff \nu \diff \nu^\prime\right) \\
			& \leq  \frac{1}{|t|} \left( \frac{1}{\epsilon} \int_{-\pi}^\pi \int_{-\pi}^\pi \int_{0}^R \int_{\epsilon}^\infty \left| \sum_{ k+l \leq 4} a_{ij}(\nu,\nu^\prime) s^{k-1} {s^{\prime}}^{l} \right| d(s e^{\i \nu},s^\prime e^{\i \nu^\prime})   ss^\prime \diff s \diff s^\prime \diff \nu \diff \nu^\prime\right) \\
			& \explain{(d)}{\leq} \frac{C(1+|\lambda|^4 + |\phi|^4 + |y|^4)}{|t|} \cdot \left( \frac{1}{\epsilon} \sum_{k+l \leq 3} \E |z|^{k} {|z^\prime|}^{l} \right) \\
			& \explain{(e)}{\leq} \frac{C(1+|\lambda|^4 + |\phi|^4 + |y|^4)}{|t|} \cdot \left( \frac{1+|\lambda|^2 + |\phi|^2 + |y|^2}{\epsilon}  \right) \\
			\end{align*}
			In the step marked (b) we used the local lipchitz bound on $d$. In the step marked (c) we recalled the formula for the density $d$ (see \eqref{transformed_density_form}). In the step marked (d) we used the bound on the coefficients $a_{k,l}(\nu)$. In the step marked (e) we used the fact that the random vector $\bm z$ is a unitary transformation of $\bm w$ and the third moment of $\bm w$ was bounded in item (4) of this lemma. Combining the bounds on $\mathsf{IIa},\mathsf{IIb}$ we obtain, 
			\begin{align*}
			(\mathsf{II}) & \leq \frac{C(1+|\lambda|^4 + |\phi|^4 + |y|^4)}{\|\bm T\|} \cdot \left( R^{14} + \frac{1+|\lambda|^2 + |\phi|^2 + |y|^2}{\epsilon}  \right).
			\end{align*}
			\item[Analysis of $(\mathsf{III})$:] We have:
			\begin{align*}
			(\mathsf{III}) & =\P \left[ |z^\prime|  \geq R \right] \\
			& \leq  \P \left[ \|\bm z\|  \geq R \right] \\
			& \explain{(f)}{\leq} \P \left[ \|\bm w\|  \geq R \right] \\
			& \explain{}{\leq}  \P \left[ |w|  \geq R \right] + \P \left[ |w^\prime| \geq R  \right] \\
			& \explain{(g)}{\leq} 2 \P \left[ |w| \geq R \right].
			\end{align*}
			In the step marked (f) we used the fact that since $\bm z$ is a unitary transformation of $\bm w$, we have $\|\bm z \| = \| \bm w\|$. In the step marked (g) we used the fact that $|w|,|w|^\prime$ are identically distributed. Finally we set $R$ as: 
			\begin{align*}
			R & = C \sqrt{1+ |y|^2 + |\phi|^2 + |\lambda|^2} + C \sqrt{\ln \frac{1}{\epsilon}},
			\end{align*} 
			and apply the concentration inequality from item (3) of this lemma to obtain: 
			\begin{align*}
			(3) \leq 2 \epsilon.
			\end{align*}
		\end{description}
		Combining the bounds on $(1),(2)$ and $(3)$ and setting $\epsilon = O(1/\|\bm T\|^{1/2})$  gives us the final bound on the characteristic function of $\bm S$: 
		\begin{align*}
		\left|\E e^{\i \ip{\bm \Lambda}{\bm S}} \right | & \leq  C\cdot(1+|\lambda|^{20} + |\phi|^{20} + |y|^{20}) \cdot \frac{\ln^{10}(\|\bm T\|)}{\sqrt{\|\bm T\|}} \\
		& \leq \frac{ C\cdot(1+|\lambda|^{20} + |\phi|^{20} + |y|^{20})}{\|\bm T\|^{\frac{1}{3}}}.
		\end{align*}
		\item The claim $\lambda_{\max} \left( \VTWis{\lambda}{\phi}{y} \right)< \infty$ follows from the moment estimates derived in claim (4) of the lemma. To show that $\lambda_{\min} \left( \VTWis{\lambda}{\phi}{y} \right)>0$ we note that if $\lambda_{\min} \left( \VTWis{\lambda}{\phi}{y} \right) = 0$, we can find a matrix $\bm T $ with $\|\bm T\| =1$ such that $\ip{\bm T}{\bm S}$ is deterministic. If this happens then the characteristic function $\E e^{\i t\ip{\bm S}{\bm T}} = 1$ which contradicts the $O(t^{-\frac{1}{3}})$ decay proved in Claim (5) of this lemma.
	\end{enumerate}
\end{proof}
\section{Analysis of the Variational Problems}
\label{appendix_variational_analysis}
In this section, we study the potential functions involved in the definition of the key functions $\Xi_1(\sigma), \hat{\Xi}_1(\sigma)$ and $\Xi_2(q;\sigma), \hat{\Xi}_2(q;\sigma)$.
Define the two concave potential functions:
\begin{align*}
V_1(\lambda;r) & = \lambda r - \E_{Y} \ln \ZTexp{\lambda}{Y}, \; \lambda \in \mathbb R\\
V_2(\lambda,\phi; q) & = 2 \alpha \lambda + \beta \phi - \E_{Y}
\ln \ZTWis{\lambda}{\phi}{Y}, \; \lambda,\phi \in \mathbb R.
\end{align*}
In this section, we study the two variational problems: 
\begin{align*}
\text{P1: } & \max_{\lambda \in \mathbb{R}} V_1(\lambda), \\
\text{P2: }&  \max_{\lambda,\phi \in \mathbb{R}} V_2(\lambda, \phi; q).
\end{align*}
The analysis in this section will consider an arbitrary distribution on the random variable $Y$. The reason for doing so is to handle the following two cases in a unified way:
\begin{enumerate}
	\item $Y$ is sampled from the empirical distribution of the phase retrieval observations:
	\begin{align*}
	Y & \sim \frac{1}{m} \sum_{i=1}^m \delta_{y_i}.
	\end{align*}
	This case covers the analysis of $\hat{\Xi}_1(\sigma),\hat{\Xi}_2(q;\sigma)$.
	\item $Y = |Z|^2 + \sigma \epsilon$  where $Z \sim \cgauss{0}{1}$ and $\epsilon \sim \gauss{0}{1}$. This case covers the analysis of ${\Xi}_1(\sigma),{\Xi}_2(q;\sigma)$.
\end{enumerate}
We also note that the potential functions $V_1, V_2$ depend on the noise level $\sigma$ even though the dependence is not explicit in our notation. In this section, we consider a fixed $\sigma > 0$ and the universal constants $C$ of this section may depend on $\sigma$. However, they do not depend on the distribution of $Y$. Finally we note that the variation problem P1 is more general than we require in the sense that for the analysis of $\Xi_1, \hat{\Xi}_1$, we can set $r = 1$. The reason for studying this more general variational problem is that we can reduce the analysis of P2 to this more general variational problem. 

\subsection{Analysis of Variational Problem P1}
The following proposition analyzes the variational problem P1 and shows that it has a unique minimizer which is guaranteed to lie in a ball of a certain radius.
\begin{proposition}[Analysis of P1]
	\label{denominator_variational_problem_prop}
	There exists a universal constant $0<C<\infty$ depending only on the noise level $\sigma$ such that: 
	\begin{enumerate}
		\item The following coercivity estimate holds:
		\begin{align*}
		V_1(\lambda;r) & \leq -\frac{r|\lambda|}{2C}, \; \forall |\lambda| \geq C \left( r + \frac{1}{r} \right)(\E |Y|^2 + 1).
		\end{align*} 
		\item All minimizers of the variational problem lie in the compact set: 
		\begin{align*}
		\left\{\lambda: |\lambda| \leq C \left( r + \frac{1}{r} \right)(\E |Y|^2 + 1) \right\}.
		\end{align*}
		\item The function $V_1(\lambda;r)$ is strongly concave on every compact set. Consequently, the variational problem has a unique minimizer. 
	\end{enumerate}
\end{proposition}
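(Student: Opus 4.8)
I would establish item~3 first (it follows from concavity), then the coercivity bound in item~1, and deduce item~2 from item~1. Writing $\ZTexp{\lambda}{y} = \int_0^\infty e^{\lambda E}\,\diff\nu_y(E)$ with $\diff\nu_y(E) = e^{-E}\gpdf(E-y)\,\diff E$ a finite positive measure on $[0,\infty)$, the map $\lambda\mapsto\ln\ZTexp{\lambda}{y}$ is a log-Laplace transform, hence convex, with $\partial_\lambda\ln\ZTexp{\lambda}{y}$ the mean of $\Texp{\lambda}{y}$ (Definition~\ref{Texp_definition}) and $\partial_\lambda^2\ln\ZTexp{\lambda}{y} = \VTexp{\lambda}{y} > 0$, the latter strictly positive because $\Texp{\lambda}{y}$ is a non-degenerate law for every finite $\lambda,y$. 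Hence $V_1(\lambda;r)=\lambda r - \E_Y\ln\ZTexp{\lambda}{Y}$ is concave with $\partial_\lambda^2 V_1(\lambda;r) = -\E_Y\VTexp{\lambda}{Y}$, and $\lambda\mapsto\E_Y\VTexp{\lambda}{Y}$ — an average of strictly positive quantities, continuous in $\lambda$ — is bounded below by a positive constant on every compact interval. This is precisely strong concavity on compacts (item~3); it forces strict concavity on $\mathbb R$ and hence at most one maximizer.

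\textbf{Coercivity (item~1).} Completing the square in the Gaussian integral (the identity also used at the start of the proof of Lemma~\ref{low_noise_denom_variation_pr}) gives, with $\mu:=\lambda-1$,
\begin{align*}
\ZTexp{\lambda}{y} &= e^{\mu y + \mu^2\sigma^2/2}\,\Phi\!\left(\mu\sigma + \tfrac{y}{\sigma}\right), \\
V_1(\lambda;r) &= \lambda r - \mu\,\E_Y[Y] - \tfrac{\mu^2\sigma^2}{2} - \E_Y\ln\Phi\!\left(\mu\sigma + \tfrac{Y}{\sigma}\right),
\end{align*}
where $\Phi$ is the standard normal distribution function. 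For $\mu\ge 0$, monotonicity of $\Phi$ gives $\ln\Phi(\mu\sigma + Y/\sigma)\ge\ln\Phi(Y/\sigma)$, so the last term is at most the $\mu$-free constant $D(\sigma):=-\E_Y\ln\Phi(Y/\sigma)\le C(1+\E_Y|Y|^2)$ (finite since $\E_Y|Y|^2<\infty$), whence $V_1(\lambda;r) \le \lambda r - \mu\E_Y[Y] - \tfrac{\mu^2\sigma^2}{2} + C(1+\E_Y|Y|^2)$, and the $-\mu^2\sigma^2/2$ term sends $V_1\to-\infty$ as $\lambda\to+\infty$. For $\mu<0$ I would use instead the elementary uniform estimate $-\ln\Phi(z)\le\tfrac{z^2}{2}+\ln(1+|z|)+C$ (Mills' ratio for $z<0$, and $\Phi(z)\ge\tfrac12$ for $z\ge0$): since $\tfrac12(\mu\sigma+Y/\sigma)^2 = \tfrac{\mu^2\sigma^2}{2}+\mu Y+\tfrac{Y^2}{2\sigma^2}$, the quadratic and linear terms in $\mu$ cancel \emph{exactly} against $-\mu\E_Y[Y]-\tfrac{\mu^2\sigma^2}{2}$, leaving $V_1(\lambda;r)\le \lambda r + C(1+\E_Y|Y|^2) + \ln\bigl(2+\E_Y|Y|+|\lambda|\bigr)$, in which the negative linear term $\lambda r$ dominates the logarithmic remainder as $\lambda\to-\infty$. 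In either regime, bookkeeping of the constants (with $C=C(\sigma)$) gives $V_1(\lambda;r)\le -r|\lambda|/(2C)$ once $|\lambda|\ge C(r+1/r)(\E_Y|Y|^2+1)$, which is item~1.

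\textbf{Items~2 and 3.} By item~1, $V_1(\cdot;r)\to-\infty$ at $\pm\infty$; together with continuity of $V_1$ and finiteness of $V_1(0;r)=-\E_Y\ln\ZTexp{0}{Y}$, this yields existence of a maximizer and confines every maximizer to $\{|\lambda|\le C(r+1/r)(\E_Y|Y|^2+1)\}$ (outside this set $V_1 < V_1(0;r)$ after enlarging $C$). Strict concavity from the first paragraph makes the maximizer unique, completing items~2 and~3.

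\textbf{Main obstacle.} Everything except the $\mu<0$ (equivalently $\lambda<1$) case of the coercivity bound is routine. There, $-\E_Y\ln\Phi(\mu\sigma+Y/\sigma)$ grows like $\tfrac{\mu^2\sigma^2}{2}$, which at face value cancels both the coercive $-\tfrac{\mu^2\sigma^2}{2}$ term and the linear-in-$\mu$ term, making $V_1$ look non-coercive on the left. Extracting coercivity relies on the \emph{exact} square-completion identity together with the sharp asymptotics $-\ln\Phi(z)=\tfrac{z^2}{2}+\ln|z|+O(1)$, so that only a logarithmic residual — negligible against $\lambda r$ — survives; a lossy substitute such as $-\ln\Phi(z)\le(\tfrac12+\epsilon)z^2+C_\epsilon$ would leave a stray $\epsilon\mu^2\sigma^2$ that destroys coercivity entirely.
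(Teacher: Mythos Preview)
Your proof is correct. Items 2 and 3 match the paper's arguments essentially verbatim (comparing $V_1(\lambda;r)$ to $V_1(0;r)$ for item 2; identifying $\partial_\lambda^2 V_1 = -\E_Y\VTexp{\lambda}{Y}$ and arguing non-degeneracy for item 3). The coercivity argument in item 1, however, follows a genuinely different route. The paper never writes down the closed form $\ZTexp{\lambda}{y}=e^{\mu y+\mu^2\sigma^2/2}\Phi(\mu\sigma+y/\sigma)$ in this proof; instead it bounds the raw integral directly: for $\lambda>0$ it substitutes $u\mapsto|\lambda|u$ and lower-bounds the integrand on the sub-interval $[\sigma^2/2,\sigma^2]$ to extract a $|\lambda|^2$ term, and for $\lambda<0$ it substitutes $u\mapsto u/|\lambda|$ and applies Jensen's inequality to the exponential. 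Your approach---completing the square exactly and then invoking the sharp Mills-ratio expansion $-\ln\Phi(z)=z^2/2+\ln|z|+O(1)$---is cleaner and more structured, and your observation that the $\lambda\to-\infty$ case hinges on the \emph{exact} cancellation of the quadratic terms (so that any lossy $-\ln\Phi(z)\le(\tfrac12+\epsilon)z^2+C_\epsilon$ bound fails) is a nice diagnostic. The paper's route sidesteps this delicacy entirely by never producing a quadratic term on the negative side: its Jensen argument yields $\ln\ZTexp{-|\lambda|}{y}\ge\ln|\lambda|-C(y^2+1)$ directly, so coercivity on the left comes from $-r|\lambda|-\ln|\lambda|$ with no cancellation needed. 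In short, your argument buys elegance and a unified formula at the cost of a sharp tail estimate; the paper's buys robustness at the cost of two ad hoc integral manipulations.
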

\begin{proof}
	Throughout this proof, $C$ refers to a universal constant depending only on $\sigma$ which may change from line to line.
	\begin{enumerate}
		\item We need to show that $V_1$ is coercive, that is: 
		\begin{align*}
		V_1(\lambda;r) \rightarrow - \infty \; \text{as }  |\lambda| \rightarrow \infty.
		\end{align*}
		In order to do so we need to obtain lower bounds on $\ln \ZTexp{\pm |\lambda|}{Y}$. First we consider:
		\begin{align*}
		\ZTexp{|\lambda|}{y} & \explain{(a)}{=} \int_0^\infty e^{-(1-|\lambda|)u}\gpdf(u-y)  \diff u \\
		& = \frac{e^{-\frac{y^2}{2\sigma^2}}}{\sqrt{2\pi \sigma^2}}\int_0^\infty \exp\left(|\lambda| u - \frac{u^2}{2\sigma^2} + \frac{uy}{2\sigma^2} - u\right) \diff u \\
		& \explain{(b)}{=} \frac{|\lambda| e^{-\frac{y^2}{2\sigma^2}}}{\sqrt{2\pi \sigma^2} } \int_0^\infty \exp\left(|\lambda|^2 u \left(1 - \frac{u}{2\sigma^2} \right) + \frac{|\lambda|uy}{2\sigma^2} -|\lambda| u\right) \diff u \\
		& \explain{}{\geq } \frac{|\lambda| e^{-\frac{y^2}{2\sigma^2}}}{\sqrt{2\pi \sigma^2} } \int_{\frac{\sigma^2}{2}}^{\sigma^2} \exp\left(|\lambda|^2 u \left(1 - \frac{u}{2\sigma^2} \right) + \frac{|\lambda|uy}{2\sigma^2} -|\lambda| u\right) \diff u \\
		& \explain{(c)}{\geq}  \frac{|\lambda| e^{-\frac{y^2}{2\sigma^2}}}{\sqrt{2\pi \sigma^2} } \int_{\frac{\sigma^2}{2}}^{\sigma^2} \exp\left(\frac{|\lambda|^2 \sigma^2}{4} -|\lambda| \left(\frac{|y|}{2} + \sigma^2\right)\right) \diff u.
		\end{align*}
		In the step marked (a), we used Definition \ref{Texp_definition}. In the step marked (b), we performed a change of variable $u = |\lambda | u$.  In the step marked (c),  we used the fact that: 
		\begin{align*}
		u \left(1 - \frac{u}{2\sigma^2} \right) & \leq \frac{|\lambda|^2 \sigma^2}{4}, \; \frac{\sigma^2}{2} \leq u \leq  \sigma^2.
		\end{align*} 
		Hence we obtain, for a universal constant $0<C<\infty$ depending only on $\sigma^2$, we have,
		\begin{align*}
		\E_{Y} \ln \ZTexp{|\lambda|}{y} & \geq \ln |\lambda| + \frac{|\lambda|^2}{C} - C |\lambda|(\E|Y|^2 + 1).
		\end{align*}
		Hence, 
		\begin{align}
		\label{coercive_estimate_denom_I}
		V_1(|\lambda|;r)  & \leq | \lambda|(r + C(y^2 + 1)) - \ln |\lambda| - \frac{|\lambda|^2}{C} \nonumber \\
		& \leq - \frac{|\lambda|^2}{2C}, \; \forall |\lambda| \geq 2 C (r + C(\E |Y|^2 + 1)).
		\end{align}
		Next we consider: 
		\begin{align*}
		\ZTexp{-|\lambda|}{y} & = \int_0^\infty e^{-(1+|\lambda|)u}\gpdf(u-y)  \diff u \\
		& = |\lambda| \int_0^\infty e^{-u} \cdot \gpdf \left(\frac{u}{|\lambda|}-y \right) \cdot e^{-\frac{u}{|\lambda|}} \diff u
		\end{align*}
		By Jensen's Inequality, 
		\begin{align*}
		\ln 	\ZTexp{-|\lambda|}{y} & = \ln |\lambda|  + \ln \left( \E_{E \sim \Exp{1}} e^{-\frac{E}{|\lambda|}} \gpdf \left( \frac{E}{|\lambda|} -y \right) \right) \\
		& \geq \ln |\lambda| - \frac{1}{|\lambda|} - \frac{1}{2\sigma^2} \left( \frac{2}{|\lambda|^2} + y^2 - \frac{2y}{|\lambda|} \right) \\
		& \geq \ln |\lambda| - C(y^2 + 1), \; \forall  \; |\lambda| \geq 1.
		\end{align*}
		Consequently we have,
		\begin{align}
		\label{coercive_estimate_denom_II}
		V_1(-|\lambda|;r) & \leq - r |\lambda| - \ln | \lambda | + C(\E |Y|^2+ 1), \; \forall \;  |\lambda| \geq 1 \nonumber \\& \leq - \frac{r |\lambda|}{2}, \; \forall |\lambda| \geq \frac{2C(\E |Y|^2 + 1))}{r} + 1.
		\end{align}
		Combining the estimates in \eqref{coercive_estimate_denom_I} and \eqref{coercive_estimate_denom_II}, we obtain that for a large enough constant $C$,
		\begin{align*}
		V_1(\lambda;r) & \leq -\frac{r|\lambda|}{2C}, \; \forall |\lambda| \geq C \left( r + \frac{1}{r} \right)(\E |Y|^2 + 1).
		\end{align*} 
		\item We observe that,
		\begin{align*}
		V_1(0;r) & = - \E_Y \ln \E_{E \sim \Exp{1}} \gpdf(E-Y) \\
		& \geq -  \ln \left( \frac{1}{\sqrt{2\pi \sigma^2}} \right) \\
		& \geq -C.
		\end{align*}
		Hence,
		\begin{align*}
		|\lambda| \geq C \left( r + \frac{1}{r} \right)(\E |Y|^2 + 1) \implies V_1(\lambda;r) \leq V_1(0;r).
		\end{align*}
		Hence,
		\begin{align*}
		\arg \min_{\lambda \in \mathbb R} V_1(\lambda;r) \subset \left\{\lambda: |\lambda| \leq C \left( r + \frac{1}{r} \right)(\E |Y|^2 + 1) \right\}.
		\end{align*}
		\item In the light of item (2) of the lemma, it is sufficient to study the variational problem: 
		\begin{align*}
		\max_{|\lambda| \leq R} V_1(\lambda;r), \; R \Mydef C \left( r + \frac{1}{r} \right)(\E |Y|^2 + 1). 
		\end{align*}
		In order to show uniqueness of the solution it is sufficient to show that $V_1(\lambda;r)$ is strictly concave on $|\lambda| \leq R$, for which it is sufficient to check that: 
		\begin{align*}
		\min_{|\lambda| \leq R} \frac{\diff^2 V_1}{\diff \lambda ^2}(\lambda) < 0 \Leftrightarrow \frac{\diff^2}{\diff \lambda ^2} \E_Y \ln \ZTexp{\lambda}{Y}  > 0.
		\end{align*}
		Note that by convexity we have, 
		\begin{align*}
		\frac{\diff^2}{\diff \lambda ^2} \E_Y \ln \ZTexp{\lambda}{Y}  \geq 0.
		\end{align*}
		In order to obtain a strict inequality, suppose there is a $\lambda_0$ such that: 
		\begin{align*}
		\frac{\diff^2}{\diff \lambda ^2} \E_Y \ln \ZTexp{\lambda}{Y} \bigg \lvert_{\lambda = \lambda_0}  = 0 \Leftrightarrow \E_{Y} \left[ \frac{\E E^2 e^{\lambda_0 E} \gpdf(E-Y) }{\E e^{\lambda_0 E} \gpdf(E-Y)} - \left( \frac{\E E e^{\lambda_0 E} \gpdf(E-Y) }{\E e^{\lambda_0 E} \gpdf(E-Y)} \right)^2 \right] & = 0.
		\end{align*}
		Recalling Definition \ref{Texp_definition},
		\begin{align*}
		\VTexp{\lambda_0}{Y} & \explain{a.s.}{=} 0.
		\end{align*}
		However this contradicts the decay rate property of the characteristic function of Tilted Exponential distribution proved in Lemma \ref{lemma_texp_properties} in Appendix \ref{texp_properties_appendix} since the amplitude of the characteristic function of deterministic random variables is constant.
	\end{enumerate}
\end{proof}
\subsection{Analysis of Variational Problem P2}
The following proposition analyzes the variational problem P2 and shows that it has a unique minimizer which is guaranteed to lie in a ball of a certain radius. 
\begin{proposition}[Analysis of P2]
	\label{nr_variational_prop}
	Suppose that $q \in (0,1)$. There exists a universal constant $0<C<\infty$ depending only on the noise level $\sigma$ such that: 
	\begin{enumerate}
		\item The following coercivity estimate holds:
		\begin{align*}
		V_2(\lambda, \phi;q) & \leq - \frac{(1-q)}{2C} \cdot (|\lambda| + |\phi|),  \; |\lambda| + |\phi| \geq C \left( 1+q + \frac{1}{1-q} \right) (\E Y^2 + 1).
		\end{align*}
		\item All minimizers of the variational problem lie in the compact set: 
		\begin{align*}
		\left\{(\lambda,\phi) \in \mathbb{R}^2: |\lambda| + |\phi| \leq C \left( 1+q + \frac{1}{1-q} \right)(\E |Y|^2 + 1) \right\}.
		\end{align*}
		\item The function $V_2(\lambda,\phi;q)$ is strongly concave on any compact set. Consequently, the variational problem has a unique minimizer. 
	\end{enumerate}
\end{proposition}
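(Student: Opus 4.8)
The plan is to prove the three items in order, reducing the two-parameter problem P2 to the one-parameter problem P1 and to a Laplace-type estimate on the diagonal. First I would reduce to $\phi\geq 0$: the substitution $\theta\mapsto\theta+\pi$ on $(-\pi,\pi]$ preserves the uniform measure and flips $\cos\theta$, so $\ZTWis{\lambda}{\phi}{y}$ is even in $\phi$, and since $q\phi\leq q|\phi|$ we get $V_2(\lambda,\phi;q)\leq V_2(\lambda,|\phi|;q)$. The argument rests on two lower bounds for $\ZTWis{\lambda}{\phi}{y}$. \emph{(i) Angular bound:} restricting the defining integral to $\theta\in[-\tfrac\pi2,\tfrac\pi2]$, where $\cos\theta\geq 0$ so $e^{\phi\sqrt{ss'}\cos\theta}\geq 1$, gives $\ZTWis{\lambda}{\phi}{y}\geq\tfrac12\,\ZTexp{\lambda}{y}^2$; together with the elementary estimate $\ln\ZTexp{\lambda}{y}\geq-\ln(2+|\lambda|)-C(1+y^2)$ for $\lambda\leq 1$ (restrict that integral to $u\in[0,(2+|\lambda|)^{-1}]$) this gives $\ln\ZTWis{\lambda}{\phi}{y}\geq-2\ln(2+|\lambda|)-C(1+y^2)$ for all $\lambda\leq 1$, $\phi\geq 0$. \emph{(ii) Laplace/diagonal bound:} restricting the integral to the thin tube $\theta\in[-\delta,\delta]$, $s'\in[s,(1+\epsilon)s]$ and integrating $s$ over $(0,\infty)$ gives $\ZTWis{\lambda}{\phi}{y}\geq c\,\delta\epsilon\, e^{-Cy^2}\int_0^\infty s\,e^{\gamma s-c_2 s^2}\diff s$, with $\gamma=\phi(1-\tfrac{\delta^2}{2})-(1-\lambda)(2+\epsilon)$ and $c_2\asymp\sigma^{-2}$, whence $\ln\ZTWis{\lambda}{\phi}{y}\geq\tfrac{\sigma^2\gamma^2}{C}-Cy^2-C\ln(\delta^{-1}\epsilon^{-1})$ as soon as $\gamma\geq 1$. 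Choosing $\delta,\epsilon$ to shrink like $(1+|\lambda|+\phi)^{-1/6}$ makes both the defect $\gamma-(\phi-2(1-\lambda))$ and the logarithm sublinear, so (ii) yields genuine quadratic growth, $\ln\ZTWis{\lambda}{\phi}{y}\gtrsim(\phi-2(1-\lambda))^2-C(1+y^2)-C\ln(1+|\lambda|+\phi)$, as soon as $\phi-2(1-\lambda)$ exceeds a $(1+|\lambda|+\phi)^{2/3}$-sized amount; for $\lambda\geq 2$ the same restriction gives $\ln\ZTWis{\lambda}{\phi}{y}\gtrsim(\lambda+\phi)^2-C(1+y^2)-C\ln(1+|\lambda|+\phi)$.

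For item 1 I would split $\{\phi\geq 0\}$ into three pieces (bounded sub-regions being irrelevant). \emph{(a) $\lambda\geq 2$:} bound (ii) gives $V_2\leq 2(\lambda+\phi)-\tfrac{(\lambda+\phi)^2}{C}+C(\E|Y|^2+1)\leq-\tfrac{\lambda+\phi}{2C}$ once $\lambda+\phi\geq C(\E|Y|^2+1)$, which dominates $-\tfrac{1-q}{2C}(|\lambda|+|\phi|)$. \emph{(b) Region $A=\{\lambda\leq 1,\ w:=\phi-2(1-\lambda)\geq C_4(1+|\lambda|+\phi)^{2/3}\}$:} for $\lambda\leq 0$ one has the identity $2\lambda+q\phi=-2(1-q)|\lambda|+qw+2q$, while bound (ii) gives $\E_Y\ln\ZTWis{\lambda}{\phi}{Y}\geq\tfrac{w^2}{C}-C(\E|Y|^2+1)-C\ln(1+|\lambda|+\phi)$; since $-w^2/C$ absorbs $qw$, the logarithm, and part of $C(\E|Y|^2+1)$, one obtains both $V_2\leq-2(1-q)|\lambda|+C\ln(1+|\lambda|)+C(\E|Y|^2+1)$ and $V_2\leq-\tfrac{w^2}{2C}+C(\E|Y|^2+1)$, and combining these with $|\phi|\leq w+2+2|\lambda|$ gives $V_2\leq-\tfrac{(1-q)(|\lambda|+|\phi|)}{C}$ beyond the stated threshold (the $0<\lambda\leq 1$ part of $A$ forces $\phi$ bounded, hence compact). \emph{(c) Region $B=\{\lambda\leq 1\}\setminus A$:} there $\phi<2(1-\lambda)+C_4(1+|\lambda|+\phi)^{2/3}$, which for $|\lambda|+|\phi|$ large forces $\phi\leq 4(1+|\lambda|)$ and hence $|\lambda|\geq(|\lambda|+|\phi|)/C$; bound (i) gives $V_2\leq 2\lambda+q\phi+2\ln(2+|\lambda|)+C(\E|Y|^2+1)$ and $2\lambda+q\phi\leq-2|\lambda|+q(2(1+|\lambda|)+(\text{sublinear}))=-2(1-q)|\lambda|+(\text{sublinear})$, so for $|\lambda|+|\phi|\geq C(1+q+\tfrac1{1-q})(\E|Y|^2+1)$ the remaining terms are at most $\tfrac12(1-q)|\lambda|$ and $V_2\leq-\tfrac{(1-q)(|\lambda|+|\phi|)}{C}$.

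Item 2 follows at once: $V_2(\lambda,\phi;q)=2\lambda+q\phi-\E_Y\ln\ZTWis{\lambda}{\phi}{Y}$ is concave because $(\lambda,\phi)\mapsto\ln\ZTWis{\lambda}{\phi}{y}$ is the log-Laplace transform of a positive measure, and $V_2(0,0;q)=-\E_Y\ln\ZTexp{0}{Y}^2\geq 0$ since $\ZTexp{0}{y}=\int_0^\infty e^{-u}\gpdf(u-y)\diff u\leq 1$; so by item 1 every optimizer lies in $\{|\lambda|+|\phi|\leq C(1+q+\tfrac1{1-q})(\E|Y|^2+1)\}$ after enlarging $C$, and one exists by continuity plus coercivity. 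For item 3, note that $(\lambda,\phi)\mapsto\ln\ZTWis{\lambda}{\phi}{y}$ is the cumulant generating function of $(S_{11}+S_{22},\ \Re S_{12})$ under $\TWis{\lambda}{\phi}{y}$: indeed $\partial_\lambda\ln\ZTWis{\lambda}{\phi}{y}$ is the mean of $s+s'$ and $\partial_\phi\ln\ZTWis{\lambda}{\phi}{y}$ the mean of $\sqrt{ss'}\cos\theta$ under this law, so $\nabla^2_{(\lambda,\phi)}\ln\ZTWis{\lambda}{\phi}{y}$ is the covariance matrix of $(S_{11}+S_{22},\Re S_{12})$, which equals $\bm M^\UH\,\VTWis{\lambda}{\phi}{y}\,\bm M$ with $\bm M=[\,\bm e_1+\bm e_2\mid \bm e_3\,]\in\mathbb R^{4\times 2}$ of full column rank (in the $\vec{\cdot}$ coordinates). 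Since $\VTWis{\lambda}{\phi}{y}\succ 0$ by Lemma \ref{tilted_wishart_properties}(6), this $2\times 2$ matrix is positive definite, so $-\nabla^2_{(\lambda,\phi)}V_2(\lambda,\phi;q)=\bm M^\UH(\E_Y\VTWis{\lambda}{\phi}{Y})\bm M\succ 0$ pointwise, and by continuity its least eigenvalue is bounded below on any compact set; hence $V_2$ is strongly concave there, and with item 2 the optimizer is unique.

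The main obstacle is item 1, and specifically the behaviour near the ray $\phi=2(1-\lambda)$. There the angular bound (i) alone beats the linear term $2\lambda+q\phi$ only by a margin $\asymp 2-q\,\tfrac{\phi}{1-\lambda}$, which degenerates as $q\uparrow 1$ when $\phi\approx 2(1-\lambda)$, whereas a crude diagonal restriction to a fixed box only produces quadratic growth of $\ln\ZTWis{\lambda}{\phi}{y}$ once $\phi\gtrsim 3(1+|\lambda|)$ --- leaving a genuine gap around $\phi\approx 2(1+|\lambda|)$ for $q$ near $1$. The remedy, and the technically delicate point, is to run restriction (ii) with a window whose width shrinks polynomially in $|\lambda|+|\phi|$, so that the quadratic-growth regime of $\ln\ZTWis{\lambda}{\phi}{y}$ begins essentially at $\phi=2(1-\lambda)$; this makes regions $A$ and $B$ overlap and is the source of both the $(1-q)$ decay rate and the $\tfrac1{1-q}$ in the threshold.
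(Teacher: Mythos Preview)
Your approach to item 1 is plausible but takes a genuinely different and considerably more laborious route than the paper. The paper never splits into angular/diagonal restrictions or does a region-by-region case analysis. Instead it passes to matrix notation, writing $\bm\Lambda=\begin{bmatrix}\lambda & \phi/2\\ \phi/2 & \lambda\end{bmatrix}$ with eigenvalues $\gamma_1=\lambda+|\phi|/2,\ \gamma_2=\lambda-|\phi|/2$, and observes that $\ip{\bm\Lambda}{\bm Q}=\gamma_1(1+q)+\gamma_2(1-q)$ (an equality, since $\bm\Lambda$ and $\bm Q$ commute). It then rotates the Gaussian variable in $\ZTWis{\lambda}{\phi}{y}=\E_{\bm g}e^{\ip{\bm\Lambda}{\bm g\bm g^{\UH}}}\gpdf(y-|g_1|^2)\gpdf(y-|g_2|^2)$ into the eigenbasis of $\bm\Lambda$ and uses the crude decoupling $|\bm b_1^{\UH}\bm g|^4+|\bm b_2^{\UH}\bm g|^4\leq\|\bm g\|_2^4\leq 2(|g_1|^4+|g_2|^4)$ to lower-bound $\ln\ZTWis{\lambda}{\phi}{y}$ by a sum of two one-dimensional log-partition functions of the form $\ln\E_{E\sim\Exp{1}}e^{\gamma_i E}\gpdf(y/\sqrt2-\sqrt2 E)$. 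This reduces P2 to two copies of P1 with $r=1+q$ and $r=1-q$, and the coercivity follows immediately from Proposition~\ref{denominator_variational_problem_prop}; the factor $(1-q)$ and the threshold $\tfrac1{1-q}$ drop out of that proposition with no further work.

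What your approach buys is that it stays entirely in the scalar parametrization $(\lambda,\phi)$ and never touches the matrix structure, at the cost of having to manufacture the critical ray $\phi=2(1-\lambda)$ by hand and then carefully close the gap around it with a shrinking-window Laplace bound. What the paper's approach buys is that the diagonalization automatically identifies the right change of variables (the eigenvalues $\gamma_1,\gamma_2$) and the right scales ($1\pm q$), so the three-region analysis, the sublinear defect bookkeeping, and the delicate matching of regions $A$ and $B$ all disappear. Your items 2 and 3 match the paper's (which defers them as ``analogous to Proposition~\ref{denominator_variational_problem_prop}'').
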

\begin{proof}
	Throughout this proof, $C$ refers to a universal constant depending only on $\sigma$ which may change from line to line. It will be helpful to write the variational problem in the following matrix notation. Define,
	\begin{align*}
	\bm \Lambda & = \begin{bmatrix} \lambda & \frac{\phi}{2} \\ \frac{\phi}{2} & \lambda \end{bmatrix}
	\end{align*}
	Then the problem P2 can be rewritten as: 
	\begin{align*}
	\text{P2: } & \max_{\bm \Lambda }V_2(\bm \Lambda), \; V_2(\bm \Lambda) = \ip{\bm \Lambda}{\bm Q} - \E_Y \ln \E_{\bm g \sim \cgauss{\bm 0}{\bm I_2}} \exp(\ip{\bm \Lambda}{\bm g \bm g^\UH}) \gpdf(Y-|g_1|^2) \gpdf(Y-|g_2|^2),
	\end{align*}
	where,
	\begin{align*}
	\bm Q & = \begin{bmatrix} 1 & q \\ q & 1\end{bmatrix}.
	\end{align*}
	To obtain the above display, we recalled the definition of the normalizing constant of the Tilted Wishart Distribution (Definition \ref{titled_wishart_definition}).
	\begin{enumerate}
		\item In order to obtain a coercivity estimate we need to lower bound $\ln \ZTWis{\lambda}{\phi}{y}$.  Our lower bound will depend only on the spectrum of $\bm \Lambda$. We consider the eigendecomposition of $\bm \Lambda$: 
		\begin{align*}
		\bm \Lambda & = \begin{bmatrix} \bm b_1^\UH \\ \bm b_2^\UH \end{bmatrix} \cdot  \begin{bmatrix} \gamma_1 & 0 \\ 0 &\gamma_2\end{bmatrix} \cdot \begin{bmatrix} \bm b_1 & \bm b_2 \end{bmatrix}
		\end{align*}
		In the above display $\gamma_1 \geq \gamma_2$ are the ordered eigenvalues of $\bm \Lambda$. We have the following lower bound on $\ln \ZTWis{\lambda}{\phi}{y}$: 
		\begin{align*}
		&\ln \ZTWis{\lambda}{\phi}{y}  = \ln \E \exp(\ip{\bm \Lambda}{\bm g \bm g^\UH}) \gpdf(y-|g_1|^2) \gpdf(y-|g_2|^2) \\
		& = \ln \E \exp(\gamma_1 |g_1|^2 + \gamma_2 |g_2|^2) \gpdf(y-|\bm b_1^\UH \bm g|^2) \gpdf(y-|\bm b_2^\UH \bm g|^2) \\
		& = \ln \E \exp\left(\gamma_1 |g_1|^2 + \gamma_2 |g_2|^2 - \frac{1}{2\sigma^2}(2 y^2 - 2y(|\bm b_1^\UH \bm g|^2+|\bm b_2^\UH \bm g|^2) +|\bm b_1^\UH \bm g|^4+|\bm b_2^\UH \bm g|^4 )\right) \\
		& \explain{(a)}{\geq} \ln \E \exp \left( \gamma_1 |g_1|^2 + \gamma_2 |g_2|^2 - \frac{1}{2\sigma^2}(2 y^2 - 2y(|g_1|^2 + |g_2|^2) + 2|g_1|^4 + 2|g_2|^4 ) \right) \\
		& \geq  \ln \E \exp(\gamma_1 |g_1|^2) \gpdf\left( \frac{y}{\sqrt{2}} - \sqrt{2}|g_1|^2\right) + \ln \E \exp(\gamma_2 |g_2|^2) \gpdf\left( \frac{y}{\sqrt{2}} - \sqrt{2}|g_2|^2\right) - \frac{y^2}{2 \sigma^2} \\
		& = \ln \E_{E \sim \Exp{1}} \exp(\gamma_1 E) \gpdf\left( \frac{y}{\sqrt{2}} - \sqrt{2}E\right) + \ln \E_{E \sim \Exp{1}} \exp(\gamma_2E) \gpdf\left( \frac{y}{\sqrt{2}} - \sqrt{2} E\right) - \frac{y^2}{2 \sigma^2}
		\end{align*}
		In the step marked (a), we used the fact that,
		\begin{align*}
		\|\bm B \bm g\|_2^2 = \|\bm g\|^2, \; \|\bm B \bm g\|_4^4  \leq \|\bm g\|_2^4 \leq 2(|g_1|^4 + |g_2|^4).
		\end{align*}
		Next note that,
		\begin{align*}
		\ip{\bm \Lambda}{ \bm Q} & \leq \gamma_1 \lambda_1(\bm Q) + \gamma_2 \lambda_2(\bm Q),
		\end{align*}
		where $\lambda_1(\bm Q) \geq \lambda_2(\bm Q)$ are the ordered eigenvalues of $\bm Q$. It is easy to check that $\lambda_1(\bm Q) = 1+q$ and $\lambda_2(\bm Q) = 1-q$ which means,
		\begin{align*}
		\ip{\bm \Lambda}{ \bm Q} & \leq \gamma_1(1+q) + \gamma_2(1-q).
		\end{align*} 
		This gives us,
		\begin{align*}
		V_2(\bm \Lambda;q) & \leq  \gamma_1  (1+q) -  \E\ln \E e^{\gamma_1 E} \gpdf\left( \frac{Y}{\sqrt{2}} - \sqrt{2}E\right)   + \gamma_2 (1-q) - \E \ln \E e^{\gamma_2 E} \gpdf\left( \frac{y}{\sqrt{2}} - \sqrt{2}E\right)  + \frac{\E Y^2}{\sigma^2}
		\end{align*}
		Utilizing the coercivity estimates from Proposition \ref{denominator_variational_problem_prop}, we obtain,
		\begin{align*}
		\left( \gamma_1 \cdot (1+q) -  \E_Y\ln \E e^{\gamma_1 E} \gpdf\left( \frac{Y}{\sqrt{2}} - \sqrt{2}E\right)  \right)  \leq - \frac{(1+q) \cdot  |\gamma_1|}{2C}, \\
		\left( \gamma_2 (1-q) -  \E_Y\ln \E e^{\gamma_2 E} \gpdf\left( \frac{Y}{\sqrt{2}} - \sqrt{2}E\right)  \right)  \leq - \frac{(1-q) |\gamma_2|}{2C},
		\end{align*}
		for all:
		\begin{align*}
		    |\gamma_1| &\geq C \left( 1+q + \frac{1}{1+q} \right) \left( \E Y^2 + 1 \right), \\
		    |\gamma_2| &\geq C \left( (1-q) + \frac{1}{1-q} \right) \left( \E Y^2 + 1 \right).
		\end{align*}
		Since,
		\begin{align*}
		\|\bm \Lambda\|^2 & \geq t \implies \max(\gamma_1^2, \gamma^2_2) \geq \frac{t}{2}, \; \forall t,
		\end{align*} 
		we obtain,
		\begin{align*}
		V_2(\bm \Lambda;q) & \leq - \frac{1-q}{2C} \cdot \|\bm \Lambda\|, \; \|\bm \Lambda\| \geq C \left( 1+q + \frac{1}{1-q} \right) (\E Y^2 + 1).
		\end{align*}
		This is equivalent to the estimate: 
		\begin{align*}
		V_2(\lambda, \phi;q) & \leq - \frac{1-q}{2C} \cdot (|\lambda| + |\phi|),  \; |\lambda| + |\phi| \geq C \cdot \left( 1+q + \frac{1}{1-q} \right) (\E Y^2 + 1).
		\end{align*}
		This concludes the proof of item (1) in the statement of the lemma.
		\item The proof is analogous to the proof of item (2) in Proposition \ref{denominator_variational_problem_prop}.
		\item The proof is analogous to the proof of item (3) in Proposition \ref{denominator_variational_problem_prop}.
	\end{enumerate}
\end{proof}
\section{Background on Characteristic Functions}
\label{CF_appendix}
In this section we collect some basic facts about characteristic functions (CF). Most of these results are taken from Chapter XV of \citet{feller2008introduction}.  The characteristic function is simply the Fourier transform of the probability density function. 
\begin{definition}[Characteristic Function] Let $f$ be a probability density function on $\mathbb R$. Then the characteristic function of $f$ is defined as: 
	\begin{align*}
	\psi(t) & = \int_\mathbb{R} e^{\i t x} f(x) \diff x.
	\end{align*}
\end{definition}
If the characteristic function is absolutely integrable, the probability density function can be recovered from it using the Fourier inversion formula.
\begin{theorem}[Fourier Inversion of CFs] \label{fourier_inversion_CF} Let $\psi$ be the CF of density $f$. Then, \begin{align*}
	f(x)  & = \frac{1}{2\pi} \int_{\mathbb{R}}  \psi(t) e^{-\i t x} \diff t.
	\end{align*}
\end{theorem}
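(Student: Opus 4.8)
The plan is to prove the inversion formula by the classical Gaussian-mollifier (smoothing) argument, which sidesteps the fact that $\psi$ need not a priori be absolutely integrable by first inserting a convergence factor $e^{-\epsilon t^2/2}$ and then letting $\epsilon \downarrow 0$. Throughout I would assume $\psi \in L^1(\mathbb R)$, since otherwise the right-hand side is not even defined; this integrability is exactly the hypothesis implicitly in force whenever the formula is invoked elsewhere in the paper.

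First I would fix $x \in \mathbb R$ and $\epsilon > 0$ and set
\[
I_\epsilon(x) \;\Mydef\; \frac{1}{2\pi} \int_{\mathbb R} \psi(t)\, e^{-\i t x}\, e^{-\epsilon t^2/2}\, \diff t .
\]
Substituting the definition $\psi(t) = \int_{\mathbb R} e^{\i t y} f(y)\,\diff y$ and applying Fubini's theorem — legitimate because $(y,t) \mapsto f(y)\, e^{\i t(y-x)} e^{-\epsilon t^2/2}$ is absolutely integrable on $\mathbb R^2$, its modulus being $f(y)\, e^{-\epsilon t^2/2}$ with finite double integral — gives
\[
I_\epsilon(x) = \int_{\mathbb R} f(y) \left( \frac{1}{2\pi}\int_{\mathbb R} e^{\i t(y-x)} e^{-\epsilon t^2/2}\,\diff t \right) \diff y .
\]
The inner integral is a standard Gaussian integral equal to $\tfrac{1}{\sqrt{2\pi\epsilon}}\, e^{-(y-x)^2/(2\epsilon)} = \gpdf[\sqrt\epsilon](y-x)$, so $I_\epsilon(x)$ is the convolution of $f$ with the density of $\gauss{0}{\epsilon}$, evaluated at $x$.

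The proof then splits into two halves that I would carry out independently. On one hand, since $\psi \in L^1$ and $|\psi(t)\, e^{-\i t x} e^{-\epsilon t^2/2}| \le |\psi(t)|$, dominated convergence yields $I_\epsilon(x) \to \frac{1}{2\pi}\int_{\mathbb R}\psi(t)\, e^{-\i t x}\,\diff t$ as $\epsilon \downarrow 0$, for every $x$. On the other hand, $\{\gpdf[\sqrt\epsilon]\}_{\epsilon>0}$ is an approximate identity, so the convolution $f * \gpdf[\sqrt\epsilon]$ converges to $f$ in $L^1(\mathbb R)$ and pointwise at every continuity point of $f$. Matching the two limits shows $\frac{1}{2\pi}\int_{\mathbb R}\psi(t)\, e^{-\i t x}\,\diff t = f(x)$ at every continuity point of $f$; moreover the left-hand side is a continuous function of $x$ (again by dominated convergence with dominating function $|\psi|$), so it is a continuous representative of $f$ and the stated identity holds for that representative, which is the content of the theorem.

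The hard part — really the only point requiring care — will be the approximate-identity step: one must verify that $f * \gpdf[\sqrt\epsilon] \to f$ in the appropriate sense. The cleanest route is to prove it in $L^1$, using density of continuous compactly supported functions together with their uniform continuity, which already forces $f$ to agree almost everywhere with the continuous function $x \mapsto \frac{1}{2\pi}\int \psi(t)\, e^{-\i t x}\,\diff t$; alternatively, at a continuity point $x_0$ of $f$ one can split the convolution integral into the regions $|y-x_0|<\delta$ and $|y-x_0|\ge\delta$, using continuity of $f$ on the first piece and the Gaussian tail bound (Lemma \ref{truncated_gauss_integral}) together with $f\in L^1$ on the second. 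I would present the $L^1$ version, since it is the shortest and is more than enough for every use of the formula in this paper.
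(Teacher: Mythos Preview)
Your proof is correct and is the standard Gaussian-mollifier argument for Fourier inversion. However, the paper does not actually prove this theorem: it is stated without proof in Appendix~\ref{CF_appendix} as a background fact, with the remark that ``most of these results are taken from Chapter~XV of \citet{feller2008introduction}.'' So there is nothing to compare against --- you have supplied a full proof where the paper simply cites the literature.
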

The moments of the PDF can be recovered from the Taylors expansion of the CF.
\begin{theorem}[Taylors Series of CF]  \label{taylors_cf} Let $X$ be a random variable with probability density function $f$. Let $\psi$ be the CF of $f$. We have, for any $ t \in \mathbb R$.
	\begin{align*}
	\left| \psi(t) - \left( 1+ \sum_{k=1}^{n-1} \frac{ \E X^k}{k!} \cdot (\i t)^k \right) \right| & \leq \frac{\E |X|^n t^n}{n!}
	\end{align*}
\end{theorem}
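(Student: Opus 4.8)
The plan is to reduce the statement to a pointwise Taylor estimate for the function $u \mapsto e^{\i u}$ on the real line and then integrate against the law of $X$. First I would record the elementary remainder identity: for every $u \in \mathbb{R}$ and every $n \geq 1$,
\begin{align*}
e^{\i u} - \sum_{k=0}^{n-1} \frac{(\i u)^k}{k!} & = \frac{\i^n}{(n-1)!} \int_0^u (u-s)^{n-1} e^{\i s} \diff s,
\end{align*}
which follows by induction on $n$: the case $n=1$ is the fundamental theorem of calculus applied to $s \mapsto e^{\i s}$, and the inductive step is a single integration by parts. Since $|e^{\i s}| = 1$, taking absolute values yields the pointwise bound
\begin{align*}
\left| e^{\i u} - \sum_{k=0}^{n-1} \frac{(\i u)^k}{k!} \right| & \leq \frac{1}{(n-1)!} \left| \int_0^u |u-s|^{n-1} \diff s \right| = \frac{|u|^n}{n!}.
\end{align*}

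Next I would substitute $u = tX$. If $\E |X|^n = \infty$ the claimed inequality is vacuous, so one may assume $\E |X|^n < \infty$; then by Lyapunov's (or Jensen's) inequality $\E |X|^k < \infty$ for every $k \leq n$, so the partial sum $1 + \sum_{k=1}^{n-1} (\i t)^k \E X^k / k!$ is well defined and, by linearity of expectation over a finite sum, equals $\E \left[ \sum_{k=0}^{n-1} (\i t X)^k / k! \right]$. Consequently, using $\psi(t) = \E e^{\i t X}$ and Jensen's inequality for the modulus,
\begin{align*}
\left| \psi(t) - \left( 1 + \sum_{k=1}^{n-1} \frac{\E X^k}{k!} (\i t)^k \right) \right| & = \left| \E \left[ e^{\i t X} - \sum_{k=0}^{n-1} \frac{(\i t X)^k}{k!} \right] \right| \leq \E \left| e^{\i t X} - \sum_{k=0}^{n-1} \frac{(\i t X)^k}{k!} \right|,
\end{align*}
and applying the pointwise bound above with $u = tX$ inside the expectation gives the upper bound $\E |tX|^n / n! = |t|^n \E |X|^n / n!$, which is exactly the asserted inequality.

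There is no serious obstacle here; the only points requiring (minor) care are the justification of the remainder identity by induction and the interchange of expectation with the finite Taylor sum, both routine given the finite-moment hypothesis. I note that one could alternatively invoke the sharper pointwise estimate $\big| e^{\i u} - \sum_{k<n}(\i u)^k/k! \big| \leq \min\big( |u|^n/n!,\ 2|u|^{n-1}/(n-1)! \big)$, but since the statement only claims the first term of this minimum, the argument above suffices.
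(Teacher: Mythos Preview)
Your proof is correct and is the standard argument. Note that the paper does not actually prove this theorem: it is stated in Appendix~\ref{CF_appendix} as a background fact taken from Chapter~XV of \citet{feller2008introduction}, with no proof given. What you have written is essentially the classical derivation found there (integral form of the Taylor remainder for $e^{\i u}$, pointwise bound, then take expectations), so there is nothing to compare against and your argument is entirely appropriate.
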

The following bound on CFs will be useful in the proofs of the local central limit theorems.
\begin{lemma}[Bounds on CF] Let $\psi$ be a multivariate CF and suppose that, there exists $0<c<1$ and $b>0$ such that,
	\begin{align}
	|\psi(\bm t)| & \leq c \; \forall \;  \|\bm t\| > b.
	\label{cf_bound_requirement}
	\end{align}
	Then, for any $\|\bm t\| \leq b$ we have,
	\begin{align*}
	|\psi(\bm t)| & \leq 1-\frac{1-c^2}{8b^2}\|\bm t\|^2.
	\end{align*}
\end{lemma}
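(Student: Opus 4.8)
The plan is to exploit the fact that, for a characteristic function, the quantity $1-|\psi|^2$ behaves submultiplicatively under doubling of the argument, and then to iterate the doubling starting from a point $\bm t$ inside the ball of radius $b$ until the dilated point leaves that ball, where the hypothesis \eqref{cf_bound_requirement} becomes available.

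First I would symmetrize. If $\psi$ is the characteristic function of a random vector $X$ on $\mathbb R^d$ and $X'$ is an independent copy of $X$, then $|\psi(\bm t)|^2 = \E \cos \ip{\bm t}{X-X'}$, which is exactly $|\psi(\bm t)|^2\in[0,1]$ for every $\bm t$. Using the elementary inequality $1-\cos(2u) = 2\sin^2 u = 8\sin^2(u/2)\cos^2(u/2) \le 8\sin^2(u/2) = 4(1-\cos u)$, applied pointwise with $u = \ip{\bm t}{X-X'}$ and then integrated, I obtain the doubling estimate
\begin{align*}
1-|\psi(2\bm t)|^2 & \le 4\bigl(1-|\psi(\bm t)|^2\bigr), \qquad \bm t\in\mathbb R^d .
\end{align*}

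Next, fix $\bm t$ with $0<\|\bm t\|\le b$ (the case $\bm t=\bm 0$ is trivial, both sides being $1$), and let $k\ge 1$ be the smallest integer with $2^k\|\bm t\|>b$. Iterating the doubling estimate $k$ times gives $1-|\psi(2^k\bm t)|^2 \le 4^k\bigl(1-|\psi(\bm t)|^2\bigr)$; since $\|2^k\bm t\|>b$, hypothesis \eqref{cf_bound_requirement} yields $|\psi(2^k\bm t)|\le c$, so $1-c^2 \le 4^k\bigl(1-|\psi(\bm t)|^2\bigr)$. By minimality of $k$ we have $2^{k-1}\|\bm t\|\le b$, hence $4^k\le 4b^2/\|\bm t\|^2$, and therefore $1-|\psi(\bm t)|^2 \ge (1-c^2)\|\bm t\|^2/(4b^2)$. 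Since $\|\bm t\|\le b$ forces the right-hand side to be at most $1/4$, the bound $\sqrt{1-x}\le 1-x/2$ valid on $[0,1]$ gives
\begin{align*}
|\psi(\bm t)| & = \sqrt{|\psi(\bm t)|^2} \le \sqrt{1-\frac{(1-c^2)\|\bm t\|^2}{4b^2}} \le 1-\frac{(1-c^2)\|\bm t\|^2}{8b^2},
\end{align*}
which is the claimed inequality.

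There is no real obstacle here: this is a standard estimate (cf.\ \citet{feller2008introduction}). The only point needing a little care is the counting in the iteration — one must take $k$ to be the \emph{first} time the dilated argument $2^k\bm t$ exits the ball of radius $b$, so that simultaneously the hypothesis applies at $2^k\bm t$ and the blow-up factor $4^k$ is controlled via the minimality bound $2^{k-1}\|\bm t\|\le b$; and one should work with the symmetrized quantity $|\psi|^2$, which satisfies the clean doubling inequality, converting back to $|\psi|$ only at the end via $\sqrt{1-x}\le 1-x/2$.
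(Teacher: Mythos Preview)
Your proof is correct. The doubling inequality $1-|\psi(2\bm t)|^2\le 4(1-|\psi(\bm t)|^2)$ follows exactly as you say from the trigonometric identity applied to the symmetrized law, the iteration is clean, and the conversion from $|\psi|^2$ to $|\psi|$ via $\sqrt{1-x}\le 1-x/2$ is legitimate since $(1-c^2)\|\bm t\|^2/(4b^2)\le 1/4<1$.

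The paper does not give a proof at all: it simply cites the univariate case as Theorem~1 in Chapter~1 of \citet{petrov2012sums} and the multivariate case as Theorem~1.8.13 of \citet{ushakov2011selected}. Your argument is the standard one behind those references, so you have supplied a self-contained proof where the paper is content to quote. There is nothing to fix.
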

\begin{proof}
	A univariate version is given as Theorem 1 in Chapter 1 of \citet{petrov2012sums}. A multivariate version is given as Theorem 1.8.13 in \citet{ushakov2011selected}.
\end{proof}
Finally we state a Multivariate Berry-Eseen bound due to \citet{bhattacharya1975errors}.
\begin{theorem}[A Multivariate Berry-Eseen Bound, \citep{bhattacharya1975errors}] \label{berry_eseen_theorem} Let $X_1, X_2 \dots X_n$ be independent random vectors in $\mathbb R^k$. Suppose that:
	\begin{align*}
	\E X_i = \bm 0, \; \frac{1}{n} \sum_{i=1}^n \E X_i X_i^\UT  = \bm I_k.
	\end{align*}
	Define: 
	\begin{align*}
	\rho_3 & \Mydef \frac{1}{n} \sum_{i=1}^n \E \|X_i\|^3.
	\end{align*}
	Then, there exists a universal constant $C_k$ depending only on the dimension $k$,  such that for any bounded, lipchitz function $f$ we have, 
	\begin{align*}
	\left| \E f \left( \frac{\sum_{i=1}^n X_i}{\sqrt{n}} \right) - \E_{Z \sim \gauss{\bm 0}{\bm I_k}}  f(Z) \right| & \leq \frac{C_k \cdot \rho_3 \cdot  (\|f\|_\infty + \|f\|_{\text{Lip}})}{\sqrt{n}}.
	\end{align*}
\end{theorem}
\section{Some Miscellaneous Results}\label{misc_appendix}
This appendix collects some miscellaneous facts and results that are useful in our analysis. The first is a classical correlation inequality. 
\begin{fact}[Chebychev Association Inequality, \citep{boucheron2013concentration}] 	\label{chebychev_association}Let $A,B$ be r.v.s and $B \geq 0$. Suppose $f,g$ are two non-decreasing functions. Then, $\E[B]\E[B f(A) g(A)] \geq \E[f(A)B]\E[g(A) B]$.
\end{fact}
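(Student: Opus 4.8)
The plan is to use the standard ``two independent copies'' (Chebyshev tensorization) argument. First I would introduce a pair $(A', B')$ that is independent of $(A,B)$ and has the same joint distribution, realized on a suitable product space. The key elementary observation is that since both $f$ and $g$ are non-decreasing, the quantity $\bigl(f(A) - f(A')\bigr)\bigl(g(A) - g(A')\bigr)$ is pointwise non-negative: on the event $\{A \ge A'\}$ both factors are $\ge 0$, and on $\{A \le A'\}$ both are $\le 0$, so in either case the product is $\ge 0$. Since $B, B' \ge 0$, multiplying preserves the sign, giving
\[
B B' \bigl(f(A) - f(A')\bigr)\bigl(g(A) - g(A')\bigr) \ge 0 \quad \text{a.s.}
\]

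Next I would take expectations and expand the product into its four terms. Using independence of $(A,B)$ from $(A',B')$ together with the fact that they are identically distributed, each term factorizes: $\E[BB' f(A) g(A)] = \E[B]\,\E[B f(A) g(A)]$, the cross terms satisfy $\E[BB' f(A) g(A')] = \E[B f(A)]\,\E[B g(A)]$ and likewise $\E[BB' f(A') g(A)] = \E[B g(A)]\,\E[B f(A)]$, and the last term $\E[BB' f(A') g(A')]$ again equals $\E[B]\,\E[B f(A) g(A)]$ by the swap symmetry of primed and unprimed variables. Collecting terms yields
\[
2\,\E[B]\,\E[B f(A) g(A)] \;-\; 2\,\E[B f(A)]\,\E[B g(A)] \;\ge\; 0,
\]
which is exactly the claimed inequality after dividing by $2$.

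The only point requiring a word of care is integrability: the factorization steps presume $\E[B]$, $\E[B f(A)]$, $\E[B g(A)]$, and $\E[B f(A) g(A)]$ are all well defined and finite. I would first establish the identity for the bounded truncations $f_N := (f \wedge N) \vee (-N)$ and $g_N$, where all expectations are finite and the manipulation is valid verbatim, and then pass $N \to \infty$ by monotone/dominated convergence under the (mild) moment hypotheses that hold in every invocation of this fact in the paper, where $B$ is a bounded exponential weight and $f,g$ grow polynomially against light-tailed arguments. This truncation is entirely routine; there is no genuine obstacle, as the whole content of the proof is the sign observation in the first paragraph.
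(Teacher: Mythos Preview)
Your argument is correct and is the standard symmetrization proof of the Chebyshev association inequality. Note, however, that the paper does not actually give a proof of this statement: it is recorded as a \emph{Fact} with a citation to \citet{boucheron2013concentration} and invoked as a black box, so there is no ``paper's own proof'' to compare against. Your write-up supplies exactly the classical justification that the cited reference contains.
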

The following collects some useful properties of Modified Bessel Function of the first kind. These results can be found in the standard references \citep{abramowitz1965handbook,watson1995treatise}. Item (5) of the following is relatively less known and is due to \citet[Appendix A]{watson1983statistics}.
%% proof : use univariate GHS inequality
%%The GHS and Other Correlation Inequalities for a Class of Even Ferromagnets 
%% GHS and other Inequalities Joel L. Lebowitz

\begin{fact}[Properties of Modified Bessel Function of the First Kind]\label{bessel_properties}
	For $x \in \mathbb R$, the Modified Bessel Function of the First Kind, denoted by, $I_0(x)$ is defined as: 
	\begin{align*}
	I_0(x) & \Mydef \frac{1}{2\pi} \int_{-\pi}^\pi e^{x \cos(\theta)}.
	\end{align*}
	It satisfies the following properties:
	\begin{enumerate}
		\item $I_0(x)$ is an increasing function on $x \geq 0$ and $I_0(0) = 1$. 
		\item $I_0(x)$ is an even function.
		\item There exists a universal constant $C$ such that,
		\begin{align*}
		I_0(x) & \leq  \frac{C e^x}{\sqrt{x}}, \; \forall x \geq 0.
		\end{align*}
		\item $I_0$ is infinitely differentiable.
		\item The function $\frac{I_0^\prime}{I_0}$ is an increasing concave function with,
		\begin{align*}
		\frac{I_0^\prime(0)}{I_0(0)} = 0, \; \lim_{x \rightarrow \infty} \frac{I_0^\prime(x)}{I_0(x)} = 1,
		\end{align*}
		and,
		\begin{align*}
		\frac{\diff}{\diff z} \left(\frac{I_0^\prime(z)}{I_0(z)} \right) \bigg |_{z =0} & = \frac{1}{2}.
		\end{align*}
	\end{enumerate}
\end{fact}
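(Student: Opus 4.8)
The plan is to read off every assertion from the probabilistic representation $I_0(x) = \E\, e^{x \cos \Theta}$ with $\Theta \sim \unif{[-\pi,\pi]}$; that is, $x \mapsto \ln I_0(x)$ is (the extension to the real line of) the cumulant generating function of the bounded random variable $\cos \Theta$, so most of the claims reduce to standard convexity properties of cumulant generating functions together with the Chebyshev association inequality (Fact~\ref{chebychev_association}).

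For item (1): $I_0(0) = \frac{1}{2\pi}\int_{-\pi}^{\pi} \diff\theta = 1$, and differentiating under the integral sign — legitimate by item (4) — gives $I_0'(x) = \E[\cos\Theta\, e^{x\cos\Theta}]$. Since $c \mapsto c$ and $c \mapsto e^{xc}$ are both non-decreasing, Fact~\ref{chebychev_association} yields $I_0'(x) = \E[\cos\Theta\, e^{x\cos\Theta}] \geq \E[\cos\Theta]\,\E[e^{x\cos\Theta}] = 0$, strictly since $\cos\Theta$ is non-constant; hence $I_0$ is strictly increasing on $[0,\infty)$. For item (2), the substitution $\theta \mapsto \pi - \theta$ sends $\cos\theta$ to $-\cos\theta$ and preserves the uniform measure, so $I_0(-x) = I_0(x)$. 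Item (4) follows by repeated differentiation under the integral sign: on any bounded $x$-interval the integrand $\theta \mapsto (\cos\theta)^k e^{x\cos\theta}$ and each of its $x$-derivatives is uniformly bounded, so dominated convergence justifies $I_0 \in C^\infty$.

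For item (3) I would combine the crude bound $I_0(x) \le e^x$ (from $\cos\theta \le 1$) with the classical Laplace-type asymptotic $I_0(x) = \frac{e^x}{\sqrt{2\pi x}}\bigl(1 + O(1/x)\bigr)$ as $x \to \infty$ (see \citet{abramowitz1965handbook, watson1995treatise}), obtained by localizing the integral near $\theta = 0$ where $\cos\theta = 1 - \theta^2/2 + O(\theta^4)$. For $0 \le x \le C^2$ one has $I_0(x) \le e^x \le C e^x/\sqrt{x}$ directly, and for $x > C^2$ the asymptotic gives the bound once $C$ is chosen large enough (so that $C \ge 1/\sqrt{2\pi}$ and the $O(1/x)$ correction on $[C^2,\infty)$ is absorbed); combining the two ranges yields the uniform bound.

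The substantive point is item (5). Writing $\rho(x) \Mydef I_0'(x)/I_0(x) = \frac{\diff}{\diff x}\ln I_0(x)$ and introducing the exponentially tilted probability measure $\diff\mu_x(\theta) \propto e^{x\cos\theta}\diff\theta$ on $[-\pi,\pi]$, one gets $\rho(x) = \E_{\mu_x}[\cos\Theta]$, hence $\rho'(x) = \mathrm{Var}_{\mu_x}(\cos\Theta) \ge 0$ (so $\rho$ is increasing), $\rho(0) = \E_{\mu_0}[\cos\Theta] = 0$, $\rho'(0) = \mathrm{Var}_{\mu_0}(\cos\Theta) = \E[\cos^2\Theta] = \tfrac12$, and $\rho(x) \to 1$ as $x \to \infty$ because $\mu_x$ concentrates at $\theta = 0$ (Laplace's method again). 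The concavity of $\rho$ is equivalent to $\rho''(x) = \kappa_3^{(x)}(\cos\Theta) \le 0$, where $\kappa_3^{(x)}$ is the third cumulant under $\mu_x$; unlike the first- and second-order statements, this sign does not drop out of a one-line association inequality, and I would simply cite \citet[Appendix~A]{watson1983statistics}, who proves exactly this monotonicity-and-concavity property of the Bessel ratio. Establishing that third-cumulant inequality from scratch is the only genuine obstacle here.
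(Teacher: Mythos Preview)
Your proposal is correct. The paper does not actually prove this fact: it states it as a standard result with citations to \citet{abramowitz1965handbook} and \citet{watson1995treatise}, and singles out item (5) as being due to \citet[Appendix~A]{watson1983statistics}. Your write-up is therefore strictly more detailed than the paper's treatment, but fully aligned with it in spirit --- you give the elementary arguments for items (1)--(4) via the probabilistic/cumulant representation, and for the one non-trivial claim (concavity of $I_0'/I_0$) you cite the same source the paper does.
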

The following lemma is about a bivariate Gaussian integral.
\begin{lemma} \label{bivariate_gauss_integral} Let $Z_1, Z_2$ be distributed as:
\begin{align*}
    \begin{bmatrix} Z_1 \\ Z_2 \end{bmatrix} & \sim \gauss{\begin{bmatrix} 0 \\ 0\end{bmatrix}}{\begin{bmatrix}1 & \rho \\ \rho & 1 \end{bmatrix}}. 
\end{align*}
Then the integral:
\begin{align*}
    J(a,b) & \Mydef \E_{Z_1,Z_2} \gpdf[1](a - Z_1) \gpdf[1](b-Z_2),
\end{align*}
is given by:
\begin{align*}
    J(a,b) & = \frac{1}{4\pi \sqrt{1-\rho^2/4}} \cdot \exp\left( - \frac{a^2 + b^2  - \rho ab}{4(1-\rho^2/4)} \right).
\end{align*}
\end{lemma}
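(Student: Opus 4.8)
The plan is to evaluate the bivariate Gaussian integral $J(a,b) = \E_{Z_1,Z_2} \gpdf[1](a - Z_1)\gpdf[1](b - Z_2)$ by recognizing it as a convolution of Gaussian densities and then completing the square in the exponent. The cleanest route is to pass to characteristic functions: since $\gpdf[1]$ is the density of $\gauss{0}{1}$ and we are integrating $\gpdf[1](a-Z_1)\gpdf[1](b-Z_2)$ against the law of $(Z_1,Z_2) \sim \gauss{\bm 0}{\bm \Sigma}$ with $\bm \Sigma = \begin{bmatrix} 1 & \rho \\ \rho & 1\end{bmatrix}$, the value $J(a,b)$ is exactly the density at the point $(a,b)$ of the random vector $(Z_1 + W_1, Z_2 + W_2)$, where $(W_1,W_2) \sim \gauss{\bm 0}{\bm I_2}$ is independent of $(Z_1,Z_2)$. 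That sum is Gaussian with mean $\bm 0$ and covariance $\bm \Sigma + \bm I_2 = \begin{bmatrix} 2 & \rho \\ \rho & 2 \end{bmatrix}$.

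From there the computation is mechanical. First I would compute $\det(\bm \Sigma + \bm I_2) = 4 - \rho^2 = 4(1 - \rho^2/4)$, so the normalizing constant of the density is $\tfrac{1}{2\pi\sqrt{4 - \rho^2}} = \tfrac{1}{4\pi\sqrt{1-\rho^2/4}}$, matching the claimed prefactor. Second I would compute the inverse covariance $(\bm \Sigma + \bm I_2)^{-1} = \tfrac{1}{4-\rho^2}\begin{bmatrix} 2 & -\rho \\ -\rho & 2 \end{bmatrix}$, so the quadratic form in the exponent is $-\tfrac12 (a,b)(\bm \Sigma + \bm I_2)^{-1}(a,b)^\UT = -\tfrac{2a^2 - 2\rho ab + 2b^2}{2(4-\rho^2)} = -\tfrac{a^2 + b^2 - \rho ab}{4 - \rho^2} = -\tfrac{a^2 + b^2 - \rho ab}{4(1-\rho^2/4)}$, which is exactly the claimed exponent. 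Assembling the prefactor and the exponential gives the stated formula for $J(a,b)$.

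There is essentially no hard obstacle here; the only thing to be careful about is the bookkeeping of the factor-of-two conventions (the variance of each $W_i$ is $1$, not $\tfrac12$, since $\gpdf[1]$ is the standard Gaussian density) and the identification $J(a,b) = p_{Z+W}(a,b)$, which follows from writing out $\int \gpdf[1](a - z_1)\gpdf[1](b-z_2)\, p_{\bm Z}(z_1,z_2)\, \diff z_1 \diff z_2$ and observing it is the convolution formula for the density of $\bm Z + \bm W$. Alternatively, if one prefers to avoid the probabilistic interpretation, the same result drops out of directly multiplying the two Gaussian densities inside the integral, collecting all quadratic and linear terms in $(z_1,z_2)$, and performing the resulting $2$-dimensional Gaussian integral by completing the square — but the convolution viewpoint makes the determinant and inverse-covariance computations transparent and is the approach I would write up.
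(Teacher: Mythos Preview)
Your proposal is correct and is essentially identical to the paper's own proof: the paper also observes that $J(a,b)$ is the joint density at $(a,b)$ of $(Z_1+\epsilon_1, Z_2+\epsilon_2)$ with independent $\epsilon_1,\epsilon_2 \sim \gauss{0}{1}$, notes that this sum is $\gauss{\bm 0}{\begin{bmatrix}2 & \rho \\ \rho & 2\end{bmatrix}}$, and reads off the bivariate Gaussian density. Your write-up is slightly more detailed in spelling out the determinant and inverse-covariance computations, but the approach is the same.
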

\begin{proof}
    Note that $J(a,b)$ is the Joint PDF of the random variables $(A,B)$ with distribution:
    \begin{align*}
        A = Z_1 + \epsilon_1, \; B = Z_2 + \epsilon_2, \; \begin{bmatrix} Z_1 \\ Z_2 \end{bmatrix} & \sim \gauss{\begin{bmatrix} 0 \\ 0\end{bmatrix}}{\begin{bmatrix}1 & \rho \\ \rho & 1 \end{bmatrix}}, \; \epsilon_1 \sim \gauss{0}{1}, \; \epsilon_2 \sim \gauss{0}{1}.
    \end{align*}
    We can directly find the distribution of $A,B$ from this description:
    \begin{align*}
        \begin{bmatrix}A \\ B \end{bmatrix} & \sim \gauss{\begin{bmatrix} 0 \\ 0\end{bmatrix}}{\begin{bmatrix}2 & \rho \\ \rho & 2 \end{bmatrix}}.
    \end{align*}
    Hence by the formula for the bivariate Gaussian pdf,
    \begin{align*}
        J(a,b) & = \frac{1}{4\pi \sqrt{1-\rho^2/4}} \cdot \exp\left( - \frac{a^2 + b^2  - \rho ab}{4(1-\rho^2/4)} \right).
    \end{align*}
\end{proof}
We will also find the following bound on truncated Gaussian integrals useful.
\begin{lemma}[Truncated Gaussian Integrals] \label{truncated_gauss_integral} Suppose that $a,A > 0$ and $k \in \mathbb N$. Then, we have,
	\begin{align*}
	\int_a^\infty x^{k} e^{-\frac{x^2}{2A^2}} \diff x & \leq C_k \cdot A \cdot (A^k + a^k) \cdot  e^{-\frac{a^2}{2}}.
	\end{align*}
	In the above display $C_k$ is a universal constant depending only on $k$.
\end{lemma}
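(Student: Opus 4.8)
The plan is to reduce everything to the two elementary facts that $\int_a^\infty e^{-x^2/2}\,\diff x \le e^{-a^2/2}$ (valid for $a \ge 0$, since on $[a,\infty)$ one has $e^{-x^2/2} \le e^{-a^2/2}e^{-(x-a)a/\cdots}$, or more simply $\int_a^\infty e^{-x^2/2}\diff x \le \int_a^\infty e^{-ax/2 \cdot \text{(something)}}$; I would just use the crude bound $\int_a^\infty e^{-x^2/2}\diff x \le \int_0^\infty e^{-(x+a)^2/2 + \cdots}$ or the standard $\le \min(\sqrt{\pi/2}, 1/a)e^{-a^2/2}$) together with the substitution $x = A u$ which rescales the Gaussian to unit variance. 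Concretely, first substitute $x = Au$, giving
\begin{align*}
\int_a^\infty x^k e^{-x^2/(2A^2)}\,\diff x & = A^{k+1}\int_{a/A}^\infty u^k e^{-u^2/2}\,\diff u.
\end{align*}
So it suffices to prove the claim for $A = 1$, i.e. to bound $\int_b^\infty u^k e^{-u^2/2}\,\diff u$ by $C_k(1 + b^k)e^{-b^2/2}$ for $b \ge 0$, and then re-substitute $b = a/A$, using $A^{k+1}(1 + (a/A)^k) = A \cdot (A^k + a^k)$, which is exactly the stated form.

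The core step is therefore the one-dimensional bound $\int_b^\infty u^k e^{-u^2/2}\,\diff u \le C_k(1 + b^k)e^{-b^2/2}$. I would handle this by splitting at $u = \max(b, \sqrt{2k})$ or, cleaner, by integration by parts / a direct inductive argument: write $u^k e^{-u^2/2} = u^{k-1}\cdot (u e^{-u^2/2})$ and integrate by parts to get $\int_b^\infty u^k e^{-u^2/2}\diff u = b^{k-1}e^{-b^2/2} + (k-1)\int_b^\infty u^{k-2}e^{-u^2/2}\diff u$. Iterating this recursion down to the base cases $k=0$ (where $\int_b^\infty e^{-u^2/2}\diff u \le \sqrt{2\pi}\,e^{-b^2/2}$ if $b \le 1$ and $\le e^{-b^2/2}$ if $b \ge 1$, in either case $\le C e^{-b^2/2}$) and $k=1$ (where the integral equals exactly $e^{-b^2/2}$) produces a finite sum of terms of the form $c_{k,j}\, b^{j}e^{-b^2/2}$ with $0 \le j \le k$, each dominated by $c_{k,j}(1 + b^k)e^{-b^2/2}$ since $b^j \le 1 + b^k$ for all $b \ge 0$. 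Collecting constants gives the claimed $C_k$.

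There is no real obstacle here; the only mild care needed is in the base case $k = 0$ to cover both small and large $b$ with a single constant (handled by the two-sided estimate $\int_b^\infty e^{-u^2/2}\diff u \le \min(\sqrt{2\pi}, 1/b)\,e^{-b^2/2}$, or just $\le \sqrt{2\pi}\,e^{-b^2/2}$ always, which is enough), and in tracking that the recursion terminates correctly for even versus odd $k$. Finally, undoing the rescaling $x = Au$ turns $C_k(1 + b^k)e^{-b^2/2}$ with $b = a/A$ into $C_k \cdot A^{k+1}(1 + (a/A)^k)e^{-a^2/(2A^2)} = C_k\, A(A^k + a^k)e^{-a^2/(2A^2)}$; since the lemma as stated writes $e^{-a^2/2}$ rather than $e^{-a^2/(2A^2)}$ in the exponent, I would note that these agree in the intended application (or interpret the lemma with the natural $A$-dependent exponent), which completes the argument.
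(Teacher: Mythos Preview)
Your approach is essentially the same as the paper's: reduce to $A=1$ by the substitution $x = Au$, bound the unit-variance integral $\int_b^\infty u^k e^{-u^2/2}\,\diff u \le C_k(1+b^k)e^{-b^2/2}$, and rescale back. The only difference is in how the $A=1$ case is handled: you use the clean integration-by-parts recursion $\int_b^\infty u^k e^{-u^2/2}\diff u = b^{k-1}e^{-b^2/2} + (k-1)\int_b^\infty u^{k-2}e^{-u^2/2}\diff u$, whereas the paper substitutes $u = x^2/2$ to pass to a shifted gamma-type integral and then applies $(\alpha+\beta)^{(k-1)/2} \le 2^{(k-1)/2}(\alpha^{(k-1)/2}+\beta^{(k-1)/2})$. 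Both arguments are elementary and equally short; yours has the minor advantage of avoiding fractional powers.

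Your observation about the exponent is correct and worth stating plainly: the rescaling gives $e^{-a^2/(2A^2)}$, not $e^{-a^2/2}$, and the paper's own proof, carried out carefully, yields the same $A$-dependent exponent. The $e^{-a^2/2}$ in the lemma statement is a typo; all applications of the lemma in the paper use it with the natural $A$-dependent exponent (or with $A$ bounded), so nothing downstream is affected.
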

\begin{proof}
	Let us first consider the case when $A=1$. Then we have, 
	\begin{align*}
	\int_{a}^\infty x^k e^{-\frac{x^2}{2}} \diff x & \explain{(a)}{=} 2^{\frac{k-1}{2}}\int_{a^2/2}^\infty u^{\frac{k-1}{2}} e^{-u} \diff u \\
	& \explain{(b)}{=} 2^{\frac{k-1}{2}} \cdot e^{-\frac{a^2}{2}} \cdot \int_{0}^\infty \left( x+ \frac{a^2}{2}\right)^{\frac{k-1}{2}} e^{-x} \diff x \\
	&\explain{(c)}{\leq} 2^{k-1} \cdot e^{-\frac{a^2}{2}} \cdot \int_{0}^\infty \left( x^{\frac{k-1}{2}}+ \frac{a^{k-1}}{2^{\frac{k-1}{2}}}\right) e^{-x} \diff x \\
	& \leq 2^{k-1} \cdot e^{-\frac{a^2}{2}} \cdot \left( \sqrt{\int_0^\infty e^{-x} x^{k-1} \diff x} + \frac{a^{k-1}}{2^{\frac{k-1}{2}}} \right) \\
	& \leq 2^{k-1} \cdot e^{-\frac{a^2}{2}} \cdot \left( \sqrt{(k-1)!} + \frac{a^{k-1}}{2^{\frac{k-1}{2}}} \right) \\
	& \leq C_k (1+a^{k}) e^{-\frac{a^2}{2}}
	\end{align*}
	In the step marked (a), we substituted $u = x^2/2$ in the step marked (b) we substituted $u = x+a$. In the step marked (c) we used the inequality $(a+b)^k \leq 2^k(a^k + b^k), \; a,b \geq 0 \; k \geq 0$. Making the substitution $x = Ax$ in the above bound gives us: 
	\begin{align*}
	\int_a^\infty x^{k} e^{-\frac{x^2}{2A^2}} \diff x & \leq C_k \cdot A \cdot (A^k + a^k) \cdot  e^{-\frac{a^2}{2}}.
	\end{align*}
	This concludes the proof. 
\end{proof}
The following lemma contains a useful upper bound on $\E|G|^{-\frac{1}{2}}$ where $G \sim \gauss{0}{1}$. 
\begin{lemma}[Fractional Moments of Gaussian Distribution] \label{fractional_moments} Let $G \sim \gauss{\mu}{\sigma^2}$. Then we have,
	\begin{align*}
	\E \frac{1}{\sqrt{|G|}} & \leq \frac{4}{\sqrt{\mu}}.
	\end{align*}
\end{lemma}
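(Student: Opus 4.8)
The plan is to work directly with the Gaussian density of $G$ and split the expectation according to whether $G$ lies near $\mu$ or near the origin. Write $G = \mu + \sigma Z$ with $Z \sim \gauss{0}{1}$ (and note $\mu > 0$ is implicit, since otherwise the right-hand side is not even real). I would decompose
\[
\E\frac{1}{\sqrt{|G|}} \;=\; \E\left[\frac{1}{\sqrt{|G|}}\,\mathbf{1}_{|G|\ge \mu/2}\right] \;+\; \E\left[\frac{1}{\sqrt{|G|}}\,\mathbf{1}_{|G|<\mu/2}\right].
\]
On the event $\{|G|\ge \mu/2\}$ one simply has $1/\sqrt{|G|}\le \sqrt{2/\mu}$, so the first term is at most $\sqrt{2}\,\mu^{-1/2}$. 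That is the easy half.

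For the second term the key observation is that the event $\{|G|<\mu/2\}$ forces $|G-\mu|>\mu/2$, so on this event the Gaussian density of $G$ is uniformly bounded by $(2\pi\sigma^2)^{-1/2}e^{-\mu^2/(8\sigma^2)}$. Hence
\[
\E\left[\frac{1}{\sqrt{|G|}}\,\mathbf{1}_{|G|<\mu/2}\right] \;\le\; \frac{e^{-\mu^2/(8\sigma^2)}}{\sqrt{2\pi}\,\sigma}\int_{-\mu/2}^{\mu/2}\frac{\diff x}{\sqrt{|x|}} \;=\; \frac{e^{-\mu^2/(8\sigma^2)}}{\sqrt{2\pi}\,\sigma}\cdot 2\sqrt{2}\,\sqrt{\mu},
\]
using $\int_{-a}^{a}|x|^{-1/2}\,\diff x = 4\sqrt{a}$ with $a=\mu/2$. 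It then remains to bound $\sqrt{\mu}\,e^{-\mu^2/(8\sigma^2)}/\sigma$ uniformly in $\sigma$: substituting $t=\mu/\sigma$ turns this into $t\,e^{-t^2/8}\,\mu^{-1/2}$, and since $\max_{t>0}t\,e^{-t^2/8}=2e^{-1/2}$ (attained at $t=2$), the second term is at most $\bigl(4\sqrt{2}\,e^{-1/2}/\sqrt{2\pi}\bigr)\mu^{-1/2} = \bigl(4/\sqrt{\pi e}\bigr)\mu^{-1/2}$.

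Adding the two estimates gives $\E(1/\sqrt{|G|}) \le \bigl(\sqrt{2}+4/\sqrt{\pi e}\bigr)\mu^{-1/2}$, and the numerical inequality $\sqrt{2}+4/\sqrt{\pi e} < 1.42 + 1.37 < 4$ finishes the proof. There is no genuine obstacle here; the only step that requires a moment's care is the near-origin term, where the integrable $|x|^{-1/2}$ singularity competes with the Gaussian decay. The point is to peel off the factor $e^{-\mu^2/(8\sigma^2)}$ coming from the density being evaluated away from its mean, and then optimize the resulting one-variable expression in $t=\mu/\sigma$ — this is exactly what makes the bound hold with an absolute constant, uniformly over all noise levels $\sigma>0$.
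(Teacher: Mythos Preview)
Your proof is correct and essentially identical to the paper's own argument: both split at the threshold $|G|=\mu/2$, bound the first piece trivially by $\sqrt{2}/\sqrt{\mu}$, and handle the second piece by freezing the Gaussian density at its value on the boundary and integrating the $|x|^{-1/2}$ singularity explicitly. The only cosmetic difference is that the paper optimizes via the substitution $x=\mu^2/(8\sigma^2)$ and invokes $\max_{x\ge 0}\sqrt{x}e^{-x}=1/\sqrt{2e}$, whereas you optimize directly in $t=\mu/\sigma$; these lead to the same numerical constant $\sqrt{2}+4/\sqrt{\pi e}<4$.
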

\begin{proof}
	We have,
	\begin{align*}
	\E \frac{1}{\sqrt{|G|}} & \leq \E \frac{1}{\sqrt{|G|}} \mathbf{1}_{|G| \leq 0.5 |\mu|} + \E \frac{1}{\sqrt{|G|}} \mathbf{1}_{|G|> 0.5 |\mu|} \\
	& \leq \int_{-0.5|\mu|}^{0.5 |\mu|} \frac{1}{\sqrt{|x|}} \cdot \frac{1}{\sqrt{2\pi \sigma^2}} e^{-\frac{(x-\mu)^2}{2\sigma^2}} \diff x + \frac{\sqrt{2}}{\sqrt{\mu}} \\
	& \leq \frac{e^{-\frac{\mu^2}{8\sigma^2}}}{\sqrt{2\pi \sigma^2}} \cdot \int_{-0.5 |\mu|}^{0.5 |\mu|} \frac{1}{\sqrt{x}} \diff x + \frac{\sqrt{2}}{\sqrt{\mu}} \\
	& = \frac{2 \sqrt{|\mu|} e^{-\frac{\mu^2}{8 \sigma^2}}}{\sqrt{\pi \sigma^2}} + \frac{\sqrt{2}}{\sqrt{\mu}} \\
	& \leq \frac{4}{\sqrt{\mu}}
	\end{align*}
	In the last step we used the fact that $\max_{x \geq 0} \sqrt{x} e^{-x} \leq \frac{1}{\sqrt{2 e}}$. 
\end{proof}
	
\end{document}